\documentclass{article}
\usepackage[utf8]{inputenc}
\usepackage[a4paper, total={15.92cm, 24.37cm}, top=1in, left=1in]{geometry}
\usepackage[T1]{fontenc}
\usepackage{lmodern}
\usepackage{dsfont} 
\usepackage{amsfonts}
\usepackage{graphicx}
\usepackage{amsmath,mathrsfs} 
\usepackage{xcolor}
\usepackage{verbatim}
\usepackage[titletoc,title]{appendix}
\usepackage{tikz}
\usetikzlibrary{patterns}
\usetikzlibrary{arrows}
\usepackage[title]{appendix}
\usepackage{amsthm,amssymb}
\usepackage{amsbsy}
\usepackage{bbold}

%%%

\makeatletter
\@addtoreset{equation}{section}
\makeatother

\makeatletter
\@addtoreset{enunciato}{section}
\makeatother

\newcounter{enunciato}[section]

\newtheorem{ittheorem}{Theorem}
 \newtheorem{itlemma}{Lemma}
 \newtheorem{itproposition}{Proposition}
 \newtheorem{itdefinition}{Definition}
 \newtheorem{itremark}{Remark}
 \newtheorem{itclaim}{Claim}
 \newtheorem{itfact}{Fact}
 \newtheorem{itexample}{Example}
 \newtheorem{itconjecture}{Conjecture}
 \newtheorem{itobservation}{Observation} 
 \newtheorem{itcorollary}{Corollary}

 \newenvironment{theorem}{\addtocounter{enunciato}{1}
 \begin{ittheorem}}{\end{ittheorem}}

 \newenvironment{lemma}{\addtocounter{enunciato}{1}
 \begin{itlemma}}{\end{itlemma}}

 \newenvironment{proposition}{\addtocounter{enunciato}{1}
 \begin{itproposition}}{\end{itproposition}}

 \newenvironment{definition}{\addtocounter{enunciato}{1}
 \begin{itdefinition}}{\end{itdefinition}}

 \newenvironment{remark}{\addtocounter{enunciato}{1}
 \begin{itremark}}{\end{itremark}}

%%%

\newtheorem*{ClmPk}{Claim $\boldsymbol{{\cal P}(k)}$}
\newtheorem*{ClmPkt}{Claim $\boldsymbol{\tilde{\cal P}(k)}$}
\newtheorem*{ClmPkt'}{Claim $\boldsymbol{\tilde{\cal P}'(k)}$}
\newtheorem*{ClmPktbar}{Claim $\boldsymbol{\bar{\cal P}(k)}$}

%%%

\def \Z {\mathbb Z}
\def \R {\mathbb R}

\def \N {\mathbb N}

\def \ee {\mathrm e}
\def \dd {\mathrm d}

\def \ba {\begin{array}}
\def \ea {\end{array}}

\def \Z {{\mathbb Z}}
\def \R {{\mathbb R}}

\def \N {{\mathbb N}}

\def \ra {\rightarrow}

\def \cX {{\cal X}}

\def \cL {{\cal L}}

\def \L {{\Lambda}}

\def \b {{\beta}}

\def \h {{\eta}}

\def \n {{\nu}}
\def \D {{\Delta}}

\def \T {{\Theta}}

\def \d {{\delta}}

\def \n {{\nu}}

\def \SES {{\hbox{\footnotesize\rm SES}}}

\newcommand{\dist}{\mathrm{dist}}

%%%%%%%%%%%%%%%%%%%%%%%%%%%%%

\begin{document}

%%%%%%%% TITLE PAGE %%%%%%%%%%%%%%
	
\author{
\renewcommand{\thefootnote}{\arabic{footnote}}
S.\ Baldassarri \footnotemark[1] \textsuperscript{,}\footnotemark[2]
\\
\renewcommand{\thefootnote}{\arabic{footnote}}
A.\ Gaudilli\`ere \footnotemark[2]
\\
\renewcommand{\thefootnote}{\arabic{footnote}}
F.\ den Hollander \footnotemark[3]
\\
\renewcommand{\thefootnote}{\arabic{footnote}}
F.R.\ Nardi \footnotemark[1]
\\
\renewcommand{\thefootnote}{\arabic{footnote}}
E.\ Olivieri \footnotemark[4]
\\
\renewcommand{\thefootnote}{\arabic{footnote}}
E.\ Scoppola \footnotemark[5]
}
	
\title{Droplet dynamics in a two-dimensional rarefied gas\\ 
under Kawasaki dynamics}
	
\footnotetext[1]{
Universit\`a degli Studi di Firenze, Dipartimento di Matematica e Informatica ``Ulisse Dini'', Viale Morgagni 67/a 50134, Firenze, Italy}
	
\footnotetext[2]{
Aix-Marseille Universit\'e, CNRS, Centrale Marseille, I2M UMR CNRS 7373, 39, rue Joliot Curie, 13453 Marseille Cedex 13, France}
	
\footnotetext[3]{
Mathematisch Instituut, Universiteit Leiden, Niels Bohrweg 1, 2333 CA Leiden, The Netherlands}
	
\footnotetext[4]{
Dipartimento di Matematica, Universit\`a di Roma Tor Vergata, Via della Ricerca Scientifica, 00133 Roma, Italy}
	
\footnotetext[5]{
Dipartimento di Matematica e Fisica, Universit\`a di Roma Tre, Largo S.\ Leonardo Murialdo 1, 00146 Roma, Italy}
	
\maketitle
	
%%%%%%%%%%%%%%%%%%%%%%%%%%%%%%%%%%%%%%%%
	
\begin{abstract}
This is the second in a series of three papers in which we study a lattice gas subject to Kawasaki conservative dynamics at inverse temperature~$\beta>0$ in a large finite box $\Lambda_\beta \subset \Z^2$ whose volume depends on $\beta$. Each pair of neighbouring particles has a negative \emph{binding energy} $-U<0$, while each particle has a positive \emph{activation energy} $\Delta>0$. The initial configuration is drawn from the grand-canonical ensemble restricted to the set of configurations where all the droplets are subcritical. Our goal is to describe, in the metastable regime $\Delta \in (U,2U)$ and in the limit as $\beta\to\infty$, how and when the system nucleates, i.e., grows a supercritical droplet somewhere in $\Lambda_\beta$. 
		
In the first paper we showed that subcritical droplets behave as quasi-random walks.
In the present paper we use the results in the first paper to analyse
how subcritical droplets form and dissolve on multiple space-time scales when the volume is \emph{moderately large},
namely, $|\Lambda_\beta| = \ee^{\Theta\beta}$ with $\Delta < \Theta < 2\Delta-U$.
In the third paper we consider the setting where the volume is \emph{very large}, namely,  $|\Lambda_\beta| = \ee^{\Theta\beta}$ with $\Delta < \Theta < \Gamma-(2\Delta-U)$, where $\Gamma$ is the energy of the critical droplet in the local model with fixed volume, and use the results in the first two papers to identify the nucleation time. We will see that in a very large volume critical droplets appear more or less independently in boxes of moderate volume, a phenomenon referred to as \emph{homogeneous nucleation}.  

Since Kawasaki dynamics is \emph{conservative}, i.e., particles move around and interact but are preserved, we need to control non-local effects in the way droplets are formed and dissolved. This is done via a \emph{deductive approach}: the tube of typical trajectories    
leading to nucleation is described via a series of events, whose complements have negligible probability, on which the evolution of the gas consists of \emph{droplets wandering around on multiple space-time scales} in a way that can be captured by a coarse-grained Markov chain on a space of droplets.
      		
\vskip 0.5truecm
\noindent
{\it MSC2020.} 
60K35, % Interacting random processes; statistical mechanics type models; percolation theory
82C26, % Dynamic and nonequilibrium phase transitions (general) in statistical mechanics
82C27. % Dynamic critical phenomena in statistical mechanics
\\
{\it Key words and phrases.} 
Lattice gas, Kawasaki dynamics, metastability, nucleation, critical droplets.\\
{\it Acknowledgement.}
FdH and FRN were supported through NWO Gravitation Grant NETWORKS 024.002.003. SB was supported through LYSM and ``Gruppo Nazionale per l'Analisi Matematica, la Probabilità e le loro Applicazioni'' (GNAMPA-INdAM). 

\end{abstract}
	
%%%%%%%%%%%%%%%%%%%%%%%%%%%%%%%%%%%%%%%%%%%%%%%

\newpage	
\tableofcontents
\newpage

%%%%%%%%% SECTION 1 %%%%%%%%%%%%%%%%%%%%%%%%%%%	

\section{Model and results}
\label{sec:modres}

In Section~\ref{sec:back} we provide background on metastability. In Section~\ref{sec:kaw} we introduce the Kawasaki dynamics that is the subject of the present paper. In Section~\ref{sec:mainres} we state our main theorems. In Section~\ref{sec:discout} we discuss the theorems and provide an outline of the remainder of the paper.   
	
%%%

\subsection{Background}
\label{sec:back}

Metastability for interacting particle systems is a thriving area in mathematical physics that is full of challenges. The goal is to describe the crossover from a \emph{metastable state} (in which the system starts from a quasi-equilibrium) to a \emph{stable state} (in which the system reaches equilibrium) under the influence of a \emph{stochastic dynamics}. Examples are the magnetisation of Ising spins subject to Glauber dynamics and the condensation of a lattice gas subject to Kawasaki dynamics. The former is an example of a \emph{non-conservative} dynamics (the number of up-spins is not preserved), while the latter is an example of a \emph{conservative dynamics} (the number of particles is preserved). Conservative systems are harder to deal with than non-conservative systems because the dynamics is non-local. The monographs \cite{OV04} and \cite{BdH15} contain plenty of examples of metastable systems, and include extensive references to the literature. The focus is on the \emph{average crossover time} from the metastable state to the stable state in parameter regimes that characterise metastability, on the set of configurations that form the \emph{saddle points} for the crossover -- referred to as the `critical droplet' -- and on the sequence of configurations the system sees \emph{prior to and after} the crossover -- referred to as the `tube of typical trajectories'. 

In the present paper we adopt the point of view that the identification of `tube of typical trajectories' is the key towards getting full control on the metastable crossover. Already in the early mathematical papers on metastability \cite{CGOV,OS1995, OS1996,S1993}, and later in papers on Kawasaki dynamics in finite volume \cite{GOS,dHOS00a}, the main strategy was to identify sets of configurations of \emph{increasing regularity} that are \emph{resistant} to the dynamics on corresponding increasing time scales. These sets of configurations form the backbone in the construction of the `tube of typical trajectories'. In particular, the idea was to define temporal configurational environments within which the trajectories of the process remain confined with high probability on appropriate time scales. This approach involves an analysis of all the possible evolutions of the process, and requires the exclusion of rare events via large deviation a priori estimates.

The present paper is the second in a series of three papers dealing with \emph{nucleation} in a \emph{supersaturated lattice gas} in a large volume. In particular, we consider a two-dimensional lattice gas at low density and low temperature that evolves under Kawasaki dynamics, i.e., particles hop around randomly subject to hard-core repulsion and nearest-neighbour attraction. We are interested in how the gas \emph{nucleates} in large volumes, i.e., how the particles form and dissolve subcritical droplets until they manage to build a critical droplet that is large enough to trigger the nucleation. 

In the first paper we showed that subcritical droplets behave as quasi-random walks. In the present paper we use the results in the first paper to analyse how subcritical droplets form and dissolve on multiple space-time scales when the volume is \emph{moderately large}. In large volumes the possible evolutions of the Kawasaki lattice gas are much more involved than in small volumes, and multiple events must be considered and controlled compared to the case of finite volume treated earlier. In particular, it is important to control the \emph{history} of the particles. For this reason we introduce several new tools, such as assigning colours to the particles that summarise information about how they interacted with the surrounding gas in the past. The focus remains on the `tube of typical trajectories', even though the control of all the possible evolutions of the Kawasaki lattice gas requires the use of multiple graphs describing multiple temporal configurational environments. These graphs will be identified in Section \ref{sub:lmmgen}, which is the core of the present paper and contains the proofs of all the principal lemmas. In the third paper we consider the setting where the volume is \emph{very large} and use the results in the first two papers to identify the \emph{nucleation time}. The outcome of the three papers together shows the following:
\begin{itemize}
\item[(1)] 
Most of the time the configuration consists of \emph{quasi-squares} and \emph{free particles}. That is why we use the terminology \emph{droplet dynamics}. The crossover time between configurations of this type is identified on a time scale that is exponential in $\beta$ (see Theorem \ref{thm:qsrec}).  
\item[(2)] 
Starting from configurations consisting of quasi-squares and free particles, the dynamics typically \emph{resist}, i.e., the dimensions of the quasi-squares do not change, for an exponential time scale in $\beta$ depending only on the dimensions of the smallest quasi-square (see Theorem \ref{thm:res}).  
\item[(3)] 
Starting from configurations consisting of quasi-squares and free particles, the dynamics typically either creates a larger quasi-square or a smaller quasi-square, depending on the dimensions of the starting quasi-square (see Theorem \ref{thm:cambiotipico}). There is a non-negligible probability that a \emph{subcritical} quasi-square follows an \emph{atypical} transition, in that it grows a larger quasi-square, and this lets the dynamics \emph{escape from metastability} (see Theorem \ref{thm:cambioatipico}).
\item[(4)] 
The crossover from the gas to the liquid (= nucleation) occurs because a \emph{supercritical quasi-square} is created somewhere in a moderately large box and \emph{subsequently grows into a large droplet}. This issue will be addressed in \cite{BGdHNOS}.
\item[(5)] 
The configurations in moderately large boxes behave as if they are \emph{essentially independent} and as if the surrounding gas is \emph{ideal}. No information travels between these boxes on the relevant time scale that grows exponentially fast with $\beta$. The supercritical quasi-square appears more or less independently in different boxes, a phenomenon referred to as \emph{homogeneous nucleation}. This issue will be addressed in \cite{BGdHNOS}.
\item[(6)]
The \emph{tube of typical trajectories leading to nucleation} is described via a series of events on which the evolution of the gas consists of \emph{droplets wandering around on multiple space-time scales}. This control is achieved via what we call the \emph{deductive approach} in Section \ref{sub:lmmgen}.
\item[(7)] 
The asymptotics of the nucleation time is identified on a time scale that is exponential in $\beta$ and depends on the \emph{entropic factor} related to the size of the box. This issue will be addressed in \cite{BGdHNOS}.
\end{itemize}

\begin{remark}
	{\rm Kawasaki dynamics in large volumes at low temperatures was studied earlier in \cite{BdHS10}. There, the average nucleation time was computed for a specific starting distribution called the \emph{last-exit-biased distribution} for the transition from subcritical to supercritical. The techniques employed in that paper rely on potential theory, which is tailored to deal with hitting probabilities and hitting times. It does \emph{not} provide information on \emph{how} the nucleation takes place. Since the last-exit-biased distribution is not a good description of the metastable equilibrium, the resulting average nucleation time is not necessarily physically realistic. However, by controlling the droplet dynamics with the tools of the present paper, we can show that the last-exit-biased distribution falls into the basin of attraction of the metastable equilibrium, and that therefore the average nucleation time computed in \cite{BdHS10} provides an accurate description, including prefactors.}
	\hfill$\spadesuit$
\end{remark}

\begin{remark}
	{\rm Kawasaki dynamics in large volumes at low temperature was also studied in \cite{GL15} (with the help of techniques developed in \cite{BL15}). There, the transitions between the different ground states are analysed in a regime where there is no pure-gas metastable state\footnote{The condition $n^4 L^2 e^{-\beta} \ll 1$ in \cite{GL15}, in the notation introduced in Section~\ref{sec:kaw}, reads $2 (\Theta - \Delta) + \Theta - U < 0$, which, together with $\Theta > \Delta$, implies that $\Delta < U$: particles immediately aggregate up to gas depletion.} and the process is started from a large square droplet with no surrounding gas. In that setting the interaction between the gas and the droplet, which is at the core of the present work, is largely avoided. Both \cite{BL15} and \cite{GL15} are closely related to the aforementioned wandering droplet issue, about which we will say more later on.}
	\hfill$\spadesuit$
\end{remark}

\begin{remark}
	{\rm It remains a challenge to describe what happens \emph{after} the exit from metastability, i.e., when the system has grown a large supercritical droplet that subsequently grows, moves around, absorbs smaller droplets, thereby depleting the surrounding gas, etc. The fact that Kawasaki lattice-gas dynamics is \emph{conservative} represents a major hurdle. For Glauber spin-flip dynamics, which is \emph{non-conservative}, this phase of the dynamics, which is \emph{beyond metastability}, has been \emph{completely} elucidated at low temperatures in \cite{DS97} and \emph{partially} elucidated at all subcritical temperatures in \cite{SS98}. While at low temperatures the escape from metastability and the successive growth of supercritical droplets occurs along increasingly larger Wulff shapes (up to fluctuations), these are used in \cite{SS98} only as a mathematical tool to control the average transition time via monotonicity, i.e., attractiveness. The description of the typical transition paths for the non-conservative Glauber spin-flip dynamics at all subcritical temperatures is still an open problem: only the Wulff shape of the critical configurations is known, and simulations suggest that subcritical configurations are ``rounder'' and supercritical configurations are ``straighter''. Since the techniques, used in \cite{GMV20} to control local relaxation times and show the absence of memory of the transition time for Glauber dynamics at all subcritical temperatures, do not rely on monotonicity, we might hope to be able to extend our understanding of the gas-droplet interaction and thereby extend our control of the transition time for non-monotone Kawasaki dynamics at higher temperatures. However, this will not be sufficient to describe the shape of evaporating subcritical clusters in a depleted gas where critical clusters can still grow. It is also beyond the scope of the present work, in which we analyse the gas-droplet interaction in the much simpler context of low-temperature dynamics and are able to fully characterise the typical escape paths from metastability, while the metastable state of Kawasaki dynamics in large volume does not look like a ground state of a restricted dynamics.}
	\hfill$\spadesuit$
\end{remark} 

For more background on metastability, we refer the reader to the monographs \cite{OV04} and \cite{BdH15}. Our reference list is restricted to those papers that are directly relevant to the work in the present paper.    

%%%
	
\subsection{Kawasaki dynamics}
\label{sec:kaw}

%%%

\paragraph{$\bullet$ Hamiltonian, generator and equilibrium.}	

Let $\b>0$ denote the inverse temperature. Let $\L_\b \subset \mathbb Z^2$ be the square box with volume
\begin{equation}
\label{Lvol}
|\L_\b| = \ee^{\T\b}, \qquad \T>0,
\end{equation}
centered at the origin with periodic boundary conditions. With each $x\in \L_\b$ associate an occupation variable $\h(x)$, assuming the values $0$ or $1$. A lattice gas configuration is denoted by  $\h \in \cX_\beta= \{0,1\}^{\L_\b}$. With each configuration $\eta$ associate an energy given by the Hamiltonian
\begin{equation}
\label{Ham*}
H(\h) = -U \sum_{\{x,y\}\in \L_\b^*}\h(x)\h(y),
\end{equation}
where $\L_\b^*$ denotes the set of bonds between nearest-neighbour sites in $\L_\b$, i.e., there is a \emph{binding energy} $-U<0$ between neighbouring particles. Let 
\begin{equation}
\label{numpart*}
|\eta|=\sum_{x\in\Lambda_\beta}\h(x)
\end{equation}
be the number of particles in $\Lambda_\beta$ in the configuration $\h$,  and let
\begin{equation}
\label{Npart}
\cX_N = \{\h\in\cX_\beta\colon\,|\eta|=N\}
\end{equation}
be the set of configurations with $N$ particles. We define Kawasaki dynamics as the continuous-time Markov chain $X=(X(t))_{t\geq0}$
with state space $\cX_N$  given by the generator
\begin{equation}
\label{gendef}
(\cL f)(\h)=\sum_{\{x,y\}\in\L_\b^*}c(x,y,\h)[f(\h^{x,y})-f(\h)],
\qquad \eta\in{\cal X}_{\beta},
\end{equation}
where
\begin{equation}
\label{confexch}
\h^{x,y}(z)= \left\lbrace\begin{array}{lcl}
\h(z) &{\rm if} & z \neq x,y, \\
\h(x) &{\rm if} & z=y, \\
\h(y) &{\rm if} & z=x,  \end{array}\right.
\end{equation}
and
\begin{equation}
\label{rate}
c(x,y,\h)= \ee^{-\b [H(\h^{x,y})-H(\h)]_+}.
\end{equation}
Equations~\eqref{gendef}--\eqref{rate} represent the standard \emph{Metropolis dynamics} associated with $H$, and is \emph{conservative} because it preserves the number of particles, i.e., $|X(t)|=|X(0)|$ for all $t>0$. The \emph{canonical Gibbs measure} $\n_N$ defined as
\begin{equation}
\label{nuN}
\n_N(\h) = {\ee^{-\b H(\h)}\mathbb{1}_{\cX_N}(\h) \over Z_N},
\qquad Z_N = \sum_{\h\in \cX_N} \ee^{-\b H(\h)},
\qquad \h \in \cX_\beta,
\end{equation}
is the reversible equilibrium of this stochastic dynamics for any $N$:
\begin{equation}
\label{rev}
\n_N(\h) c(x,y,\h) = \n_N(\h^{x,y}) c(x,y,\h^{x,y}).
\end{equation}
We augment the energy $H(\eta)$ of configuration $\eta$ by adding a term $\Delta|\eta|$, with $\Delta>0$ an \emph{activation energy} per particle. This models the presence of an external reservoir that keeps the density of particles in $\Lambda_\beta$ fixed at $\ee^{-\beta\Delta}$.

%%%

\paragraph{$\bullet$ Subcritical, critical and supercritical droplets.}

The initial configuration is chosen according to the \emph{grand-canonical Gibbs measure} restricted to the set of subcritical droplets. More precisely, denote by 
\begin{equation}
\label{def:lc}
\ell_ c = \Big\lceil \frac{U}{2U-\Delta}\Big\rceil
\end{equation}
the critical length introduced in \cite{dHOS00a} for the \emph{local model} where $\Lambda_\beta=\Lambda$ does not depend on $\beta$ (see Fig.~\ref{fig-cancrit}). 

%%%%%%%%%%%%%%%%%%%%%%%%%%%%%%%%%%%%%
\begin{figure}%[htbp]
\setlength{\unitlength}{0.24cm}
\begin{picture}(15,15)(-3,-1)
\qbezier[40](10,-4)(20,-4)(30,-4)
\qbezier[40](10,-4)(10,3)(10,15)
\qbezier[40](10,15)(20,15)(30,15)
\qbezier[40](30,-4)(30,10)(30,15)
\put(15,0){\line(1,0){10}}
\put(15,11){\line(1,0){10}}
\put(15,0){\line(0,1){11}} 
\put(26,12){\line(1,0){1}}
\put(27,12){\line(0,1){1}}
\put(27,13){\line(-1,0){1}}
\put(26,13){\line(0,-1){1}}
 \put(25,0){\line(0,1){5}}
\put(25,6){\line(0,1){5}}
\put(26,5){\line(0,1){1}}
\put(25,5){\line(1,0){1}}
\put(25,6){\line(1,0){1}}
\qbezier[5](25,5)(25,5.5)(25,6)
\put(13,5.5){$\ell_c$}
\put(18.5,-2){$\ell_c-1$}
\put(28.5,-3){$\Lambda$}
\end{picture}
\vspace{1cm}
\caption{\small A critical droplet in a finite volume $\Lambda$: a protocritical droplet, consisting of an $(\ell_c-1) \times \ell_c$ quasi-square with a single protuberance attached to one of the longest sides, and a free particle nearby. When the free particle attaches itself to the protuberance, the droplet becomes supecritical. \normalsize}
\label{fig-cancrit}
\end{figure}
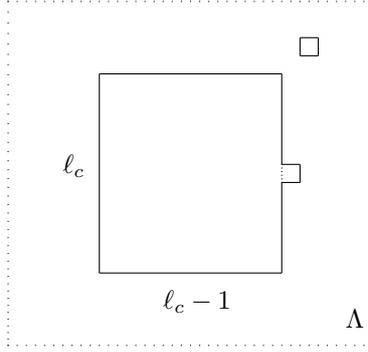 
%%%%%%%%%%%%%%%%%%%%%%%%%%%%%%%%%%%%%%

Define
\begin{equation}
\label{def:R}
{\cal R} = \big\{\eta\in{\cal X}_\beta\colon\,\hbox{all clusters of } \eta \hbox{ have volume at most } \ell_c(\ell_c-1)+2\big\}
\end{equation}
and put
\begin{equation}
\label{muR}
\mu_{\cal R}(\h) = \frac{\ee^{-\b[H(\h)+\Delta |\eta|]}}{Z_{\cal R}}
\mathbb{1}_{\cal R}(\eta), \qquad \h\in\cX_\beta,
\end{equation}
where
\begin{equation}
\label{ZR}
Z_{\cal R} = \sum_{\h\in {\cal R}} \ee^{-\b [H(\h)+\Delta |\eta|]}
\end{equation}
is the normalising partition sum. The initial configuration $X(0)$ is drawn from $\mu_{\cal R}$. 

We will be interested in the regime
\begin{equation}
\D \in (U,2U), \quad \b \to\infty.
\end{equation}
which corresponds to \emph{metastable behaviour}.\footnote{In order to avoid trivialities, we assume that $\ell_c>2$, i.e., $\Delta>\tfrac32 U$.} We will see that in this regime droplets with side length smaller than $\ell_c$ have a tendency to shrink, while droplets with a side length larger that $\ell_c$ have a tendency to grow. We will refer to the former as \emph{subcritical droplets} and to the latter as \emph{supercritical droplets}. The main difficulty in analysing the metastable behaviour is a proper description of the interaction between the droplets and the surrounding gas. As part of the nucleation process, droplets grow and shrink by exchanging particles with the gas around them, as is typical for conservative dynamics. To describe the evolution of our system in terms of a droplet dynamics, we will show that on an appropriate time scale the dynamics typically returns to the set of configurations consisting of quasi-square droplets, provided the volume is not too large.
The main results of the present paper provide a description of the dynamics in terms of \emph{growing and shrinking} wandering droplets. In particular, Theorems \ref{thm:qsrec}--\ref{thm:res} and \ref{thm:cambiotipico}--\ref{thm:cambioatipico} below identify the dominant rates of growing and shrinking of droplets up to a time horizon that goes well beyond the exit time of ${\cal R}$, namely, up to the time of formation of a droplet with volume of order $\lambda(\beta)$, an unbounded but slowly increasing function of $\beta$. In the follow-up paper \cite{BGdHNOS}, these theorems will be used to identify the nucleation time, i.e., the time of exit of $\cal R$.

%%%%%%%%%%%%%%%%%%%%%%%%%%%%%%%%%%%%%%%%%%%%%%%%%%%
\begin{figure}%[htbp]
\centering
\begin{tikzpicture}[scale=0.25,transform shape]		
		\draw [gray, fill=gray, semithick, even odd rule] 
		(20,10) rectangle (37,20) (23,12) rectangle (34,18);
		\draw [gray, fill=gray] (29,14) rectangle (30,15);
		\node at (30.7,15.2){\huge{10}};
		\draw [gray, fill=gray] (19,9) rectangle (20,10);
		\node at (20.5,8.8){\huge{15}};
		\draw [gray, fill=gray] (18,8) rectangle (19,9);
		\node at (19.5,7.8){\huge{14}};
		\draw [gray, fill=gray] (17,7) rectangle (18,8);
		\node at (16.5,8.2){\huge{13}};
		\draw [gray, fill=gray] (16,6) rectangle (17,7);
		\node at (15.5,7.2){\huge{12}};
		\draw [gray, fill=gray] (15,5) rectangle (16,6);
		\node at (14.5,6.2){\huge{11}};
		\draw [gray, fill=gray, semithick] 
		(15,5) rectangle (11,-3);
		\draw [gray, fill=gray, semithick] 
		(11,5) rectangle (10,6);
		\node at (9.7,4.7){\huge{4}};
		\draw [gray, fill=gray, semithick] 
		(10,6) rectangle (9,7);
		\node at (10.3,6.4){\huge{5}};
		\draw [gray, fill=gray, semithick] 
		(11,7) rectangle (10,8);
		\node at (11.3,8.4){\huge{2}};
		\draw [gray, fill=gray, semithick] 
		(9,7) rectangle (8,8);
		\node at (7.7,8.4){\huge{1}};
		\draw [gray, fill=gray, semithick] 
		(9,5) rectangle (8,6);
		\node at (7.7,4.8){\huge{3}};
		\draw [gray, fill=gray, semithick] 
		(15,-3) rectangle (16,-4);
		\node at (15.8,-2.5){\huge{9}};
		\draw [gray, fill=gray, semithick] 
		(15,-5) rectangle (16,-6);
		\node at (14.5,-5){\huge{8}};
		\draw [gray, fill=gray, semithick] 
		(15,-6) rectangle (10,-9);
		\draw [gray, fill=gray, semithick] 
		(18,-6) rectangle (21,-9);
		\draw [gray, fill=gray, semithick] 
		(18,-6) rectangle (17,-5);
		\node at (16.8,-6.5){\huge{7}};
		\draw [gray, fill=gray, semithick] 
		(18,-4) rectangle (17,-3);
		\node at (18.5,-3.6){\huge{6}};
		\draw [gray, fill=gray, semithick] 
		(18,-3) rectangle (23,0);
		\draw [gray, fill=gray, semithick] 
		(32,-1) rectangle (33,0);
		\node at (33.7,0.5){\huge{16}};
\end{tikzpicture}
	
\vskip 0 cm
\caption{\small Each particle is represented by a unit square. A particle is free when it is not touching any other particles and can be moved to infinity without doing so. A particle is clusterised when it is part of a cluster. Particles 1--5 and 16 are free, particles 6--9, 10, 11--15 are not free. All other particles are clusterised. \normalsize}
\label{fig:freeparticle}
\end{figure}
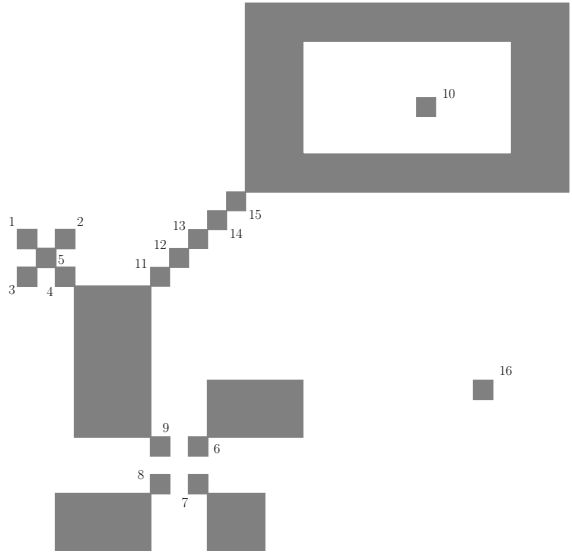
%%%%%%%%%%%%%%%%%%%%%%%%%%%%%%%%%%%%%%%%%%%    
    
\subsection{Main results}
\label{sec:mainres}

%%%
	
\subsubsection{Definitions and notation}

%%%

\paragraph{$\bullet$ Time horizons.}

In order to state our main results, we first need to clarify the time horizons we are interested in. To this end, we define the set
\begin{equation}
\label{def:R'}
{\cal R}':=\left\{\eta\in{\cal X}_\beta\colon\,
\begin{array}{ll}
\hbox{all clusters of } \eta \hbox{ have volume at most } \ell_c(\ell_c-1)+2 \\
\hbox{except for at most one cluster with volume less than } \tfrac18\lambda(\beta)
\end{array}
\right \},
\end{equation}
where $\lambda(\b)$ is an unbounded but slowly increasing function of $\beta$ satisfying
\begin{equation}
\label{def:lambda}
\lambda(\b)\log\lambda(\b)=o(\log\b), \qquad \beta \to \infty,
\end{equation}
e.g.\ $\lambda(\b)=\sqrt{\log\b}$. For $C^{\star}>0$ large enough, our theorems will hold up to time $T^{\star}$ defined as
\begin{equation}
\label{def:Tstella}
T^{\star} = \ee^{C^{\star}\beta}\wedge\min\{t \geq 0\colon\, X(t)\notin{\cal R'}\}.
\end{equation}
We will see in \cite{BGdHNOS} that our dynamics starting from $\mu_{\cal R}$ typically exits ${\cal R'}$ within a time that is exponentially large in $\beta$, and with a probability tending to $1$ does so through the formation of a single  large cluster $\cal C$ of volume $\tfrac18\lambda(\beta)$, rather than through two supercritical droplets. Hence, $T^{\star}$ indeed coincides with the appearance time of $\cal C$, provided $C^{\star}$ is large enough.

%%%
	
\paragraph{$\bullet$ Active and sleeping particles.}
	
As in~\cite{GdHNOS09}, the notion of active and sleeping particle will be crucial throughout the paper. Since the precise definition requires additional notations, we give an intuitive description only. For precise definitions we refer to Section \ref{subsec:color}.
	
The division of particles into active and sleeping is related to the notion of free particles. Intuitively, a particle is \emph{free} if it does not belong to a cluster (= a connected component of nearest-neighbour particles) and can be moved to infinity without clusterisation, i.e., by moving non-clusterised particles only (see Fig.~\ref{fig:freeparticle}). Let 
$$
D=U+d,
$$ 
with $d>0$ sufficiently small. For $t > \ee^{D\beta}$, a particle is said to be \emph{sleeping} at time $t$ if it was not free during the time interval $[t - \ee^{D \beta}, t]$. Non-sleeping particles are called \emph{active}. (Note that being active or sleeping depends on the history of the particle.) By convention, we say that prior to time $\ee^{D\beta}$ sleeping particles are those that belong to a large enough \emph{quasi-square}, where quasi-squares are clusters with sizes $\ell_1 \times \ell_2$ in the set 
\begin{equation}
\label{QS}
\hbox{QS}=\{(\ell_1,\ell_2)\in\mathbb{N}^2\colon\,\ell_1\leq \ell_2\leq \ell_1+1\}.
\end{equation}
In order to declare all the particles in the quasi-square as sleeping before time $\ee^{D\beta}$ we require that $\ell_1\geq 2$.
	
%%%

\paragraph{$\bullet$ Local boxes.}	
	
To define a finite box $\Lambda$ as the union of a finite number $k$ of disjoint local boxes $\bar{\Lambda}_i$, $1 \leq i \leq k$, in analogy with the local model introduced in~\cite{dHOS00a}, we associate with each configuration a local configuration
$$
\bar\eta \in \{0,1\}^{\bar\Lambda}=\prod_{1 \leq i \leq k}\{0,1\}^{\bar\Lambda_i},
$$
which we identify with $\{0,1\}^{\Lambda}$. These local boxes allow us to control the global properties of the gas in terms of its local properties, namely, via the duality between gas and droplets, which is represented by the duality between active and sleeping particles, respectively. First, the local boxes have to contain all the sleeping particles. Second, the local boxes are dynamic, namely, $\bar\Lambda_i=\bar\Lambda_i(t)$. Indeed, droplets can move and we want to avoid seeing sleeping particle outside of the local boxes. In particular, the boxes follow the droplets, i.e., must be redefined only when a particular event occurs,  e.g.\ two droplets are too close to each other, or a cluster is too close to the boundary of a box, or a particle outside the boxes falls asleep, or particles in a box all turn active. We denote by $\dist(\cdot,\cdot)$ the distance associated with the $\ell_\infty$-norm on $\R^2$:
\begin{equation}
\label{def:norm}
\|\cdot\|_\infty\colon\, (x,y)\in\R^2 \mapsto |x|\vee|y|.
\end{equation}
Following \cite{G09}, we introduce a map $g_5$ as an iterative map that merges into single rectangles those rectangles that have distance $<5$ between them, while we leave the other rectangles unchanged. (We refer to \eqref{def:mapg} for the precise definition.) At any time $t\geq0$, we require that the collection of the $k(t)$ local boxes $\bar\Lambda(t)=(\bar\Lambda_i(t))_{1 \leq i \leq k(t)}$ satisfy the following conditions associated with $\eta_t=X(t)$:
\begin{description}
\item[B1.] 
$\Lambda(t) = \cup_{1 \leq i \leq k(t)} \bar\Lambda_i(t)$ contains all the sleeping particles.
\item[B2.] 
For all $1 \leq i \leq k(t)$, $\bar\Lambda_i(t)$ contains at least one sleeping particle.
\item[B3.] 
For all $1 \leq i \leq k(t)$, all particles in the restriction $\bar\eta_i(t)$ of $\eta_t$ to $\bar\Lambda_i(t)$ are either free or at distance $>1$ from the internal border of $\bar\Lambda_i(t)$.
\item[B4.] 
For all $1 \leq i,j \leq k(t)$ with $i\neq j$, $\dist(\bar\Lambda_i(t),\bar\Lambda_j(t)) \geq 5$.
\end{description}

%%%%%%%%%%%%%%%%%%%%%%%%%%%%%%%%%%%%%%%%%%%%%	
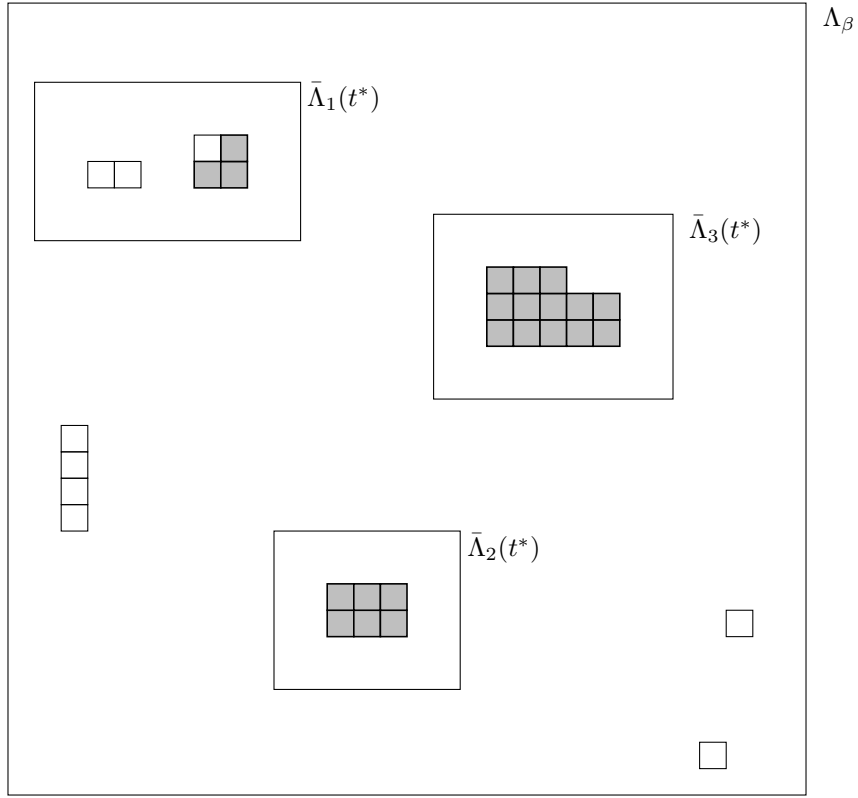
\begin{figure}
\centering
\begin{tikzpicture}[scale=0.35,transform shape]
			
                \draw (0,0) rectangle (30,30);
                \put (305,290){$\Lambda_\beta$}
			
	        \draw (26,1) rectangle (27,2);
	        \draw (27,6) rectangle (28,7);
	        
	        \draw [black, fill=lightgray, semithick] (12,6) rectangle (13,7);
	        \draw [black, fill=lightgray, semithick] (13,6) rectangle (14,7);
	        \draw [black, fill=lightgray, semithick] (14,6) rectangle (15,7);
	        \draw [black, fill=lightgray, semithick] (12,7) rectangle (13,8);
	        \draw [black, fill=lightgray, semithick] (13,7) rectangle (14,8);
	        \draw [black, fill=lightgray, semithick] (14,7) rectangle (15,8);
	        \draw (10,4) rectangle (17,10);
	        \put (172,90){$\bar\Lambda_2(t^*)$}
	        
	        \draw (3,23) rectangle (4,24);
	        \draw (4,23) rectangle (5,24);
	        \draw [black, fill=lightgray, semithick] (7,23) rectangle (8,24);
	        \draw [black, fill=lightgray, semithick] (8,23) rectangle (9,24);
	        \draw [black, fill=lightgray, semithick] (8,24) rectangle (9,25);
	        \draw (7,24) rectangle (8,25);
	        \draw (1,21) rectangle (11,27);
	        \put (112,260){$\bar\Lambda_1(t^*)$}
	        
	        \draw [black, fill=lightgray, semithick] (18,17) rectangle (19,18);
	        \draw [black, fill=lightgray, semithick] (19,17) rectangle (20,18);
	        \draw [black, fill=lightgray, semithick] (20,17) rectangle (21,18);
	        \draw [black, fill=lightgray, semithick] (21,17) rectangle (22,18);
	        \draw [black, fill=lightgray, semithick] (22,17) rectangle (23,18);
	        \draw [black, fill=lightgray, semithick] (18,18) rectangle (19,19);
	        \draw [black, fill=lightgray, semithick] (19,18) rectangle (20,19);
	        \draw [black, fill=lightgray, semithick] (20,18) rectangle (21,19);
	        \draw [black, fill=lightgray, semithick] (21,18) rectangle (22,19);
	        \draw [black, fill=lightgray, semithick] (22,18) rectangle (23,19);
	        \draw [black, fill=lightgray, semithick] (18,19) rectangle (19,20);
	        \draw [black, fill=lightgray, semithick] (19,19) rectangle (20,20);
	        \draw [black, fill=lightgray, semithick] (20,19) rectangle (21,20);
	        \draw (16,15) rectangle (25,22);
	        \put (255,210){$\bar\Lambda_3(t^*)$}
	        
	        \draw (2,13) rectangle (3,14);
	        \draw (2,12) rectangle (3,13);
	        \draw (2,11) rectangle (3,12);
	        \draw (2,10) rectangle (3,11);
\end{tikzpicture}
\caption{\small An example of local boxes $\bar\Lambda(t^*)=(\bar\Lambda_i(t^*))_{1 \leq i \leq 3}$ for $t^*>0$, where the gray and the white particles are sleeping, respectively, active. \normalsize}
\label{fig:scatole}
\end{figure}
%%%%%%%%%%%%%%%%%%%%%%%%%%%%%%%%%%%%%%%%%%%%%%%			
						
\begin{definition}
\label{def:boxes}
{\rm The collection of boxes $\bar\Lambda(t)=(\bar\Lambda_i(t))_{1 \leq i \leq k(t)}$ is constructed as follows. At time $t=0$, consider the collection $\bar S(0)$ of $5\times5$ boxes centered at the clusterised particles, and define $\bar\Lambda(0)=g_5(\bar{S}(0))\setminus\bar\Lambda^*(0)$, where $\bar\Lambda^*(0)$ denotes the collection of boxes belonging to $g_5(\bar{S}(0))$ that contain active particles only. Let ${\cal B}$ be the set of special times associated to boxes, refer to as {\it boxes special times}, defined by
\begin{equation}
\label{def:B}
{\cal B}=\left\{t\geq0\colon\,
\hbox{at time } t \hbox{ at least one of the conditions {\bf B1--B4} above is violated by } \bar\Lambda(t^-)\right\}.
\end{equation}
For $t>0$, define $\bar\Lambda(t)$ as follows:
\begin{itemize}
\item 
If $t \in {\cal B}$, then define the collection $\bar S(t)$ of $5\times5$ boxes centered at the clusterised particles, and define $\bar\Lambda(t)=g_5(\bar{S}(t)) \setminus \bar\Lambda^*(t)$, where $\bar\Lambda^*(t)$ denotes the collection of boxes belonging to $g_5(\bar{S}(t))$ that contain active particles only.
\item 
If $t\notin{\cal B}$, then define $\bar\Lambda(t)=\bar\Lambda(t^-)$.
\end{itemize}
We will suppress the dependence on $t$ from the notation whenever it is not relevant. See Fig.~\ref{fig:scatole} for an example of local boxes.} \hfill$\spadesuit$
\end{definition}
	
Since at each time $t$ all the sleeping particles belong to $\bar\Lambda(t)$, the boxes induce a partition of the sleeping particles. We say that a {\it coalescence occurs at time $t$} if there exist two sleeping particles that are in different local boxes at time $t^-$, but are in the same local box at time $t$, i.e., if there exist $1 \leq i_1,i_2 \leq k(t^-)$, $i_1\neq i_2$, $1 \leq i^* \leq k(t)$ and two sleeping particles $s_1,s_2$ such that $s_j\in\bar\Lambda_{i_j}(t^-)$ and $s_j\in\bar\Lambda_{i^*}(t)$, $j=1,2$. This phenomenon is related to the possibility that two droplets join to form a single larger droplet. Coalescence is difficult to control quantitatively, which is why in the present paper we limit ourselves to what happens \emph{in the absence of coalescence}. In the follow-up paper \cite{BGdHNOS} we show that metastable nucleation is unlikely to occur via coalescence.

%%%
    
\subsubsection{Key theorems: Theorems \ref{thm:qsrec}--\ref{thm:res} and \ref{thm:cambiotipico}--\ref{thm:cambioatipico}}

%%%
    
\paragraph{$\bullet$ Sets and hitting times.}
    
Let ${\cal X}_{\Delta^+}$ be the set of configurations without droplets or with droplets that are quasi-squares with $\ell_1\geq 2$ (and with additional regularity conditions on the gas surrounding droplets to be specified in Definition \ref{def:recurrence}). Let ${\cal X}_E$ be the set of configurations in ${\cal X}_{\Delta^+}$ without droplets (see \eqref{def:vuoto} and Definition \ref{def:recurrence}). Define $(\bar\tau_k)_{k\in\N_0}$ as the sequence of return times in ${\cal X}_{\Delta^+}$ after an active particle is seen in $\Lambda$. Define the hitting time of the set $A\subset{\cal X}_\beta$ for the process $X$ as
\begin{equation}
\tau_A(X) = \inf\{t \geq 0\colon\, X(t)\in A\}.
\end{equation}
Put $\bar\tau_0 = \tau_{{\cal X}_{\Delta^+}}$ and, for $i \in\N_0$, define
\begin{equation}
\label{activereturn1}
\bar\sigma_{i+1} = 
\begin{cases}
\inf\big\{t > \bar\tau_i\colon\,\hbox{there is an active particle in $\Lambda(t)$ at time $t$}\big\}, 
&\hbox{ if } X(\bar\tau_i)\in{\cal X}_{\Delta^+}\setminus{\cal X}_E, \\
\ee^{\Delta\beta}, 
&\hbox{ if } X(\bar\tau_i)\in{\cal X}_E,
\end{cases}
\end{equation}
and
\begin{equation}
\label{deltareturn1}
\bar\tau_{i + 1} = \inf\left\{t > \bar\sigma_{i + 1}\colon\,X(t) \in {\cal X}_{\Delta^+}\right\}.
\end{equation}
Recall that $|\Lambda_\beta|=\ee^{\Theta\b}$. We assume that $\Delta<\Theta\leq\theta$, with $\theta$ defined as follows. Let $\epsilon = 2U - \Delta$, and let $r(\ell_1,\ell_2)$ be the \emph{resistance} of the $\ell_1\times \ell_2$ quasi-square with $1\leq \ell_1\leq \ell_2$ given by
\begin{align}
\label{def:res}
r(\ell_1,\ell_2)
&= \min\{(\ell_1 - 2) \epsilon + 2U, 2\Delta - U\} \notag \\ 
&= \min\{(2U - \Delta) \ell_1 - U + 2\Delta - U, 2\Delta - U\}.
\end{align}

\begin{figure}
	\centering
	\includegraphics[width=0.95\textwidth]{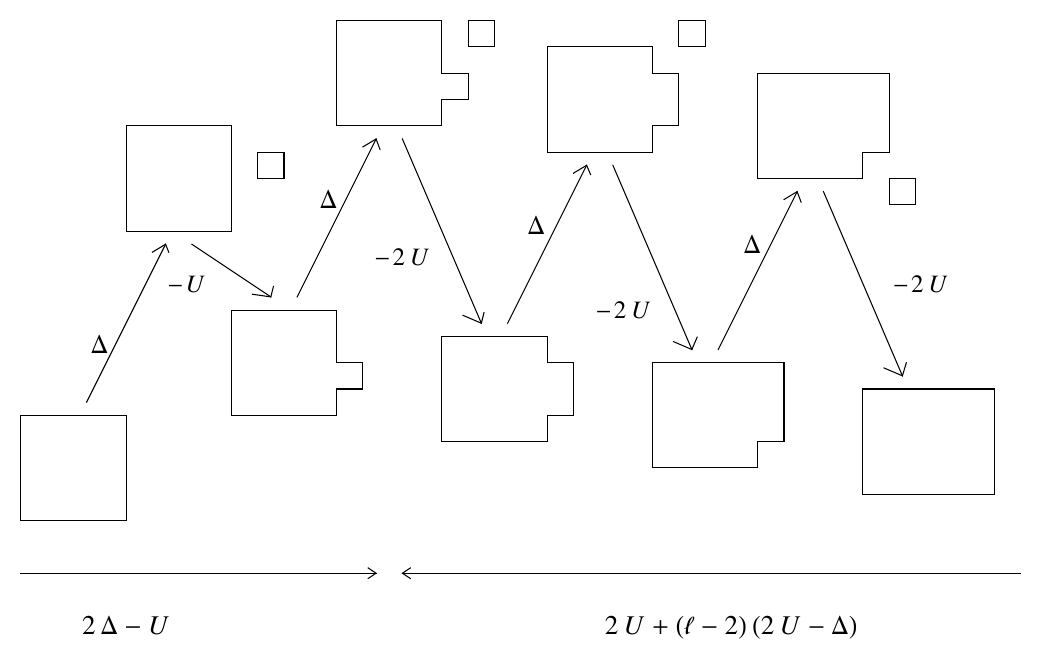}
	\caption{\small Cost of adding or removing a row of length $\ell$ in a finite volume. \normalsize}
	\label{fig:resistenza}
\end{figure}
\noindent
See Fig.~\ref{fig:resistenza}. Let $\theta = 2 \Delta - U - \gamma$ be the resistance of the largest subcritical quasi-square. Since this quasi-square has sizes $(\ell_c - 1) \times \ell_c$, we have $2\Delta - U - \gamma = 2U + ((\ell_c - 1) - 2) \epsilon$, so that
\begin{equation}
\label{def:gamma}
\gamma = \Delta - U - (\ell_c - 2) \epsilon.
\end{equation}
We will see that $\gamma>0$ is an important parameter. The previously mentioned regularity conditions on the gas uses an extra parameter $\alpha>0$ (see below Definition \ref{def:recurrence}), which can be chosen as small as desired. Since we defined $D=U+d$, $\Delta^+$ is defined by $\Delta^+=\Delta+\alpha$. Call a function $f(\b)$ superexponentially small, written $\SES(\beta)$, if
\begin{equation}
\lim_{\b\ra\infty}{1 \over \b}\log f(\b)=-\infty.
\end{equation}

%%%

\paragraph{$\bullet$ Key theorems.}

Theorems \ref{thm:qsrec}--\ref{thm:res} and \ref{thm:cambiotipico}--\ref{thm:cambioatipico} below control the transitions between configurations consisting of quasi-squares and free particles, the times scales on which these transitions occur, and the most likely trajectories they follow.  

\medskip\noindent
{\bf (I)} Our first theorem describes the typical return times to the set ${\cal X}_{\Delta^+}$.  
   
\begin{theorem}{\bf [Typical return times]}
\label{thm:qsrec}
If $\Delta<\Theta\leq\theta$, then for any $\d>0$, and any $d$ and $\alpha$ small enough, 
\begin{equation}
P_{\mu_{\cal R}}\Big(\bar\tau_0 \geq \ee^{(\Delta+\alpha+\delta)\beta},\ \bar\tau_0\leq T^{\star}\Big)=\SES(\beta)
\end{equation}
and
\begin{equation}
P_{\mu_{\cal R}}\Big(\ee^{(\Delta-\alpha-\delta)\beta}\leq\bar\tau_{i+1}-\bar\tau_i\leq \ee^{(\Delta+\alpha+\delta)\beta}
\,\,\forall\, i\in\N_0\colon\, \bar\tau_{i+1}\leq T^{\star}\Big)=1-\SES(\beta).
\end{equation}
\end{theorem}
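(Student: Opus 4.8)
The plan is to reduce both statements to the single-droplet estimates established in the companion paper (quasi-random walk behaviour of subcritical droplets) together with the recurrence machinery announced in items (1)--(3) of the introduction, by a deductive argument that partitions the trajectory into boxes and time windows. First I would treat the \emph{upper bound} on $\bar\tau_0$ and on each increment $\bar\tau_{i+1}-\bar\tau_i$. By definition $\bar\tau_i$ is a return time to ${\cal X}_{\Delta^+}$ after an active particle is seen, and $\bar\sigma_{i+1}$ is either the first appearance of an active particle in $\Lambda(t)$ (if $X(\bar\tau_i)$ has a droplet) or the deterministic time $\ee^{\Delta\beta}$ (if $X(\bar\tau_i)\in{\cal X}_E$). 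In the second case the bound on $\bar\tau_{i+1}-\bar\tau_i$ follows once we show that, starting from the empty configuration, within an additional time $\ee^{\alpha\beta}$ the gas typically produces a configuration in ${\cal X}_{\Delta^+}$ — this is essentially a local-equilibrium statement for the surrounding ideal gas and is controlled by the reversibility \eqref{rev} together with the a priori estimate that the gas density is $\ee^{-\beta\Delta}$. In the first case, once an active particle appears near a quasi-square of dimensions $(\ell_1,\ell_2)$, the droplet-dynamics theorems (the typical-change result, item (3)) say that within time $\ee^{r(\ell_1,\ell_2)\beta+o(\beta)}\le\ee^{\theta\beta+o(\beta)}$ the system relaxes back to ${\cal X}_{\Delta^+}$; since $\Theta\le\theta$ and $\theta=2\Delta-U-\gamma$, and since the added constants $\alpha,d$ can be absorbed into $\delta$, this gives $\bar\tau_{i+1}-\bar\tau_i\le\ee^{(\Delta+\alpha+\delta)\beta}$ with probability $1-\SES(\beta)$, provided we argue that no new active particle is created from the gas on a time scale shorter than $\ee^{\Delta\beta}$ — again a density estimate on $\Lambda_\beta$, where the entropic factor $\ee^{\Theta\beta}$ is compensated by the energetic cost $\ee^{-\Delta\beta}$ per free particle because $\Theta>\Delta$ is false here; rather $\Theta$ can be as large as $\theta<2\Delta-U$, so one free particle costs $\ee^{\Delta\beta}$ and the volume factor $\ee^{\Theta\beta}$ with $\Theta\le\theta<2\Delta-U$ still leaves a net $\ee^{-(2\Delta-U-\theta)\beta}=\ee^{-\gamma\beta}$ worth of room, which is what makes the waiting time for the \emph{next} active particle of order $\ee^{\Delta\beta}$ rather than shorter.

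For the \emph{lower bound} $\bar\tau_{i+1}-\bar\tau_i\ge\ee^{(\Delta-\alpha-\delta)\beta}$ I would argue as follows. Immediately after $\bar\tau_i$ the configuration is in ${\cal X}_{\Delta^+}$; by the definition of $\bar\sigma_{i+1}$, the clock does not even start until an active particle is seen in $\Lambda(t)$. In the case $X(\bar\tau_i)\in{\cal X}_E$ this is imposed to be $\ee^{\Delta\beta}$ by fiat. In the case $X(\bar\tau_i)\in{\cal X}_{\Delta^+}\setminus{\cal X}_E$, all particles of the droplet are sleeping and the rest of $\Lambda_\beta$ contains only the ideal gas; the first active particle in $\Lambda(t)$ arises either from the gas (which takes time $\Theta(\ee^{\Delta\beta})$ because the equilibrium density is $\ee^{-\beta\Delta}$) or from the droplet emitting a particle, which requires detaching a particle and costs energy at least $U>\Delta-\alpha$ for $\alpha$ small (more precisely, detaching from a quasi-square corner costs $2U-\Delta+\Delta=2U>\Delta$, while a free particle near the droplet only appears after a protuberance-plus-detachment sequence whose rate is $\ee^{-U\beta+o(\beta)}$ and $U>\Delta-\alpha$). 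So in all cases $\bar\sigma_{i+1}-\bar\tau_i\ge\ee^{(\Delta-\alpha-\delta)\beta}$ with probability $1-\SES(\beta)$, and since $\bar\tau_{i+1}\ge\bar\sigma_{i+1}$, the lower bound follows. Throughout, the deductive scheme of Section \ref{sub:lmmgen} is used to restrict to the event that the box structure $\bar\Lambda(t)$ behaves well (no coalescence, conditions {\bf B1--B4} maintained by short redefinitions) and that no atypical droplet growth occurs before $T^\star$; the complement of this event has $\SES(\beta)$ probability by the a priori large-deviation estimates, and on it everything reduces to the single-box analysis.

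The bookkeeping that makes this rigorous is: (a) a union bound over the $\Theta(\ee^{\Theta\beta})$ possible locations and over the $O(\ee^{C^\star\beta})$ relevant time scale, which is why the error terms must be $\SES(\beta)$ rather than merely $\ee^{-c\beta}$, and why the droplet-dynamics theorems are stated with $\SES$ errors; (b) the observation that the number of return cycles up to $T^\star$ is at most $\ee^{C^\star\beta}/\ee^{(\Delta-\alpha-\delta)\beta}$, still at most $\ee^{C^\star\beta}$, so the "$\forall i$" quantifier costs only another $\SES(\beta)$ in the union bound; (c) the remark that the slowly growing cluster allowed in ${\cal R}'$ — of volume up to $\tfrac18\lambda(\beta)$ — contributes only energy $O(\lambda(\beta))=o(\beta/\log\beta)\cdot O(\log\beta)$, hence nothing at the exponential scale, so its presence does not disturb any of the estimates.

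\textbf{Main obstacle.} The delicate point is not the exponential order of the times but the \emph{uniformity over the history}: because the dynamics is conservative, whether a particle counts as active depends on the past $\ee^{D\beta}$ window, and the return time $\bar\tau_i$ is defined relative to seeing an active particle in the \emph{moving} region $\Lambda(t)=\cup_i\bar\Lambda_i(t)$, which itself is re-defined at the box special times ${\cal B}$. Controlling that these re-definitions are rare, that $\Lambda(t)$ stays small (total volume $O(\lambda(\beta))$ in the no-coalescence regime), and that the surrounding gas outside $\Lambda(t)$ never injects an active particle "for free" on a scale shorter than $\ee^{\Delta\beta}$, requires the full strength of the colour bookkeeping of Section \ref{subsec:color} and the box construction of Definition \ref{def:boxes}. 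This is exactly where the deductive approach of Section \ref{sub:lmmgen} does the heavy lifting, and where I expect the proof to spend most of its effort; the time-scale estimates themselves then follow by comparison with the local model of \cite{dHOS00a} essentially as above.
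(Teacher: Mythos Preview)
Your proposal has two genuine gaps, one in each bound.

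\textbf{Lower bound.} Your argument that a droplet emits an active particle only after time $\ee^{U\beta}$ because ``detaching costs energy at least $U>\Delta-\alpha$'' is based on a false inequality: the model assumes $\Delta\in(U,2U)$, so $U<\Delta-\alpha$ for small $\alpha$. More importantly, you miss the actual mechanism. The definition of ${\cal X}_{\Delta^+}$ (Definition~\ref{def:recurrence}) requires that there be \emph{no particle} in the annulus $[\bar\Lambda_i,\Delta-\alpha]\setminus\bar\Lambda_i$ around each local box. Hence at time $\bar\tau_i$ every gas particle is at distance at least $\ee^{(\Delta-\alpha)\beta/2}$ from $\Lambda$, and the \emph{non-superdiffusivity} of QRWs immediately gives that no such particle can enter $\Lambda$ before time $\ee^{(\Delta-\alpha-\delta)\beta}$ except with $\SES(\beta)$ probability. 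That is the whole lower-bound argument in the paper; no energy comparison with $U$ is involved.

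\textbf{Upper bound.} You invoke ``the droplet-dynamics theorems (the typical-change result, item (3))'' to bound the relaxation time back to ${\cal X}_{\Delta^+}$ by $\ee^{r(\ell_1,\ell_2)\beta}$. This is circular: those results are Theorems~\ref{thm:res}--\ref{thm:cambiotipico}, which are proved \emph{after} Theorem~\ref{thm:qsrec} and rely on it. It is also the wrong time scale: $r(\ell_1,\ell_2)$ governs the time to \emph{change} the quasi-square, not to return to ${\cal X}_{\Delta^+}$. The paper instead splits $\bar\tau_{i+1}-\bar\tau_i$ into $(\bar\sigma_{i+1}-\bar\tau_i)+(\bar\tau_{i+1}-\bar\sigma_{i+1})$. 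The second piece is bounded directly by the recurrence Proposition~\ref{prop:rec} (with $A=\Delta^+$). For the first piece, one uses that on ${\cal X}^*$ the box $[\bar\Lambda,\Delta+\alpha]$ contains at least $\ee^{\alpha\beta/2}$ particles, and the \emph{spread-out lower bounds} for QRWs force at least one of them to enter $\bar\Lambda$ within time $T_{\Delta^+}\ee^{\delta\beta}$ with probability $1-\SES(\beta)$.

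In short, the proof does not need the colour bookkeeping or the deductive scheme of Section~\ref{sub:lmmgen} at all; it is a direct consequence of Proposition~\ref{prop:rec}, the environment ${\cal X}^*$ (Proposition~\ref{prp:ambito}), and the non-superdiffusivity/spread-out properties of QRWs from \cite{GdHNOS09}.
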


%%%%%%%%%%%%%%%%%%%%%%%%%%%%%%%%%%%%%%%%%%%%%%%
\begin{figure}
\centering
\begin{tikzpicture}[scale=0.35,transform shape]
		
		\draw (0,0) rectangle (30,30);
		\put (305,290){\LARGE{$\Lambda_\beta$}}

		\draw [black, fill=lightgray, semithick] (12,6) rectangle (13,7);
		\draw [black, fill=lightgray, semithick] (13,6) rectangle (14,7);
		\draw [black, fill=lightgray, semithick] (14,6) rectangle (15,7);
		\draw [black, fill=lightgray, semithick] (12,7) rectangle (13,8);
		\draw [black, fill=lightgray, semithick] (13,7) rectangle (14,8);
		\draw [black, fill=lightgray, semithick] (14,7) rectangle (15,8);
		\draw [black, fill=lightgray, semithick] (12,8) rectangle (13,9);
		\draw [black, fill=lightgray, semithick] (13,8) rectangle (14,9);
		\draw [black, fill=lightgray, semithick] (14,8) rectangle (15,9);

		\draw [black, fill=lightgray, semithick] (7,23) rectangle (8,24);
		\draw [black, fill=lightgray, semithick] (8,23) rectangle (9,24);
		\draw [black, fill=lightgray, semithick] (8,24) rectangle (9,25);
		\draw [black, fill=lightgray, semithick] (7,24) rectangle (8,25);
		\draw [black, fill=lightgray, semithick] (9,23) rectangle (10,24);
		\draw [black, fill=lightgray, semithick] (9,24) rectangle (10,25);

		\draw [black, fill=lightgray, semithick] (18,14) rectangle (19,15);
		\draw [black, fill=lightgray, semithick] (19,14) rectangle (20,15);
		\draw [black, fill=lightgray, semithick] (20,14) rectangle (21,15);
		\draw [black, fill=lightgray, semithick] (21,14) rectangle (22,15);
		\draw [black, fill=lightgray, semithick] (22,14) rectangle (23,15);
		\draw [black, fill=lightgray, semithick] (18,15) rectangle (19,16);
		\draw [black, fill=lightgray, semithick] (19,15) rectangle (20,16);
		\draw [black, fill=lightgray, semithick] (20,15) rectangle (21,16);
		\draw [black, fill=lightgray, semithick] (21,15) rectangle (22,16);
		\draw [black, fill=lightgray, semithick] (22,15) rectangle (23,16);
		\draw [black, fill=lightgray, semithick] (18,16) rectangle (19,17);
		\draw [black, fill=lightgray, semithick] (19,16) rectangle (20,17);
		\draw [black, fill=lightgray, semithick] (20,16) rectangle (21,17);
		\draw [black, fill=lightgray, semithick] (21,16) rectangle (22,17);
		\draw [black, fill=lightgray, semithick] (22,16) rectangle (23,17);
		\draw [black, fill=lightgray, semithick] (18,17) rectangle (19,18);
		\draw [black, fill=lightgray, semithick] (19,17) rectangle (20,18);
		\draw [black, fill=lightgray, semithick] (20,17) rectangle (21,18);
		\draw [black, fill=lightgray, semithick] (21,17) rectangle (22,18);
		\draw [black, fill=lightgray, semithick] (22,17) rectangle (23,18);
		\draw [black, fill=lightgray, semithick] (18,18) rectangle (19,19);
		\draw [black, fill=lightgray, semithick] (19,18) rectangle (20,19);
		\draw [black, fill=lightgray, semithick] (20,18) rectangle (21,19);
		\draw [black, fill=lightgray, semithick] (21,18) rectangle (22,19);
		\draw [black, fill=lightgray, semithick] (22,18) rectangle (23,19);
		\draw [black, fill=lightgray, semithick] (18,19) rectangle (19,20);
		\draw [black, fill=lightgray, semithick] (19,19) rectangle (20,20);
		\draw [black, fill=lightgray, semithick] (20,19) rectangle (21,20);
		\draw [black, fill=lightgray, semithick] (21,19) rectangle (22,20);
		\draw [black, fill=lightgray, semithick] (22,19) rectangle (23,20);
		
		\draw (26,1) rectangle (27,2);
		\draw (27,6) rectangle (28,7);
		\draw (13,11) rectangle (14,12);
		\draw (7,9) rectangle (8,10);
		\draw (1,2) rectangle (2,3);
		\draw (27,17) rectangle (28,18);
		\draw (23,27) rectangle (24,28);
\end{tikzpicture}
\caption{\small An example of a configuration $\eta\in{\cal X}_{\Delta^+}$, where the gray and the white particles are sleeping, respectively, are active, such that $\pi(\eta)=\{(2,3),(3,3),(5,6)\}$.\normalsize}
\label{fig:proiezione}
\end{figure}
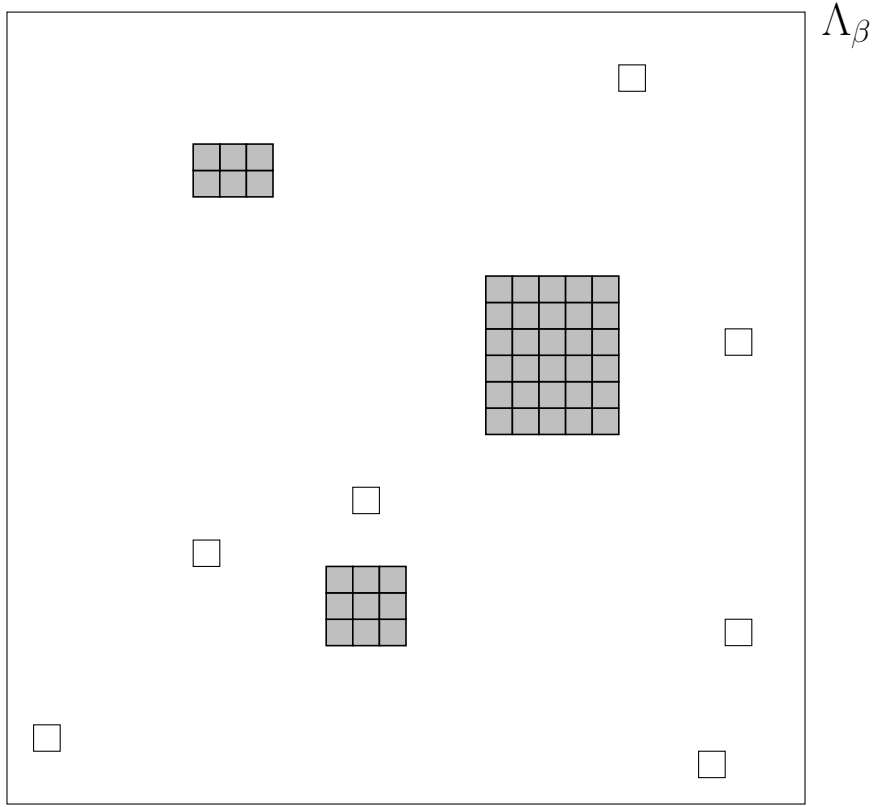
%%%%%%%%%%%%%%%%%%%%%%%%%%%%%%%%%%%%%%%%%%%%%%%%%%%%

\medskip\noindent
{\bf (II)} 
Our second theorem describes the typical update times for a configuration in $\mathcal{X}_{\Delta^+}$. Denoting by $\pi$ a projection from ${\cal X}_{\Delta^+}$ to a finite space
\begin{equation}
\bar{\cal X}_{\Delta} = \bigcup_{k\geq0}\hbox{QS}^1\times\cdots\times\hbox{QS}^{k},
\end{equation}
where $\hbox{QS}^i$ are the sizes of the quasi-square clusters contained in the local boxes $\bar\Lambda_i$ and are defined in \eqref{QS}. See Fig.~\ref{fig:proiezione}. We can define a dynamics on the space $\bar{\cal X}_{\Delta}$ of sizes of quasi-squares, arranged for example in increasing lexicographic order. For $i\in\N_0$, we denote by $(l_{1,i},l_{2,i})$ in $QS$, with $l_{1,i} \geq 2$, the sizes of the smallest quasi-square at time $\bar\tau_i$, if any, and otherwise we set $l_{1,i}=l_{2,i}=0$. Define 
\begin{equation}
\label{def:cambio}
\bar\tau_{c,i} = \min\{\bar\tau_k\geq\bar\tau_i\colon\,\pi(X(\bar\tau_k))\neq\pi(X(\bar\tau_i))\},
\end{equation}
recall \eqref{def:res}, and define the resistance of a configuration in ${\cal X}_E$ by
\begin{equation}
\label{def:resvuoto}
r(0,0)=4\Delta-2U-\theta.
\end{equation}

\begin{theorem}{\bf [Typical update times]}
\label{thm:res}
If $\Delta<\Theta\leq\theta$, then for any $\delta>0$, any $d$ and $\alpha$ small enough, and any $i\in\N_0$,
\begin{equation}
\label{eq1}
P_{\mu_{\cal R}}\left(
\begin{array}{ll}
\hbox{if } \bar\tau_{c,i}\leq T^{\star}, \hbox{ then }
\bar\tau_{c,i}-\bar\tau_i\leq \ee^{(r(l_{1,i},l_{2,i})+\delta)\beta} \\
\hbox{or a coalescence occurs between } \bar\tau_i \hbox{ and } \bar\tau_{c,i}
\end{array}
\right)
=1-\SES(\beta)
\end{equation}
and
\begin{equation}
\label{eq2}
\lim_{\beta\to\infty}
P_{\mu_{\cal R}}\left(
\begin{array}{ll}
\hbox{if } \bar\tau_{c,i}\leq T^{\star}, \hbox{ then }
\bar\tau_{c,i}-\bar\tau_i\geq \ee^{(r(l_{1,i},l_{2,i})-\delta)\beta} \\
\hbox{or a coalescence occurs between } \bar\tau_i \hbox{ and } \bar\tau_{c,i}
\end{array}
\right)
=1.
\end{equation}
\end{theorem}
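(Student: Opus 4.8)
The plan is to read \eqref{eq1}--\eqref{eq2} as a metastability statement for the coarse‑grained process $\pi(X(\bar\tau_k))$ and to prove it by the usual two‑sided route — an upper bound on the exit time from a well via an explicit path, a lower bound via reversibility — the only real novelty being that everything must be carried out inside the fluctuating environment of free particles. Fix $i$, abbreviate $r=r(l_{1,i},l_{2,i})$, and recall from Theorem~\ref{thm:qsrec} that, off an $\SES(\b)$ event, $\bar\tau_{k+1}-\bar\tau_k=\ee^{(\D\pm(\a+\d_1))\b}$ for every $k$ with $\bar\tau_{k+1}\le T^{\star}$, where $\d_1$ can be taken arbitrarily small. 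Hence $\bar\tau_{c,i}-\bar\tau_i$ equals, up to a factor $\ee^{\pm(\a+\d_1)\b}$, the product $\ee^{\D\b}M$, with $M$ the number of cycles $[\bar\tau_k,\bar\tau_{k+1}]$ completed before $\pi$ first changes; since $r>\D$ in every case one cycle is negligible on the scale $\ee^{r\b}$, so the problem reduces to controlling $M$, and for that it suffices to show that the one‑cycle transition probability $p_\b$ — the conditional probability, given no coalescence, that $\pi$ changes during a cycle started from a well bottom whose smallest quasi‑square has sizes $(l_1,l_2)$ — satisfies $\ee^{-(r-\D+\d')\b}\le p_\b\le \ee^{-(r-\D-\d')\b}$ with $\d'$ as small as we please. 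Granting this, \eqref{eq2} comes out of Markov's inequality after a union bound over the $\le \ee^{(r-\d)\b}/\ee^{(\D-\a-\d_1)\b}$ cycles that fit into time $\ee^{(r-\d)\b}$, while \eqref{eq1} follows because the number of change‑free cycles is stochastically dominated by a geometric variable of parameter $\ee^{-(r-\D+\d')\b}$, so that after $\ee^{(r-\D+\d/2)\b}$ cycles — hence within time $\ee^{(r+\d)\b}$ — the probability that $\pi$ has not yet changed is $\ee^{-\ee^{(\d/2-\d')\b}}=\SES(\b)$; the slacks $\a,d,\d_1,\d'$ are taken small enough in terms of $\d$, and the ``or a coalescence occurs'' clauses let us run everything on the no‑coalescence event, where the local boxes behave predictably.

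The common ingredient for both halves is the identification of the communication height. On the no‑coalescence event the local‑box description reduces the question to the local model of \cite{dHOS00a}: a quasi‑square changes its $\pi$‑value only by shedding or acquiring an entire row, and — once the free particles are accounted for through the $\D$‑per‑particle bookkeeping justified in Section~\ref{sec:kaw} — the communication height of these transitions, measured from the bottom of the well, is $(l_1-2)\e+2U$ for shedding a row and $2\D-U$ for acquiring one, the saddle being in each case of protocritical type, a quasi‑square bearing a single protuberance together with one free particle nearby; the droplet passes through the cheaper of the two, whence $r=\min\{(l_1-2)\e+2U,\,2\D-U\}$. Transplanting this computation from the fixed finite volume of \cite{dHOS00a} to the fluctuating collection $\L(t)$ is where the quasi‑random‑walk results of the first paper enter: they ensure that the wandering of the gas opens up no route of communication height below $r$ and that the neighbourhood of each droplet evolves as in the local model. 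The upper bound $p_\b\le\ee^{-(r-\D-\d')\b}$, hence \eqref{eq2}, is then the standard reversibility/large‑deviation estimate for Metropolis dynamics: an active particle, present in each cycle by definition of $\bar\sigma_{k+1}$, raises the configuration to height $\D$ above the well bottom, and a further climb of height $r-\D$ — which $\pi$ must perform in order to change — has conditional probability at most $\ee^{-(r-\D-\d')\b}$, only a sub‑exponential entropy factor being lost because the droplet neighbourhoods carry $O(1)$ particles.

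For the matching lower bound $p_\b\ge\ee^{-(r-\D+\d')\b}$, hence \eqref{eq1}, I would exhibit an explicit path from the well bottom, over the relevant saddle, and down into ${\cal X}_{\Delta^+}$ with the new $\pi$‑value, and bound below the probability that one cycle realises it. When $r=(l_1-2)\e+2U$ (subcritical smallest quasi‑square) this is the row‑peeling path: the active particle wakes the droplet, the chosen row is emitted one particle at a time with each emitted particle random‑walking off to rejoin the gas, and the maximal height along the way is exactly $(l_1-2)\e+2U$; when $r=2\D-U$ it is the row‑growth path, whose bottleneck is the protuberance‑plus‑free‑particle configuration and which, once a two‑particle strip has been formed, runs essentially downhill (each further particle lowers the energy by $\e$). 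The probability that a single cycle follows such a path — active particle reaches the droplet, performs the prescribed attach/detach sequence, the surplus particles then diffuse far enough to fall asleep, and the boxes reconfigure so the endpoint lies in ${\cal X}_{\Delta^+}$ — is at least $\ee^{-(r-\D)\b}$ times sub‑exponential corrections, the corrections being polynomial combinatorial factors together with the sub‑exponentially small escape probabilities supplied by the first paper. The case $X(\bar\tau_i)\in{\cal X}_E$ is the instance in which the transition is the birth of the first $2\times2$ quasi‑square somewhere in $\L_\b$: the same scheme applies, with the local nucleation barrier $4\D-2U$ of the $2\times2$ reduced by an entropic contribution, which is what the value $r(0,0)=4\D-2U-\th$ records.

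The genuinely hard part is not the geometric‑trials bookkeeping but the sharp per‑cycle estimates in the conservative setting: that no non‑local excursion of the gas undercuts the communication height $r(l_1,l_2)$, and that the constructive path genuinely ends in ${\cal X}_{\Delta^+}$ with the box collection meeting {\bf B1--B4} and the active/sleeping labels as prescribed. This is exactly what the quasi‑random‑walk estimates of the first paper and the chain of high‑probability events built by the deductive approach in Section~\ref{sub:lmmgen} are there to deliver; once they are in hand, the remaining points — restricting to the horizon $T^{\star}$, excluding coalescence (absorbed into the stated alternatives), and tuning $\a,d,\d_1,\d'$ below $\d$ — are routine.
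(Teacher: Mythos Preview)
Your proposal is correct and follows essentially the same route as the paper: reduce $\bar\tau_{c,i}-\bar\tau_i$ to the number of cycles $K_i=\min\{k:\pi(X(\bar\tau_{i+k}))\neq\pi(X(\bar\tau_i))\}$ via Theorem~\ref{thm:qsrec}, then sandwich $K_i$ between geometric random variables by proving two-sided per-cycle bounds $\ee^{-(r-\Delta+\delta')\beta}\le p_\beta\le \ee^{-(r-\Delta-\delta')\beta}$. The paper packages these two bounds as Propositions~\ref{prp:lbtipico} (lower bound, via the explicit row-peeling and row-growth paths you describe) and~\ref{prp:uppbound} part~(1) (upper bound, via the deductive approach of Section~\ref{sub:lmmgen}); your final paragraph correctly locates the real work in the latter, so your earlier remark that the upper bound is ``the standard reversibility/large-deviation estimate'' undersells what is actually required in the conservative setting.
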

	  
\begin{remark}
{\rm Theorem \ref{thm:res} states that, starting from $\mu_{\cal R}$ and unless a coalescence occurs, for any $i\in\N_0$ the projected dynamics typically remains in $\pi(X(\bar\tau_i))$ through successive visits in ${\cal X}_{\Delta^+}$ for a time of order $\ee^{r(l_{1,i},l_{2,i})\beta}$. The $\SES$ error in \eqref{eq1} is related to an anomalously large realisation of a geometric random variable, while an anomalously small realisation leads to an error that is only exponentially small in \eqref{eq2}. Note that for $l_{1,i} \geq \ell_c$ all the quasi-squares have the same resistance $2\Delta-U$. For the case in which $X(\bar\tau_i)$ has no quasi-square, its resistance $r(0,0)$ involves the resistance of the empty configuration in the local model and a spatial entropy that comes from the position in $\Lambda_\beta$ where the new droplet can appear.}\hfill$\spadesuit$
\end{remark}

\medskip\noindent
{\bf (III)} 
Our third theorem describes the typical transition of the system between two successive visits to $\mathcal{X}_{\Delta^+}$ conditional on the dynamics not returning to the same configuration at time $\bar\tau_{i+1}$. Given a configuration $X(\bar\tau_i)\in{\cal X}_{\Delta^+}$, define the typical transition $\pi'_i$ as follows. For $l_{1,i}\geq \ell_c$, set 
$$
\pi'_i=\big\{\pi(\eta')\colon\, \eta' \hbox{ is a configuration obtained from }
X(\bar\tau_i) \hbox{ by adding a row to an arbitrary quasi-square}\big\}. 
$$
For $l_{1,i}<\ell_c$, we need to distinguish between the cases $l_{2,i}\geq3$, $l_{2,i}=2$ and $l_{2,i}=0$. If $l_{1,i}<\ell_c$ and $l_{2,i}\geq3$ (respectively, $l_{2,i}=2$), then we define $\pi'_i$ as the singleton made up of the collection of sizes of quasi-squares obtained from $\pi(X(\bar\tau_i))$ by modifying one of the smallest quasi-squares, which becomes $(l_{2,i}-1)\times l_{1,i}$ (respectively, $0\times0$). If $l_{1,i}=l_{2,i}=0$, then we define $\pi'_i=\{\pi(\eta')\}$, where $\eta'$ is the configuration obtained from $X(\bar\tau_i)$ by creating a $2\times2$ square droplet, namely, $\pi'_i=\{(2,2)\}$.
	  	  
\begin{theorem}{\bf [Typical transitions]}
\label{thm:cambiotipico}
If $\Delta<\Theta\leq\theta$, then for any $d$ and $\alpha$ small enough, and any $i\in\N_0$,
\begin{equation}
\lim_{\beta\to\infty}
P_{\mu_{\cal R}} \left( \left.
\begin{array}{ll}
\hbox{ if } \bar\tau_{i+1}\leq T^{\star}, \hbox{ then }
\pi(X(\bar\tau_{i+1}))\in\pi'_i\\
\hbox{or a coalescence occurs between } \bar\tau_i \hbox{ and } \bar\tau_{i+1}
\end{array}
\right|
\pi(X(\bar\tau_{i+1}))\neq\pi(X(\bar\tau_i))
\right) =1.
\end{equation}
\end{theorem}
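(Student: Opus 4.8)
The plan is to condition on $\pi(X(\bar\tau_{i+1}))\neq\pi(X(\bar\tau_i))$ and show that the only transitions of non-negligible probability are those that land in $\pi'_i$. First I would fix the starting configuration $\eta=X(\bar\tau_i)\in{\cal X}_{\Delta^+}$ and, working inside a single local box $\bar\Lambda_j$ (the others do not interact on the relevant time scale, by the locality established via conditions \textbf{B1--B4} and the absence of coalescence), consider the smallest quasi-square of size $(l_{1,i},l_{2,i})$. By Theorem~\ref{thm:res}, up to time $\bar\tau_{c,i}$ the projected dynamics typically stays put for a time of order $\ee^{r(l_{1,i},l_{2,i})\beta}$; so the event in question is, modulo $\SES$ errors, the event that the \emph{first} successful update of $\pi$ occurs. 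The strategy is then a detailed analysis of the tube of trajectories that realise such an update, i.e.\ the trajectories that, starting from $\bar\tau_i$, reach a configuration whose projection differs from $\pi(\eta)$ before returning to $\pi(\eta)$.

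The main tool here is the quasi-random-walk description from the first paper of the series, together with the resistance bookkeeping in \eqref{def:res}. I would split into the cases of the theorem. For $l_{1,i}\geq\ell_c$ (supercritical quasi-square), the cost of adding a row of length $\ell$ is $r(\ell_1,\ell_2)=2\Delta-U$ by \eqref{def:res}, which is exactly the same as the cost of \emph{removing} a row; so, by a standard comparison of the two competing mechanisms (growth vs.\ shrinkage through the protocritical/protuberance saddle, cf.\ Fig.~\ref{fig-cancrit}), both are realised with probability bounded away from $0$ and $1$, and in either case the new projection is obtained by adding a row to \emph{some} quasi-square — one must also rule out, with probability $\to1$, the simultaneous modification of two quasi-squares or the creation of a new droplet from scratch, which carry strictly larger exponential cost. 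For $l_{1,i}<\ell_c$ and $l_{2,i}\geq 3$, the subcritical quasi-square has $r(l_{1,i},l_{2,i})=(l_{1,i}-2)\epsilon+2U<2\Delta-U$, and the cheapest escape from $\pi(\eta)$ is to shed a row, producing $(l_{2,i}-1)\times l_{1,i}$; I would show that growing a row costs $2\Delta-U>r(l_{1,i},l_{2,i})$ strictly, hence has vanishing conditional probability, and likewise for the degenerate cases $l_{2,i}=2$ (the quasi-square dissolves, $\pi'_i=\{0\times0\}$) and $l_{1,i}=l_{2,i}=0$ (the cheapest move, at cost $r(0,0)=4\Delta-2U-\theta$ by \eqref{def:resvuoto}, creates a $2\times2$ droplet — larger droplets are strictly more costly, and a single free particle does not change $\pi$). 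In every case the key input is that the relevant saddle configurations and their energies are exactly those already identified in the local model of \cite{dHOS00a}, lifted to $\Lambda_\beta$ via the local boxes, and that the gas regularity conditions built into ${\cal X}_{\Delta^+}$ (the parameter $\alpha$, Definition~\ref{def:recurrence}) guarantee the surrounding gas contributes only a negligible $\ee^{o(\beta)}$ correction.

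The hard part will be establishing that the behaviour is genuinely \emph{local}, i.e.\ that over a time horizon of order $\ee^{r(l_{1,i},l_{2,i})\beta}$ no active particle wanders from one local box into another and interferes with the update of the box under consideration, and that the box redefinitions (the times in ${\cal B}$) do not themselves produce a spurious change of $\pi$. This is precisely where the deductive approach of Section~\ref{sub:lmmgen} enters: one must exhibit a family of events, each of complement probability $\SES(\beta)$ on the horizon $T^\star$, on which the active particles emitted by any quasi-square either return to it or reach the gas reservoir without reaching another droplet, so that the evolution reduces to independent single-box problems — the excluded scenario being exactly a coalescence, which is why it appears as an alternative in the statement. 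Granting those events, the conditional statement follows by comparing, for the single box, the exponential rates of the finitely many admissible update mechanisms and taking $\beta\to\infty$.
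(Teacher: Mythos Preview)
Your overall strategy --- bound the conditional probability by comparing the exponential cost of the typical transition against that of any competing atypical one --- is the paper's approach as well. In the paper this is packaged as: Proposition~\ref{prp:lbtipico} gives a lower bound $\ee^{-[r(\ell_1,\ell_2)-\Delta+O(\alpha,d,\delta)]\beta}$ on the denominator $P(\pi(X(\bar\tau_1))\neq\pi(\eta_0))$, and Proposition~\ref{prp:uppbound}(2) gives an upper bound $\ee^{-[r(\ell_1,\ell_2)-\Delta+a-O(\alpha,d)]\beta}$, with a \emph{strict} gap $a>0$, on the numerator $P(\pi(X(\bar\tau_1))\notin\{\pi(\eta_0)\}\cup\pi')$. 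The ratio then tends to $0$. The pure-gas case $X(\bar\tau_i)\in{\cal X}_E$ is handled separately via Remark~\ref{rmk:qsdim} and the box-creation estimate~\eqref{eq:newbox}. All the ``hard part'' you identify (locality, box redefinitions, control of emitted particles) is absorbed into the proof of Proposition~\ref{prp:uppbound} through the graphs ${\cal G}_1$, ${\cal G}_2$ and the large-deviation events $Z_1$, $Z_2$.

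There is, however, a genuine error in your supercritical analysis. You write that for $\ell_1\geq\ell_c$ the cost of removing a row equals the cost $2\Delta-U$ of adding one, and that ``both are realised with probability bounded away from $0$ and $1$''. This is false: in~\eqref{def:res} the term $(\ell_1-2)\epsilon+2U$ is the \emph{shrinking} barrier, and for $\ell_1\geq\ell_c$ it is \emph{strictly larger} than $2\Delta-U$ (indeed $\ell_c\epsilon\geq U$ gives $(\ell_c-2)\epsilon+2U\geq 2\Delta-U$, with strict inequality for $\ell_1>\ell_c$). So shrinking a supercritical quasi-square is an \emph{atypical} transition, not a competing one of equal rate --- which is why $\pi'_i$ in this regime contains only row-additions. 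Your sentence ``in either case the new projection is obtained by adding a row to some quasi-square'' is then inconsistent with your own premise. The paper makes this quantitative via the graph ${\cal G}_2$ of Fig.~\ref{fig:G2sup} and the bound~\eqref{munster}: the gap $a=(\epsilon-\gamma)\mathbb{1}_{\{\iota<1/2\}}+\gamma\mathbb{1}_{\{\iota\geq 1/2\}}>0$ is exactly the excess cost of any supercritical transition that is not row-addition. Without this strict inequality (and with your claim that shrinking is equally cheap) the limit $1$ would not follow in the supercritical case.
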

	
\medskip\noindent
{\bf (IV)}  
Our fourth and last theorem characterises the atypical transitions of the system, starting from a subcritical configuration consisting of a single quasi-square, between two successive visits to $\mathcal{X}_{\Delta^+}$, with no creation of new boxes and conditional on the dynamics not returning to the same configuration at time $\bar\tau_i$. To this end, given $X(\bar\tau_i)\in{\cal X}_{\Delta^+}$ with $2\leq l_{1,i}<\ell_c$, we define $\pi''_i=(l_{2,i},l_{1,i}+1)$. Moreover, we say that a {\it box creation occurs at time $t$} if there exists an active particle at time $t^-$ that does not belong to $\Lambda(t^-)$ and falls asleep at time $t$. 
              
\begin{theorem}{\bf [Atypical transitions]}
\label{thm:cambioatipico}
If $\Delta<\Theta\leq\theta$, then for any $d$ and $\alpha$ small enough, and any $i\in\N_0$ such that $X(\bar\tau_i)\in{\cal X}_{\Delta^+}$ consists of a single quasi-square with $2\leq l_{1,i}<\ell_c$,
\begin{equation}
P_{\mu_{\cal R}}\left( \left.
\begin{array}{ll}
\hbox{ if } \bar\tau_{i+1}\leq T^{\star}, \hbox{ then } \pi(X(\bar\tau_{i+1}))=\pi''_i \hbox{ and} \\
\hbox{no box creation occurs between } \bar\tau_i \hbox{ and } \bar\tau_{i+1}
\end{array}
\right |
\pi(X(\bar\tau_{i+1}))\neq\pi(X(\bar\tau_i))
\right )
\geq \ee^{-[(2\Delta-U)-r(\ell_1,\ell_2)+\delta]\beta}.
\end{equation}
\end{theorem}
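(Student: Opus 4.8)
The plan is to produce an explicit trajectory (or, more precisely, a family of trajectories compatible with the coarse-grained description) that realizes the atypical growth of the single quasi-square from sizes $l_{1,i}\times l_{2,i}$ to $l_{2,i}\times(l_{1,i}+1)$, and to show that this bundle of trajectories has probability at least $\ee^{-[(2\Delta-U)-r(\ell_1,\ell_2)+\delta]\beta}$ among all trajectories that leave the current $\pi$-value before time $\bar\tau_{i+1}$ without creating a new box. First I would recall from the local-model analysis in \cite{dHOS00a} (and the resistance computation in \eqref{def:res}) the energetic cost of the optimal growth path: starting from an $\ell_1\times\ell_2$ quasi-square with $\ell_1<\ell_c$, the cheapest way to add a full row and obtain the $\ell_2\times(\ell_1+1)$ quasi-square passes through a saddle of height $2\Delta-U$ above the starting configuration (a protuberance plus an incoming free particle, then filling the new row one unit at a time, each new unit requiring a fresh free particle brought from the gas at cost $\Delta$ and then a downhill attachment). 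The resistance $r(\ell_1,\ell_2)=\min\{(\ell_1-2)\epsilon+2U,\,2\Delta-U\}$ is exactly the barrier for the \emph{competing} event — the typical shrinking move — so the ratio of the two barriers, $(2\Delta-U)-r(\ell_1,\ell_2)$, is the exponential price one pays for choosing to grow rather than shrink. This is the number in the exponent of the lower bound.

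Next I would set up the conditioning carefully. The event $\{\pi(X(\bar\tau_{i+1}))\neq\pi(X(\bar\tau_i))\}$ means that on the excursion between $\bar\tau_i$ and $\bar\tau_{i+1}$ the dynamics actually changes the quasi-square configuration; by Theorem \ref{thm:cambiotipico} the typical such change is the shrinking move $\pi'_i$ (for $l_{1,i}<\ell_c$, $l_{2,i}\geq 3$), which occurs with probability $1-o(1)$ within this conditioned event. So it suffices to lower bound the \emph{unconditional} probability of the growth event and divide by the (at most $1$) probability of the conditioning event; equivalently, lower bound $P_{\mu_{\cal R}}(\text{grow to }\pi''_i\text{, no box creation}, \bar\tau_{i+1}\leq T^\star)$ directly, since dividing by something $\leq 1$ only helps. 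I would then construct the reference tube: starting from $X(\bar\tau_i)\in{\cal X}_{\Delta^+}$ (a quasi-square plus a regularized gas), in a time window of length $\ee^{(r(\ell_1,\ell_2)-\delta/2)\beta}$ — short enough that, by Theorem \ref{thm:res}, the configuration has \emph{not yet} typically made its shrinking move — the dynamics executes the following sequence with the claimed probability: a free particle appears near a long side of the quasi-square (this is the $\mu_{\cal R}$-typical ambient-gas event and costs essentially $\ee^{-\Delta\beta}$ per particle, but one such particle is already present on the relevant scale), it attaches to form a protuberance, a second free particle arrives and attaches next to it, and so on until the row of length $\ell_2$ is complete. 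Each elementary step "bring a free particle to the droplet and let it attach" has probability bounded below by $\ee^{-(\Delta+o(1))\beta}$ times the probability that the droplet has not meanwhile dissolved; the bottleneck step (the one that must climb to energy $2\Delta-U$) contributes the dominant factor $\ee^{-(2\Delta-U)\beta}$, while all the subsequent downhill fillings contribute only polynomial or $\ee^{o(\beta)}$ factors. Multiplying, and using that there are only $O(\ell_2)=O(1)$ steps, gives a lower bound $\ee^{-(2\Delta-U+\delta/2)\beta}$ for reaching the grown quasi-square; combining with the factor $\ee^{(r(\ell_1,\ell_2)-\delta/2)\beta}$ coming from the "free" time budget (the droplet has $\approx\ee^{r(\ell_1,\ell_2)\beta}$ worth of attempts before it would typically shrink — formally, a geometric number of i.i.d.\ attempts with the above success probability per attempt, cf.\ the renewal structure behind $\bar\tau_{c,i}$) yields the advertised $\ee^{-[(2\Delta-U)-r(\ell_1,\ell_2)+\delta]\beta}$. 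The absence of a box creation on this tube is automatic because all particles used are drawn from the existing local box $\bar\Lambda_i$ and its immediate neighborhood, and no sleeping particle is ever produced outside $\Lambda(t)$; this has to be checked against conditions \textbf{B1--B4} but is routine on the explicit tube.

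The main obstacle, I expect, is \emph{not} the energetics — those are inherited from \cite{dHOS00a} and from the resistance bookkeeping already in place — but rather making the "geometric number of independent attempts" argument rigorous in the conservative, large-volume setting. One must show that, on the good events from Section \ref{sub:lmmgen} (whose complements are $\SES$), the excursion between $\bar\tau_i$ and $\bar\tau_{c,i}$ genuinely decomposes into $\approx\ee^{r(\ell_1,\ell_2)\beta}$ quasi-independent "trials," each lasting a subexponential time, during which the droplet sits essentially still and the surrounding gas re-randomizes to something close to the ideal gas with density $\ee^{-\Delta\beta}$; and that the growth move competes favorably, trial by trial, against the shrink move — i.e., that conditioning on "some change happens" does not suppress the growth probability by more than a subexponential factor. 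This requires the coupling and the coloring machinery (active/sleeping particles, the quasi-random-walk behavior of free particles from paper I) to control the history-dependence and to guarantee that after each failed attempt the system returns to a configuration in ${\cal X}_{\Delta^+}$ with the same $\pi$-value and a fresh (regularized) gas, so that the Markov-renewal lower bound applies. Once that decomposition is in hand, the proof is a one-line product estimate; assembling the decomposition from the lemmas of Section \ref{sub:lmmgen} is where the real work lies.
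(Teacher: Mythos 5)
Your proposal identifies the right ingredients — the explicit growth tube (a row of length $\ell_2$ built by bringing free particles one at a time, with the saddle at height $2\Delta-U$), the resistance $r(\ell_1,\ell_2)$ as the competing shrinking barrier, and the renewal structure over successive returns to ${\cal X}_{\Delta^+}$ — and it lands on the correct final exponent. It is essentially the same circle of ideas the paper uses: the paper's proof is literally one line, ``Proposition \ref{prp:subgrow} and the first part of Proposition \ref{prp:uppbound} prove Theorem \ref{thm:cambioatipico}: they give the necessary upper and lower bounds on the denominator and numerator of the conditional probability.'' Your growth tube is Proposition \ref{prp:subgrow} in disguise; your ``free time budget $\ee^{r\beta}$ before the droplet shrinks'' is the content of Proposition \ref{prp:uppbound}(1), via Theorem \ref{thm:res}.

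There is, however, a genuine logical gap in how you set up the conditioning, and it is not cosmetic. You write that it suffices to lower bound the unconditional probability $P(\pi(X(\bar\tau_{i+1}))=\pi''_i,\ldots)$ and divide by the $(\leq 1)$ probability of the conditioning event. That reasoning would yield only the bound $\ee^{-[\Delta-U+\delta]\beta}$ (which is what Proposition \ref{prp:subgrow} gives for a single return window of length $\approx\ee^{\Delta\beta}$), and since $r(\ell_1,\ell_2)\geq 2U>\Delta$ for $\ell_1\geq 2$, one has $\Delta-U>(2\Delta-U)-r(\ell_1,\ell_2)$, so $\ee^{-(\Delta-U)\beta}$ is \emph{strictly smaller} than the required $\ee^{-[(2\Delta-U)-r(\ell_1,\ell_2)+\delta]\beta}$. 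In other words, the theorem's lower bound genuinely exceeds the unconditional numerator, so you cannot avoid the upper bound on the denominator. Your subsequent ``geometric number of attempts over a window $\ee^{r\beta}$'' computation is in fact the renewal identity $P(\hbox{grow before shrink})=p/(p+q)$, which does require both $p\geq\ee^{-(\Delta-U)\beta}$ (growth lower bound per return, Prop.~\ref{prp:subgrow}) and $p+q\leq\ee^{-(r-\Delta)\beta}$ (change upper bound per return, Prop.~\ref{prp:uppbound}); but as written it conflicts with your earlier claim, and your tube over time $\ee^{r\beta}$ does not lower bound the stated numerator $P(\pi(X(\bar\tau_{i+1}))=\pi''_i,\ldots)$ — growth on that tube may occur at $\bar\tau_{i+k}$ for any $k\geq 1$, not necessarily at $\bar\tau_{i+1}$. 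To close the argument you must make the denominator upper bound explicit (either as a ratio of one-step probabilities, as the paper does, or as the geometric-series renewal identity stated carefully), rather than declaring the conditioning event harmless.
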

       
\begin{remark}
{\rm Theorem \ref{thm:cambioatipico} provides a lower bound for the atypical transition of `going against the drift' in the case of a subcritical quasi-square. As we will show in the follow-up paper \cite{BGdHNOS}, the {\it escape from metastability} occurs via nucleation of a supercritical droplet somewhere in the box $\Lambda_\beta$. Indeed, we will characterise the time the dynamics needs to exit ${\cal R}$, as well as the typical paths of configurations visited by the wandering cluster until the formation of a large droplet. The results of the present paper, which are limited to the case $\Theta<2\Delta-U-\gamma$, will allow us to accomplish this task for larger values of $\Theta$, namely, $\Theta<\Gamma-(2\Delta-U)$, where $\Gamma$ is the energy of the critical droplet in the local model.} \hfill$\spadesuit$
\end{remark}

\begin{remark}
{\rm The techniques developed in the present paper make it possible to prove that, for any quasi-square configuration of size $\ell_1\times \ell_2$ in ${\cal X}_{\Delta^+}$, the cluster exits any finite box centered around the cluster with a volume that does not depend on $\beta$, within a time of order $\ee^{r(\ell_1,\ell_2)\beta}$. This is the reason why we speak of a \emph{wandering cluster}. We will not state nor use this result as a formal theorem.}\hfill$\spadesuit$
\end{remark}

%%%       

\subsection{Outline}
\label{sec:discout}

Section \ref{sec:tools} collects certain key tools that are needed throughout the paper. In particular, in Section \ref{sec:notation} we introduce key notation, in Section \ref{sec:enest} we formulate certain regularity properties for the initial configuration that we can impose because their failure is extremely unlikely, while in Section~\ref{sec:rec} we group the configurations into a sequence of subsets of  configurations of increasing regularity and prove a recurrence property to these sets on an increasing sequence of time scales. In Section \ref{sec:lemmas} we state three key propositions (Propositions \ref{prp:lbtipico}--\ref{prp:uppbound}) that are needed along the way. The proof of Theorems \ref{thm:qsrec}, \ref{thm:res}, \ref{thm:cambiotipico} and \ref{thm:cambioatipico} are given in Sections \ref{sec:proof1}, \ref{sec:proof2}, \ref{sec:proof3} and \ref{sec:proof4}, respectively, subject to these propositions.

The three propositions are proved in Sections \ref{sub:prp1}--\ref{sub:prp3}. The proof is based on a number of key lemmas (Lemmas \ref{lmm:exitempty}, \ref{lmm:exitgen}, \ref{lmm:exit2x2}) whose proof is given in Section~\ref{sub:lmmgen}. This section, which uses two more key lemmas (Lemmas \ref{gex}--\ref{gorgonzola}), is long and difficult because it contains the main technical hurdles of the paper. These hurdles are organised into what we call the \emph{deductive approach}: the tube of typical trajectories  leading to nucleation is described via a series of events, whose complements have negligible probability, on which the evolution of the gas consists of \emph{droplets wandering around on multiple space-time scales} in a way that can be captured by a coarse-grained Markov chain on a space of droplets. 

Appendices \ref{sec:appa} and \ref{sec:appb3} provide additional computations that are needed in the paper: environment estimates that exclude non-regular configurations, respectively, large deviation estimates for certain events that come up in the deductive approach.

%%%%%%%% SECTION 2 %%%%%%%%%%%%%%%%%%%%%%%%%%%	
	
\section{Key tools}
\label{sec:tools}
	
In this section we provide some tools that are needed to prove the theorems. These tools rely on the notion of QRWs (Quasi-Random Walks). In \cite{GdHNOS09} it was shown that the active particles of a two-dimensional lattice gas under Kawasaki dynamics at low density evolve in a way that is close to an \emph{ideal gas}. The results in  \cite{GdHNOS09} are formulated in the general context of QRWs for  a large class of initial conditions having no anomalous concentration of particles for time horizons that are much larger than the typical collision time. More precisely, the process of QRWs used to describe the ideal gas approximation consists of $N$ labelled particles that can be coupled to a process of $N$ Independent Random Walks (IRWs) in such a way that the two processes follow the same paths outside rare time intervals, called pause intervals, in which the paths of the QRWs remain confined to small regions. 

For the definition of QRWs and their construction, we refer to \cite[Sections 2.2-2.4]{GdHNOS09}. We note that for the notion of sleeping and active particles to be well defined, we need to label the {\it particles} and not work with a dynamics of {\it configurations} $\eta\in{\cal X}_\beta$ only, as defined in \eqref{gendef}. There is flexibility in associating a particle dynamics with a configuration dynamics. In particular, as in \cite{GdHNOS09} we can allow instantaneous permutation of particles inside a given cluster. Later we will use this flexibility by specifying a local permutation rule (see Section \ref{subsec:color}). For now we only assume that such a rule has been chosen. We encourage the reader to inspect the main properties of QRWs, which will be a key tool in the remaining part of the paper. In particular, we refer to \cite[Theorems 3.2.3, 3.2.4, 3.2.5, 3.3.1]{GdHNOS09} for the \emph{non-superdiffusivity} property and for upper and lower bounds on the \emph{spread-out property}, respectively.
	
%%%
	
\subsection{Definitions and notations}
\label{sec:notation}
	
In this section we introduce some definitions and notations that will be needed throughout the sequel.

\begin{definition}
$\mbox{}$
{\rm 
\begin{itemize}
\item[1.] 
As in \cite{GdHNOS09}, $\alpha$ and $d$ are two positive parameters that can be chosen as small as desired, and $\lambda(\b)$ is an unbounded but slowly increasing function of $\beta$ that satisfies \eqref{def:lambda}. Moreover, $C^{\star}$ is a positive parameter that can be chosen as large as desired. Once chosen, $\alpha$, $d$, $\lambda$ and $C^{\star}$ are fixed. We write $O(\delta)$, $O(\alpha)$ and $O(d)$ for quantities with an absolute value that can be bounded by a constant times $|\delta|$, $|\alpha|$ and $|d|$, for small enough values of these parameters. We write $O(\delta,\alpha,d)$ for the sum of three such quantities.
\item[2.] 
We use short-hand notation for a few quantities that depend on the old parameters $\Delta \in (\tfrac32 U,2U)$ and $\Theta \in (\Delta,2\Delta-U)$, and on the new parameters $\alpha$, $d$. Recall that
$$
\epsilon=2U-\Delta, \qquad \ell_c=\Big\lceil\frac{U}{\epsilon}\Big\rceil,
\qquad \gamma=\Delta-U-(\ell_c-2)\epsilon,
\qquad \theta=2\Delta-U-\gamma,
$$
and  
$$
D=U+d, \qquad \Delta^+=\Delta+\alpha,
$$
and abbreviate
\begin{equation}
\label{Spardef}
S=\dfrac{4\Delta-\theta}{3}-\alpha.
\end{equation}
For $C>0$, write $T_C$ for the time scale $T_C=\ee^{C\beta}$.
\item[3.]
For convenience we identify a configuration $\eta\in\cX_\beta$ with its support $\hbox{supp}(\eta)=\{z\in\Lambda_\beta: \eta(z)=1\}$ and write $z\in\eta$ to indicate that $\eta$ has a particle at $z$. For $\h\in\cX_\beta$, denote by $\h^{cl}$ the {\it clusterised part of} $\h$:
\begin{equation}
\label{def:cluster}
\h^{cl} = \{z\in\h\colon\,\parallel z-z'\parallel =1 \hbox{ for some } z'\in\h\}.
\end{equation}
Call {\it clusters of} $\h$ the connected components of the graph drawn on $\h^{cl}$ obtained by connecting nearest-neighbour sites
that are not a singleton.
\item[4.]
Denote by $B(z,r), z\in\R^2, r>0$, the open ball with center $z$ and radius $r$ in the norm defined in \eqref{def:norm}. The closure of $A\subset\R^2$ is denoted by $\overline{A}$.
\item[5.] 
For $A\subset\Z^2$, denote by $\partial^- A$ the internal boundary of $A$, i.e.,
\begin{equation}
\label{def:bordint}
\partial^- A = \{z\in A\colon\, \parallel z-z'\parallel=1 \hbox{ for some } z'\in\Z^2\setminus A\}.
\end{equation}
For $s>0$, put
\begin{equation}
[A,s] = \bigcup_{z\in A}\overline{B(z,\ee^{\frac{s}{2}\beta})}\cap\Z^2.
\end{equation}
Call $A$ a rectangle on $\Z^2$ if there are $a,b,c,d\in\R$ such that
\begin{equation}
A = [a,b] \times [c,d] \cap\Z^2.
\end{equation}
Write $\hbox{RC(A)}$, called the {\it circumscribed rectangle} of $A$, to denote the intersection of all the rectangles on $\Z^2$ containing $A$. Moreover, denote by ${\cal R}$ the set of all finite sets of rectangles on $\Z^2$.
\item[6.] 
Given $\sigma\geq0$ and $\bar S=\{R_1,\ldots,R_{|S|}\}\in{\cal R}$, two rectangles $R$ and $R'$ in $\bar S$ are said to be in the same equivalence class if there exists a finite sequence $R_1,\ldots,R_k$ of rectangles in $\bar S$ such that
$$
R=R_1, \quad R'=R_k, \quad  \dist(R_j,R_{j+1})<\sigma\,\, \forall\, 1 \leq j < k.
$$
Let $C$ indicate the set of equivalent classes, define the map
$$
\bar g_\sigma\colon\, \bar S\in{\cal R} \mapsto
\Bigg\{\hbox{RC}\Bigg(\bigcup_{j \in c}R_j\Bigg)\Bigg\}_{c\in C}\in{\cal R},
$$
and let $(\bar g_\sigma^{(k)})_{k\in\N_0}\in {\cal R}^{\mathbb{N}}$ be the sequence of iterates of $\bar g_\sigma$. Define
\begin{equation}
\label{def:mapg}
g_\sigma(\bar S)= \lim_{k\rightarrow\infty} \bar g_\sigma^{(k)}(\bar S).
\end{equation}
As discussed in \cite{G09}, the sequence $(\bar g_\sigma^{(k)}(\bar S))_{k\in\N_0}$ ends up being a constant, so the limit is well defined.
\end{itemize}
}\hfill$\spadesuit$
\end{definition}
	
%%%
	
\subsection{Environment estimates}
\label{sec:enest}
	
In this section we introduce a subset of configurations ${\cal X}^*\subset{\cal X}_\beta$, to which we refer as the \emph{typical environment}, with the property that if our system is started from the restricted ensemble, then it can escape from $\mathcal{X}^*$ within any time scale that is exponential in $\beta$ with a negligible probability only. Boxes are square boxes, and we require that $\Theta>\Delta$. Recall \eqref{Spardef} for the definition of the parameter $S$.

\begin{remark}
	\label{rmk:S}
	{\rm The choice of $S$ comes from the fact that we require the probability to have 4 particles anywhere in a box of volume $\ee^{S\beta}$ to tend to zero under the measure $\mu_{\cal R}$ as $\beta\to\infty$.}\hfill$\spadesuit$
\end{remark}

\begin{definition}
{\rm Define
\begin{equation}
{\cal X}^*=\bigcap_{j=1}^5{\cal X}_j^*,
\end{equation}
where, for $\lambda$ satisfying \eqref{def:lambda} and $S$ given by \eqref{Spardef},
\begin{align}
{\cal X}_1^*& = \left\{\eta\in{\cal X}_\beta\colon
\begin{array}{ll}
\hbox{in any box of volume } \ee^{\theta\beta} \hbox{ the number of clusters is at most } \lambda(\beta)\\
\end{array}
\right\}, \notag \\[0.2cm]
{\cal X}_2^*&=\left\{\eta\in{\cal X}_\beta\colon
\begin{array}{ll}
\hbox{in any box of volume } \ee^{\theta\beta} \hbox{ the number of 4-tuples of particles in different} \\
\hbox{connected components with diameter smaller than } \sqrt{\ee^{S\beta}} \hbox{ is at most } \lambda(\beta)
\end{array}
\right\}, \notag \\[0.2cm]
{\cal X}_3^*&=\left\{\eta\in{\cal X}_\beta\colon
\begin{array}{ll}
\hbox{in any box of volume } \ee^{\theta\beta} \hbox{ the number of 4-tuples of particles in different} \\
\hbox{connected components with diameter smaller than } \sqrt{\ee^{A\beta}} \\ 
\hbox{is at most } \ee^{(3A-4\Delta+\theta+4\alpha)\beta} \hbox{ for any } S<A<\Delta^+
\end{array}
\right\}, \notag \\[0.2cm]
{\cal X}_4^*&=\left\{\eta\in{\cal X}_\beta\colon
\begin{array}{ll}
\hbox{in any box of volume } \ee^{(\Delta+\alpha)\beta} \hbox{ the number of particles is at most } \ee^{\frac{3}{2}\alpha\beta} \\\hbox{and at least } \ee^{\frac{1}{2}\alpha\beta}
\end{array}
\right\}, \notag \\[0.2cm]
{\cal X}_5^*&=\left\{\eta\in{\cal X}_\beta\colon
\begin{array}{ll}
\hbox{in any box of volume } \ee^{(\Delta-\frac{\alpha}{4})\beta}
\hbox{ the number of particles is at most } \tfrac14 \lambda(\beta)
\end{array}
\right\} \notag.
\end{align}
}\hfill$\spadesuit$
\end{definition}

\begin{remark}
{\rm The exit time of ${\cal X}^*_5$ coincides with the first time ${\cal T}_{\alpha,\lambda}$ when an anomalous concentration event occurs (\cite{GdHNOS09}). Since the QRW-estimates of \cite{GdHNOS09} hold up to this time, we can use them as long as the system stays in ${\cal X^*}\subset{\cal X}^*_5$.} \hfill$\spadesuit$
\end{remark}

\begin{remark}
{\rm The reason why we define ${\cal X}^*$ for any $\Theta>\Delta$ (i.e., without the restriction $\Theta\leq\theta$) is that in \cite{BGdHNOS} we will need Proposition \ref{prp:ambito}, which says that starting from $\mu_{{\cal R}}$ the system exits ${\cal X}^*$ within any given exponential time with $\SES(\beta)$ probability only. The main theorems of the present paper, which hold for the dynamics in a box of volume at most $\ee^{\theta\beta}$, with {\em periodic} boundary conditions, immediately extend to the case where such a small box is embedded into a larger box of volume $\ee^{\Theta\beta}$, with {\em open} boundary conditions, as long as the system remains in the typical environment ${\cal X}^*$.} \hfill$\spadesuit$
\end{remark}
    
Recall the definition of the set ${\cal R}$ given in \eqref{def:R}, of the set $\cal R'\supset\cal R$ given in \eqref{def:R'} and of the time ${T^{\star}}$ given in \eqref{def:Tstella}.
	
\begin{proposition}
\label{prp:ambito}
$P_{\mu_{\cal R}}(\tau_{{\cal X}_\beta\setminus{\cal X}^*} \leq T^{\star})=\SES(\beta)$.
\end{proposition}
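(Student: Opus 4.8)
The statement asserts that, starting from the restricted ensemble $\mu_{\cal R}$, the probability that the process leaves the typical environment ${\cal X}^*=\bigcap_{j=1}^5{\cal X}_j^*$ before the time horizon $T^\star$ is superexponentially small. The plan is to treat the five defining events separately, bound the probability of leaving each ${\cal X}^*_j$ by $\SES(\beta)$, and then take a union bound, which preserves $\SES(\beta)$ since there are only finitely many terms. The key observation that makes all five estimates possible is a combination of two ingredients: (i) a static estimate under $\mu_{\cal R}$ itself, controlling the probability of an anomalous local concentration of particles at time $0$ (these are the \emph{environment estimates} of Section~\ref{sec:enest} / Appendix~\ref{sec:appa}); and (ii) a dynamical \emph{persistence} argument using the QRW machinery of \cite{GdHNOS09}, valid precisely up to the first anomalous-concentration time ${\cal T}_{\alpha,\lambda}$, which by construction dominates the exit time of ${\cal X}^*_5$, so the estimates can be bootstrapped as long as we stay in ${\cal X}^*\subset{\cal X}^*_5$.

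\textbf{Key steps, in order.} First I would record the static ingredient: under $\mu_{\cal R}$, the marginal density of particles is $\ee^{-\beta\Delta}$ up to polynomial corrections (because the clusters in ${\cal R}$ are uniformly bounded, so the partition sum $Z_{\cal R}$ factorizes approximately over sites, as in \cite{dHOS00a}); hence the probability of seeing $k$ particles in a prescribed box of volume $\ee^{s\beta}$ is of order $\ee^{-\beta k(\Delta-s)}$ up to subexponential factors, and of seeing $k$ particles in \emph{different} small clusters of diameter $\le \sqrt{\ee^{A\beta}}$ in a box of volume $\ee^{\theta\beta}$ is controlled by a first-moment (Mayer-type) bound of order $\ee^{\beta(\theta + kA/2 \cdot 2 - k\Delta)}$ — this is exactly the bookkeeping that forces the choice $S=\tfrac13(4\Delta-\theta)-\alpha$ (Remark~\ref{rmk:S}) and that underlies the exponents appearing in ${\cal X}^*_2,{\cal X}^*_3,{\cal X}^*_4,{\cal X}^*_5$. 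Second, for the dynamical part I would invoke the QRW coupling: outside pause intervals the active particles follow independent random walks, and the non-superdiffusivity and spread-out bounds (\cite[Theorems 3.2.3--3.2.5, 3.3.1]{GdHNOS09}) control, over any time window of length $\ee^{C\beta}$, the probability that independent walkers create an anomalous local pile-up that was not present initially; the number of particles that can ever enter a fixed box is itself controlled by ${\cal X}^*_4$-type estimates on a slightly larger box, so the walks cannot conspire to violate ${\cal X}^*_1$--${\cal X}^*_5$ on the time scale $T_{C^\star}$. Third, one checks the self-consistency: each of these QRW estimates is valid up to ${\cal T}_{\alpha,\lambda}=\tau_{{\cal X}_\beta\setminus{\cal X}^*_5}$, so we argue on the event that the process has not yet left ${\cal X}^*_5$; a standard stopping-time decomposition then closes the loop and shows the process cannot leave \emph{any} of the five sets first. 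Finally, union over $j\in\{1,\dots,5\}$ and over the (polynomially many in $\beta$, or discretized $O(\ee^{C^\star\beta})$ many, handled by a further union bound absorbed into the $\SES$) boxes and time slabs.

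\textbf{Main obstacle.} The delicate point is the interplay between the \emph{time horizon} $T^\star = \ee^{C^\star\beta}\wedge \tau_{{\cal X}_\beta\setminus{\cal R}'}$ and the \emph{self-referential} nature of the QRW estimates, which only hold until the first anomalous-concentration time. One must make sure the argument is not circular: the QRW bounds are used to prove we stay in ${\cal X}^*$, but their validity is conditioned on staying in ${\cal X}^*_5\supset{\cal X}^*$. The resolution is the usual one — work up to the stopping time $\tau_{{\cal X}_\beta\setminus{\cal X}^*}$, show that on $\{\tau_{{\cal X}_\beta\setminus{\cal X}^*}\le T^\star\}$ some specific ${\cal X}^*_j$ is the one violated, and that for \emph{that} $j$ the QRW estimates were still legitimately available just before the violation (because ${\cal X}^*_5$ had not yet been exited); a short argument rules out ${\cal X}^*_5$ being violated first directly from the static bound plus spread-out. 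A secondary technical nuisance is that the boxes in the definitions range over \emph{all} boxes of the given volume, so one needs a covering/discretization argument (finitely many representative boxes up to translation by $O(1)$) before applying the first-moment bounds, and a further union over a time-mesh of spacing, say, $1$ in $[0,\ee^{C^\star\beta}]$; both unions cost only a factor $\ee^{O(\Theta\beta)}$ or $\ee^{C^\star\beta}$, which is harmless against a superexponentially small bound. I expect the static first-moment computations (the content of Appendix~\ref{sec:appa}) to be routine but lengthy, and the clean statement of the QRW persistence lemma in the present notation to be where the real care is needed.
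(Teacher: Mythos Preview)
Your plan is a genuinely different route from the paper's, and considerably more laborious. The paper does \emph{not} use any QRW dynamics to prove this proposition. Instead it exploits stationarity: one passes from $\mu_{\cal R}$ to $\mu_{{\cal R}'}$ at a cost $\mu({\cal R}')/\mu({\cal R})\le \ee^{\bar C\beta}$, restricts the dynamics to ${\cal R}'$ (legitimate because $T^\star$ stops before exiting ${\cal R}'$), and uses that $\mu_{{\cal R}'}$ is reversible and hence invariant for the restricted chain. Writing $X(t)=\check X_{M(t)}$ with $M$ a Poisson clock of rate $\ee^{\Theta\beta}$, one bounds the number of jumps by $\ee^{(\Theta+C^\star+\delta)\beta}$ via Chernoff, and then simply uses
\[
P_{\mu_{{\cal R}'}}\bigl(\check X_k\in{\cal X}_\beta\setminus{\cal X}^*\bigr)=\mu_{{\cal R}'}\bigl({\cal X}_\beta\setminus{\cal X}^*\bigr)
\]
for every $k$, by invariance. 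The whole dynamical problem collapses to the \emph{static} estimate $\mu_{{\cal R}'}\bigl(({\cal X}^*_i)^c\bigr)=\SES(\beta)$ for each $i$, which is the content of Appendix~\ref{sec:appa}. The exponential prefactors from the change of measure and the time-sum are absorbed into $\SES$.

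Your dynamical approach, by contrast, tries to propagate the time-zero estimate forward via QRW spread-out and non-superdiffusivity bounds. This could plausibly be made to work for ${\cal X}^*_4$ and ${\cal X}^*_5$, which are pure particle-count conditions, but there is a real obstacle for ${\cal X}^*_1$, ${\cal X}^*_2$, ${\cal X}^*_3$: these involve the \emph{cluster structure} (number of clusters, particles in distinct connected components), and the QRW coupling controls particle positions under an ideal-gas approximation, not cluster formation and dissolution, which depend on the full interacting dynamics. Your sketch does not say how you would control the number of clusters in a box over time. The self-consistency bootstrap you describe is also delicate and would need to be spelled out carefully. The paper's stationarity trick sidesteps all of this: you should use it instead.
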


\begin{proof}
Denote by $A_t$ the event that the dynamics exits ${\cal X}^*$ at time $t$, and by $A_t^{\cal R'}$ the event $A_t$ when the dynamics is restricted to ${\cal R'}$ (by ignoring jumps that would lead the dynamics out of ${\cal R'}$). Given a Poisson process on $\mathbb{R}_+$ with rate $\ee^{\Theta\beta}$, denote by $M(t)$ the number of times the clock rings up to time $t\geq0$, and write $P$ to denote its law. Let $(\check{X}_k)_{k\in\mathbb{N}}$ be the the embedded discrete-time process such that the original process $(X(t))_{t\geq0}$ can be written as $X(t)=\check{X}_{M(t)}$. Estimate, for $\delta>0$,
$$
\begin{array}{ll}
P_{\mu_{\cal R}}(\exists \ t<T^{\star},A_t)
&=\displaystyle\sum_{\eta\in{\cal R}}\frac{\mu(\eta)}{\mu({\cal R})} P_\eta(\exists \ t<T^{\star},A_t) \\
&\displaystyle\leq\dfrac{\mu({\cal R'})}{\mu({\cal R})}
\sum_{\eta\in{\cal R'}}\dfrac{\mu(\eta)}{\mu({\cal R}')} P_\eta(\exists \ t<T^{\star},A_t)\\
&\leq\displaystyle\frac{\mu({\cal R'})}{\mu({\cal R})}
P_{\mu_{\cal R'}}\Bigl(\exists \ t<\ee^{C^{\star}\beta},A_t^{\cal R'}\Bigr) \\
&\displaystyle\leq \frac{\mu({\cal R'})}{\mu({\cal R})} 
\Big[P\Bigl(M(\ee^{C^{\star}\beta})\geq \ee^{(\Theta+C^{\star}+\delta)\beta}\Bigr)
+\sum_{1 \leq k<\ee^{(\Theta+C^{\star}+\delta)\beta}}P_{\mu_{\cal R'}}
(\check{X}_k\in{\cal X}_\beta\setminus{\cal X}^*)\Big]\\
&\displaystyle\leq \frac{\mu({\cal R'})}{\mu({\cal R})}
\Bigl[\SES(\beta)+\ee^{(\Theta+C^{\star}+\delta)\beta}\sum_{i=1}^5 \mu_{\cal R'}(({{\cal X}_i}^*)^c)\Bigr],
\end{array}
$$
where the term $\SES(\beta)$ comes from the Chernoff bound for a Poisson random variable, and $\mu_{{\cal R}'}$ stands for
the grand-canonical Gibbs measure conditioned to ${\cal R}'$. To get the claim, note that, because $\mu({\cal R'})/\mu({\cal R})\leq \ee^{\bar{C}\beta}$ for some $\bar{C}>0$, it suffices to prove that $\mu_{\cal R'}(({{\cal X}^*_i})^c)=\SES(\beta)$ for any $i$. This is done in Appendix \ref{sec:appa}.
\end{proof}

\begin{remark}
{\rm Proposition~\ref{prp:ambito} allows us to work with configurations in ${\cal X}^*$. Replacing the original dynamics by the dynamics restricted to ${\cal X}^*$, we can couple the two dynamics in such a way that they have the same trajectories up to any time that is exponential in $\beta$ with probability $1-\SES(\beta)$.} \hfill$\spadesuit$
\end{remark}

%%%
	
\subsection{Recurrence properties}
\label{sec:rec}
	
In this section we group the configurations in $\cal X_\beta$ into a sequence of subsets of  configurations of \emph{increasing regularity}, and we prove a \emph{recurrence property} of the associated Markov processes restricted to these sets on an \emph{increasing sequence of time scales}. To that end, denote by
$$
\bar H_{i}(\bar\eta_i)=-U
\sum_{\{x,y\}\in\bar\Lambda_i(t)^*}\eta(x)\eta(y)
+\Delta\sum_{x\in\bar\Lambda_i(t)}\eta(x)
$$
the \emph{local energy} of $\bar\eta_i=\eta_{|\bar\Lambda_i}$ at time $t$ inside the box $\bar\Lambda_i(t)$, where $\bar\Lambda_i(t)^*$ denotes the set of bonds in $\bar\Lambda_i(t)$. We emphasise that, alongside the local model, we need to introduce two additional sets, ${\cal X}_D$ and ${\cal X}_S$, to control the regularity of the gas surrounding the droplets.
    
\begin{remark}
{\rm With each particle $i$ we can associate, at any time $t\geq0$, the time
\begin{equation}
\label{def:sleeping}
s_i(t) = \inf\big\{s\in{[0,t)}\colon\, \hbox{particle } i \hbox{ is not free during the entire time interval } [s,t]\big\},
\end{equation}
 so that $s^*_i(t)=\ee^{D\beta}-(t-s_i(t))$ is the time that particle $i$ needs to remain not free in order to fall asleep. By convention, for a sleeping (respectively, free) particle at time $t$ we put $s^*_i(t)=0$  (respectively, $s^*_i(t)=\infty$). In this way we are able to characterise active and sleeping particles at any time $t$. In addition, the process $Y=(X(t),(s^*_i(t))_{i=1}^N,\bar\Lambda(t))_{t\geq0}$ is Markovian. In the sequel we will simply refer to this process as the original process $X=(X(t))_{t\geq0}$. In Section \ref{subsec:color} we will consider a slight generalisation of the process $Y$ in which more information about particles is included, again referring to it as the original process $X$.}\hfill$\spadesuit$
\end{remark}
	
\begin{definition}
\label{def:recurrence}
{\rm For any time $t\geq0$, given a configuration $\eta_t=X(t)\in{\cal X}_{\beta}$ and the collection $\bar\Lambda(t) = (\bar\Lambda_i(t))_{1 \leq i \leq k(t)}$ of finite boxes in $\Lambda_\beta$ as in Definition \ref{def:boxes}, we say that $\eta_t$ is 0-reducible (respectively, $U$-reducible) if for some $i$ the local energy of $\bar\eta_i$ can be reduced along the dynamics with constant $\bar\Lambda(t)$ without exceeding the energy level $\bar H_{i}(\bar\eta_i)+0$ (respectively, $\bar H_{i}(\bar\eta_i)+U$). If $\eta_t$ is not $0$-reducible or $U$-reducible, then we say that $\eta_t$ is $0$-irreducible or $U$-irreducible, respectively. We define
$$
\begin{array}{ll}
{\cal X}_0
&=\{\eta_t\in{\cal X}^*\colon\,\eta_t \hbox{ is $0$-irreducible}\}, \\[0.2cm]
{\cal X}_U
&=\{\eta_t\in{\cal X}_0\colon\,\eta_t \hbox{ is $U$-irreducible}\}, \\[0.2cm]
{\cal X}_D 
&=\{\eta_t\in{\cal X}_U\colon\,\hbox{all the particles in } \Lambda(t) \hbox{ are sleeping}\}, \\[0.2cm]
{\cal X}_S
&=\{\eta_t\in{\cal X}_D\colon\,\hbox{each box of volume } \ee^{S\beta} \hbox{ contains three active particles at most}\}, \\[0.2cm]
{\cal X}_{\Delta^+}
&=\left\{\eta_t\in{\cal X}_S\colon\,
\begin{array}{ll}
\bar\eta_t \hbox{ is a union of at most } \lambda(\beta) \hbox{ quasi-squares with} \\
\hbox{no particle inside } \bigcup_i[\bar\Lambda_i(t),\Delta-\alpha] 
\hbox{ except for those} \\
\hbox{in the quasi-squares, one for each local box } \bar\Lambda_i(t) 
\end{array}
\right\},
\end{array}
$$
where $[\bar\Lambda_i(t),\Delta-\alpha]$ are the boxes of volume $\ee^{(\Delta-\alpha)\beta}$ with the same center as $\bar\Lambda_i(t)$.}\hfill$\spadesuit$
\end{definition}

\begin{remark}
\label{rmk:qsdim}
{\rm Note that if $\eta\in{\cal X}_{\Delta^+}$ and $\ell_2=2$, then $\ell_1=2$. Indeed, a $1\times2$ dimer does not belong to ${\cal X}_U$ and therefore is not in ${\cal X}_{\Delta^+}$.}\hfill$\spadesuit$
\end{remark}
     
Recall that $T_A=\ee^{A\beta}$ for $A\in\{0,U,D,S,\Delta^+\}$. Note that we have used the index $\Delta^+$ to define the set ${\cal X}_{\Delta^+}$, despite the fact that it explicitly depends on the quantity $\Delta-\alpha$. This is needed to provide an upper bound for the return times in ${\cal X}_{\Delta^+}$, namely, the recurrence property stated in the following proposition, which uses the usual shift operator $\vartheta_s$, $s\geq0$, defined by $\vartheta_s(X)=X(s+\cdot)$, so that $s+\tau_{{\cal X}_A}\circ\vartheta_s= \min\{t\geq s\colon\, X(t)\in{\cal X}_A\}$.

\begin{proposition}
\label{prop:rec}
For all $A\in\{0,U,D,S,\Delta^+\}$, all $\delta>0$ and any stopping time $\tau$,
$$
P_{\mu_{\cal R}}\big(\tau_{{\cal X}_A}\circ\vartheta_\tau \geq T_A \ee^{\delta\beta}, \
\tau+\tau_{{\cal X}_A}\circ\vartheta_\tau\leq T^{\star}\big) = \SES(\beta).
 $$	
 \end{proposition}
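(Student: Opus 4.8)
\textbf{Proof plan for Proposition \ref{prop:rec}.}

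The plan is to prove the five recurrence statements in a nested fashion, going from the coarsest regularity set ${\cal X}_0$ to the finest ${\cal X}_{\Delta^+}$, exploiting the chain of inclusions ${\cal X}_{\Delta^+}\subset{\cal X}_S\subset{\cal X}_D\subset{\cal X}_U\subset{\cal X}_0\subset{\cal X}^*$. For each level $A$, the argument has the same skeleton: by Proposition \ref{prp:ambito} we may restrict the dynamics to ${\cal X}^*$ at the cost of an $\SES(\beta)$ error, so throughout we work inside the typical environment; then we estimate the cost of the local rearrangements that bring a configuration into ${\cal X}_A$ and compare that cost with the time budget $T_A\ee^{\delta\beta}$.

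First I would treat $A=0$ and $A=U$ together, since these are the purely energetic levels defined via $0$- and $U$-reducibility. Starting from an arbitrary $\eta\in{\cal X}^*$, as long as $\eta\notin{\cal X}_0$ there is a local move (inside one of the boxes $\bar\Lambda_i$, with constant box structure) that strictly decreases the local energy without raising it above the current level; such moves occur on a time scale $O(1)$ (rate $O(1)$ since $[H(\eta^{x,y})-H(\eta)]_+=0$), and the energy is bounded, so after a time that is at most $\ee^{\delta\beta}=T_0\ee^{\delta\beta}$ (in fact polynomially many moves suffice) the configuration is $0$-irreducible with probability $1-\SES(\beta)$; the $\SES$ error is the probability that a sum of $O(1)$-rate exponential clocks exceeds $\ee^{\delta\beta}$, controlled by a Chernoff/Poisson bound as in the proof of Proposition \ref{prp:ambito}. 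The step to ${\cal X}_U$ is identical except that the enabling moves may now require paying an extra energy $\le U$, so they happen on time scale $\ee^{U\beta}$; hence within $T_U\ee^{\delta\beta}=\ee^{(U+\delta)\beta}$ the dynamics reaches ${\cal X}_U$, again with an $\SES$ error coming from a geometric/Poisson large-deviation estimate on the number and duration of the required steps. One must also check that these local relaxations do not push the configuration out of ${\cal X}^*$; this is where the restriction to ${\cal X}^*$ and the fact that the boxes $\bar\Lambda(t)$ are only redefined at times in ${\cal B}$ are used.

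Next I would handle $A=D$, $A=S$, $A=\Delta^+$, which concern the gas surrounding the droplets and rely on the QRW machinery of \cite{GdHNOS09} rather than on pure energetics. For ${\cal X}_D$: starting from a $U$-irreducible configuration, every active (non-sleeping) particle in $\Lambda(t)$ is, by definition, one that has been free at some point in the last $\ee^{D\beta}$; using the spread-out and non-superdiffusivity properties of QRWs (valid because we are inside ${\cal X}^*\subset{\cal X}^*_5$), such a particle either attaches itself to a cluster or is displaced so that it no longer lies in any local box, within a time $\ee^{D\beta}$ — at which point the box collection is redefined and the particle either becomes part of a droplet or exits $\Lambda(t)$ entirely; the probability that this fails for some active particle within $T_D\ee^{\delta\beta}$ is $\SES(\beta)$ by the QRW estimates together with a union bound over the (at most $\lambda(\beta)$, by ${\cal X}^*_1$) clusters. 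For ${\cal X}_S$ one additionally needs that boxes of volume $\ee^{S\beta}$ do not contain four or more active particles; the parameter $S$ was chosen precisely (Remark \ref{rmk:S}) so that under $\mu_{\cal R}$, and hence — by the environment estimates ${\cal X}^*_2,{\cal X}^*_3$ propagated along the dynamics — up to time $T^{\star}$, a $4$-tuple of nearby active particles is $\SES$-unlikely. Finally, for ${\cal X}_{\Delta^+}$ one must upgrade "union of clusters" to "union of at most $\lambda(\beta)$ quasi-squares with the prescribed empty annuli $[\bar\Lambda_i,\Delta-\alpha]$": the quasi-square shape follows from $0$- and $U$-irreducibility (a $0$- and $U$-irreducible cluster of the sizes permitted in ${\cal R}'$ is a quasi-square, as in the local model \cite{dHOS00a}), and emptiness of the annuli follows from the QRW spread-out estimate, since a free particle left in such an annulus would either fall asleep (creating a box creation event, i.e.\ entering ${\cal B}$ and forcing a redefinition that absorbs it) or drift away, all within the budget $T_{\Delta^+}\ee^{\delta\beta}=\ee^{(\Delta+\alpha+\delta)\beta}$; the bound $\Delta+\alpha$ on the exponent is exactly the one appearing in Theorem \ref{thm:qsrec}.

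\textbf{Main obstacle.} The delicate point is not the energetic relaxation to ${\cal X}_0$ and ${\cal X}_U$, which is routine, but the interplay between the \emph{dynamic} box collection $\bar\Lambda(t)$ and the recurrence to ${\cal X}_D$, ${\cal X}_S$, ${\cal X}_{\Delta^+}$: one has to argue that the boxes are redefined often enough (via the special times ${\cal B}$) that active particles are reliably either captured by a droplet or expelled, yet not so often that a particle keeps re-entering a box before falling asleep, and all of this must be compatible with the QRW coupling, which is only valid as long as no anomalous concentration occurs (exit of ${\cal X}^*_5$) and, crucially here, with the fact that we are working \emph{in the absence of coalescence} — two droplets drifting together would invalidate the quasi-square description. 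Making the QRW estimates of \cite{GdHNOS09} interact cleanly with the box dynamics of Definition \ref{def:boxes}, uniformly over the stopping time $\tau$ in the statement, is the real work; I expect this to be carried out by reducing, at each level, to a finite list of "bad events" (a clock taking too long, a particle returning too soon, a fourth active particle appearing, a coalescence or box creation at the wrong moment), each shown to have $\SES(\beta)$ probability by the tools of Section \ref{sec:tools} and Appendix \ref{sec:appb3}, and then taking a union bound over the at most $\ee^{C^{\star}\beta}$ relevant time steps as in the proof of Proposition \ref{prp:ambito}.
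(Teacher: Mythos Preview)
Your skeleton is the right one --- nested recurrence from ${\cal X}_0$ down to ${\cal X}_{\Delta^+}$, energetic arguments for the first two levels, QRW estimates for the last three --- and this is exactly how the paper proceeds. But three points are off.

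First, you try to show directly that reaching ${\cal X}_A$ within $T_A\ee^{\delta\beta}$ has probability $1-\SES(\beta)$. The paper does \emph{not} do this: it only shows that from any $\eta\in{\cal X}^*$ one reaches ${\cal X}_A$ (or exits ${\cal X}^*$) within $T_A\ee^{(\delta/4)\beta}$ with probability at least $\ee^{-(\delta/4)\beta}$, and then iterates over $\ee^{(3\delta/4)\beta}$ consecutive sub-intervals via the strong Markov property to get the $\SES$ bound. For $A\in\{S,\Delta^+\}$ you cannot avoid this iteration, because the constructed events (spreading out all quadruples, bringing in gas particles to build quasi-squares) genuinely have probability only $\ee^{-O(\delta)\beta}$, not $1-\SES$.

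Second, for $A=\Delta^+$ you write that ``a $0$- and $U$-irreducible cluster of the sizes permitted in ${\cal R}'$ is a quasi-square''. This is false: any rectangle is $U$-irreducible (every boundary particle has at least two neighbours), so for instance a $2\times 10$ rectangle is in ${\cal X}_U$ but is not a quasi-square. Reaching ${\cal X}_{\Delta^+}$ therefore requires an \emph{active construction}: the paper brings particles in from the gas to complete each cluster to a quasi-square (using the strong lower bounds of \cite[Theorem~3.3.1]{GdHNOS09}), then empties the annuli $[\bar\Lambda_i,\Delta-\alpha]\setminus\bar\Lambda_i$, while controlling the probability of box creation along the way via the estimate that eventually becomes \eqref{eq:newbox}. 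Your ``drift away or fall asleep'' argument for the annuli is fine, but the shape part needs this extra ingredient.

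Third, for $A=S$ you say the environment estimates ${\cal X}^*_2,{\cal X}^*_3$ make a $4$-tuple of nearby active particles $\SES$-unlikely. They do not: ${\cal X}^*_2$ only says there are at most $\lambda(\beta)$ such quadruples at any given time. The paper uses this to reduce to finitely many quadruples and then applies the QRW/IRW coupling to show that each one spreads out to diameter larger than $\ee^{S\beta/2}$ by time $T_S\ee^{\delta\beta}$ with probability at least $1-\ee^{-\delta\beta/2}$; a union bound over $\lambda(\beta)$ quadruples then feeds into the iteration above.

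Finally, the obstacle you flag --- the interaction between the dynamic box collection and the recurrence argument --- is real, but the paper sidesteps it rather than confronts it: for $A\leq S$ it works with an \emph{enlarged} auxiliary collection $\bar\Lambda'(t)\supset\bar\Lambda(t)$ built around all clusterised (not just sleeping) particles, which is stable under the constructed events and for which ${\cal X}'_A\subset{\cal X}_A$.
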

 
\noindent
To prove Proposition \ref{prop:rec}, we need the following lemma, whose proof is postponed until after the proof of Proposition \ref{prop:rec}.
 
\begin{lemma}
\label{lmm:sleeping}
Let $t\geq0$ be the time at which an active particle $p$ joins a cluster ${\cal C}$ with at most two particles, and let $t^*=t+(t_1\wedge t_2)\circ\vartheta_t$, where $t_1$ (respectively, $t_2$) is the first time when the cluster ${\cal C}$ contains at least four particles (respectively, does not contain particle $p$ anymore). The probability that particle $p$ falls asleep during the time interval $[t,t^*]$ is $\SES(\beta)$.
 \end{lemma}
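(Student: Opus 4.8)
The quantity to control is the probability that a newly clusterised active particle $p$, joining a small cluster $\cal C$ (with at most two particles, so $\cal C\cup\{p\}$ has at most three), manages to stay not-free for the entire duration $\ee^{D\beta}=\ee^{(U+d)\beta}$ before either $\cal C$ reaches four particles or $p$ detaches. The intuition is that a cluster of two or three particles is energetically very weak: such a configuration is $U$-reducible, so along the dynamics at constant boxes one can dissolve it without paying more than an extra energy $U$. Hence, on any time scale of order $\ee^{(U+d)\beta}$, with overwhelming probability the cluster breaks up (particle $p$ becomes free again) long before $p$ can fall asleep. I would therefore aim to show that, uniformly over the possible shapes of $\cal C\cup\{p\}$ with at most three particles, the dynamics exits the event $\{p\text{ clusterised, }|\cal C|\le 3\}$ within time $\ee^{(U+d')\beta}$ for some $d'<d$ with probability $1-\SES(\beta)$, which forces $p$ not to have accumulated the required $\ee^{(U+d)\beta}$ of continuous non-freeness.

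\textbf{Key steps.} First, I would enumerate the relevant local configurations: a cluster of exactly two particles (a dimer) together with $p$ attached, and a cluster of three particles with $p$ in it; in all cases the total number of clusterised particles is at most three, and geometrically the cluster is contained in a bounded region. Second, for each such configuration I would identify an explicit ``dissolution move'' that detaches $p$ (or reduces the cluster) paying an energy cost of at most $U$: breaking one bond costs $U$, and since every particle in a cluster of size $\le 3$ has at most two bonds, one can always free one particle at cost $\le 2U$, but more carefully a boundary particle of such a small cluster has exactly one bond, so detaching it costs exactly $U$. Third, I would invoke the standard reversibility/large-deviation machinery: starting from a configuration where the cost to dissolve is $\le U$, the dynamics reaches a configuration with $p$ free within a time of order $\ee^{U\beta}$ times a polynomial correction, with probability $1-\SES(\beta)$ — this is essentially the recurrence estimate of Proposition~\ref{prop:rec} applied with $A=U$ (or a direct application of the environment estimates together with the QRW bounds, since after detachment $p$ must be able to move away, which it can as long as the system stays in ${\cal X}^*$). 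Since $\ee^{U\beta}\ee^{\delta\beta}\ll\ee^{(U+d)\beta}$ for $\delta<d$, this contradicts $p$ remaining not-free for the full interval $[t,t^*]$ of length $\ee^{D\beta}$, unless an $\SES(\beta)$-event occurs or the cluster first grew to four particles / $p$ first detached — both of which are exactly the excluded alternatives defining $t^*$. Fourth, I would handle the bookkeeping that $t^*$ is itself random: on the event that $p$ stays clusterised and $|\cal C|\le 3$ throughout $[t,t^*]$, the above argument shows this event cannot last longer than $\ee^{(U+d/2)\beta}$ with probability $1-\SES(\beta)$, hence $p$ cannot fall asleep during it.

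\textbf{Main obstacle.} The delicate point is that ``$p$ falls asleep'' requires $p$ to be \emph{not free} — not necessarily clusterised — for the whole interval of length $\ee^{D\beta}$: in principle $p$ could detach from $\cal C$ but immediately re-attach, or hover next to the cluster without being movable to infinity, repeatedly, so that it never accumulates a free moment yet $\cal C$ never reaches four particles and $p$ is, strictly speaking, never ``out of $\cal C$'' in the sense that matters. So the real work is to show that the excursions during which $p$ is not-free-but-not-in-$\cal C$ are short and that $p$ genuinely becomes free (movable to infinity through non-clusterised particles, which is exactly where one needs the typical environment ${\cal X}^*$ and the non-superdiffusivity / spread-out properties of QRWs to guarantee no blocking gas around the cluster) before the clock $\ee^{D\beta}$ runs out. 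I expect the cleanest route is: (i) reduce to the event that $p$ is clusterised throughout, absorbing the ``not-free-but-not-clusterised'' excursions into $\SES(\beta)$ error via the QRW estimates; (ii) on that reduced event, use the $U$-reducibility of clusters of size $\le 3$ together with Proposition~\ref{prop:rec} (case $A=U$) to get an escape time $\lesssim\ee^{(U+\delta)\beta}\ll\ee^{D\beta}$; (iii) conclude. Threading the definition of ``free'' through step (i) carefully — so that ``$p$ becomes free at least once'' really does reset the sleeping clock — is where the main care is needed, but it is a direct consequence of the QRW spread-out bounds applied in the typical environment.
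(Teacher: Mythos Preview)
Your core idea is right and matches the paper's: a cluster of at most three particles is $U$-reducible, so it dissolves on time scale $\ee^{U\beta}$, which is much shorter than the $\ee^{D\beta}=\ee^{(U+d)\beta}$ needed for $p$ to fall asleep. The paper's proof, however, is far more direct than what you outline. It simply divides $[0,\ee^{D\beta}]$ into $\ee^{\frac34 d\beta}$ subintervals of length $\ee^{(U+\frac{d}{4})\beta}$, notes that on each subinterval a detachment occurs with probability at least $\ee^{-\frac{d}{4}\beta}$ (since the cost is at most $U$), and applies the Markov property to get $(1-\ee^{-\frac{d}{4}\beta})^{\ee^{\frac34 d\beta}}=\SES(\beta)$. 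No appeal to Proposition~\ref{prop:rec}, no QRW estimates, no environment ${\cal X}^*$. Your route through Proposition~\ref{prop:rec} with $A=U$ is not circular (that case is proved without Lemma~\ref{lmm:sleeping}), but it is unnecessary machinery for a three-line estimate.

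More importantly, your ``main obstacle'' is a red herring. You worry that $p$ might detach from $\cal C$, fail to be \emph{free} in the technical sense, and re-attach, so that the sleeping clock never resets. But look at the definition of $t^*$: the stopping time $t_2$ fires the instant the cluster $\cal C$ no longer contains $p$. Whether or not $p$ is free at that moment is irrelevant --- the interval $[t,t^*]$ simply ends there, and the lemma asks only about falling asleep \emph{within} that interval. Since $p$ was free at $t^-$ (it just joined $\cal C$), its sleeping clock starts from scratch at $t$, and if $t^*-t<\ee^{D\beta}$ then $p$ cannot fall asleep in $[t,t^*]$. So all you actually need is that $p$ (or, in the three-particle case, successive particles) detaches within $\ee^{D\beta}$ with probability $1-\SES(\beta)$, which is exactly the elementary iteration the paper does. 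Your steps (i) and (iii) involving the technical notion of ``free'' and the QRW spread-out bounds are not needed for this lemma.
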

     
\noindent {\it Proof of Proposition \ref{prop:rec}.}
Let $A\in\{0,U,D,S,\Delta^+\}$. Divide the time interval $[0,T_A\ee^{\delta\beta}]$ into $\ee^{\frac{3}{4}\delta\beta}$ intervals $I_j$ of length $T_A\ee^{\frac{\delta}{4}\beta}$. We have 
\begin{equation}
\label{eq:up1}
\begin{array}{ll}
\displaystyle\sup_{\eta\in{\cal X}^*} P_\eta(\tau_{{\cal X}_A}\wedge 
\tau_{{\cal X}_\beta\setminus{\cal X}^*}>T_A \ee^{\delta\beta})
&\leq\displaystyle\prod_{1 \leq j<\ee^{\frac{3}{4}\delta\beta}}
\sup_{\eta\in{\cal X}^*} P_\eta(\tau_{{\cal X}_A}, \tau_{{\cal X}_\beta\setminus{\cal X}^*}\notin I_j) \\
&=\displaystyle \Big(1-\inf_{\eta\in{\cal X}^*} P_\eta(\tau_{{\cal X}_A}\wedge \tau_{{\cal X}_\beta\setminus{\cal X}^*}
\leq T_A \ee^{\frac{\delta}{4}\beta})\Big)^{\ee^{\frac{3}{4}\delta\beta}},
\end{array}
\end{equation}
where we use the strong Markov property for the stopping time $\tau_{{\cal X}_A}$. By Proposition \ref{prp:ambito}, it suffices to prove that 
\begin{equation}
\label{eq:rec}
\inf_{\eta\in{\cal X}^*}P_\eta(\tau_{{\cal X}_A}\wedge \tau_{{\cal X}_\beta\setminus{\cal X}^*}
\leq T_A \ee^{\frac{\delta}{4}\beta})\geq \ee^{-\frac{\delta}{4}\beta}.
\end{equation}
In other words, for each $\eta$ in ${\cal X}^*$ we have to build a dynamical event on time scale $T_A\ee^{(\delta/4)\beta}$ and with probability at least $\ee^{-(\delta/4)\beta}$ such that the final configuration is in ${\cal X}_A$, provided our system does not exit ${\cal X}^*$. This is a standard estimate for metastable systems at low temperature, which has been carried out in full detail for a simplified version of our model \cite{dHOS00a}. Here we indicate the differences with respect to the earlier work.

To build ${\cal X}_A$, we used $\Lambda(t)=\cup_{1 \leq i \leq k(t)}\bar\Lambda_i(t)$, the connected components of which form our box collection $\bar\Lambda(t)$. For $A\leq S$ we use another box collection $\bar\Lambda'(t)$ such that $\Lambda'(t)=\cup_{1 \leq i \leq k'(t)}\bar\Lambda_i'(t)$, for which $\bar\Lambda_i'(t)$, $1 \leq i \leq k'(t)$, are the connected components of $\Lambda'(t)$, and such that $\Lambda(t)\subset\Lambda'(t)$ for all $t$. As a consequence, the associated ${\cal X}'_A$ is contained in ${\cal X}_A$. We need to consider this new collection $\bar\Lambda'(t)$ in order to avoid the creation of new local boxes when some particle outside of $\bar\Lambda(t)$ falls asleep before time $T_Ae^{\delta\beta}$. The construction of $\bar\Lambda'(t)$ is analogous to that in Definition \ref{def:boxes}, but now without removing the collection $\bar\Lambda^*(t)$, $t\geq0$, i.e., the boxes without sleeping particles, and by redefining the boxes at time $t$ when at least one of the conditions B1', B3 and B4 is violated by $\bar\Lambda'(t^-)$, with B1' being defined as B1 but referring to clusterised particles instead of sleeping particles. In this way the new collection satisfies conditions B1', B3 and B4 for any $t\geq0$.
     
\medskip\noindent
$\bullet$ \underline{Case $A=0$:}
Consider $\Lambda'(0)$, which contains all the clusterised particles at time 0 and is such that all particles outside $\Lambda'(0)$ are initially free. Let $\tau_c$ be the first time when two of these free particles collide, or one of them enters $\Lambda'(0)$. By \cite[Proposition 3.1.1 and Theorem 1]{G09}, the probability that $\tau_c>\ee^{\delta\beta}$ when starting from a configuration in ${\cal X}^*$ is larger than $\ee^{-\delta\beta}$ for $\beta$ large enough. Conditionally on this event, and as long as no clusterised particle in $\Lambda'(0)$ is at distance one from the internal border of $\Lambda'(0)$, the dynamics inside $\Lambda'(0)$ is independent from that outside. By construction, there are at least two particles in each $\bar\Lambda_i(0)$. By grouping them we can perform within time $\ee^{\delta\beta}$ the 0-reduction in $\bar\Lambda'(0)$ with a non-exponentially small probability, as in \cite{dHOS00a}: the only difference is that we are not working with a box $\bar\Lambda(0)$ of a bounded size but of a slowly growing size. However, we can deal with this box as in \cite[Appendix A]{GdHNOS09}.
      
\medskip\noindent
$\bullet$ \underline{Case $A=U$:}
We can proceed in the same way, except for the fact that to reach a $U$-irreducible configuration we may have to move some particle outside $\Lambda'(0)$. This happens for example when starting with a protuberance on a quasi-square (see \cite{dHOS00a}). We set
$$
\tilde\Lambda=\{\zeta\in\Lambda_\beta\colon\,
\exists\, x\in\Lambda'(0), ||\zeta-x||\leq2\}
$$
and to build the reduction event we ask that each free particle in $\Lambda_\beta\setminus\tilde\Lambda$ remains free, without entering $\tilde\Lambda$, for a time $T_U\ee^{\delta\beta}$. We also ask that each free particle in $\tilde\Lambda\setminus\Lambda'(0)$ moves to $\Lambda_\beta\setminus\tilde\Lambda$ without visiting $\Lambda'(0)$ or forming a new cluster. Conditionally on this event, the local configuration in $\Lambda'(0)$ can be $U$-reduced, with respect to $\Lambda'(0)$, within time $T_U\ee^{\delta\beta}$ with a non-exponentially small probability. Since no more than $\lambda(\beta)$ particles can leave $\Lambda'(0)$ on the described event,
\cite[Theorem 1]{G09} gives the desired bound.
     
\medskip\noindent
$\bullet$ \underline{Case $A=D$:}
We can simply use the same event as in the case $A=U$, but built on the slightly longer time scale $T_D\ee^{\delta\beta}$: all clusterised particles in our $U$-reduced configuration fall asleep.
     
\medskip\noindent
$\bullet$ \underline{Case $A=S$:}
We use again the same event, but built on the longer time scale $T_S\ee^{\delta\beta}$. By the coupling of QRWs and IRWs, the probability that a given quadruple of free particles at time $T_D\ee^{\delta\beta}$ has a diameter at most $\ee^{S\beta/2}$ at time $T_S\ee^{\delta\beta}$ is smaller than $\ee^{-\delta\beta/2}$, as a consequence of the spread-out property of the simple random walk given by the difference between the position of two of the four particles. By the non-superdiffusivity property, assuming that our process is in ${\cal X}_D\subset{\cal X}^*$ at time $T_D\ee^{\delta\beta}$, we only have to consider $\lambda(\beta)$ quadruples to check that by time $T_S\ee^{\delta\beta}$ we have reached ${\cal X}_S$: the probability that a particle exits $\Lambda'(0)$ within time $T_S\ee^{\delta\beta}\ll \ee^{2U\beta}$, and before the entrance of a new particle in $\Lambda'(0)$, is exponentially small in $\beta$. Consequently,
$$
P_\eta(X(T_S\ee^{\delta\beta})\in{\cal X}_S \hbox{ or } \tau_{{\cal X}^*}<T_S\ee^{\delta\beta})
\geq \ee^{-\frac{\delta}{4}\beta}-\ee^{-(2U-S-\delta)\beta}
- \lambda(\beta)\ee^{-\frac{\delta}{2}\beta}\geq \ee^{-\frac{\delta}{3}\beta}
$$
for $\beta$ large enough.
      
\medskip\noindent
$\bullet$ \underline{Case $A=\Delta+\alpha$:}
We have shown that within time $T_S \ee^{\delta\beta}$ the dynamics reaches ${\cal X}_S$ or exits ${\cal X}^*$ with probability $1-\SES(\beta)$. To build the event, we let particles enter the local boxes in order to form quasi-squares, before emptying the annulus between $[\Lambda(t),\Delta-\alpha]$ and $\Lambda(t)$ for a large enough $t$, while going to ${\cal X}_S$, all without the occurrence of a box creation. To control this event, we provide an upper bound for the probability that a box creation occurs after reaching ${\cal X}_S$. A box creation can occur with a non-$\SES(\beta)$ probability only when four particles are in a box of volume $\ee^{(D+\delta)\beta}$ at the same time $t<T_{\Delta^+}\ee^{(\delta/4)\beta}$. Indeed, starting from a cluster consisting of two or three particles only, the probability that a particle falls asleep is $\SES(\beta)$ by Lemma \ref{lmm:sleeping}. We estimate from above the probability to have four particles in a box of volume $\ee^{(D+\delta)\beta}$ at the same time $t<T_{\Delta^+}\ee^{(\delta/4)\beta}$. For $S<A'<\Delta^+$ we can estimate the probability that a given quadruple of particles with diameter $\ee^{A'\beta/2}$ arrives in a box of volume $\ee^{(D+\delta)\beta}$ within time $T_{\Delta^+}\ee^{\delta\beta}$, as follows. Divide the time interval $[\ee^{A'\beta},T_{\Delta^+}\ee^{\delta\beta}]$ into intervals of length $\ee^{D\beta}$, and divide at each initial time $i\ee^{D\beta}$ of such a time interval the volume $i\ee^{(D+\delta)\beta}$ centered at one of the particles into boxes of volume $\ee^{(D+\delta)\beta}$. Then, by the non-superdiffusivity property and the spread-out property of the QRWs, we get that the required probability is at most
$$
\ee^{\delta\beta}\displaystyle\sum_{\ee^{A'\beta}\leq i\ee^{D\beta} \leq \ee^{(\Delta+\alpha+\delta)\beta}}
\Bigg(\dfrac{i\ee^{(D+\delta)\beta}}{\ee^{(D+\delta)\beta}}\Bigg)
\Bigg(\dfrac{\ee^{(D+\delta)\beta}\ee^{\delta\beta}}{i\ee^{(D+\delta)\beta}}\Bigg)^4
\leq \ee^{2(D-A')\beta+O(\delta)\beta}.
$$
When $X(T_S\ee^{\delta\beta})\in{\cal X}^*$, this implies that the probability to have four particles in a box of volume $\ee^{(D+\delta)\beta}$ within time $T_{\Delta^+}\ee^{\delta\beta}$ is at most $\ee^{(A'+2D-4\Delta+\theta+4\alpha)\beta}\ee^{O(\delta)\beta}$, which is an increasing function of $A'$. Since $A'<\Delta^+$, we have that the required probability is less than $\ee^{\theta\beta}\ee^{(2D-3\Delta)\beta}\ee^{(4\alpha+O(\delta))\beta}$, which implies that 
\begin{equation}
\label{eq:newbox}
P\big(\hbox{a box creation occurs within time } T_{\Delta^+}\ee^{\delta\beta}\big)
\leq \ee^{-(3\Delta - 2U -\theta -2d)\beta}.
\end{equation}
This is exponentially small, so that we can work with a constant number of boxes.
     
We can now proceed as in \cite{dHOS00a} to bring in particles from the gas in order to build quasi-squares. One additional difficulty and one additional simplification occurs. While in \cite{dHOS00a} the local box was fixed, which makes motion of large droplets inside impossible, here our local boxes move with the droplets, so that there are no lacunary configuration issues. However, we cannot use the simple random walk estimates to give lower bounds on the probability of bringing particles from the gas into the local boxes: these have to be replaced by the strong lower bounds of \cite[Theorem 3.3.1]{GdHNOS09}. Once we have obtained quasi-squares only in $\Lambda(\tau)$ for some stopping time $\tau\leq T_{\Delta^+}\ee^{\delta\beta/2}$, we can build the same event that was used to deal with $A=S$ to empty the annulus $[\Lambda(\tau),\Delta-\alpha]\setminus\Lambda(\tau)$ without moving the boxes anymore while going back to ${\cal X}_S$.
\qed
     
\medskip
\noindent {\it Proof of Lemma \ref{lmm:sleeping}.}
Suppose that two active particles join together. Divide the time interval $[0,\ee^{D\beta}]$ into $\ee^{\frac{3}{4}d\beta}$ intervals of length $\ee^{(U+\frac{d}{4})\beta}$. We have
$$
P\big(\hbox{a particle is detached within time } \ee^{(U+\frac{d}{4})\beta}\big)
\geq \ee^{-\frac{d}{4}\beta}
$$
and so by the Markov property the probability to have a particle falling asleep is at most $(1-\ee^{-\frac{d}{4}\beta})^{\ee^{\frac{3}{4}d\beta}}=\SES(\beta)$. The case in which three active particles join together can be treated similarly.
\qed

%%%%%%%%%  SECTION 3 %%%%%%%%%%%%%%%%%%%%%%%%%%%%%
    
\section{Proof of theorems}
\label{sec:proofs}
   
Section~\ref{sec:lemmas} lists three key propositions that provide bounds on the probability of transitions between configurations consisting of a single quasi-square and free particles. The proofs of these propositions are deferred to Section~\ref{sec:lemmone}. Sections~\ref{sec:proof1}--\ref{sec:proof4} use the propositions to prove Theorems~\ref{thm:qsrec}--\ref{thm:cambioatipico}, respectively.

The pure gas state is defined as
\begin{equation}
\label{def:vuoto}
{\cal X}_E:=\{\eta\in{\cal X}_{\Delta^+}\colon\, \eta \hbox{ has no quasi-square}\}.
\end{equation}
     
%%%           
     
\subsection{Key propositions: Propositions \ref{prp:lbtipico}--\ref{prp:uppbound}}
\label{sec:lemmas}
    
Recall the definition of $\pi(\eta_0)\in\bar{\cal X}_{\Delta}$, $\eta_0\in{\cal X}_{\Delta^+}$, given in Section \ref{sec:mainres}. Denote by $(\ell_1,\ell_2) \in QS$ with $\ell_1 \geq 2$ the dimensions of the smallest quasi-square, if any, otherwise set $\ell_1=\ell_2=0$. Define the projections $\pi',\pi''\in\bar{\cal X}_{\Delta}$ similarly to the projections $\pi_i',\pi_i''$ defined in Section \ref{sec:mainres}. 

We start by giving a lower bound for the probability that the dynamics, starting from $\eta_0\in{\cal X}_{\Delta^+}$, has a projection that is distinct from $\pi(\eta_0)$ at time $\bar\tau_1$ without exiting the environment ${\cal X}^*$.
     
\begin{proposition}
\label{prp:lbtipico}
Assume that $\Delta<\Theta\leq\theta$. If $\eta_0\in{\cal X}_{\Delta^+}$, then for any $\delta>0$,
$$
P_{\eta_0}\left(\begin{array}{ll} 
\pi(X(\bar\tau_1))\neq\pi(\eta_0) \hbox{ or a coalescence}\\
\hbox{occurs before } \bar\tau_1 \hbox{ or } \bar\tau_1>\tau_{{\cal X}_\beta\setminus{\cal X^*}}
\end{array}
\right)
\geq \ee^{-[r(\ell_1,\ell_2)-\Delta+O(\alpha,d,\delta)]\beta}.
$$
\end{proposition}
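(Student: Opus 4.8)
\textbf{Proof strategy for Proposition \ref{prp:lbtipico}.}

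The plan is to exhibit, for each starting configuration $\eta_0 \in {\cal X}_{\Delta^+}$, an explicit dynamical event of probability at least $\ee^{-[r(\ell_1,\ell_2)-\Delta+O(\alpha,d,\delta)]\beta}$ on which either the projection changes at $\bar\tau_1$, or a coalescence occurs, or the environment ${\cal X}^*$ is exited before $\bar\tau_1$. The point of the disjunction is that on the favourable event we force a $\pi$-change, but we only need to control the scenario where coalescence and the exit from ${\cal X}^*$ do \emph{not} interfere. The construction is the standard ``reference path'' lower bound for metastable lattice gases as in \cite{dHOS00a,GdHNOS09}, but with the complication that we must (a) supply a free particle from the surrounding gas rather than have it available for free, and (b) keep all the \emph{other} droplets and free particles frozen so that $\bar\tau_1$ is indeed realised by the event we build.

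First I would reduce to the single-smallest-quasi-square picture: by Proposition \ref{prp:ambito} we may assume the dynamics stays in ${\cal X}^*$, and by the recurrence Proposition \ref{prop:rec} the return time $\bar\tau_1$ to ${\cal X}_{\Delta^+}$ is at most $\ee^{(\Delta+\alpha+\delta)\beta}$ with $1-\SES(\beta)$ probability. The bulk of the argument is then local: around the smallest quasi-square of dimensions $\ell_1 \times \ell_2$ (or, if $\ell_1=\ell_2=0$, around a cleared region of $\Lambda_\beta$), I build the transition that costs resistance $r(\ell_1,\ell_2)$. There are three regimes, matching the definition of $r$ in \eqref{def:res} and the definition of $\pi'$: (i) for $\ell_1 < \ell_c$ and $\ell_2 \ge 3$, the cheapest move is to \emph{erode} a row — detach particles one at a time from a shortest side, each detachment costing at most $\epsilon = 2U-\Delta$ except the last few which cost $U$, for a total of $(\ell_1-2)\epsilon + 2U$; (ii) for $\ell_1 \ge \ell_c$, one instead grows the droplet by bringing in $\ell_1$ free particles and building a new row, the bottleneck being the protuberance-plus-free-particle saddle at cost $2\Delta - U$; (iii) for $\ell_1=\ell_2=0$, one must nucleate a $2\times2$ square in the gas, at cost $r(0,0)=4\Delta-2U-\theta$, where the $\theta$ accounts for the entropy of the $\ee^{\Theta\beta}$ possible positions. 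In each regime I estimate the probability of the corresponding sequence of jumps against the equilibrium weights using reversibility \eqref{rev}, and the probability that the required free particles are delivered from the gas within the allotted time using the strong lower bounds on the spread-out property \cite[Theorem 3.3.1]{GdHNOS09} rather than bare random-walk estimates; the environment bounds ${\cal X}_4^*$, ${\cal X}_5^*$ guarantee enough particles are available nearby without anomalous concentration.

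The main obstacle — and the place where most of the work sits — is \emph{decoupling}: ensuring that while we perform the local transition, no other event resets $\bar\tau_1$ or invalidates the membership in ${\cal X}_{\Delta^+}$ at the end. Concretely I must simultaneously require that every other quasi-square resists (its particles stay put) on the time scale $\ee^{(r+\delta)\beta} \le \ee^{(2\Delta-U+\delta)\beta}$, that no free particle outside the active local box falls asleep (no box creation) and none enters a forbidden annulus $[\bar\Lambda_i,\Delta-\alpha]$, and that no two droplets coalesce; if any of these fail we land in the ``coalescence occurs'' or ``${\cal X}^*$ exited'' alternative and are still fine. Bounding the probability of this conjunction of ``good behaviour'' events from below by $1-\SES(\beta)$ uses the resistance estimate (Theorem \ref{thm:res}, or rather the one-sided lower-bound half of it proved from the same lemmas), the non-superdiffusivity of the QRWs, and Lemma \ref{lmm:sleeping} for the no-sleep requirement; multiplying by the local transition probability $\ee^{-[r(\ell_1,\ell_2)-\Delta+O(\alpha,d,\delta)]\beta}$ — where the $+\Delta$ is the entropic gain from the $\ee^{\Delta\beta}$-many time steps available and the $O(\alpha,d,\delta)$ absorbs the regularity-parameter slack and the discretisation of the clock — yields the claimed bound. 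I would present the $\ell_1 < \ell_c$, $\ell_2 \ge 3$ case in detail and indicate how the other two regimes are handled by the analogous bookkeeping, deferring the heaviest decoupling lemmas to Section \ref{sec:lemmone}.
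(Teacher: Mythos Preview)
Your three-case breakdown (shrink for subcritical, grow for supercritical, nucleate for empty) and the use of the QRW spread-out lower bounds match the paper's proof. The decoupling paragraph, however, has two real problems. First, a time-scale slip: $\bar\tau_1$ is a \emph{single} return step of order $\ee^{(\Delta+\alpha)\beta}$, not $\ee^{(r+\delta)\beta}$; you are conflating it with $\bar\tau_{c,i}$. For the shrinking mechanism the paper in fact works on the even shorter scale $\ee^{(\Delta-\alpha/2)\beta}$: combined with non-superdiffusivity and the ${\cal X}_{\Delta^+}$ annulus condition (no particle in any $[\bar\Lambda_i,\Delta-\alpha]$ at time $0$), this already prevents any external particle from reaching any local box, so interference with other boxes is handled directly, without any resistance estimate. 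Second, invoking Theorem~\ref{thm:res} is circular --- that theorem is proved \emph{from} this proposition --- and in any case unnecessary: you do not need other quasi-squares to resist, since if one of them changes then $\pi$ changes and you win by the disjunction. What you actually need is that when your local mechanism completes, the global configuration lies in ${\cal X}_{\Delta^+}$; for this just require that during the short window no particle enters or leaves any \emph{other} $\bar\Lambda_i$ and all free particles stay out of all annuli. That conjunction has probability $1-\ee^{-c\beta}$ for some $c>0$ (not $1-\SES$ as you write, since a $2U$-detachment elsewhere has probability $\ee^{-(\epsilon+O(\alpha))\beta}$), but this is all a lower bound needs.

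One bookkeeping point: your ``total of $(\ell_1-2)\epsilon+2U$'' is $r(\ell_1,\ell_2)$, not the probability exponent $r(\ell_1,\ell_2)-\Delta$. The $\Delta$-gain is not one global entropic factor; it enters $\ell_1-1$ times via the strong Markov property at each detachment (each $2U$-move is attempted over its own interval of length $\ee^{(\Delta-\alpha/2)\beta}$), yielding net cost $(\ell_1-1)(2U-\Delta)=r(\ell_1,\ell_2)-\Delta$.
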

     
\noindent
The proof of Proposition \ref{prp:lbtipico} is given in Section \ref{sub:prp1}.
     
We next give a lower bound on the probability that the dynamics, starting from $\eta_0\in{\cal X}_{\Delta^+}$ consisting of a single subcritical quasi-square, at time $\bar\tau_1$ reaches a configuration $X(\bar\tau_1)$ such that $\pi(X(\bar\tau_1))=\pi''$ without exiting the environment ${\cal X}^*$ and no box creation occurs before $\bar\tau_1$.
     
\begin{proposition}\
\label{prp:subgrow}
Assume that $\Delta<\Theta\leq\theta$. If $\eta_0\in{\cal X}_{\Delta^+}$ consists of a single $\ell_1\times \ell_2$ quasi-square with $2\leq \ell_1<\ell_c$, then for any $\delta>0$,
$$
P_{\eta_0}\left( 
\begin{array}{ll}
\pi(X(\bar\tau_1))=\pi'' \hbox{ and no box creation} \\
\hbox{occurs before } \bar\tau_1, \hbox{ or } \bar\tau_1>\tau_{{\cal X}_\beta\setminus{\cal X^*}}
\end{array}
\right)
\geq \ee^{-[\Delta-U+O(\alpha,d,\delta)]\beta}.
$$
\end{proposition}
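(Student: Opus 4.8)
The plan is to exhibit an explicit dynamical event, built on the time scale $\ee^{(\Delta-U)\beta}$ up to polynomial-in-$\beta$ and $\ee^{O(\alpha,d,\delta)\beta}$ corrections, on which the single subcritical quasi-square of dimensions $\ell_1\times\ell_2$ grows, along its longest side, into a quasi-square realising $\pi''=(\ell_2,\ell_1+1)$, while the gas around it stays regular and no particle outside $\Lambda(t)$ falls asleep. The heuristic is the standard one from the local model \cite{dHOS00a}: a subcritical quasi-square is unstable against \emph{losing} a protuberance but can be pushed against the drift by first forming a protuberance (cost $\Delta-U$, since detaching a particle from the gas costs $\Delta$ and attaching it to a side of the cluster gains $U$), and then, \emph{before the protuberance evaporates}, bringing in enough further particles from the gas to tile the new row, after which the new $\ell_2\times(\ell_1+1)$ quasi-square is reached and all its particles fall asleep. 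Each subsequent particle added to the growing row, once the protuberance is present, only has the bulk cost $\Delta$ of extracting it from the reservoir (the cost of attaching to the row edge is paid back), but because we can afford to wait a time $\ee^{(\Delta-U)\beta}$ and the gas has density $\ee^{-\Delta\beta}$, the QRW lower bounds of \cite[Theorem 3.3.1]{GdHNOS09} guarantee that such a particle reaches the relevant corner of the cluster within this time with probability bounded below by $\ee^{-O(\delta)\beta}$ — here the point is that the entropy of the box of volume $\ee^{\Theta\beta}$, $\Theta\le\theta$, is enough to find such a particle without further exponential loss. Thus the full event costs $\ee^{-(\Delta-U+O(\alpha,d,\delta))\beta}$: the single $\Delta-U$ comes from the protuberance creation, which is the only genuinely uphill step, and everything after that is free up to the $O(\delta)$ waiting losses.

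First I would set up the relevant local box $\Lambda'(0)$ (the enlarged, never-emptied box collection introduced in the proof of Proposition \ref{prop:rec}, so that no spurious box creation is triggered) and the annular neighbourhood $\tilde\Lambda$ as in the case $A=U$ there; I would ask that all free particles in $\Lambda_\beta\setminus\tilde\Lambda$ stay free and away from $\tilde\Lambda$ throughout, and that free particles in $\tilde\Lambda\setminus\Lambda'(0)$ except for the ones we are using wander off to $\Lambda_\beta\setminus\tilde\Lambda$ without clustering — by \cite[Theorem 1]{G09} this has non-exponentially-small probability since $\Theta\le\theta<2U$. Next I would describe the core construction step by step: (i) one particle detaches from the gas and attaches to a longest side of the quasi-square, forming a protuberance — cost $\Delta-U$, realised within time $\ee^{(\Delta-U+\delta)\beta}$ with probability $\ee^{-(\Delta-U+O(\delta))\beta}$ by \cite[Theorem 3.3.1]{GdHNOS09}; (ii) repeatedly, $\ell_1-1$ more times, a gas particle enters the box and slides into the growing row next to the protuberance before any detachment from the cluster occurs — each such step has probability bounded below by $\ee^{-O(\delta)\beta}$ and costs no extra exponent because the row-attachment energy gain $U$ compensates the extraction cost's surface part, leaving only the activation $\Delta$ which is paid for by the waiting time $\ee^{(\Delta-U)\beta}$ combined with the density; (iii) once the new row is complete we have an $\ell_2\times(\ell_1+1)$ quasi-square, and we let its clusterised particles fall asleep and the annulus $[\Lambda(\tau),\Delta-\alpha]\setminus\Lambda(\tau)$ be emptied, exactly as in the $A=S$ and $A=\Delta^+$ cases of Proposition \ref{prop:rec}, so that we land in ${\cal X}_{\Delta^+}$ with $\pi=\pi''$. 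Throughout I would note that, conditionally on staying in ${\cal X}^*$, the gas outside stays regular and at most $\lambda(\beta)$ particles are moved, so \cite[Theorem 1]{G09} applies; and I would invoke Lemma \ref{lmm:sleeping} to ensure that while the cluster still has at most three particles of the new row no particle falls asleep prematurely, hence no box creation is forced during the construction. Finally I would check that the time of the whole construction is at most $\ee^{(\Delta-U+\delta)\beta}$, so $\bar\tau_1$ occurs within this horizon (or the environment is exited first), and multiply the $O(1/\text{poly})$-many conditional probabilities to get the claimed lower bound $\ee^{-[\Delta-U+O(\alpha,d,\delta)]\beta}$.

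The main obstacle is step (ii): controlling the lower bound on the probability that, \emph{given} a protuberance is present on the quasi-square, a fresh gas particle is delivered from the reservoir to the adjacent corner within time $\ee^{(\Delta-U)\beta}$ and attaches so as to extend the row — and that this happens $\ell_1-1$ times in succession before the protuberance (or any cluster particle) detaches. One has to argue that the relevant "deposition rate" of gas particles at a prescribed lattice site is $\ee^{-(\Delta+o(1))\beta}$ (density times local arrival rate, via the spread-out lower bound of \cite[Theorem 3.3.1]{GdHNOS09}), so that over a time window $\ee^{(\Delta-U)\beta}$ the success probability per particle is $\ee^{-(U+o(1))\beta}\cdot$(entropy of the box), and that the box entropy $\ee^{\Theta\beta}$ with $\Theta$ up to $\theta$ is exactly enough to absorb the $U$ but not more — this is where the hypothesis $\Delta<\Theta\le\theta$ is used. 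Simultaneously one must bound above the probability that the partially built row collapses in the meantime, which is where reversibility and the resistance bookkeeping of \eqref{def:res} enter, together with the recurrence Proposition \ref{prop:rec} to guarantee that after each successful deposition the configuration returns to a $U$-irreducible state on the faster time scale $\ee^{U\beta}\ll\ee^{(\Delta-U)\beta}$, so the steps decouple. Making these competing estimates fit together with only an $O(\alpha,d,\delta)\beta$ total error — rather than accumulating a spurious $\ell_1$-dependent exponential loss — is the technical heart of the argument, and is precisely what the key lemmas of Section \ref{sub:lmmgen} are designed to supply.
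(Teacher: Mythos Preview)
Your plan has a genuine gap in both the time scale and the mechanism. You build the whole construction on the horizon $\ee^{(\Delta-U+\delta)\beta}$, but since $\eta_0\in{\cal X}_{\Delta^+}$ there is no gas particle in $[\bar\Lambda,\Delta-\alpha]$, and by non-superdiffusivity only particles within distance $\ee^{(\Delta-U)\beta/2}$ can reach $\bar\Lambda$ in that time. The expected number of such particles is of order $\ee^{(\Delta-U)\beta}\cdot\ee^{-\Delta\beta}=\ee^{-U\beta}\ll 1$, so with probability $1-\ee^{-U\beta}$ \emph{no} gas particle at all reaches the droplet in your window, let alone the $\ell_2$ you need. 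Your cost accounting is also mislocated: on any time scale, forming a single protuberance is essentially free once a particle has arrived; the genuine $\Delta-U$ cost is in having a \emph{second} particle arrive in the $\ee^{U\beta}$-lifetime of that protuberance. Your step (ii) therefore cannot have probability $\ee^{-O(\delta)\beta}$ per particle on your time scale---each delivery would cost an additional $\ee^{-U\beta}$ at best. The ``box entropy $\ee^{\Theta\beta}$'' you invoke to absorb this $U$ is spurious here: the particle must land next to a \emph{fixed} protuberance, so there is no spatial sum to perform. That entropy enters only in the empty case $\eta_0\in{\cal X}_E$, where the new droplet may nucleate anywhere in $\Lambda_\beta$; for growing a droplet already pinned at a location it plays no role, and $\Theta$ enters the present proposition only through the no-box-creation estimate \eqref{eq:newbox}.

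The paper's proof runs instead on the time scale $T_{\Delta^+}\ee^{\delta\beta}$ and simply reuses the supercritical growing mechanism from the proof of Proposition~\ref{prp:lbtipico}: the single expensive step is to have \emph{two} gas particles simultaneously inside $[\bar\Lambda,D-\delta]$ at some time before $T_{\Delta^+}\ee^{\delta\beta}$, which by the spread-out lower bounds has probability at least $\ee^{-[\Delta-U+O(\alpha,d,\delta)]\beta}$. Once both attach to the same side they form a two-particle bar, stable on scale $\ee^{2U\beta}>T_{\Delta^+}$, and the remaining $\ell_2-2$ particles are brought in one at a time within a further $T_{\Delta^+}\ee^{\delta\beta/2}$ each at cost only $\ee^{-O(\delta)\beta}$, by the strong lower bounds of \cite[Theorem~3.3.1]{GdHNOS09}. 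The no-box-creation clause is then obtained directly from \eqref{eq:newbox}, with no recourse to the lemmas of Section~\ref{sub:lmmgen} (those are needed for the upper bounds in Proposition~\ref{prp:uppbound}, not here).
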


\noindent
The proof of Proposition \ref{prp:subgrow} is given in Section \ref{sub:prp2}.
     
We finally provide upper bounds on the probability that typical and atypical transitions occur.

\begin{proposition}
\label{prp:uppbound}
Assume that $\Delta<\Theta\leq\theta$.\\
(1) If $\eta_0\in{\cal X}_{\Delta^+}$, then 
\begin{equation}
\label{eq:exit1}
\displaystyle\limsup_{\beta\rightarrow\infty}
\sup_{\pi(\eta_0)}\frac{1}{\beta}\log P_{\eta_0}\left(
\begin{array}{ll}
\pi(X(\bar\tau_1))\neq\pi(\eta_0)\hbox{ and a} \\
\hbox{coalescence does not occur} \\
\hbox{before }\bar\tau_1,
\hbox{ or } \bar\tau_1>\tau_{{\cal X}_\beta\setminus{\cal X^*}}
\end{array}
\right)
\leq -[r(\ell_1,\ell_2)-\Delta-O(\alpha,d)].
\end{equation}
(2) If $\eta_0\in{\cal X}_{\Delta^+}\setminus{\cal X}_E$, then
\begin{equation}
\label{eq:exit2}
\displaystyle\limsup_{\beta\rightarrow\infty} \sup_{\pi(\eta_0)}\frac{1}{\beta}\log P_{\eta_0}\left(
\begin{array}{ll}
\pi(X(\bar\tau_1))\notin\{\pi(\eta_0)\}\cup\pi' \hbox{ and}\\
\hbox{a coalescence does not occur} \\
\hbox{before }\bar\tau_1,
\hbox{ or } \bar\tau_1>\tau_{{\cal X}_\beta\setminus{\cal X^*}}
\end{array}
\right)
< -[r(\ell_1,\ell_2)-\Delta-O(\alpha,d)].
\end{equation}
\end{proposition}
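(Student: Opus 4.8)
\textbf{Proof plan for Proposition \ref{prp:uppbound}.}

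The plan is to reduce both upper bounds to a careful energy-landscape analysis of the local dynamics inside a single box, combined with the QRW estimates of \cite{GdHNOS09} to control the rate at which active particles are supplied from (or lost to) the surrounding gas. The starting point is the recurrence machinery of Section \ref{sec:rec}: by Proposition \ref{prop:rec} and Proposition \ref{prp:ambito}, up to $\SES(\beta)$ errors we may assume the process lives in ${\cal X}^*$ and returns to ${\cal X}_{\Delta^+}$ on the appropriate time scales, so that the event in question can be decomposed over the (at most $\lambda(\beta)$-many) excursions between consecutive visits to the sets ${\cal X}_0 \supset {\cal X}_U \supset {\cal X}_D \supset {\cal X}_S$. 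Since $\lambda(\beta)$ grows subexponentially, a union bound over these excursions costs nothing on the exponential scale, so it suffices to bound the probability of a \emph{single} excursion producing the forbidden change of $\pi$.

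First I would set up the local picture. Because no coalescence occurs and the system stays in ${\cal X}^*$, during the excursion the sleeping particles in each box $\bar\Lambda_i$ form a quasi-square, and by conditions B1--B4 together with the ${\cal X}^*_4, {\cal X}^*_5$ estimates only $O(\lambda(\beta))$ active particles are present near any box, with no four of them simultaneously in a box of volume $\ee^{S\beta}$ (this is exactly the content of ${\cal X}_S$). The key deterministic input is then the computation of the \emph{communication height} for the relevant local transitions: starting from an $\ell_1\times\ell_2$ quasi-square (plus the allowed free particles), any path in the configuration space that changes the quasi-square — either by growing a full new row/column, or by eroding one — must pass through an energy barrier of height exactly $r(\ell_1,\ell_2)$ above the starting level, as recorded in \eqref{def:res} and Fig.~\ref{fig:resistenza}; for part (2), reaching a projection \emph{outside} $\{\pi(\eta_0)\}\cup\pi'$ requires strictly more, because the cheapest move not in $\pi'$ forces either an extra protuberance that must be filled (adding at least $\epsilon$ or $U$) or the simultaneous involvement of a second droplet/box, both of which are strictly more costly, giving the strict inequality. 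This is the standard reversibility/cycle argument of \cite{OV04,dHOS00a}: the probability that a reversible Metropolis chain, started at the bottom of a cycle, exits over a saddle of height $h$ before returning to the bottom is $\ee^{-[h+o(1)]\beta}$.

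The main obstacle — and the reason the bound carries the $-\Delta$ correction and the $O(\alpha,d)$ slack — is that our dynamics is \emph{conservative}, so the "local" transition cannot be analysed in isolation: the particles that fill a protuberance or detach from the droplet must physically arrive from, or escape into, the gas, and the time cost of this transport must be converted into a probability cost via the QRW spread-out and non-superdiffusivity bounds \cite[Theorems 3.2.3--3.3.1]{GdHNOS09}. The clean way to handle this is to note that on the time scale of a return to ${\cal X}_{\Delta^+}$, which is $\ee^{(\Delta+O(\alpha,d))\beta}$ by Theorem \ref{thm:qsrec}, the number of "attempts" to cross the local barrier is at most $\ee^{(\Delta+O(\alpha,d))\beta}$ (each attempt needing a free particle to be brought adjacent to the droplet, which by the spread-out property happens at rate $\le \ee^{-\Delta\beta+O(\alpha,d)\beta}$ because the gas density is $\ee^{-\Delta\beta}$ and $|\bar\Lambda_i|$ is subexponential), while each attempt succeeds in crossing the barrier with probability $\le \ee^{-r(\ell_1,\ell_2)\beta}$ in the absence of the entropic gas factor — multiplying attempts by success probability and optimising gives the $-[r(\ell_1,\ell_2)-\Delta-O(\alpha,d)]$ exponent. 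I would carry this out by: (i) fixing the excursion and the local box; (ii) invoking Proposition \ref{prop:rec} to bound the excursion length; (iii) decomposing the forbidden event as "cross the local saddle during the excursion" and applying the reversibility estimate for the local chain with frozen gas; (iv) using \cite[Theorem 3.2.3]{GdHNOS09} (non-superdiffusivity) to control the number of particle arrivals, and \cite[Theorem 3.3.1]{GdHNOS09} for the per-arrival probability; (v) for part (2), upgrading the height from $r(\ell_1,\ell_2)$ to $r(\ell_1,\ell_2)+\min\{\epsilon,U\}$ by the geometric classification of saddles not leading to $\pi'$, which yields the strict inequality after absorbing $\min\{\epsilon,U\}$ into the gap; (vi) taking a union bound over the $\le\lambda(\beta)$ excursions and boxes. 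The delicate bookkeeping — matching the gas-transport exponent to exactly $\Delta$ and no more, uniformly over the (subexponentially many) boxes — is where the $O(\alpha,d)$ error is unavoidable and is the crux of the argument; the precise version of this bookkeeping is what the key lemmas of Section \ref{sub:lmmgen} are designed to supply, so in the body of the proof I would reduce \eqref{eq:exit1}--\eqref{eq:exit2} to Lemmas \ref{lmm:exitempty}, \ref{lmm:exitgen} and \ref{lmm:exit2x2} and defer the transport estimates there.
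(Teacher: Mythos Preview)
Your final reduction to Lemmas \ref{lmm:exitempty}, \ref{lmm:exitgen} and \ref{lmm:exit2x2} is correct and matches the paper's structure in Section~\ref{sub:prp3}: the proof of Proposition~\ref{prp:uppbound} is indeed a case split on $\eta_0\in{\cal X}_E$, $\ell_2\geq 3$, and $\ell_2=2$, with Step~5 handling general box collections, and the content is deferred to those lemmas.

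The gap is in your sketch of how those lemmas are proved. Your mechanism --- ``attempts times success probability'' with ``the reversibility estimate for the local chain with frozen gas'' --- is precisely what does \emph{not} work here, and the paper's entire Section~\ref{sub:lmmgen} is built around circumventing this. The gas cannot be frozen: during an excursion between returns to ${\cal X}_D$, active particles enter and leave $\bar\Lambda$, attach and detach, and the local configuration at the successive stopping times $\tau_k$ is not governed by a fixed-volume reversible chain. The standard cycle/communication-height machinery from \cite{OV04,dHOS00a} therefore does not give the exponent directly. What the paper does instead is introduce \emph{coloration rules} (green, red, yellow) for active particles encoding their history, couple these with \emph{permutation rules} that prioritise which particle is freed from a cluster, and then define an explicit list of large deviation events $A,B,C,C',D,D',E,F_{m+1},G,G',G_4',H_2$ (and variants $\tilde B,\tilde C,\tilde D,\tilde F_{m+1},G_3',H_3,H_3',I,J^{ij}$ for part~(2)) whose union $Z_1$ (resp.\ $Z_2$) has the claimed cost. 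The heart of the argument is an \emph{inductive} claim ${\cal P}(k)$ (resp.\ $\tilde{\cal P}(k)$) that on $Z_1^c$ the finite chain $\varphi^k=(X(\tau_i))_{i\leq k}$ cannot escape a prescribed graph ${\cal G}_1$ (resp.\ ${\cal G}_2$) of isoperimetric configurations; the induction tracks not just energy levels but also the colour inventory, and the most delicate part is controlling \emph{yellow} particles --- those that wake up without a $2U$-cost event --- which have no a priori large-deviation cost and must be handled purely combinatorially via the permutation rules and properties $\tilde{\cal P}(k)$(ii)--(vi).

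Your claim for part~(2) that the extra cost is $\min\{\epsilon,U\}$ is also not what the paper obtains: the actual gap $a$ in \eqref{munster} is $\gamma/2$ or $(1-\iota)\gamma$ in the subcritical case and $(\epsilon-\gamma)$ or $\gamma$ in the supercritical case, depending on whether $\iota=\ell_c-U/\epsilon$ lies below or above $1/2$. This comes not from a single higher saddle but from the cost of the event $\tilde F_{m+1}=F_{m+1}\cap(F_1\cup\bigcup_{i,j}J^{ij})$ or $\tilde H_2$, where $J^{ij}$ is a two-free-particle event analysed via Lemma~\ref{gex} and a random-walk-with-trap estimate, not via an energy-barrier argument at all.
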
     
     
\noindent
The proof of Proposition \ref{prp:uppbound} is given in Section \ref{sub:prp3}.

%%%     
     
\subsection{Proof of Theorem \ref{thm:qsrec}}
\label{sec:proof1}

Fix $\delta>0$. From Proposition \ref{prop:rec} we deduce that the event $\{\bar\tau_0\geq \ee^{(\Delta+\alpha+\delta)\beta},\bar\tau_0\leq T^{\star}\}$ has probability $\SES(\beta)$. Consider any $i\in\N_0$ such that $\bar\tau_{i+1}\leq T^{\star}$. The event $\{\bar\tau_{i+1}-\bar\tau_i>\ee^{(\Delta+\alpha+\delta)\beta}\}$ has probability $\SES(\beta)$. Indeed, this event would imply that either $\bar\sigma_{i+1}$ or $\bar\tau_{i+1}$ exceed $T_{\Delta^+} \ee^{\delta\beta}$, and both have probability $\SES(\beta)$. Indeed, in the former case, we have to control the probability that none of the particles inside the volume $[\bar\Lambda,\Delta+\alpha]$ enters $\bar\Lambda$ within a time $T_{\Delta^+}\ee^{\delta\beta}$. These particles are at least $\ee^{\frac{1}{2}\alpha\beta}$ in number, since the dynamics is in ${\cal X^*}$ because of the condition $\bar\tau_{i+1}\leq T^{\star}$. Hence this probability is $\SES(\beta)$ by the strong lower bounds associated with the spread-out property of QRWs (see \cite[Theorem 3.3.1]{GdHNOS09}). In the latter case, we conclude by using Proposition \ref{prop:rec}. Also the event $\{\bar\tau_{i+1}-\bar\tau_i<\ee^{(\Delta-\alpha-\delta)\beta}\}$ has probability $\SES(\beta)$. Indeed, this event would imply that $\bar\sigma_{i+1}$ is at most $\ee^{(\Delta-\alpha-\delta)\beta}$. This event has probability $\SES(\beta)$ by the non-superdiffusivity property if the configuration at time $\bar\tau_i$ is in ${\cal X}_{\Delta^+}\setminus{\cal X}_E$, otherwise it has probability zero by the condition $\bar\sigma_{i+1}=\ee^{\Delta\beta}$.
\qed

%%%     

\subsection{Proof of Theorem \ref{thm:res}}
\label{sec:proof2}
     
For $i\in N_0$, define
\begin{equation*}
K_i = \min\big\{k\in\N\colon\, \pi(X(\bar\tau_{i+k})) \neq \pi(X(\bar\tau_i))\big\}.
\end{equation*}
Up to coalescence and exit from ${\cal X}^*$, Proposition \ref{prp:lbtipico} and the first part of Proposition \ref{prp:uppbound} show that $K_i$ dominates and is dominated by a geometric random variable with success probability of order $\ee^{-(r(\ell_1,\ell_2)-\Delta)\beta}$. Together with Theorem \ref{thm:qsrec}, which gives uniform lower and upper bounds on the return times $\bar\tau_{j+1}-\bar\tau_j$, $j\in\N_0$, this proves Theorem \ref{thm:res}: the $\SES$ error in \eqref{eq1} is related to an anomalously large realisation of a geometric random variable, while an anomalously small realisation leads to an error that is only exponentially small in \eqref{eq2}. 
\qed

%%%
     
\subsection{Proof of Theorem \ref{thm:cambiotipico}}
\label{sec:proof3}

Proposition \ref{prp:lbtipico} and the second part of Proposition \ref{prp:uppbound} prove Theorem \ref{thm:cambiotipico} for any $i\in\N_0$ such that $X(\bar\tau_i)\in{\cal X}_{\Delta^+}\setminus{\cal X}_E$: these propositions provide the necessary lower and upper bounds on the denominator and numerator of the conditional probability. Otherwise, if $X(\bar\tau_i)\in{\cal X}_E$, then instead of using Proposition \ref{prp:uppbound} we conclude by using Remark \ref{rmk:qsdim} and arguing as in \eqref{eq:newbox} to show that the probability to have more than 4 particles in a box with volume of order $\ee^{D\beta}$ is exponentially smaller than the bound obtained in Proposition \ref{prp:lbtipico}.
\qed
     
%%%

\subsection{Proof of Theorem \ref{thm:cambioatipico}}
\label{sec:proof4}

Proposition \ref{prp:subgrow} and the first part of Proposition \ref{prp:uppbound} prove Theorem \ref{thm:cambioatipico}: they give the necessary upper and lower bounds on the denominator and numerator of the conditional probability.
\qed

%%%%%%%%%%%% SECTION 4 %%%%%%%%%%%%%%%%%

\section{Proof of propositions}
\label{sec:lemmone}
  
In Section~\ref{sub:prp1}--\ref{sub:prp3} we prove Propositions~\ref{prp:lbtipico}--\ref{prp:uppbound}, respectively. The proof of Proposition~\ref{prp:uppbound} relies on three additional lemmas, whose proof is deferred to Section~\ref{sub:lmmgen}.      
 
%%%

\subsection{Proof of Proposition \ref{prp:lbtipico}}
\label{sub:prp1}

Fix $\delta>0$. Since 
$$
P_{\eta_0}(\pi(X(\bar\tau_1))\neq\pi(\eta_0)) \geq P_{\eta_0}(\pi(X(\bar\tau_1))=\pi'),
$$
we need to bound from below the probability that a typical transition of the dynamics on ${\cal X}_{\Delta^+}$ occurs.

\medskip\noindent
{\bf 1.}    
We start by considering the supercritical case $\ell_1>\ell_c$. Since in this case $r(\ell_1,\ell_2)=2\Delta-U$, it suffices to exhibit a mechanism to grow within time $T_{\Delta^+}\ee^{\delta\beta}$ with probability at least $\ee^{-(\Delta-U+O(\alpha,d,\delta))\beta}$. Within time $T_{\Delta^+}\ee^{\delta\beta/2}$ bring two particles from the gas inside one of the volumes $[\bar\Lambda_i, D-\delta]$. Attach the two particles in time $\ee^{(D+\delta)\beta}$. Complete the quasi-square with particles from the gas. Let $\tau$ be the first time at which there are two active particles inside one of these volumes. On the time scale we are interested in, particles can arrive inside the box $\Lambda$, but before time $\tau$ only one can be active. Thus, by using the recurrence property to ${\cal X}_U$, we know that this active particle can attach itslef to the quasi-square inside $\Lambda$, but it does not feel asleep with probability $1-\SES$. Moreover, via the interaction with this active particle the cluster can move, but in such a way that $\Lambda(t)\subset[\Lambda(0), D-\delta]$ for any $t$. Indeed, any redefinition of the local box, implied by the movement of the cluster, is related to a free particle that moves in $\Lambda$. We show that the probability that the number of these box special times exceeds $\ee^{O(\alpha,\delta)\beta}$ is $\SES$.

Since the dynamics belongs to the environment ${\cal X}^*$, by the non-superdiffusivity property of the QRWs we know that at most $\ee^{3\alpha\beta/2}$ particles can interact with $\Lambda$ within time $T_{\Delta^+}\ee^{\delta\beta}$. Each particle no longer visits $\Lambda$ after each box special time associated with it with a probability lat least $1/(\log \exp (\Delta+O(\alpha,\delta))\beta)$.Thus,
$$
P(\hbox{there are more than } \ee^{O(\alpha,\delta)\beta} \hbox{ visits in } \Lambda)
\leq \Biggl(1-\frac{1}{(\Delta+O(\alpha,\delta))\beta}\Biggr)^{\ee^{O(\alpha,\delta)\beta}}
= \SES(\beta).
$$
Thus, up to an event of probability $\SES$, we deal with the fixed target volume $[\Lambda(0), D-\delta]$. In addition, we deal with a constant number of local boxes, since we can control the probability that a box creation occurs  within time $T_{\Delta^+}\ee^{\delta\beta}$ via the estimate derived in the proof of Proposition \ref{prop:rec}. To check that the resulting order of probability is correct, we proceed as follows. Divide the time interval $[0,T_{\Delta^+}\ee^{\delta\beta}]$ into intervals $[t_i,t_i+\ee^{(D+\delta)\beta}]$, with $1 \leq i < \ee^{(\Delta+\alpha-D)\beta}$. By considering $T_i=i \ee^{(D+\delta)\beta}$, and using the non-superdiffusivity property and the lower bound associated with the spread-out property of the QRWs (see \cite[Theorem 3.2.5(ii)]{GdHNOS09}), we get 
\begin{equation}
P(\tau<\ee^{(\Delta+\alpha+\delta)\beta}) \geq
\sum_{\ee^{(\Delta+\alpha-\delta)\beta}\leq i\ee^{(D+\delta)\beta}\leq \ee^{(\Delta+\alpha+\delta)\beta}}
\Biggl(\frac{\ee^{(D-\delta)\beta}}{i\ee^{(D+\delta)\beta}\ee^{\delta\beta}}\Biggr)^2
\geq \ee^{-[\Delta-U+O(\alpha,d,\delta)]\beta}.
\end{equation}
Let these two active particles inside $[\Lambda(0), D-\delta]$ at time $\tau$ attach themselves to the quasi-square. By using the non-superdiffusivity property and the stronger, higher resolution, lower bounds associated with the spread-out property of the QRWs, we get that this probability is at least $\ee^{-O(\delta)\beta}$. Arguing in the same way, we obtain an analogous lower bound for the probability to complete the quasi-square with particles from the gas in time $T_{\Delta^+}\ee^{\delta\beta/2}$. We conclude by using the strong Markov property at times $\tau$ and those corresponding to each attachment of the particles to the cluster in $\Lambda$.
    
\medskip\noindent
{\bf 2.} 
Next consider the subcritical case $\ell_1<\ell_c$. We start with $\eta_0\in{\cal X}_E$. Since in this case $r(\ell_1,\ell_2)=4\Delta-2U-\theta$, it suffices to exhibit a mechanism to create a $2\times2$ droplet within time $T_{\Delta^+}\ee^{\delta\beta}$ with probability at least  $\ee^{-(3\Delta-2U-\theta+O(\alpha,d,\delta))\beta}$. Within time $T_{\Delta^+}\ee^{\delta\beta/2}$ bring four particles from the gas inside a box of volume $\ee^{(D-\delta)\beta}$. Attach two of these particles within time $\ee^{(D+\delta)\beta}$. Move the other two particles at a finite distance from the dimer within time $\ee^{(U-\delta/2)\beta}$. Given a fixed site $x\in\Lambda_\beta$, let $\tau$ be the first time at which there are four active particles in a box of volume $\ee^{(D-\delta)\beta}$ centered at $x$. To check that the resulting order of probability is correct, we proceed as follows. Divide the time interval $[0,T_{\Delta^+}\ee^{\delta\beta}]$ into intervals $[t_i,t_i+\ee^{(D+\delta)\beta}]$ with $1 \leq i<\ee^{(\Delta+\alpha-D)\beta}$. By considering $T_i=i \ee^{(D+\delta)\beta}$, and using the non-superdiffusivity property and the lower bound associated with the spread-out property of the QRWs (see \cite[Theorem 3.2.5(ii)]{GdHNOS09}), we get 
\begin{equation}
P(\tau<\ee^{(\Delta+\alpha+\delta)\beta}) \geq
\sum_{\ee^{(\Delta+\alpha-\delta)\beta}\leq i\ee^{(D+\delta)\beta}\leq \ee^{(\Delta+\alpha+\delta)\beta}}
\Biggl(\frac{\ee^{(D-\delta)\beta}}{i\ee^{(D+\delta)\beta}\ee^{\delta\beta}}\Biggr)^4
 \geq \ee^{-3[\Delta-U+O(\alpha,d,\delta)]\beta}.
\end{equation}
Let $\sigma$ be the first time at which two among these four active particles form a dimer for the first time at a finite distance from the site $x$. By using the non-superdiffusivity property and the stronger lower bounds associated with the spread-out property of the QRWs,
we get
\begin{equation}
\label{eq:distfinita}
P(\sigma<\ee^{(D+\delta)\beta})\geq	
\int_{\ee^{(D-\delta)\beta}}^{\ee^{(D+\delta)\beta}}\Biggl(\frac{1}{t\ee^{\delta\beta}}\Biggr)^2 dt
\geq \ee^{-[U+O(\delta,d)]\beta}.
\end{equation}
Now let the other two active particles attach themselves to the dimer formed at time $\sigma$ within time $\ee^{(U-\delta/2)\beta}$, so that the dimer is still present with probability $1-\SES$. Arguing as before, we deduce that this probability is at least $\ee^{-O(\delta)\beta}$. Finally we observe that these creations of a first cluster of sleeping particles around a given site $x$ are disjoint events up to an event with negligible probability, the probability of which is controlled as in \eqref{eq:newbox}. By summing over all the sites $x\in\Lambda_\beta$ and applying the strong Markov property at the times $\tau$, $\sigma$ and those corresponding to the attachment of the third particle to the dimer, we get the claim.

\medskip\noindent
{\bf 3.}    
Finally, consider the case $\ell_1\geq 2$. It suffices to exhibit a mechanism to shrink within time $\ee^{(\Delta-\alpha+\delta)\beta}$ with a probability at least $\ee^{-(r(\ell_1,\ell_2)-\Delta+O(\alpha,d,\delta))\beta}$. The mechanism to shrink is the following: detach a row of $\ell_1$ particles and bring each particle outside the volume $[\Lambda,\Delta-\alpha]$ within time $\ee^{(\Delta-\alpha/2)\beta}$. Note that at time $t=0$ there are at most $\lambda(\beta)/4$ particles inside the volume $[\Lambda,\Delta-\alpha/4]$ because the dynamics starts in ${\cal X}^*$. Thus, by the non-superdiffusivity property it follows that, up to an event of probability $\SES$, these are the only particles that can enter $[\Lambda,\Delta-\alpha]$ within time $\ee^{(\Delta-\alpha/2)\beta}$. We can therefore argue as in the proof of Proposition \ref{prop:rec} for $A=U$ with the following differences. For the first $\ell_1-1$ particles we obtain that the probability for each one of them to be detached is at least $\ee^{(-(2U-\Delta)-O(\alpha,\delta))\beta}$. Indeed, divide the time interval $[0,\ee^{(\Delta-\alpha/2)\beta}]$ into intervals $S_i$ of length $\ee^{D\beta}$, with $1 \leq i<\ee^{(\Delta-D-\alpha/2)\beta}$. Then the probability to detach one of these particles is at least
$$
\ee^{-\delta\beta}\sum_{1 \leq i < \ee^{(\Delta-D-\alpha/2)\beta}}
P(\hbox{there is a move of cost } 2U 
\hbox{ between } i\ee^{D\beta} \hbox{ and } (i+1)\ee^{D\beta})
\geq \ee^{[-(2U-\Delta)-O(\alpha,\delta)]\beta}.
$$
After applying the strong Markov property at each of the detaching times and observing that the probability of detaching the last particle at cost $U$ within time $\ee^{(\Delta-\alpha/2)\beta}$ is at least $\ee^{-O(\delta)\beta}$, and also the probability that no particle is inside the annulus $[\Lambda,\Delta-\alpha]\setminus\Lambda$ because of the lower bounds associated with the spread-out property of the QRWs.
\qed
   
%%%

\subsection{Proof of Proposition \ref{prp:subgrow}}
\label{sub:prp2}

Fix $\delta>0$. Since in this case $\pi''=(\ell_2\times(\ell_1+1))$, in order to get the claim it suffices to exhibit a mechanism to grow with a probability at least $\ee^{-[\Delta-U+O(\alpha,d,\delta)]\beta}$. The mechanism is the same as for the supercritical case used in the proof of Proposition \ref{prp:lbtipico}. Since now we are interested in not having a box creation before time $\bar\tau_1$, we obtain the desired lower bound after using the estimate in \eqref{eq:newbox}.
\qed
    
%%%
     
\subsection{Proof of Proposition \ref{prp:uppbound}}
\label{sub:prp3}
	
Since we need to control all the possible mechanisms to grow and shrink, the proof of Proposition \ref{prp:uppbound} is much more involved than the proofs of Propositions~\ref{prp:lbtipico}--\ref{prp:subgrow}, and is organised into steps. We start by considering the case $\eta_0\in{\cal X}_E$. We assume that there is a single finite box for the starting configuration $\eta_0$, namely, $\eta_0\in{\cal X}_{\Delta^+}$ consisting of a single quasi-square of size $\ell_1 \times \ell_2$. Abusing notation, we refer to the current box $\Lambda=\bar\Lambda_0$ as $\bar\Lambda$ instead of $\bar\Lambda_0$. This is needed in order to make the proof clearer. We will see later how to derive the statement for general boxes. 

The key steps in the proof are the following:
\begin{description}
\item[Step 1:] Introduce coloration and permutation rules (Section \ref{subsec:color}).
\item[Step 2:] Consider the case $\h_0\in{\cal X}_E$ (Section \ref{sub:step2}).
\item[Step 3:] Consider the case $\h_0\in{\cal X}_{\Delta^+} \setminus {\cal X}_E$ and $\ell_2 \geq 3$ (Section \ref{sub:step3}).
\item[Step 4:] Consider the case $\h_0\in{\cal X}_{\Delta^+} \setminus {\cal X}_E$ and $\ell_2 = 2$ (Section \ref{sub:step4}).
\item[Step 5:] Derive the statement for a general collection of finite boxes $\bar\Lambda = (\bar\Lambda_i)_{i \in I}$ (Section \ref{sub:step5}).
\end{description}

\noindent
In step 1 we introduce the notion of \emph{colours for particles and their permutation rules}, which are needed in steps 2--5. In each of steps 2--4 we state a key lemma and explain how to derive the statement of interest from it. The proofs of the lemmas are deferred to Section \ref{sub:lmmgen}, which is the technical core of the present paper. 

Recall that we are considering the case in which there is a single finite local box $\bar\Lambda$. We call ${\cal I}(n)$ the set of configurations $\eta$ such that $\bar\eta$ is of size $|\bar\eta|=n$ and is the solution of the associated isoperimetric problem. We use the notation ${\cal I}(n)^{fp}$ to indicate the presence of a free particle in $\bar\Lambda$. Moreover, we call ${\cal I}(0)$ the set of configurations for which there is no local box $\bar\Lambda$. We introduce the sequence $(\tau_k)_{k\in\N_0}$ of return times in ${\cal X}_{D}$ after seeing an active particle in $\bar\Lambda$ as follows. Put $\tau_0=0$ and, for $i \in \N_0$, define
\begin{equation}
\label{activereturn}
\sigma_{i + 1} = \inf\left\{t > \tau_i\colon\, \hbox{there is an active particle inside $\bar\Lambda(t)$ at time $t$} \right\}
\end{equation}
and
\begin{equation}
\label{deltareturn2}
\tau_{i + 1} = \inf\left\{t > \sigma_{i + 1}\colon\, X(t) \in {\cal X}_{D} \right\}.
\end{equation}
Note the difference between \eqref{activereturn}-\eqref{deltareturn2} and \eqref{activereturn1}-\eqref{deltareturn1}. Let $\varphi^k$ be the finite-time Markov chain $\varphi^k = (X(\tau_i))_{0 \leq i \leq k}$, and put
$$
n = \max\left\{k \geq 0\colon\,\tau_k < T_{\Delta^+} \ee^{\delta \beta}\right\}.
$$
Finally, set $\iota=\ell_c-U/\epsilon \in (0,1)$.

%%%
    
\subsubsection{Step 1: Coloration and permutation rules}
\label{subsec:color}

Divide the particles into \emph{active} particles and \emph{sleeping} particles: a notion that is related to {\it free} particle. Define
$$
\hat{\cal X}_N := \{(z_1,\dots,z_N)\colon\,z_i\neq z_j \, \forall\, i,j\in\{1,...,N\},i\neq j\},
$$
a set of $N$ labelled particles. We say that a particle $i \in \{1,\ldots,N\}$ is free at time $t_0\geq0$ if there exists a trajectory $\hat\eta\colon\,t \in [t_0,t_0+T] \mapsto \hat\eta(t) \in \hat{\cal X}_N$ that respects the rules of the dynamics and satisfies (see the construction carried out in \cite[Section 2.2]{GdHNOS09} and recall that $T_{\alpha}=\ee^{(\Delta-\alpha)\beta}$ with $\alpha>0$)
\begin{itemize}
\item[(i)] $||\hat\h_i(t_0+T)-\hat\eta_i(t_0)||_2>T_{\alpha}^{1/2}$.
\item[(ii)] $\forall \ t\in[t_0,t_0+T]\colon\, {\cal U}(\hat\eta(t))^{cl} = {\cal U}(\hat\eta(t_0))^{cl}$.
\end{itemize}
For $t>\ee^{D\beta}$, a particle is said to be sleeping at time $t$ if it was not free during the entire time interval $[t - \ee^{D \beta}, t]$. A non-sleeping particle is said to be active. By convention, prior to time $\ee^{D\beta}$ all particles are active.
    
\medskip\noindent
$\bullet$ {\bf Coloration rules.} These are for active particles only: sleeping particles have no color.
\begin{enumerate}
\item 
All particles in $[\bar\Lambda, \Delta - \delta]^c$ are green and remain green when entering $[\bar\Lambda, \Delta - \delta]$. Any particle that leaves $[\bar\Lambda, \Delta - \delta]$ is colored green.
\item 
When a particle wakes up in $\bar \Lambda$ at some time $t$ it is colored red if the following rules are satisfied:
\begin{itemize}
\item[(i)] $t = \sigma_i$ for some $i > 0$.
\item[(ii)] The particle is the only one that is active in $\bar \Lambda$ at time $t$.
\item[(iii)] There was a move of cost $2U$ or two ``$\delta$-close moves'' of cost $U$, i.e., both in the time interval $[t - \ee^{\delta \beta}, t]$.
\end{itemize}
\item Color yellow any particle that wakes up without being colored red.
\end{enumerate}
It follows from these rules that at time $t = 0$ all clusterized particles are without color, all active particles are green, a green particle cannot change color but can only loose color, any particle can loose its color by falling asleep, an awaking particle cannot be colored green at a wake-up time, and a colored particle can change color (from red or yellow to green) only when leaving $[\bar\Lambda, \Delta - \delta]$.
    
\medskip\noindent
$\bullet$ {\bf Permutation rules.} We couple the color rules with labelling rules by building a hierarchy on clusterized particles in the same cluster. The higher particles in this hierarchy are the sleeping ones, followed by yellow, then red, and finally green particles. To compare two sleeping particles or two particles with the same color, we say that the lower one in the hierarchy is the last aggregated particle in their shared cluster, and we break ties by some random rule. At each time $t$ when some particle has to be freed from a cluster, we set particle positions to ensure that this particle is the lowest one in the cluster hierarchy at time $t^-$. This is compatible with the local permutation rule associated with quasi-random walks.
    
The reason why we prefer to release green and red particles rather than yellow particles is that we have much less control on the latter. We also want to have to control the smallest possible number of active particles, which is why we place sleeping particles at the highest rank in the hierarchy, and we introduce the time aggregation rule to give more chance to fall asleep to any particle that was about to do so.
    
%%%
    
\subsubsection{Step 2: Starting configuration has no square: Lemma \ref{lmm:exitempty}}
\label{sub:step2}
    
Consider the case in which the starting configuration $\eta_0\in{\cal X}_{\Delta^+}$ has no quasi-square, i.e., $\eta_0\in{\cal X}_E$ (recall Definition \ref{def:recurrence} and \eqref{def:vuoto}). Then we need to prove the first part of Proposition \ref{prp:uppbound} only. The following lemma controls the exit of the dynamics from the pure gas state, which corresponds to the creation of the first droplet and therefore to the creation of a new local box.

\begin{lemma}
\label{lmm:exitempty}
Assume that $\Delta<\Theta\leq\theta$. For $\eta_0$ in ${\cal X}_{E}$,
\begin{equation}
\limsup_{\beta \rightarrow \infty} {1 \over \beta} \log P_{\eta_0}\left(
\hbox{a box creation occurs within time }
\bar\tau_1\right) \leq -[3\Delta - 2U -\theta -O(\alpha,d)].
\end{equation}
\end{lemma}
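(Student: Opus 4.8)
The plan is to reduce a box creation out of the pure gas state $\eta_0\in{\cal X}_E$ to the event that four particles simultaneously occupy a box of volume of order $\ee^{D\beta}$ within time $\bar\tau_1$, and then to estimate the probability of the latter by the same kind of QRW counting argument used in the proof of Proposition~\ref{prop:rec} for the case $A=\Delta+\alpha$. Recall that a box creation at time $t$ means that an active particle outside $\Lambda(t^-)$ falls asleep. Starting from ${\cal X}_E$, for a particle to fall asleep it must first belong to a cluster, and by Lemma~\ref{lmm:sleeping} a cluster with at most three particles produces a sleeping particle only with $\SES(\beta)$ probability. Hence, up to a $\SES(\beta)$ event, a box creation before $\bar\tau_1$ forces a cluster of at least four particles to be formed, which in turn forces four particles to occupy a common box of volume $\ee^{(D+\delta)\beta}$ at some time $t<T_{\Delta^+}\ee^{(\delta/4)\beta}$ (since on $\{\bar\tau_1\leq T^\star\}$ the recurrence to ${\cal X}_{\Delta^+}$ via Proposition~\ref{prop:rec} controls the length of the excursion, and $\bar\tau_1$ itself is at most $T_{\Delta^+}\ee^{\delta\beta}$ with probability $1-\SES(\beta)$ by Theorem~\ref{thm:qsrec}). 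So it suffices to bound the probability that such a four-particle concentration occurs.

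For this I would first observe that, since $\eta_0\in{\cal X}_E\subset{\cal X}^*$, the environment estimates restrict the relevant quadruples: starting from ${\cal X}^*_2$, ${\cal X}^*_3$ we control, for each scale $\sqrt{\ee^{A\beta}}$ with $S<A<\Delta^+$, the number of $4$-tuples of particles lying in distinct connected components of diameter smaller than $\sqrt{\ee^{A\beta}}$; there are at most $\ee^{(3A-4\Delta+\theta+4\alpha)\beta}$ of them (and at most $\lambda(\beta)$ at scale $\sqrt{\ee^{S\beta}}$). For a fixed quadruple whose mutual diameter is of order $\ee^{A'\beta/2}$ with $S<A'<\Delta^+$, the probability that all four arrive in a common box of volume $\ee^{(D+\delta)\beta}$ within time $T_{\Delta^+}\ee^{\delta\beta}$ is, by the non-superdiffusivity property and the (upper) spread-out property of QRWs, at most $\ee^{2(D-A')\beta+O(\delta)\beta}$ — this is exactly the computation displayed in the proof of Proposition~\ref{prop:rec}, obtained by partitioning $[\ee^{A'\beta},T_{\Delta^+}\ee^{\delta\beta}]$ into intervals of length $\ee^{D\beta}$ and the volume $i\,\ee^{(D+\delta)\beta}$ around one particle into boxes of volume $\ee^{(D+\delta)\beta}$. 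Multiplying the per-quadruple bound by the number of quadruples from the environment estimate gives, for each $A'$, a contribution at most $\ee^{(A'+2D-4\Delta+\theta+4\alpha)\beta}\ee^{O(\delta)\beta}$, which is increasing in $A'$; taking $A'\uparrow\Delta^+=\Delta+\alpha$ yields a bound of order $\ee^{(2D-3\Delta+\theta+5\alpha)\beta}\ee^{O(\delta)\beta}=\ee^{-(3\Delta-2U-\theta-2d-O(\alpha))\beta}\ee^{O(\delta)\beta}$, and summing the $O(1/\delta)$ dyadic scales in $[S,\Delta^+]$ only changes this by a polynomial (hence $\ee^{O(\delta)\beta}$) factor. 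Since $\delta>0$ is arbitrary, this gives $\limsup_{\beta\to\infty}\tfrac1\beta\log P_{\eta_0}(\cdots)\leq -[3\Delta-2U-\theta-O(\alpha,d)]$, as claimed.

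The main obstacle I anticipate is the passage from the geometric statement "a box creation occurs before $\bar\tau_1$" to the clean combinatorial event "four particles in a box of volume $\ee^{(D+\delta)\beta}$ at a common time". One has to argue carefully that: (a) the excursion away from ${\cal X}_{\Delta^+}$ up to $\bar\tau_1$ does not last longer than $T_{\Delta^+}\ee^{\delta\beta}$ except with $\SES(\beta)$ probability (Theorem~\ref{thm:qsrec} / Proposition~\ref{prop:rec}); (b) the only non-$\SES(\beta)$ way for a particle outside the boxes to fall asleep before $\bar\tau_1$ is through the formation of a four-particle cluster, which uses Lemma~\ref{lmm:sleeping} together with the fact that in ${\cal X}_E$ there are no clusters at all to begin with, so any incipient cluster starts from isolated particles; and (c) the time-horizon truncation $\bar\tau_1\wedge T^\star$ keeps the dynamics in ${\cal X}^*$ (Proposition~\ref{prp:ambito}), so that the environment estimates ${\cal X}^*_2$, ${\cal X}^*_3$ are available throughout the excursion. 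Once these reductions are in place, the remaining estimate is the QRW counting bound already carried out in Proposition~\ref{prop:rec}, and the exponent matches the asserted $3\Delta-2U-\theta$ up to $O(\alpha,d)$.
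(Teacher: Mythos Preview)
Your proposal is correct and follows exactly the paper's approach: the paper's proof is a one-line reference to \eqref{eq:newbox}, and you have essentially reproduced the derivation of \eqref{eq:newbox} from the proof of Proposition~\ref{prop:rec} (case $A=\Delta+\alpha$), namely the reduction via Lemma~\ref{lmm:sleeping} to four particles in a box of volume $\ee^{(D+\delta)\beta}$, followed by the QRW counting estimate combined with the environment bounds from ${\cal X}^*_2$, ${\cal X}^*_3$. Your anticipated obstacles (a)--(c) are precisely the implicit reductions the paper relies on.
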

    
\begin{remark}
\label{rmk:empty}
{\rm Starting from $\eta_0\in{\cal X}_E$, reaching at time $\bar\tau_1$ a configuration such that $\pi(X(\bar\tau_1))\neq\pi(\eta_0)$ implies that a box creation has occurred. Hence the first part of Proposition \ref{prp:uppbound} follows from Lemma \ref{lmm:exitempty}.}\hfill$\spadesuit$
\end{remark}
	
%%%

\subsubsection{Step 3: Starting configuration has a single large quasi-square: Lemma \ref{lmm:exitgen}}
\label{sub:step3}

Recall that we are considering a starting configuration $\h_0\in{\cal X}_{\Delta^+}$ consisting of a single quasi-square of size $\ell_1\times \ell_2$ with $\ell_1\leq \ell_2$ and $\ell_2 \geq 3$. Recall \eqref{activereturn}-\eqref{deltareturn2} and \eqref{def:res} for the definition of resistance for a quasi-square of size $\ell_1\times \ell_2$.

\begin{lemma}
\label{lmm:exitgen}
Assume that $\Delta<\Theta\leq\theta$. Let $\eta_0 \in {\cal X}_{\Delta^+}$ be such that its restriction $\bar\eta_0$ to $\bar\Lambda$ is a quasi-square of size $\ell_1 \times \ell_2$ with $\ell_1 \leq \ell_2$ and $\ell_2 \geq 3$. If $\eta_0$ is subcritical, i.e., $\ell_1 < \ell_c$, then we set $m = \ell_1 - 2$ and
$$
a =\gamma\Bigl(\frac{1}{2}\mathbb{1}_{\{\ell_1<\ell_c-1\}}
+\frac{1}{2}\mathbb{1}_{\{\ell_1=\ell_c-1,\iota<\frac{1}{2}\}}
+(1-\iota)\mathbb{1}_{\{\ell_1=\ell_c-1,\iota\geq\frac{1}{2}\}}\Bigr)>0.
$$
Let ${\cal G}_1$ be the graph represented in Fig.~\ref{fig:G1} and ${\cal G}_2$ the graph represented in Fig.~\ref{fig:G2sub}. If $\eta_0$ is supercritical,i.e., $\ell_1 \geq \ell_c$, instead set $m = \ell_c - 2$ and
$$
a = (\epsilon - \gamma)\mathbb{1}_{\{\iota<\frac{1}{2}\}}
+\gamma\mathbb{1}_{\{\iota\geq\frac{1}{2}\}}>0.
$$
Define the same ${\cal G}_1$ (associated with a different $m$), and let ${\cal G}_2$ be the graph represented in Fig.~\ref{fig:G2sup}. Then
\begin{equation}
\label{langres}
\displaystyle\limsup_{\beta \rightarrow \infty} {1 \over \beta} \log P_{\eta_0}\left(\varphi^n \hbox{ escapes from ${\cal G}_1$}\right) 
\leq -[r(\ell_1,\ell_2)-\Delta-O(\alpha,d)]
\end{equation}
and
\begin{equation}
\label{munster}
\limsup_{\beta \rightarrow \infty} {1 \over \beta} \log P_{\eta_0}\left(\varphi^n \hbox{ escapes from ${\cal G}_2$}\right) 
\leq -[r(\ell_1,\ell_2) - \Delta + a -O(\alpha,d)].
\end{equation}
\end{lemma}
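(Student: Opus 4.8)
The plan is to reduce the probabilistic statement \eqref{langres}--\eqref{munster} to a combinatorial estimate on the coarse-grained chain $\varphi^n = (X(\tau_i))_{0\le i\le k}$, obtained by tracking the local energy profile inside $\bar\Lambda$ together with the colours of the active particles introduced in Step~1. The two graphs ${\cal G}_1$ and ${\cal G}_2$ encode the admissible transitions of this chain on two different time/energy scales: ${\cal G}_1$ is the ``outer'' environment containing all configurations reachable from $\eta_0$ at cost at most $r(\ell_1,\ell_2)-\Delta$ above the starting configuration, while ${\cal G}_2$ is the smaller ``inner'' environment one must traverse to actually change the projection $\pi$, whose exit costs an additional amount $a>0$ reflecting the extra barrier one pays to either shrink the smallest quasi-square by a full row (subcritical case) or to add a row beyond the corner (supercritical case). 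First I would set up, for each edge of these graphs, a single-step bound: using reversibility \eqref{rev}, Proposition~\ref{prop:rec} (recurrence to ${\cal X}_D$ on the scale $T_D\ee^{\delta\beta}$), and the environment estimates of Proposition~\ref{prp:ambito}, I would estimate $P_{\eta}(X(\tau_1)=\eta')$ for $\eta,\eta'$ in the graph by $\ee^{-[\Phi(\eta,\eta')-\Delta+O(\alpha,d,\delta)]\beta}$, where $\Phi$ is the communication height along the dynamics with frozen box collection, and the $-\Delta$ accounts for the entropic gain of one free particle arriving from the gas (as in the lower bounds of Proposition~\ref{prp:lbtipico}).

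The key steps, in order, would be: (1) justify that up to an $\SES(\beta)$ event the box collection stays fixed during each excursion $[\tau_i,\tau_{i+1}]$ and only a bounded number of active particles (at most $\ee^{O(\alpha,\delta)\beta}$, controlled via non-superdiffusivity in ${\cal X}^*$) interact with $\bar\Lambda$, so that the colours are well-defined and the permutation rules apply; (2) verify that the vertices listed in Figs.~\ref{fig:G1}, \ref{fig:G2sub}, \ref{fig:G2sup} genuinely exhaust the configurations in ${\cal X}_D$ that the chain can occupy before paying the relevant barrier — this is where the isoperimetric inequality for quasi-squares and the definitions of $r(\ell_1,\ell_2)$, $\gamma$, $\iota$ enter, and where one must carefully case-split on $\ell_1<\ell_c-1$, $\ell_1=\ell_c-1$ (further split on $\iota<\tfrac12$ vs $\iota\ge\tfrac12$), and $\ell_1\ge\ell_c$ to read off the correct $m$ and $a$; (3) assign to each edge its one-step exponential cost and check, by inspecting all directed paths from $\eta_0$ to the boundary of ${\cal G}_1$ (resp.\ ${\cal G}_2$), that the minimal total cost of escaping equals $r(\ell_1,\ell_2)-\Delta$ (resp.\ $r(\ell_1,\ell_2)-\Delta+a$); (4) upgrade the one-step bounds to a bound on the escape probability over the whole chain, summing over the at most $n\le\ee^{(\Delta+\alpha+\delta)\beta}$ return times and over the polynomially-many (in fact $\lambda(\beta)$-many, hence subexponentially many) paths through the graph, so that the prefactor is absorbed into $\ee^{O(\alpha,d)\beta}$; (5) let $\delta\downarrow0$.

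The single-step estimates themselves split into a lower-bound side (already essentially done in Propositions~\ref{prp:lbtipico}--\ref{prp:subgrow}, needed here only to know the chain does not get stuck) and an upper-bound side, which is the crux: for an edge that costs $c$ one must show $P_\eta(\text{chain makes this transition before }\tau_1)\le\ee^{-[c-\Delta-O(\alpha,d)]\beta}$, and this requires ruling out shortcuts — e.g.\ that a second active particle does not arrive and collaborate, that a yellow particle (over which we have poor control) does not trigger an unaccounted transition, and that coalescence or a box creation does not intervene. These exclusions are exactly the content of the three deferred lemmas (Lemmas~\ref{gex}--\ref{gorgonzola}) used in Section~\ref{sub:lmmgen}; in the present proof I would invoke them as black boxes. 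The main obstacle is step~(2): one has to argue that the explicitly-drawn graphs are closed under the dynamics below the barrier, i.e.\ that there is no ``exotic'' configuration of sleeping-plus-active particles (possibly with a free particle present, the ${\cal I}(n)^{fp}$ states) that the chain can reach at lower cost and that is not a vertex of ${\cal G}_1$ or ${\cal G}_2$; this is a finite but delicate enumeration, driven by the local isoperimetric inequality and the constraint that at most one active particle is present between consecutive $\sigma_i$'s, and it is the place where the precise shape of the figures — and the threshold $\iota\lessgtr\tfrac12$ distinguishing which protuberance pattern is cheaper — has to be matched against the combinatorics. Once the graphs are shown to be invariant, the cost bookkeeping in steps~(3)--(4) is routine and \eqref{langres}--\eqref{munster} follow.
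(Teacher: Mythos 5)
Your outline and the paper's actual proof diverge in a structurally important way, and the divergence hides a genuine gap.

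The paper does not do per-edge cost bookkeeping followed by a sum over return times. Instead it inverts the logic: it fixes once and for all a finite family of large-deviation events ($A, B, C, C', D, D', E, F_{m+1}, G, G', G_4', H_2$, collected into $Z_1$, with analogous additions for $Z_2$), and then proves by induction on $k$ a deterministic implication ${\cal P}(k)$: \emph{on the complement of $Z_1$, the chain $\varphi^k$ cannot escape ${\cal G}_1$, and each climb along a $2U$-edge paints a new particle red}. Escape therefore forces $m+1$ red paintings within the whole time horizon, i.e.\ the event $F_{m+1}$, whose probability is bounded in Appendix~B by $\ee^{-(m+1)(2U-\Delta-\alpha)\beta+O(\delta)\beta}$ using the strong Markov property at each painting time and a single division of the global horizon $[0,T_{\Delta^+}\ee^{\delta\beta}]$ into $\ee^{D\beta}$-intervals. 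The colour machinery is not a bookkeeping convenience — it is the object that makes the induction close: the permutation rules and the no-yellow-particle claim ${\cal P}(k)$(iii) are what rule out the exotic intermediate states you flag in your step~(2).

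Your step~(4) is where the proposal breaks. The one-step cost $\ee^{-(2U-\Delta)\beta}$ for climbing a single $2U$-edge is correct, and the chain makes up to $n\sim\ee^{(\Delta+\alpha+\delta)\beta}$ returns. A union bound of the form ``sum over return times of the probability of an $(m+1)$-fold consecutive climb'' yields
$$
n\cdot\ee^{-(m+1)(2U-\Delta)\beta}\ \approx\ \ee^{(\Delta+\alpha+\delta)\beta}\,\ee^{-(m+1)(2U-\Delta)\beta},
$$
which overshoots the target $\ee^{-(m+1)(2U-\Delta)\beta+O(\alpha,d)\beta}$ by a factor $\ee^{\Delta\beta}$, and since $\Delta\in(U,2U)$ this is not absorbable into $\ee^{O(\alpha,d)\beta}$. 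Moreover the number of length-$\ell$ up–down excursion paths through ${\cal G}_1$ grows like a central binomial coefficient in $\ell$, not like $\lambda(\beta)$, so ``polynomially many paths'' is also false. A per-step summation scheme can in principle be salvaged by the nested-cycle or recursive-resistance machinery of Olivieri--Scoppola, but that is a substantially different argument than what you wrote; the paper's $F_{m+1}$-counting device is precisely what replaces all of that, by turning ``an escape path occurs somewhere in time'' into ``the single global event $F_{m+1}$ occurs'' whose cost is computed once, with the time entropy already integrated in.

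Two smaller issues. First, Lemmas~\ref{gex} and~\ref{gorgonzola} cannot be invoked as external black boxes: they are stated and used \emph{inside} the proof of Lemma~\ref{lmm:exitgen}, in terms of the modified stopping times $\tilde\sigma_k,\tilde\tau_k$, the pair-specific times $\theta^{ij}_k$, and the events $J^{ij}$ that the proof itself must construct; there are also only two such lemmas, not three. Second, your step~(2) (exhaustiveness of the vertex sets of ${\cal G}_1$ and ${\cal G}_2$) is not a free-standing combinatorial check — it is ${\cal P}(k)$(i) and $\tilde{\cal P}(k)$(i), and it is proved conditional on $Z_1^c$ (resp.\ $Z_2^c$) together with the inductive hypotheses tracking colours; without the colour accounting you cannot rule out, e.g., two free particles simultaneously in $\bar\Lambda$, which would let the chain reach configurations outside the figures at cost below $r(\ell_1,\ell_2)-\Delta$.

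In short: correct ingredients (colours, permutation rules, ${\cal I}(\cdot)$ vertex sets, the role of $\iota$ and $\gamma$, the deferred lemmas), but the global-to-local reduction you propose runs the union bound in the wrong direction and does not yield the stated exponent. You would need to replace steps~(3)--(4) by the deductive scheme: define $Z_1$ (and $Z_2$), prove $Z_1^c\Rightarrow\{\varphi^n\hbox{ stays in }{\cal G}_1\}$ by induction, and estimate $c(Z_1)$ as the minimum of the individual event costs — with the time entropy for where the rare moves happen priced once, inside the cost of $F_{m+1}$, rather than a second time through a sum over return steps.
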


\noindent
The proof of Lemma \ref{lmm:exitgen} is given in Section \ref{sub:lmmgen}.

%%%%%%%%%%%%%%%%%%%%%%%%%%%%%%%%%%%%%%%%%%%%%
\begin{figure}[h!]
	$$
	\begin{tikzpicture}
		\draw (14, 0) node {${\cal I}(\ell_1 \ell_2)$};
		\draw (13.5, .25) -- (12.5, .75);
		\draw (13, .6) node[right] {$2U$};
		\draw (12, 1) node {${\cal I}(\ell_1 \ell_2 - 1)$};
		\draw (11.5, 1.25) -- (10.5, 1.75);
		\draw (11, 1.6) node[right] {$2U$};
		\draw (10.25, 1.875) -- (10.15, 1.925);
		\draw (9.85, 2.075) -- (9.75, 2.125);
		\draw (9.5, 2.25) -- (8.5, 2.75);
		\draw (9, 2.6) node[right] {$2U$};
		\draw (8, 3) node {${\cal I}(\ell_1 \ell_2 - m + 1)$};
		\draw (7.5, 3.25) -- (6.5, 3.75);
		\draw (7, 3.6) node[right] {$2U$};
		\draw (6, 4) node {${\cal I}(\ell_1 \ell_2 - m)$};
	\end{tikzpicture}
	$$
	\caption{The graph ${\cal G}_1$ in both the subcritical and the supercritical case.}
	\label{fig:G1}
\end{figure}
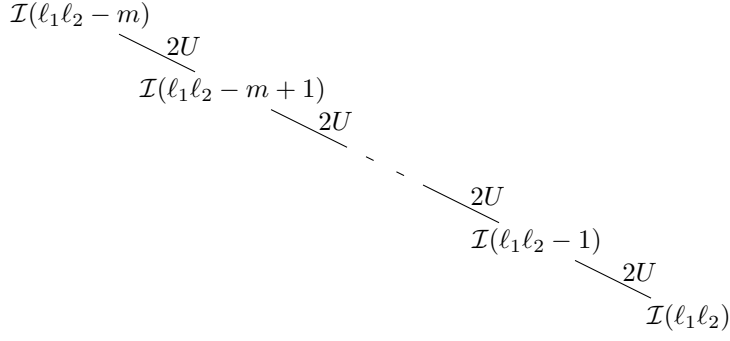
%%%%%%%%%%%%%%%%%%%%%%%%%%%%%%%%%%%%%%%%%%%	
 
%%%%%%%%%%%%%%%%%%%%%%%%%%%%%%%%%%%%%%%%%%% 
\begin{figure}[h!]
$$
\begin{tikzpicture}
\draw (12, 1) node {${\cal I}(\ell_1 \ell_2)$};
\draw (11.5, 1.25) -- (10.5, 1.75);
\draw (11, 1.6) node[right] {$2U$};
\draw (10.25, 1.875) -- (10.15, 1.925);
\draw (9.85, 2.075) -- (9.75, 2.125);
\draw (9.5, 2.25) -- (8.5, 2.75);
\draw (9, 2.6) node[right] {$2U$};
\draw (8, 3) node {${\cal I}(\ell_1 \ell_2 - m + 1)$};
\draw (7.5, 3.25) -- (6.5, 3.75);
\draw (7, 3.6) node[right] {$2U$};
\draw (6, 4) node {${\cal I}(\ell_1 \ell_2 - m)$};
\draw (5.5, 4.25) -- (4.5, 4.75);
\draw (5, 4.6) node[right] {$2U$};
\draw (4, 5) node {${\cal I}(\ell_1 \ell_2 - m - 1)$};
\draw (4, 4.75) -- (2.9, .3);
\draw (3, 0) node {${\cal I}(\ell_1 \ell_2 - \ell_1)$};
\draw (3.1, .3) -- (5.5, 3.75);
\end{tikzpicture}
$$
\caption{The graph ${\cal G}_2$ in the subcritical case.}
\label{fig:G2sub}
\end{figure}
%%%%%%%%%%%%%%%%%%%%%%%%%%%%%%%%%%%%%%%%%%

 %%%%%%%%%%%%%%%%%%%%%%%%%%%%%%%%%%%%%%
\begin{figure}[h!]
	$$
	\begin{tikzpicture}
		\draw (12, 1) node {${\cal I}(\ell_1 \ell_2 + \ell_2)$};
		\draw (11.5, 1.25) -- (10.5, 1.75);
		\draw (11, 1.6) node[right] {$2U$};
		\draw (10.25, 1.875) -- (10.15, 1.925);
		\draw (9.85, 2.075) -- (9.75, 2.125);
		\draw (9.5, 2.25) -- (8.5, 2.75);
		\draw (9, 2.6) node[right] {$2U$};
		\draw (8, 3) node {${\cal I}(\ell_1 \ell_2 + 3)$};
		\draw (7.5, 3.25) -- (6.5, 3.75);
		\draw (7, 3.6) node[right] {$2U$};
		\draw (6, 4) node {${\cal I}(\ell_1 \ell_2 + 2)$};
		\draw (5.5, 4.25) -- (4.5, 4.75);
		\draw (5, 4.6) node[right] {$2U$};
		\draw (4, 5) node {${\cal I}(\ell_1 \ell_2 + 1)$};
		\draw (4, 4.75) -- (2.9, 2.3);
		\draw (3.1, 2.3) -- (5.5, 3.75);
		\draw (3, 2) node {${\cal I}(\ell_1 \ell_2)$};
		\draw (2.5, 2.25) -- (1.5, 2.75);
		\draw (2, 2.6) node[right] {$2U$};
		\draw (1.25, 2.875) -- (1.15, 2.925);
		\draw (0.85, 3.075) -- (0.75, 3.125);
		\draw (0.5, 3.25) -- (-0.5, 3.75);
		\draw (0, 3.6) node[right] {$2U$};
		\draw (-1, 4) node {${\cal I}(\ell_1 \ell_2 - m)$};
	\end{tikzpicture}
	$$
	\caption{The graph ${\cal G}_2$ in the supercritical case.}
	\label{fig:G2sup}
\end{figure}
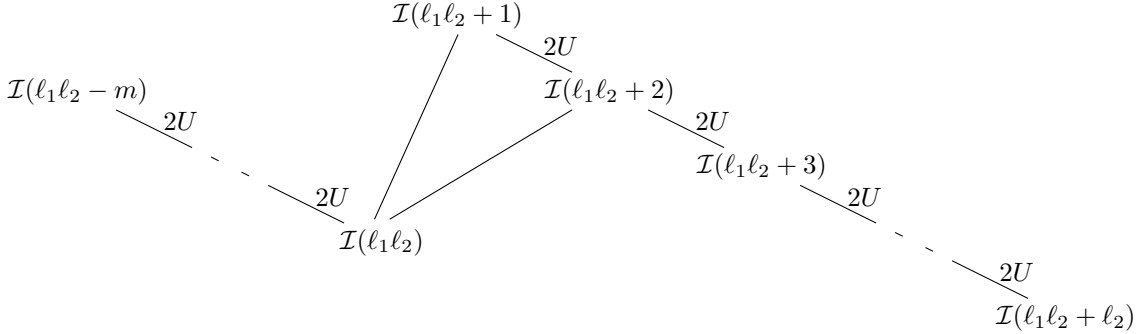
%%%%%%%%%%%%%%%%%%%%%%%%%%%%%%%%%%%%%%%%%%%

\begin{remark}
\label{rmk:implgen}
{\rm Proposition \ref{prp:uppbound} follows from Lemma \ref{lmm:exitgen} when $\h_0$ consists of a single quasi-square with size $\ell_1\times \ell_2$ and $\ell_2 \geq 3$. First, Lemma \ref{lmm:exitgen} gives us information at the return times to ${\cal X}_{\Delta^+}$ after seeing an active particle in $\bar\Lambda$. Indeed, note that such a return time can occur only in the time intervals of type $[\tau_k,\sigma_{k+1}]$, because during the time intervals of type $[\sigma_k,\tau_k]$ the configurations that are visited are not in ${\cal X}_D$ and therefore not even in ${\cal X}_{\Delta^+}$ (recall Definition \ref{def:recurrence}). It is clear that a return time in ${\cal X}_{\Delta^+}$ does not necessarily coincide with a time $\tau_k$, but during the time interval $[\tau_k,\sigma_{k+1}]$ the number of particles of the isoperimetric configuration is conserved, and so the system reaches ${\cal X}_{\Delta^+}$ in the same configuration visited at time $\tau_k$. Second, \eqref{langres} implies the first part of Proposition \ref{prp:uppbound}. Starting from $\eta_0$, if $\pi(X(\bar\tau_1)) \neq \pi(\eta_0)$, then $\varphi^n$ has escaped from ${\cal G}_1$. Hence the chain of inequalities holds due to \eqref{langres} and we get the claim. Finally, the second part of Proposition \ref{prp:uppbound} follows in the same way. Starting from $\eta_0$, if $\pi(X(\bar\tau_1))\notin\{\pi(\eta_0),\pi'\}$, then $\varphi^n$ has escaped from ${\cal G}_2$. Hence the chain of inequalities holds due to \eqref{munster}.}\hfill$\spadesuit$
\end{remark}

%%%	
	
\subsubsection{Step 4: Starting configuration has a single small quasi-square: Lemma \ref{lmm:exit2x2}}
\label{sub:step4}
	
We recall that we are considering a starting configuration $\h_0\in{\cal X}_{\Delta^+}$ consisting of a single quasi-square of size $\ell_1 \times \ell_2$ with $\ell_1\leq \ell_2=2$. Thus, we need to consider only the case in which $\h$ contains a $2\times2$ square droplet (recall Remark \ref{rmk:qsdim}).

\begin{lemma}
\label{lmm:exit2x2}
Assume that $\Delta<\Theta\leq\theta$. Let $\eta_0\in{\cal X}_{\Delta^+}$ be such that its restriction $\bar\eta_0$ to $\bar\Lambda$ is a $2 \times 2$ square. Let ${\cal G}_1$ be the graph consisting of the vertex ${\cal I}(4)$ only, and define the graph ${\cal G}_2$ as
$$
\begin{array}{ccc}
& \ \ \ {\cal I}(4)\\
&/ \\
&{\cal I}(0).
\end{array}
$$
Then
\begin{equation}
\label{langres2x2}
\limsup_{\beta \rightarrow \infty} {1 \over \beta} \log P_{\eta_0}\left(\varphi^n \hbox{ escapes from ${\cal G}_1$}\right) 
\leq -[r(2,2) - \Delta - O(\alpha,d)]
\end{equation}
and
\begin{equation}
\label{munster2x2}
\limsup_{\beta \rightarrow \infty} {1 \over \beta} \log P_{\eta_0}\left(\varphi^n \hbox{ escapes from ${\cal G}_2$}\right) 
\leq -[r(2,2) - \Delta + \tfrac12\gamma - O(\alpha,d)].
\end{equation}
\end{lemma}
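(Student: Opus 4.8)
The plan is to treat Lemma~\ref{lmm:exit2x2} as the $\ell_1=\ell_2=2$ boundary case of the analysis carried out for Lemma~\ref{lmm:exitgen}, adapting the deductive approach to the special geometry of a $2\times2$ square. First I would set up the Markov chain $\varphi^n=(X(\tau_i))_{0\le i\le n}$ on the finite state space of isoperimetric configurations reachable from ${\cal I}(4)$, using the coloration and permutation rules of Section~\ref{subsec:color}. The point of working with $\varphi^n$ rather than the full process is that between consecutive times $\tau_i$ only one active particle can be in $\bar\Lambda$ (by the recurrence to ${\cal X}_U$ and Lemma~\ref{lmm:sleeping}, a two- or three-particle cluster does not fall asleep), so each step of $\varphi^n$ corresponds to at most one particle being exchanged between the square and the surrounding gas. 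The resistance here is $r(2,2)=\min\{2U,\,2\Delta-U\}=2U$ (since $\Delta<2U$), and the escape we must bound costs more than $2\Delta-U-\Delta=\Delta-U$ in ${\cal G}_1$-terms; note that $r(2,2)-\Delta=2U-\Delta=\epsilon$ and $\tfrac12\gamma$ is the extra gain for the further refinement in ${\cal G}_2$.

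The key steps, in order, would be: (i) Enumerate the possible single-particle transitions out of ${\cal I}(4)$. The square can lose a particle (breaking a corner costs $2U$, since a corner particle of a $2\times2$ square has two bonds), producing a triple ${\cal I}(3)$ plus a free particle; or it can gain a particle that attaches to a side (cost $0$ energetically, but requires bringing a particle from the gas, which is the entropic cost). (ii) Show that the only way for $\varphi^n$ to leave ${\cal G}_1=\{{\cal I}(4)\}$ within the time horizon $T_{\Delta^+}\ee^{\delta\beta}$ is to either dissolve the square down to ${\cal I}(0)$ (a sequence of detachments, whose cost is dominated by the first $2U$-move followed by further moves, giving an overall rate bounded by $\ee^{-(2U-\Delta-O(\alpha,d))\beta}=\ee^{-(r(2,2)-\Delta-O(\alpha,d))\beta}$ using the entropic reward $\ee^{\Delta\beta}$ per returned particle) or grow to a $2\times3$ rectangle (which, after filling, is ${\cal I}(6)$ and leaves ${\cal G}_1$). (iii) For the ${\cal G}_2$ bound, observe that ${\cal G}_2$ already includes the edge ${\cal I}(4)$–${\cal I}(0)$, so escaping from ${\cal G}_2$ means the square grew to ${\cal I}(6)$ (a $2\times3$ rectangle) without ever dissolving; I would bound the probability of this ``growth against the drift'' transition by controlling the mechanism that brings a second active particle into $\bar\Lambda$ and attaches it before the protuberance detaches, yielding the extra factor $\ee^{-\tfrac12\gamma\beta}$. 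Here I would invoke the general argument of Lemma~\ref{lmm:exitgen} with $m=\ell_1-2=0$: the graph ${\cal G}_1$ degenerates to a single vertex and the $a$-gain specializes; one checks that for $\ell_1=2$ the relevant value of $a$ is $\tfrac12\gamma$, consistent with the $\iota<\tfrac12$ / $\iota\ge\tfrac12$ dichotomy collapsing in this boundary case.

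Throughout, I would use: the recurrence property (Proposition~\ref{prop:rec}) to reduce to configurations in ${\cal X}_D$ at the times $\tau_i$; the environment control (Proposition~\ref{prp:ambito}) to stay in ${\cal X}^*$, so that QRW estimates from \cite{GdHNOS09} apply; the non-superdiffusivity and spread-out properties to estimate the probability that a given active particle reaches $\bar\Lambda$ within the allotted time, and to bound the number of particles that can interact with $\bar\Lambda$ (at most $\ee^{3\alpha\beta/2}$); and the box-creation estimate \eqref{eq:newbox} to discard the possibility that a second local box appears, so that we genuinely work with the fixed target volume $[\Lambda(0),D-\delta]$ and a bounded number of boxes. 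The number of steps $n$ of $\varphi^n$ before the horizon is itself bounded (up to $\SES(\beta)$) by $\ee^{O(\alpha,\delta)\beta}$, as in the proof of Proposition~\ref{prp:lbtipico}, so a union bound over steps costs only an $\ee^{O(\alpha,\delta)\beta}$ factor, which is absorbed into the $O(\alpha,d)$ error.

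The main obstacle I expect is the same one that makes Lemma~\ref{lmm:exitgen} hard: controlling the \emph{history} of particles near the tiny cluster via the colour bookkeeping. For a $2\times2$ square, a detaching corner particle leaves a triple ${\cal I}(3)$ which is geometrically fragile — it can re-shuffle among several $3$-particle shapes at zero or $U$ cost — and one must argue that none of these intermediate moves lets a stray active particle sneak in and stabilize a $2\times3$ configuration more cheaply than the claimed rate, nor lets the cluster fall asleep in a non-square shape (which would violate the return to ${\cal X}_{\Delta^+}$). Handling the yellow particles (those that wake up without the red-colouring condition, over which we have little control) in this small-cluster regime, and verifying that the coarse-grained chain on $\{{\cal I}(0),{\cal I}(4)\}$ faithfully captures all trajectories up to $\SES(\beta)$-events, is where the bulk of the genuine work lies; the energy/entropy accounting ($2U$ for the detachment, $\ee^{\Delta\beta}$ entropic reward per particle sent back to the gas, $\tfrac12\gamma$ for the growth refinement) is then routine once the combinatorial control is in place.
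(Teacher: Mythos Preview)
Your treatment of the ${\cal G}_1$ bound is essentially correct and matches the paper: with $m=\ell_1-2=0$ the graph ${\cal G}_1$ degenerates to the single vertex ${\cal I}(4)$, and the cost is $c(F_1)\ge 2U-\Delta-\alpha-O(\delta)=r(2,2)-\Delta-O(\alpha,d)$, exactly as in the general subcritical case of Lemma~\ref{lmm:exitgen}.

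However, your analysis of the ${\cal G}_2$ escape has a genuine gap. You claim that escaping ${\cal G}_2$ means the square grew to ${\cal I}(6)$ \emph{without ever dissolving}, and you propose to bound only that growth mechanism. This is incomplete: ${\cal G}_2$ contains ${\cal I}(0)$, so dissolution is allowed, but then you must control what happens \emph{after} dissolution. When the $2\times2$ square breaks apart (one red emission followed by three yellow emissions, ending with a broken dimer), you are left with four free active particles in $[\bar\Lambda,D+\delta]$: three yellow and one red. The chain $\varphi^n$ escapes ${\cal G}_2$ from ${\cal I}(0)$ if a cluster reforms that is \emph{not} a $2\times2$ square, for instance if two of the yellow particles team up with two fresh green particles arriving from the gas. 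The general $\tilde{\cal G}$ machinery of Lemma~\ref{lmm:exitgen} cannot handle this, because the shrunk state ${\cal I}(\ell_1'\ell_2')$ would be ${\cal I}(2)$, a dimer that is not in ${\cal X}_U$ and hence not a stopping point for $\varphi$; the analysis must track the system all the way down to ${\cal I}(0)$ and back up.

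The paper deals with this by introducing two \emph{new} large deviation events $K_1$ and $K_2$ (not present in the general case) that exclude, respectively, three red/green particles meeting one yellow, and two red/green particles meeting two yellow, in a box of volume $\ee^{D\beta}$ after time $t^*$ with $X(t^*)\in{\cal I}(0)$. Both have cost at least $\Delta-D+\alpha-O(\delta)$. On $(K_1\cup K_2)^c$, together with the inductive information that at $\tau_k$ the three yellows and one red are all in $[\bar\Lambda,D+\delta]$ with two yellows at mutual distance~2, any reforming cluster must use exactly the three yellow particles plus one red or green, forcing $X(\tau_{k+1})\in{\cal I}(4)$. Your proposal does not introduce these events and does not track the colour composition after dissolution, so it cannot close the induction from the ${\cal I}(0)$ vertex.
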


\noindent
The proof of Lemma \ref{lmm:exit2x2} is deferred to Section \ref{sub:lmm2x2}.

 \begin{remark}
{\rm In order to deduce Proposition \ref{prp:uppbound} from Lemma \ref{lmm:exit2x2} in case $\h_0$ consists of a single $2\times2$ square, we can argue as in Remark \ref{rmk:implgen}.}\hfill$\spadesuit$
\end{remark}

%%%

\subsubsection{Step 5: Result for a general collection of finite boxes}
\label{sub:step5}

We close by explaining how to derive Lemmas \ref{lmm:exitgen} and \ref{lmm:exit2x2} when the starting configuration is not such that 
$\bar\Lambda(0)=\bar\Lambda_0(0)$. First, we need to extend the definition of the set ${\cal I}(n)$. Given a collection $\bar\Lambda(t)=(\bar\Lambda_i(t))_{1 \leq i<k(t)}$ of finite boxes in $\Lambda_\beta$, we call ${\cal I}(n)$ the set of configurations $\eta$ such that $\bar\eta$ is of size $\sum_{1 \leq i<k(t)}|\bar\eta_i|=n$ and is the solution of the isoperimetric problem for a configuration with $n$ particles and $k(t)$ connected components. We use the notation ${\cal I}(n)^{fp}$ to indicate the presence of a free particle in one of the boxes. Moreover, in Lemma \ref{lmm:exit2x2} we need to replace the set ${\cal I}(0)$ by the set $\bar{\cal I}(n-4)$, defined as the set of configurations for which the collection $\bar\Lambda(t)$ has one local box less than $\bar\Lambda(t^-)$,  and there are $n$ particles inside $\bar\Lambda(t^-)$ and $n-4$ particles inside $\bar\Lambda(t)$. This set takes into account the dissolution of a $2\times2$ square droplet at time $t$ leading to the disappearance of one of the local boxes. Up to any coalescence between local boxes, we can argue as in the proof of Lemmas~\ref{lmm:exitgen} and \ref{lmm:exit2x2}.

%%%%%%%% SECTION 5 %%%%%%%%%%%%%%%%%%%%% 	

\section{Proof of lemmas: from large deviations to deductive approach}
\label{sub:lmmgen}
 
Section~\ref{subsec:exitempty} shows that the proof of Lemma~\ref{lmm:exitempty} has already been achieved. Section~\ref{subsec:exitgen}, which is long and constitutes the main technical hurdle of the paper, contains the proof of Lemma \ref{lmm:exitgen} and is divided into several parts: Section~\ref{subsec:struct} outlines the structure of the proof, while Sections~\ref{subsec:escapeI}--\ref{subsec:escapeIII} work out the details of this proof for three cases. The latter rely on two further lemmas, whose proof is deferred to Sections~\ref{sec:appb1}--\ref{sec:appb2}. 

The structure of the argument used to achieve the proof of Lemma \ref{lmm:exitgen} and Lemma \ref{lmm:exit2x2} is common. Indeed, we follow a \emph{deductive approach}, in the sense that we consider a family of large deviation events and use their intricate interrelation to estimate their respective probabilities. In particular, starting from these large deviation events we will prove, by induction in $k$, a claim ${\cal P}(k)$ of the form ``if none of these events occurs, then the dynamics does not escape from the graph in the first $k$ steps''. This way of going about is inspired by the point of view that the tube of typical paths is the skeleton for the metastable crossover. Indeed, the role of the different graphs introduced below is that they describe the temporal configurational environment from which the dynamics cannot escape. We will control the evolution of the dynamics in this environment via large deviation a priori estimates, and we will need a detailed case study to be able to proceed.

%%%

\subsection{Proof of Lemma~\ref{lmm:exitempty}}
\label{subsec:exitempty}

The claim is the same as the one derived in \eqref{eq:newbox}.
\qed

%%%

\subsection{Proof of Lemma~\ref{lmm:exitgen}}
\label{subsec:exitgen} 

%%%

\subsubsection{Structure of the proof}
\label{subsec:struct}

By using the coloration and permutation rules introduced in Section~\ref{subsec:color}, we build a list of large deviation events, each having a cost 
$$
c(\cdot) = -\limsup_{\beta \rightarrow \infty} {1 \over \beta} \log P(\cdot),
$$
to prove by contradiction that if $\varphi^n$ escapes from ${\cal G}_1$, then the union $Z_1$ of these large deviation events has to occur. We define another event $Z_2$ by removing from $Z_1$ some of these large deviation events, resulting in a larger cost, and adding new large deviation events, which also have a larger cost than $Z_1$. While dealing with $Z_1$ and ${\cal G}_1$, we can consider the subcritical and the supercritical case simultaneously, but we must separate when dealing with $Z_2$ and ${\cal G}_2$. Finally, we prove that if $\varphi^n$ escapes from ${\cal G}_2$, then $Z_2$ has to occur.

In the sequel we consider three cases for escaping ${\cal G}_1$ and ${\cal G}_2$:
\begin{itemize}
\item[(I)] 
Escape from ${\cal G}_1$.
\item[(II)] 
Escape from ${\cal G}_2$ in the subcritical case.
\item[(III)] 
Escape from ${\cal G}_2$ in the supercritical case.
\end{itemize}

%%%

\subsubsection{Escape case (I)}
\label{subsec:escapeI}

%%%
	
\paragraph{$\bullet$ Large deviation events.}
Here is a list of bad events that can lead to $\varphi^n$ escaping from ${\cal G}_1$ or ${\cal G}_2$, together with a lower bound on their cost. We call {\it entrance time} and {\it exit time} all times $t$ at which a free particle enters or leaves $\bar \Lambda(t)$. A {\it special time} is an entrance time, an exit time, a wake up time, a return time $\tau_i$ or a boxes special time (recall \eqref{def:B}). Note that each $\sigma_i$ defined in \eqref{activereturn} is a special time, since it is either an entrance time or a wake-up time. As above, we say that two times $t_1 < t_2$ are {\it $\delta$-close} if $t_2 - t_1 < \ee^{\delta \beta}$.
	
\begin{description}
\item[$A:$] 
A recurrence or non-superdiffusivity property is violated within time $T_{\Delta^+} \ee^{\delta \beta}$. This event has an infinite cost, i.e., its probability is $\SES$.
\item[$B:$] 
There are more than $\ee^{(2\alpha+\delta)\beta}$ special times within time $T_{\Delta^+} \ee^{\delta \beta}$. This event has an infinite cost.
\item[$C:$] 
Within time $T_{\Delta^+} \ee^{\delta \beta}$ there is a time interval of length $\ee^{\delta \beta}$ that contains a special time followed by a move of cost larger than or equal to $U$. This event costs at least $U - O(\delta)$.
\item[$C':$] 
Within time $T_{\Delta^+} \ee^{\delta \beta}$ there is a time interval $I$ of length at most $\ee^{\delta \beta}$ that contains a move of cost larger than or equal to $U$ and ends with the entrance in $\bar\Lambda$ of a free particle that was outside $\bar\Lambda$ during $I$. This event costs at least $U - O(\delta)$.
\item[$D:$] 
Within time $T_{\Delta^+} \ee^{\delta \beta}$ there is a time interval of length $\ee^{D \beta}$ that contains a special time followed by a move of cost larger than or equal to $2U$ or two $\delta$-close moves of cost larger than or equal to $U$. This event costs at least $2U - D - O(\delta)$.
\item[$D':$] 
Within time $T_{\Delta^+} \ee^{\delta \beta}$ there is a time interval $I$ of length at most $\ee^{D \beta}$ that contains a move of cost larger than or equal to $2U$ or two $\delta$-close moves of cost larger than or equal to $U$, and ends with the entrance in $\bar\Lambda$ of a free particle that was outside $\bar\Lambda$ during $I$. This event costs at least $2U - D - O(\delta)$.
\item[$E:$] 
Within time $T_{\Delta^+} \ee^{\delta \beta}$ there is a time interval $[t_1, t_2]$ such that $|\bar X|$ is constant on $[t_1, t_2]$, the local energy difference $\bar H(\eta(t_2)) - \bar H(\eta(t_1))$ is larger than or equal to $3U$, and $t_1$ is $\delta$-close to some earlier special time. This event costs at least $3U - \Delta - \alpha -O(\delta)$.
\item[$F_{m + 1}:$]
There are $m + 1$ times $t_1 < \cdots < t_{m + 1} < T_{\Delta^+} \ee^{\delta\beta}$ at which some particle is colored red. This event costs at least $(m+1) (2U - \Delta-\alpha) -O(\delta)$.
\item[$G:$] 
There are two red particles at a same time $t < T_{\Delta^+} \ee^{\delta\beta}$ in $[\bar\Lambda, D + \delta]$. This event costs at least $U - d + \epsilon -\alpha - O(\delta)$.
\item[$G':$] 
There are a red and a green particles at a same time $t < T_{\Delta^+} \ee^{\delta\beta}$ in $[\bar\Lambda, D + \delta]$. This event costs at least $U - d -\alpha - O(\delta)$.
\item[$G_4':$] 
There are four active particles, red or green, at a same time $t<T_{\Delta^+}\ee^{\delta\beta}$ in a box of volume $\ee^{(D+\delta)\beta}$, or a particle that belongs to a cluster consisting of two or three active particles only falls asleep. This event costs at least $3\Delta-2U-\theta+3\alpha-2d-O(\delta)$.
\item[$H_2:$] 
There are two green particles at a same time $t < T_{\Delta^+} \ee^{\delta\beta}$ in $[\bar\Lambda, D + \delta]$. This event costs at least $\Delta - D +\alpha - O(\delta)$.
\end{description} 

Set 
\begin{equation}
\label{Z1}
Z_1 = A \cup B \cup C \cup C' \cup D \cup D' \cup E \cup F_{m + 1} \cup G \cup G' \cup G_4' \cup H_2,
\end{equation}
so that $Z_1^c$ implies $A^c$,$B^c$, \dots, $G_4'^c$, $H_2^c$. We will prove by induction that, for all $0 \leq k \leq n$,

\begin{ClmPk}
If $Z_1$ does not occur, then
\begin{itemize}
\item[(i)] 
$\varphi^k$ does not escape from ${\cal G}_1$.
\item[(ii)] 
A particle is painted red each time $\varphi^k$ climbs along an $2U$-edge of ${\cal G}_1$.
\item[(iii)] 
No particle is painted yellow within $\tau_k$.
\item[(iv)] 
No box creation occurs within $\tau_k$.
\end{itemize}
\end{ClmPk}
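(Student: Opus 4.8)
The plan is to prove Claim $\cP(k)$ by induction on $k$, carefully threading together the large deviation events whose absence (i.e., $Z_1^c$) is assumed throughout. Let me sketch how I would organise the inductive step.

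\medskip\noindent
\textbf{Base case.} For $k=0$ the statement is immediate: $\varphi^0$ is the starting configuration $\eta_0$, which is the vertex ${\cal I}(\ell_1\ell_2)$ of ${\cal G}_1$ (so (i) holds), and at time $\tau_0=0$ no particle has yet been coloured red or yellow and no box has been created, so (ii)--(iv) hold vacuously. One should check that on $Z_1^c$ the starting configuration does indeed sit at the top vertex --- this uses $\eta_0\in{\cal X}_{\Delta^+}$ and the coloration conventions at time $0$ from Section~\ref{subsec:color}.

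\medskip\noindent
\textbf{Inductive step.} Assume $\cP(k-1)$ and work on $Z_1^c$. The heart of the argument is the analysis of the time interval $[\tau_{k-1},\tau_k]$, which by definition contains a single $\sigma_k$: at $\sigma_k$ either an active particle enters $\bar\Lambda$ or a particle wakes up inside $\bar\Lambda$. First I would use $A^c$ (the recurrence property to ${\cal X}_D$, Proposition~\ref{prop:rec}) together with $B^c$ (at most $\ee^{(2\alpha+\delta)\beta}$ special times) to argue that $\tau_k-\tau_{k-1}$ is not too long and that only a controlled number of free particles pass through $\bar\Lambda$ during $[\sigma_k,\tau_k]$. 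Then, using $H_2^c$, $G'^c$, $G^c$ and $G_4'^c$, I would show that during $[\sigma_k,\tau_k]$ there is essentially only \emph{one} active particle that can interact with the quasi-square: two green particles, or a red and a green, or two reds, or four active particles near each other are all forbidden, and by Lemma~\ref{lmm:sleeping} (packaged into $G_4'^c$) a cluster of two or three active particles cannot spawn a sleeping particle, so no box creation occurs within $\tau_k$ --- this gives (iv). Next, the single interacting particle drives the local configuration: by $C^c$, $D^c$ (and their primed variants $C'^c$, $D'^c$ controlling entrances) together with $E^c$, the local energy $\bar H(\bar\eta)$ cannot increase by $2U$ or more across $[\tau_{k-1},\tau_k]$ following a special time, except possibly when a particle is painted red --- which by the red-coloration rule (rule 2 in Section~\ref{subsec:color}) is exactly the event ``a move of cost $2U$ or two $\delta$-close moves of cost $U$ just occurred''. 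This forces: either $\bar\eta$ at time $\tau_k$ equals $\bar\eta$ at time $\tau_{k-1}$ (the self-loop, allowed in ${\cal G}_1$), or the only way to change it is to \emph{remove} a particle from the cluster (a downward move along a $2U$-edge, because detaching from a full row costs $2U$ and the gas cannot supply a particle that sticks without passing through a $2U$-barrier), in which case a red particle must have been painted --- giving (ii). Finally $F_{m+1}^c$ bounds the number of red-painting times by $m$, so $\varphi^k$ cannot have descended more than $m$ steps below ${\cal I}(\ell_1\ell_2)$, i.e., it stays in ${\cal G}_1$ (whose bottom vertex is ${\cal I}(\ell_1\ell_2-m)$) --- giving (i). Part (iii), that no particle is painted yellow within $\tau_k$, follows because a wake-up either satisfies the red rules (hence the particle is red, not yellow) or is excluded: a particle awaking without the red conditions would need either two active particles in $\bar\Lambda$ simultaneously (excluded by the single-interacting-particle analysis via $G_4'^c$, $H_2^c$, etc.) or an awakening that is not at some $\sigma_i$ (excluded because on $[\tau_{k-1},\sigma_k]$ all particles in $\bar\Lambda$ are sleeping by the return to ${\cal X}_D$, so the first awakening is $\sigma_k$).

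\medskip\noindent
\textbf{Main obstacle.} The delicate point is the case analysis at $\sigma_k$ when $\sigma_k$ is a wake-up time rather than an entrance time: one must rule out that a particle already clusterised but awakening can, together with a fresh entering particle, produce a forbidden configuration or an uncontrolled energy increase, and one must verify that the bookkeeping of ``which particle is red'' stays consistent with the permutation/hierarchy rules (sleeping $\succ$ yellow $\succ$ red $\succ$ green) when particles attach and detach. Equally delicate is showing that the only moves changing $\bar\eta$ that are \emph{not} forbidden by $C^c,D^c,E^c$ are precisely the downward $2U$-edges of ${\cal G}_1$ --- this requires the isoperimetric characterisation of ${\cal I}(n)$ (a quasi-square loses a particle from the end of a row at cost $2U$, and any configuration reached from ${\cal I}(n)$ by a sequence of moves of local cost $<2U$ that returns to ${\cal X}_D$ with $n$ or $n-1$ particles is again isoperimetric), and is where the ``$\delta$-close'' machinery and the distinction between cost-$U$ and cost-$2U$ moves must be handled with care. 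I expect this local geometric/energetic analysis, combined with correctly propagating the colour bookkeeping, to be the bulk of the work; the probabilistic estimates themselves are all absorbed into the definitions of $A$--$H_2$ and the QRW bounds already cited.
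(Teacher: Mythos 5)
The overall architecture of your proposal matches the paper's: induction on $k$, analysis of the return-time interval into sub-intervals, and use of the large-deviation events to classify the possible moves together with the colour bookkeeping to pin down when a red particle is painted. However, there is a genuine gap that goes to the heart of the dynamics. You claim that ``the only way to change $\bar\eta$ is to \emph{remove} a particle from the cluster\dots because\dots the gas cannot supply a particle that sticks without passing through a $2U$-barrier.'' This is false, and the paper's proof devotes an entire case --- the \emph{good attachment} case on $[\sigma_{k+1},\tau_{k+1}]$ --- to exactly the opposite phenomenon: a free particle enters $\bar\Lambda$, $\Delta\bar H$ drops by $2U$ (no barrier; the energy \emph{decreases}), and $X$ returns to ${\cal X}_D$ in ${\cal I}(\ell_1\ell_2-(j-1))$, i.e., the cluster \emph{gains} a particle without any red colouring. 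Consequently $\varphi^k$ can both climb and descend inside ${\cal G}_1$, and the bound on red colourings controls only the climbs, not the net position.

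Because you rule out growth by a flawed argument, you never address why $\varphi^k$ cannot escape \emph{upward}, from ${\cal I}(\ell_1\ell_2)$ to ${\cal I}(\ell_1\ell_2+1)$. In the paper this is the geometric observation that good attachment is excluded precisely when $j=0$: since $\bar H({\cal I}(\ell_1\ell_2)^{fp})-2U<\bar H({\cal I}(\ell_1\ell_2+1))$, no free particle can attach to a quasi-square with two bonds, so from the top vertex the only outcome is the exit case and $X$ returns to ${\cal I}(\ell_1\ell_2)$. Without this step your argument for (i) does not close. A related omission is in your treatment of (iii): the claim that every wake-up satisfies the red rules is not established by excluding ``two active particles'', but by the isoperimetric argument in the paper that from an $i\leq m<\ell_1-1$ configuration at level $2U$ the only way to free a particle (detaching from the external boundary while remaining isoperimetric) forces exactly the energy identity $\bar H(X(\sigma_{k+1}))=\bar H({\cal I}(\ell_1\ell_2-i-1))+\Delta$ and hence the red colouring; freeing from ${\cal X}_0\setminus{\cal X}_U$ is impossible because either the starting configuration was not isoperimetric or the detachment was from the internal boundary. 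You flag these as ``delicate points'' but a proof needs them to be carried out, since they are where the claimed cost bound could otherwise fail.
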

    
\noindent
Property (iv) avoids the creation of new boxes within time $t\leq\tau_k$. Since the cost of $Z_1$ is given by the smallest cost of its components $A$, $B$, $\dots$, we obtain
$$
c(Z_1) = \left\{
\begin{array}{ll}
c(F_{m + 1}) \geq r(\ell_1,\ell_2) - \Delta -O(\alpha) - O(\delta) & \hbox{if $\ell_1 < \ell_c$,}\\
c(H_2) \geq r(\ell_1,\ell_2) - \Delta - O(\alpha,d) - O(\delta) & \hbox{if $\ell_1 \geq lc$},
\end{array}
\right.
$$
and this will prove \eqref{langres}.
 
%%%
    
\paragraph{$\bullet$ Proof of $\boldsymbol{{\cal P}(k)}$, $\boldsymbol{0 \leq k \leq n}$.}
${\cal P}(0)$ obviously holds because $\tau_0 = 0$. We prove ${\cal P}(k + 1)$ by assuming ${\cal P}(k)$. Let us assume that $Z_1^c$ occurs. We have to control the process $X$ on the time interval
$$
[\tau_k, \tau_{k + 1}] = [\tau_k, \sigma_{k + 1}] \cup [\sigma_{k + 1}, \tau_{k + 1}].
$$
We analyse these two intervals separately.
    
\medskip\noindent
{\it The time interval $[\tau_k, \sigma_{k + 1}]$:}
Consider the process
$$
\Delta \bar H\colon\, t \in [\tau_k, \sigma_{k + 1}) \mapsto \bar H(X(t)) - \bar H(X(\tau_k)).
$$
It follows from the definition of $\sigma_{k + 1}$ that $|\bar X(t)|$ does not change during the time interval $[\tau_k, \sigma_{k + 1})$. ${\cal P}(k)$ implies, in particular,
\begin{equation}
\label{epoisse}
X(\tau_{k}) \in {\cal I}(\ell_1 \ell_2 - i)
\end{equation}
for some $1 \leq i \leq m$, so that $\bar X(\tau_k)$ is a solution of the isoperimetric problem, and this implies that $\Delta\bar H$ cannot go down below $0$. Then $E^c$ implies that $\Delta\bar H$ cannot go above $2U$, and it follows that
$$
\Delta \bar H(t) \in \{0, U, 2U\}, \qquad \tau_k \leq t < \sigma_{k + 1}.
$$
The process $\Delta \bar H$ can therefore be seen as a succession of increases and decreases of the local energy to some of these three values. We claim that $Z_1^c$ implies:
\begin{itemize}
\item[(i)] 
Each increase of $\Delta\bar H$ to $2U$ is followed by a $\delta$-close decrease to $U$ or $0$.
\item[(ii)] 
Each increase of $\Delta\bar H$ to $U$ is followed by a $\delta$-close decrease to $0$ or a $\delta$-close increase to $2U$.
\item[(iii)] 
After each decrease to $U$, $\Delta\bar H$ has to increase to $2U$ within a time $\ee^{(U + \delta)\beta}$ or to decrease to $0$ within a time $\ee^{\delta\beta}$. 
\end{itemize}
Indeed, (i) and (ii) follow from the recurrence property to ${\cal X}_0$ implied by $A^c$, while (iii) follows from the recurrence properties to ${\cal X}_U$ and ${\cal X}_0$ implied by the same event.

Now, $\sigma_{k + 1}$ can be reached either via the entrance of a free particle in $\bar\Lambda$ or by freeing some particle in $\bar\Lambda$. We will refer to these as the entrance and wake-up case, and we analyse them separately.
\begin{description}
\item[{\it \underline{Entrance case:}}]
In this case properties (i)--(iii), $C'^c$ and $D'^c$ imply that $\Delta\bar H(\sigma_{k + 1}^-) = 0$, hence $X(\sigma_{k + 1}^-) \in {\cal I}(\ell_1 \ell_2 -i)$ with $1 \leq i \leq m$ defined by \eqref{epoisse}. $\bar X(\sigma_{k +1})$ is then made up of an isoperimetric configuration of size $\ell_1 \ell_2 -i$ and a free particle, for which we use the short-hand notation $X(\sigma_{k + 1}) \in {\cal I}(\ell_1 \ell_2 - i)^{fp}$.
\item[{\it \underline{Wake-up case:}}]
Recall \eqref{epoisse} again, and use that $E^c$ and $i \leq m < \ell_1 - 1$ imply
$$
\bar H(X(\sigma_{k + 1})) \leq \bar H({\cal I}(\ell_1 \ell_2 - i)) + 2U = \bar H({\cal I}(\ell_1 \ell_2 - i -1)) + \Delta.
$$
Since a free particle has perimeter $4$, we also have the reverse inequality 
$$
\bar H(X(\sigma_{k + 1})) \geq \bar H({\cal I}(\ell_1 \ell_2 - i -1)) + \Delta,
$$
and so we conclude that    	
\begin{equation}
\label{petit_gaugry}
\bar H(X(\sigma_{k + 1})) = \bar H({\cal I}(\ell_1 \ell_2 - i -1)) + \Delta.
\end{equation}
Together with properties (i)--(iii) this implies that the waking-up particle is colored red: the requested move of cost $2U$, or two $\delta$-close move of cost $U$, do not have to be $\delta$-close to $\sigma_{k + 1}$, and it is not possible that a particle wakes up from a  $U$-reducible configuration that is reached without waking up from a configuration in ${\cal X}_D$. Indeed, it is impossible to obtain an isoperimetric configuration with a free particle by detaching a particle from an isoperimetric configuration in ${\cal X}_0 \setminus {\cal X}_U$: if the free particle is detached from the external boundary of the configuration, then the starting configuration is not isoperimetric, while if the particle is detached from the internal boundary, then it is not in ${\cal X}_0$. Equation~\eqref{petit_gaugry} also implies that $X(\sigma_{k + 1}) \in {\cal I}(\ell_1 \ell_2 - i - 1)^{fp}$.
\end{description}
    
The above analysis of the time interval $[\tau_k, \sigma_{k + 1}]$ requires a few concluding remarks. First, we proved that no yellow particle can be produced during this time interval. Second, $F_{m + 1}^c$ together with ${\cal P}(k)$ and $X(0) \in {\cal I}(\ell_1 \ell_2)$ imply that the wake-up case has to be excluded when $i = m$. Third, we can conclude 
\begin{equation}
\label{maroilles}
X(\sigma_{k + 1}) \in \left\{\begin{array}{ll}
{\cal I}(\ell_1 \ell_2 - j)^{fp} \hbox{ for some $j \in \{i, i + 1\}$}
&\hbox{if $i < m$,} \\
\noalign{\vskip 3pt}
{\cal I}(\ell_1 \ell_2 - j)^{fp} \hbox{ with $j = i$}
&\hbox{if $i = m$.} 
\end{array}
\right.
\end{equation}

\medskip\noindent
$\blacktriangleright$ {\it The time interval $[\sigma_{k + 1}, \tau_{k + 1}]$:}
$A^c$ implies that $\tau_{k + 1} - \sigma_{k + 1} < \ee^{(D + \delta / 2)\beta}$. From ${\cal P}(k)$ and our previous analysis  we also know that we have a red or a green particle in $\bar \Lambda$ and that no yellow particle was produced during the time interval $[0, \sigma_{k + 1}]$. Therefore the non-superdiffusivity property, $G^c$, $G'^c$ and $H_2^c$ imply that no other (colored) particle can enter $\bar \Lambda$ before time $\tau_{k + 1}$.
    
Let us next consider the process
$$
\Delta \bar H\colon\, t \in [\sigma_{k + 1}, \tau_{k + 1}] \mapsto \bar H(X(t)) - \bar H(X(\sigma_{k + 1}))
$$
and make two preliminary observations:
\begin{itemize}
\item[(i)]
Since there is a free particle in $\bar \Lambda$, the recurrence property to ${\cal X}_0$, $C^c$, $C'^c$ and the fact that no other active particle can enter $\bar\Lambda$ before $\tau_{k + 1}$ imply that $\Delta\bar H$ first has to decrease within a time $\ee^{\delta\beta}$.
\item[(ii)]
The recurrence property to ${\cal X}_U$, $D^c$ and $D'^c$ imply that before time $\tau_{k + 1}$ there will be neither a move of cost larger than or equal to $2U$, nor a succession of $\delta$-close moves of cost larger than or equal to $U$.    
\end{itemize}
We now separate two complementary events, to which we will refer as the {\it good attachment} and the {\it exit}.
\begin{description}
\item[$\blacktriangleright$ \it Good attachment:]
This occurs when $\Delta\bar H$ reaches the level $-2U$ before the free particle leaves $\bar \Lambda$. With $1 \leq j \leq m$ defined in \eqref{maroilles}, the local energy is equal to
$$
\bar H({\cal I}(\ell_1 \ell_2 - j)^{fp})- 2U = \bar H({\cal I}(\ell_1 \ell_2 - (j - 1)))
$$
because $j - 1 \leq m - 1 < \ell_1 - 1$ and $j > 0$: good attachment is excluded when $j = 0$ because 
$$
\bar H({\cal I}(\ell_1 \ell_2)^{fp}) - 2U < \bar H({\cal I}(\ell_1 \ell_2 + 1)).
$$
The recurrence property to ${\cal X}_0$, observation (ii) and the fact that no other free particle can enter $\bar\Lambda$ before time $\tau_{k + 1}$ imply that $\Delta\bar H$ can only oscillate between the levels $-2U$ and $-U$. This excludes any possibility for the active particle to leave $\bar\Lambda$ before time $\tau_{k + 1}$, and $X$ has to reach ${\cal X}_D$ by reaching ${\cal X}_U$ and making the active particle fall asleep. Since they are reached from level $-2U$, configurations at level $-U$ are $U$-reducible. It follows that $X$ reaches ${\cal X}_D$ at the level $-2U$, i.e., in ${\cal I}(\ell_1 \ell_2 - (j - 1))$.
\item[$\blacktriangleright$ \it Exit:]
This occurs when $\Delta\bar H$ does not reach the level $-2U$ before the free particle leaves $\bar \Lambda$. Observation (i) implies that $\Delta\bar H$ first decreases to $-\Delta$ or $-U$. In the first case $X$ reaches ${\cal X}_D$ in ${\cal I}(\ell_1 \ell_2 - j)$ with $j$ defined in \eqref{maroilles}. In the second case the recurrence property to ${\cal X}_0$, observation (ii) and the fact that no other free particle can enter $\bar\Lambda$ before $\tau_{k + 1}$ imply that $\Delta\bar H$ can only oscillate between the levels $-U$ and $0$ before possibly going down to $-\Delta$. Since configurations at levels $-U$ and $0$ are all $U$-reducible (consider the reverse path to $X(\sigma_{k + 1}^-)$), $\Delta\bar H$ must eventually go down to $-\Delta$: $X$ reaches ${\cal X}_D$ in ${\cal I}(\ell_1 \ell_2 - j)$.
 \end{description}
Our permutation rules now imply that no yellow particle can be produced during the time interval $[\sigma_{k + 1}, \tau_{k + 1}]$, and we conclude that 
$$
X(\tau_{k + 1}) \in \left\{
\begin{array}{ll}
{\cal I}(\ell_1 \ell_2 - i') \hbox{ for some $i' \in \{j - 1, j\}$}
&\hbox{if $j > 0$,} \\
\noalign{\vskip 3pt}
{\cal I}(\ell_1 \ell_2 - i') \hbox{ with $i' = j$}
&\hbox{if $j = 0$.} 
\end{array}
\right.
$$
Combined with \eqref{maroilles} and the fact that a red particle was produced if $j = i + 1$, it remains to prove ${\cal P}(k+1)$-iv). But    this follows from the event $G_4'^c$ and ${\cal P}(k+1)$(iii), and ends our induction.

%%%

\paragraph{$\bullet$ Cost estimates.}

To complete the proof of \eqref{langres}, we only need to check the lower bounds for the cost of each event that makes up $Z_1$, for which we refer to Appendix \ref{sec:appb3}. This concludes Case (I).
\qed

%%%

\subsubsection{Escape case (II): Lemmas \ref{gex}--\ref{gorgonzola}}
\label{subsec:escapeII}

%%%

\paragraph{$\bullet$ Special times and large deviation events in the subcritical case.}

In the subcritical case, the cost of $Z_1$ equals the cost of $F_{m + 1}$. We build $Z_2$ by removing $F_{m + 1}$ from $Z_1$, before adding new large deviation events. With
$$
\ell_1' = \ell_2 - 1, \qquad \ell_2' = \ell_1,
$$
the proof of \eqref{langres} shows that $(Z_1 \setminus F_{m + 1})^c$ implies that either $\varphi^n$ does not escape from ${\cal G}_1$, or there is a first return time $\tau_{k_0}$ such that $X(\tau_{k_0}) \in {\cal I}(\ell_1' \ell_2' + 2)$, and an $(m + 1)^{th}$ particle is colored red at time $\sigma_{k_0 + 1}$. The following formula is a definition of $k_0$:
\begin{equation}
\label{bleu}
\hbox{\it $\sigma_{k_0 + 1}$ is the $(m + 1)^{th}$ attribution time of the red color.}
\end{equation}
Note that before time $\tau_{k_0}$ no particle can be colored yellow and there are at least $\ell_1'\ell_2'$ sleeping particles for any $t\in[0,\tau_{k_0}]$. In proving \eqref{munster} we will therefore have to deal with yellow particles. These cannot be controlled by their too low energetic cost, but they are closely related to the notion of $U$-reducibility. A careful analysis of the possible trajectories between $U$-reducible clusterized configurations and configurations in ${\cal X}_D$ will be the key tool to control the yellow particles. To that end we set $\tilde\tau_{k_0} = \tau_{k_0}$ and, for $k \geq k_0$,
$$
\tilde\sigma_{k + 1} = \inf\left\{t > \tilde\tau_k\colon\,
\hbox{there is a free particle inside $\bar\Lambda$ at time $t$}\right\},
$$
and
$$
\tilde\tau_{k + 1} = \inf\left\{t > \tilde\sigma_{k + 1}\colon\,\hbox{$X(t) \in {\cal X}_D$ 
or $X(t) \in {\cal I}(\ell_1' \ell_2' + 1) \setminus {\cal X}_U$}\right\}.
$$
Note the difference between these definitions  and those of the special times $\sigma_{i + 1}$ and $\tau_{i + 1}$: they are related to {\it free} particles and ${\cal X}_D \cup ({\cal I}(\ell_1' \ell_2' + 1) \setminus {\cal X}_U)$, rather than to {\it active} particles and ${\cal X}_D$. However, $(Z_1 \setminus F_{m + 1})^c$ implies that $\tilde\sigma_{k_0 + 1} = \sigma_{k_0 + 1}$. To prove \eqref{munster} we must analyze the time intervals $[\tilde\tau_k, \tilde\sigma_{k + 1}]$ and $[\tilde\sigma_{k + 1}, \tilde\tau_{k + 1}]$, just like we analyzed the time intervals $[\tau_i, \sigma_{i + 1}]$ and $[\sigma_{i + 1}, \tau_{i + 1}]$  to prove \eqref{langres}. We needed such an analysis for all $1 \leq i < n$, but now it will turn out that it will be enough to consider $1 
\leq k < \tilde n$ with
\begin{equation}
\label{fourme}
\tilde n = \min\{\tilde n_1, \tilde n_2\}
\end{equation}
and
\begin{align*}
\tilde n_1
&= \max\{k \geq k_0\colon\, \tilde\tau_k \leq T_{\Delta^+} \ee^{\delta\beta}\}, \\
\tilde n_2
&= \min\{k > k_0\colon\, X(\tilde\tau_k) \in {\cal I}(\ell_1' \ell_2' + 2)\}. 
\end{align*}
We will add $\tilde\sigma_k$ and $\tilde\tau_k$, $1 \leq k \leq \tilde n$ to our set of special times.  

\medskip\noindent
$\blacktriangleright$ {\it \underline{The main obstacle:}}
With a pair of particles $\{i, j\}$ we associate a family of special times $\theta^{ij}_k$, $k \in \N_0$. Before giving the definition of these stopping times, let us explain what they will be used for. In proving \eqref{langres}, we could exclude the simultaneous presence of two {\it free} particles in $\bar\Lambda$. This was done by excluding the simultaneous presence of two {\it active} particles in $[\bar\Lambda, D + \delta]$ by the means of large deviation events to control red and green particles and the inductive hypothesis to control yellow particles. In proving \eqref{munster}, we still need to exclude the simultaneous presence of two {\it free} particles in $\bar\Lambda$, but we have to allow the simultaneous presence of two {\it active} particles in $\bar \Lambda$. We will face this obstacle by using large deviation events and some inductive hypothesis to exclude, on the one hand, the simultaneous presence of {\it three} active particles in $[\bar\Lambda, D+\delta]$, and showing, on the other hand, that the first simultaneous presence of two free particles $i$ and $j$ in $\bar\Lambda$ at a time $T^{ij}$ would imply some large deviation event $J^{ij}$ that involves the two particles $i$ and $j$ during a time interval $[\theta^{ij}_k, T^{ij}]$ in which $i$ and $j$ are the only active particles in $[\bar\Lambda, D + \delta]$.
    
Let us now give the precise definitions for $\theta^{ij}_k$ and $J^{ij}$. We call $\theta^{ij}_0 < \theta^{ij}_1 < \cdots$ the ordered sequence of times $t$ such that one of the following events occurs:
\begin{itemize}
\item[(i)] $i$ is clusterized in $\bar\Lambda$, $j$ is freed inside $\bar\Lambda$, and there was at $t^-$ a single cluster in $\bar\Lambda$ that contained $i$ and $j$.
\item[(ii)] 
$i$ enters $[\bar \Lambda, D + \delta]$ and $j$ is in $[\bar \Lambda, D + \delta]$, so that $i$ was outside $[\bar \Lambda, D + \delta]$ at time $t^-$.
\item[(iii)] 
$i$ is clusterized in $\bar\Lambda$, $j$ is free in $[\bar\Lambda, D + \delta]$, a third particle $k$ leaves $[\bar\Lambda, D + \delta]$ and there is no other free particle in $[\bar\Lambda, D + \delta]$, so that $k$ was inside $[\bar\Lambda, D + \delta]$ at time $t^-$.
\end{itemize} 
We call $T^{ij}$ the first time when particles $i$ and $j$ are both free in $\bar\Lambda$. We say that $J^{ij}$ occurs if $T^{ij} \leq T_{\Delta^+} \ee^{\delta\beta}$ and there is some $\theta^{ij}_k < T^{ij}$ such that any active particle in $[\bar\Lambda, D + \delta]$ during the time interval $[\theta^{ij}_k, T^{ij}]$ is either $i$ or $j$. The following lemma expresses one of the main properties of the large deviation event $J^{ij}$.

\begin{lemma}
\label{gex}
If $T^{ij} \leq T_:{\Delta^+} \ee^{\delta\beta}$, then either $J^{ij}$ occurs or there is a time $t \leq T^{ij}$ at which there are at least three active particles inside $[\bar\Lambda, D + \delta]$.
\end{lemma}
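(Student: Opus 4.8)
\textbf{Proof proposal for Lemma \ref{gex}.}

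The plan is to prove the contrapositive: assume that at no time $t \leq T^{ij}$ are there three active particles in $[\bar\Lambda, D+\delta]$, and deduce that $J^{ij}$ occurs. Since $T^{ij} \leq T_{\Delta^+}\ee^{\delta\beta}$ holds by hypothesis, it suffices to exhibit some index $k$ with $\theta^{ij}_k < T^{ij}$ such that every active particle in $[\bar\Lambda, D+\delta]$ during the whole interval $[\theta^{ij}_k, T^{ij}]$ is $i$ or $j$. The natural candidate is the \emph{largest} such $\theta^{ij}_k$, i.e., the last time before $T^{ij}$ at which one of the events (i)--(iii) in the definition of $(\theta^{ij}_k)_k$ occurs. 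I would first observe that this set of times is non-empty: at time $T^{ij}$ both $i$ and $j$ are free in $\bar\Lambda$, so in particular both are active and inside $[\bar\Lambda, D+\delta]$; tracing each of them backwards in time, there must be an earlier instant at which $i$ enters $[\bar\Lambda, D+\delta]$ with $j$ already present (event (ii)), or at which $i$ and $j$ separate from a common cluster (event (i)) — in all cases producing at least one $\theta^{ij}_k < T^{ij}$.

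The core of the argument is then a ``last exit'' style decomposition. Let $\theta = \theta^{ij}_{k^\ast}$ be the largest element of $\{\theta^{ij}_k\}$ strictly below $T^{ij}$. Suppose, for contradiction, that some third particle $p \notin \{i,j\}$ is active and lies in $[\bar\Lambda, D+\delta]$ at some time $s \in [\theta, T^{ij}]$. By the no-three-active-particles assumption applied at time $s$, at most one of $i, j$ can simultaneously be active in $[\bar\Lambda, D+\delta]$ at time $s$; combined with the fact that at time $T^{ij}$ both $i$ and $j$ \emph{are} active and free in $\bar\Lambda$, the particle $p$ must leave $[\bar\Lambda, D+\delta]$ at some time in $(s, T^{ij}]$, and at least one of $i,j$ — say $j$ — must become active/enter $[\bar\Lambda, D+\delta]$ at some time in $(s, T^{ij}]$. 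I would then chase through the three scenarios by which $j$ can re-enter the picture: either $j$ is freed from a cluster in $\bar\Lambda$, or $j$ crosses into $[\bar\Lambda, D+\delta]$ from outside. Because $i$ is already committed to being active and on its way to being free in $\bar\Lambda$ at $T^{ij}$, and because $p$'s departure happens in the same window, one checks that one of the defining conditions (i), (ii), or (iii) for $\theta^{ij}$-times is triggered at a time strictly after $\theta$ and at most $T^{ij}$ — contradicting the maximality of $\theta = \theta^{ij}_{k^\ast}$. Condition (iii) is precisely tailored to the case where $j$ stays free in $[\bar\Lambda, D+\delta]$ while a third particle $p$ (playing the role of ``$k$'' in (iii)) exits, so the bookkeeping closes up.

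The main obstacle I anticipate is the careful case analysis in the previous paragraph: one must track, for each of $i$, $j$, and the putative intruder $p$, its status (clusterized in $\bar\Lambda$ / free in $\bar\Lambda$ / free in the annulus $[\bar\Lambda, D+\delta]\setminus\bar\Lambda$ / outside $[\bar\Lambda, D+\delta]$) and its transitions across the window $[\theta, T^{ij}]$, and verify that in every configuration the no-three-active-particles hypothesis forces exactly one of the $\theta^{ij}$-events to fire after $\theta$. The subtlety is that the three defining events (i)--(iii) must genuinely exhaust the ways in which a ``third active particle'' episode can begin and end given the boundary data at $\theta$ and at $T^{ij}$; in particular event (ii) handles entrances from outside, (i) handles separations inside $\bar\Lambda$, and (iii) handles the bottleneck case where the intruder is the departing particle rather than an arriving one. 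Once this combinatorial exhaustion is established, the lemma follows immediately by the maximality argument, with no quantitative or probabilistic input — this is purely a deterministic statement about trajectories, which is why it carries no cost in the sense of $c(\cdot)$.
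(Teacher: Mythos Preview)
Your contrapositive strategy is correct and the lemma is indeed purely deterministic, but your organization is more roundabout than the paper's. You take $\theta = \max_k \theta^{ij}_k$ and then argue by contradiction that no third active particle can appear on $[\theta, T^{ij}]$, which forces the case analysis you yourself flag as the main obstacle. The paper instead \emph{constructs} the relevant $\theta^{ij}_k$ directly: it sets
\[
{\cal T}_0 = \sup\bigl\{t \leq T^{ij}\colon i \hbox{ or } j \hbox{ is outside } [\bar\Lambda, D+\delta] \hbox{ or both are in a same unique cluster in } \bar\Lambda \hbox{ at time } t\bigr\},
\]
uses the structure of $\eta_0 \in {\cal X}_{\Delta^+}$ (single quasi-square in $\bar\Lambda$, no other particle in $[\bar\Lambda,\Delta-\alpha]$) to see that ${\cal T}_0 \geq 0$, and then splits into just two cases. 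If no third active particle appears on $[{\cal T}_0, T^{ij}]$, then ${\cal T}_0$ itself is a $\theta^{ij}_k$ of type (i) or (ii) and we are done. Otherwise the paper sets ${\cal T}_1$ to be the \emph{last} time a third active particle is present in $[\bar\Lambda, D+\delta]$; the no-three-active-particles hypothesis forces one of $i,j$ to be sleeping (hence clusterized in $\bar\Lambda$) at ${\cal T}_1$, so ${\cal T}_1$ is a $\theta^{ij}_k$ of type (iii), and by construction no third particle appears on $[{\cal T}_1, T^{ij}]$.

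Both arguments are last-exit decompositions and prove the same thing; the paper's version simply bypasses your anticipated case-chase by choosing ${\cal T}_0$ and ${\cal T}_1$ as explicit suprema matched to the defining events (i)--(ii) and (iii) respectively, rather than working backwards from the abstract maximal $\theta^{ij}_k$. Your non-emptiness argument also implicitly relies on the same initial-condition fact (at $t=0$ either both particles are in the unique cluster or one is outside $[\bar\Lambda, D+\delta]$), which you should make explicit.
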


\noindent
The proof of Lemma \ref{gex} is deferred to Section \ref{sec:appb1}.

%%%

\paragraph{$\bullet$ Large deviations events.}
The event $\tilde B$ in the following list contains $B$ because we enlarge our set of {\it special times} by adding the $\tilde\sigma_k$, $\tilde\tau_k$ and $\theta^{ij}_k$. In the same way, $\tilde C$ and $\tilde D$ contain $C$ and $D$. The event $\tilde F_{m + 1}$ is instead contained in $F_{m + 1}$ and has a larger cost.
\begin{description}
\item[$\tilde B:$] 
There are more than $\ee^{(2\alpha+\delta) \beta}$ special times within time $T_{\Delta^+} \ee^{\delta \beta}$. This event has an infinite cost.
\item[$\tilde C:$] 
Within time $T_{\Delta^+} \ee^{\delta \beta}$ there is a time interval of length $\ee^{\delta \beta}$ that contains a special time followed by a move of cost larger than or equal to $U$. This event costs at least $U - O(\delta)$.
\item[$\tilde D:$] 
Within time $T_{\Delta^+} \ee^{\delta \beta}$ there is a time interval of length $\ee^{D \beta}$ that contains a special time followed by a move of cost larger than or equal to $2U$ or two $\delta$-close moves of cost larger than or equal to $U$. This event costs at least $2U - D - O(\delta)$.
\item[$G_3':$]
There are three active particles, red or green, together with a particle from a cluster at a same time $t<T_{\Delta^+}\ee^{\delta\beta}$ in a box of volume $\ee^{(D+\delta)\beta}$, or a particle that belongs to a cluster consisting of two or three active particles only falls asleep. This event costs at least $3\Delta-2U-\theta+3\alpha-2d-O(\delta)$.
\item[$\tilde F_{m + 1}:$]
Within time $T_{\Delta^+} \ee^{\delta\beta}$ there are $m + 1$ attributions of red color and there are either an extra move of cost larger than or equal to $2U$ or two $\delta$-close moves of cost larger than or equal to $U$, or else the occurrence of one of the events $J^{ij}$. Note that $\tilde F_{m + 1} = F_{m + 1} \cap (F_1\cup \bigcup_{i,j}J^{ij})$. This event costs at least $(m + \frac{3}{2}) (2U - \Delta-\alpha) - O(\delta)$.
\end{description}

%%%%%%%%%%%%%%%%%%%%%%%%%%%%%%%%%%%%%%
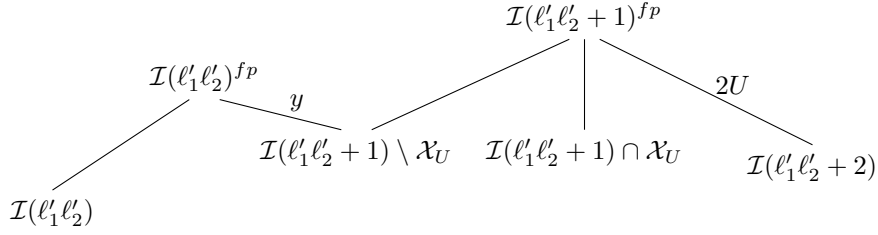
\begin{figure}[htbp]
$$
\begin{tikzpicture}
\draw (2, -.8) node {${\cal I}(\ell_1'\ell_2')$};
\draw (2, -.5) -- (3.8, .7);
\draw (4, 1) node {${\cal I}(\ell_1'\ell_2')^{fp}$};
\draw (4.2, .7) -- (5.8, .3);
\draw (5, .65) node [right] {$y$};
\draw (6, 0) node {${\cal I}(\ell_1'\ell_2' + 1) \setminus {\cal X}_U$};
\draw (6.2, .3) -- (8.8, 1.5);
\draw (9, 0) node {${\cal I}(\ell_1'\ell_2' + 1) \cap {\cal X}_U$};
\draw (9, .3) -- (9, 1.5);
\draw (9, 1.8) node {${\cal I}(\ell_1'\ell_2' + 1)^{fp}$};
\draw (9.2, 1.5) -- (12, .1);
\draw (10.6, .9) node [right] {$2U$};
\draw (12, -.2) node {${\cal I}(\ell_1'\ell_2' + 2)$};
\end{tikzpicture}
$$
\caption{The graph $\tilde{\cal G}$.}
\label{fig:tildecalG}
\end{figure}
%%%%%%%%%%%%%%%%%%%%%%%%%%%%%%%%%%%%%%%

Set 
\begin{equation}
\label{Z2sub}
Z_2 = A \cup \tilde B \cup \tilde C \cup C' \cup \tilde D \cup D' \cup E
\cup \tilde F_{m + 1} \cup G \cup G' \cup G_3' \cup G_4' \cup H_2.
\end{equation}
Let $\tilde{\cal G}$ be the graph in Fig.~\ref{fig:tildecalG}.

Recall \eqref{bleu} and \eqref{fourme}, and set
$$
\tilde\varphi^k
= (X(\tilde\tau_{k_0}), X(\tilde\sigma_{k_0 + 1}), X(\tilde\tau_{k_0 + 1}), \dots, X(\tilde\sigma_k), X(\tilde\tau_k)),
\qquad k_0 \leq k \leq \tilde n. 
$$
We will prove by induction that, for all $k_0 \leq k \leq \tilde{n}$,
\begin{ClmPkt}
If $Z_2$ does not occur, then
\begin{itemize}
\item[(i)] 
$\tilde\varphi^k$ does not escape from $\tilde {\cal G}$.
\item[(ii)] 
Some particle can be colored yellow during the time interval $[\tilde\tau_{k_0}, \tilde\tau_k]$, but only during the climbing of the $y$-edge of $\tilde {\cal G}$.
\item[(iii)] 
There is at most one yellow particle at each time $t \leq \tilde\tau_k$.
\item[(iv)] 
Each time $0 \leq t \leq \tilde\tau_k$ a particle falls asleep there is no yellow particle at the first $\tilde\tau_j$, $1 \leq j \leq k$, larger than or equal to $t$.
\item[(v)] 
For all $k_0<j \leq k$, if $X$ visits ${\cal X}_U$ during the time interval $[\tilde\sigma_j, \tilde\tau_j)$, then there is no red or green particle in $\bar\Lambda$ at time $\tilde\tau_j$.
\item[(vi)] 
At each time $0 \leq t \leq \tilde\tau_k$ there are at least $\ell_1' \ell_2'$ sleeping particles.
\item[(vii)] No box creation occurs within time $\tau_k$.
\end{itemize}
\end{ClmPkt}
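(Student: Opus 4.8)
\textbf{Proof plan for Claim $\tilde{\cal P}(k)$.}

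The plan is to argue by induction on $k$, closely mirroring the induction for Claim ${\cal P}(k)$ but now with yellow particles allowed on a single designated edge. The base case $k=k_0$ holds by the analysis already carried out in proving \eqref{langres}: on $(Z_1 \setminus F_{m+1})^c$ (which is implied by $Z_2^c$, since the new events $\tilde B$, $\tilde C$, $\tilde D$, $\tilde F_{m+1}$ refine or replace their untilded counterparts) we know $X(\tilde\tau_{k_0}) = X(\tau_{k_0}) \in {\cal I}(\ell_1'\ell_2'+2)$, that no particle was colored yellow before $\tilde\tau_{k_0}$, that there are at least $\ell_1'\ell_2'$ sleeping particles throughout $[0,\tilde\tau_{k_0}]$, and that no box creation occurred. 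So properties (i)--(vii) hold trivially at $k=k_0$ (the graph is entered at its bottom vertex, no $y$-edge has been climbed yet, there is no yellow particle, etc.).

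For the inductive step, assume $\tilde{\cal P}(k)$ and $Z_2^c$, and analyse the process on $[\tilde\tau_k, \tilde\tau_{k+1}] = [\tilde\tau_k, \tilde\sigma_{k+1}] \cup [\tilde\sigma_{k+1}, \tilde\tau_{k+1}]$. On the first subinterval, $|\bar X|$ is constant, $X(\tilde\tau_k)$ is one of the isoperimetric (or near-isoperimetric) vertices of $\tilde{\cal G}$, and the same energy-landscape bookkeeping as before --- using $E^c$ to cap $\Delta\bar H$ at $2U$, and $A^c$ (recurrence to ${\cal X}_0$ and ${\cal X}_U$) together with $\tilde C^c$, $C'^c$, $\tilde D^c$, $D'^c$ to control how the local energy climbs and descends --- pins down $X(\tilde\sigma_{k+1})$ up to the addition of a free particle. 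The genuinely new ingredient on the second subinterval is that we must allow two active particles in $\bar\Lambda$ simultaneously; here Lemma~\ref{gex} is invoked, together with the new large-deviation events $G_3'$ and $G_4'$ (to forbid three active particles in $[\bar\Lambda, D+\delta]$) and $\tilde F_{m+1}^c$ (to forbid any $J^{ij}$ once the $(m+1)^{st}$ red particle has been produced), to conclude that two \emph{free} particles are never simultaneously in $\bar\Lambda$. Given this, the ``good attachment'' versus ``exit'' dichotomy from the proof of \eqref{langres} carries over, now with the extra possibility that $X$ reaches ${\cal I}(\ell_1'\ell_2'+1)\setminus{\cal X}_U$ (the $y$-edge endpoint), along which --- and only along which --- a yellow particle may appear. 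Properties (iii)--(iv) then follow from the permutation rules (one freed particle at a time, lowest in the hierarchy) and the recurrence estimates bounding how long a near-$U$-reducible configuration persists before either falling asleep or shedding the particle; property (v) follows because visiting ${\cal X}_U$ during $[\tilde\sigma_j, \tilde\tau_j)$ forces $\Delta\bar H$ down to a $U$-reducible level from which the active particle must be expelled or absorbed before a new one enters; property (vi) is preserved because the only vertex of $\tilde{\cal G}$ with fewer than $\ell_1'\ell_2'+1$ clusterized particles still has $\ell_1'\ell_2'$ of them; and property (vii) follows from $G_4'^c$ (equivalently $G_3'^c$) exactly as ${\cal P}(k+1)$(iv) did.

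The main obstacle is the control of yellow particles on the $y$-edge, i.e. the careful classification of all dynamical trajectories connecting a $U$-reducible clusterized configuration in ${\cal X}_0\setminus{\cal X}_U$ to a configuration in ${\cal X}_D$ or in ${\cal I}(\ell_1'\ell_2'+1)\setminus{\cal X}_U$, showing that at most one yellow particle can be alive at a time and that it cannot survive to a return time $\tilde\tau_j$ once it has fallen asleep. This is exactly the content that the two auxiliary lemmas (Lemma~\ref{gex} and the lemma deferred to Section~\ref{sec:appb2}, here referred to as Lemma~\ref{gorgonzola}) are designed to handle: Lemma~\ref{gex} reduces the two-free-particle scenario to the large-deviation event $J^{ij}$ or to a three-active-particle event, and the second lemma supplies the structural fact about $U$-reducibility needed to see that the trajectory along the $y$-edge is forced. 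The rest of the argument is a bookkeeping exercise of the same flavour as the proof of \eqref{langres}, with the understanding that every constant-order combinatorial factor is absorbed into the $O(\alpha,d,\delta)$ and that the cost lower bounds for the new events $\tilde B$, $\tilde C$, $\tilde D$, $G_3'$, $\tilde F_{m+1}$ are verified in Appendix~\ref{sec:appb3}.
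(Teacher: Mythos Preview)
Your plan is essentially the same approach as the paper's: induction on $k$ starting from $k_0$, splitting $[\tilde\tau_k,\tilde\tau_{k+1}]$ into the two subintervals, controlling $\Delta\bar H$ via the large-deviation events, and using Lemmas~\ref{gex} and~\ref{gorgonzola} to handle the new two-active-particle scenario. The paper organises the inductive step explicitly as four cases according to which vertex of $\tilde{\cal G}$ the configuration $X(\tilde\tau_k)$ lies in (${\cal I}(\ell_1'\ell_2'+2)$, ${\cal I}(\ell_1'\ell_2'+1)\cap{\cal X}_U$, ${\cal I}(\ell_1'\ell_2'+1)\setminus{\cal X}_U$, ${\cal I}(\ell_1'\ell_2')$), with Case~3 requiring a rather intricate nested alternative $(a)/(b)/(c)/(b')/(c')$ that your sketch compresses into ``good attachment versus exit''.

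Two places where your attributions are slightly off and would need correcting in execution: (1) after $\sigma_{k_0+1}$ it is $\tilde F_{m+1}^c$ (no further $2U$-move or $\delta$-close pair of $U$-moves), not $E^c$, that caps $\Delta\bar H$ on $[\tilde\tau_k,\tilde\sigma_{k+1}]$ at level $U$ rather than $2U$ in Cases~2--4; (2) the exclusion of three active particles in $[\bar\Lambda,D+\delta]$ comes from $G^c$, $G'^c$, $H_2^c$ (pairs among red/green) combined with the inductive hypothesis~(iii) (at most one yellow), not from $G_3'^c$ or $G_4'^c$, which are used only for property~(vii). Relatedly, Lemma~\ref{gorgonzola} is not used to ``force the $y$-edge trajectory'' but rather, in Case~3, to bound the elapsed time since the last visit to ${\cal X}_U$ and thereby show that a clusterised red or green particle cannot fall asleep during $[\tilde\tau_k,\tilde\sigma_{k+1}]$; this is precisely what makes property~(v) go through. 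With these adjustments your plan matches the paper's proof.
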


\noindent
Property (i) is the main one we are interested in. Property (iv) implies that if a particle falls asleep when there is a yellow particle, then it is the yellow particle that falls asleep. Property (vi) is easy to check and simplifies a few steps of the proof. We will use properties (ii)--(iv) to control inductively the yellow particles, in particular, property (iii) will be used to prove property (vii). Property (v) will be used to prove property (iv) with the help of the following lemma, whose proof is deferred to Section \ref{sec:appb2}.

\begin{lemma}
\label{gorgonzola}
If $Z_2$ does not occur, then, for all $k \leq \tilde n$, either $X(\tilde\tau_k) \in {\cal X}_U$ or $X(t) \not\in {\cal X}_U$ for all $t \in [\tilde\tau_k, \tilde\sigma_{k + 1})$.
\end{lemma}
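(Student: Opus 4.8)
The statement is vacuous when $X(\tilde\tau_k)\in{\cal X}_U$, so the plan is to assume $X(\tilde\tau_k)\notin{\cal X}_U$ and show that then $X(t)\notin{\cal X}_U$ for every $t$ in the rest interval $[\tilde\tau_k,\tilde\sigma_{k+1})$. By the definition of the stopping time $\tilde\tau_k$ together with the inclusion ${\cal X}_D\subset{\cal X}_U$, this assumption forces $X(\tilde\tau_k)\in{\cal I}(\ell_1'\ell_2'+1)\setminus{\cal X}_U$; in particular the restriction $\bar X(\tilde\tau_k)$ to $\bar\Lambda$ solves the isoperimetric problem with $n:=\ell_1'\ell_2'+1$ particles. (For $k=k_0$ one has $\tilde\tau_{k_0}=\tau_{k_0}$, which is a return time in ${\cal X}_D$, so the vacuous case applies; hence no induction on $k$ is needed, each index $k\le\tilde n$ being treated separately using only $Z_2^c$.) Two elementary facts about $[\tilde\tau_k,\tilde\sigma_{k+1})$ start the argument: by the definition of $\tilde\sigma_{k+1}$ there is no free particle inside $\bar\Lambda$ on this interval, and since by condition {\bf B3} every particle is free at the instant it crosses into or out of $\bar\Lambda$, no particle crosses the boundary of $\bar\Lambda$ there; consequently $|\bar X(t)|\equiv n$ is constant, and $\bar X(\tilde\tau_k)$ being isoperimetric gives $\bar H(\bar X(t))\ge\bar H(\bar X(\tilde\tau_k))$ throughout the interval.

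Next I would transport to $[\tilde\tau_k,\tilde\sigma_{k+1})$ the analysis already carried out for the intervals $[\tau_k,\sigma_{k+1})$ in the proof of \eqref{langres}. Since $\tilde\tau_k$ is a special time, $E^c$ confines the increment $\Delta\bar H(t):=\bar H(\bar X(t))-\bar H(\bar X(\tilde\tau_k))$ to $\{0,U,2U\}$, and $A^c$, $\tilde B^c$, $\tilde C^c$, $C'^c$, $\tilde D^c$, $D'^c$ impose on $\Delta\bar H$ exactly the oscillation pattern described there: each rise to $2U$ is followed within $\ee^{\delta\beta}$ by a decrease, each rise to $U$ is followed within $\ee^{\delta\beta}$ either by a decrease to $0$ or by a further rise to $2U$, and after each decrease to $U$ the process must go up to $2U$ within $\ee^{(U+\delta)\beta}$ or down to $0$ within $\ee^{\delta\beta}$. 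Because no particle enters or leaves $\bar\Lambda$ on the interval, all of this is produced by moves internal to $\bar\Lambda$.

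It then remains to do a short bookkeeping of ${\cal X}_U$-membership along the interval. For every $t$ with $\Delta\bar H(t)\in\{U,2U\}$, the configuration $\bar X(t)$ has been reached from the level-$0$ configuration $\bar X(\tilde\tau_k)$ by a path of moves internal to $\bar\Lambda$; as Kawasaki moves are reversible and $|\bar X|$ is constant, the last upward move along this path can be undone, lowering the local energy below the current level without exceeding it, so $\bar X(t)$ is $0$-reducible and therefore lies outside ${\cal X}_0\supset{\cal X}_U$. For every $t$ with $\Delta\bar H(t)=0$, the constancy of $|\bar X|$ and $\bar H(\bar X(t))=\bar H_{\min}(n)$ force $\bar X(t)$ to be again an isoperimetric configuration with $n=\ell_1'\ell_2'+1$ particles, i.e.\ a rectangle-plus-protuberance of that size; the defect of $U$-irreducibility does not depend on the side or position of the protuberance (it is realised by detaching the protuberance and letting that particle leave $\bar\Lambda$, the route that places $\bar X(\tilde\tau_k)$ outside ${\cal X}_U$), whence $\bar X(t)\in{\cal I}(\ell_1'\ell_2'+1)\setminus{\cal X}_U$. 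Combining the two cases, $X(t)\notin{\cal X}_U$ for all $t\in[\tilde\tau_k,\tilde\sigma_{k+1})$, which is the claim.

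The hard part is the confinement step of the second paragraph: ruling out that, during a rest interval, the ``extra'' particle that entered $\bar\Lambda$ at the preceding $\tilde\sigma$-time consolidates the protuberance configuration into a genuinely $U$-irreducible cluster. This is exactly where the absence of a free particle in $\bar\Lambda$ throughout $[\tilde\tau_k,\tilde\sigma_{k+1})$ is essential — it prevents a second particle from arriving and ``closing'' a new row — and where the large-deviation exclusions $\tilde C^c$, $C'^c$, $\tilde D^c$, $D'^c$, $E^c$ attached to the special time $\tilde\tau_k$ do the work, reproducing inside one rest interval the estimate already established for $[\tau_k,\sigma_{k+1})$. Once that confinement is secured, the reducibility accounting above is routine; the only point needing a (standard) verification is that the three energy levels $0,U,2U$ above an isoperimetric configuration of size $\ell_1'\ell_2'+1$ indeed all correspond to configurations lying outside ${\cal X}_U$.
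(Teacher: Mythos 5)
Your approach is genuinely different from the paper's, and it has a gap at precisely the hardest point. The paper proves the lemma by contradiction in essentially four lines: if $X(t)\in{\cal X}_U$ for some $t$ in the rest interval, then by the recurrence property in $A^c$ one may take $t-\tilde\tau_k\leq T_U\ee^{\delta\beta}$; since $\tilde\tau_k$ is a special time, $\tilde D^c$ forces the dynamical path from $\bar X(\tilde\tau_k)$ to $\bar X(t)$ (and hence its reversal) to stay below $\bar H(\bar X(\tilde\tau_k))+U\leq\bar H(\bar X(t))+U$; concatenating the reversed path with the $U$-reducing path that exists because $\bar X(\tilde\tau_k)\notin{\cal X}_U$ produces a $U$-reducing path for $\bar X(t)$, contradicting $\bar X(t)\in{\cal X}_U$. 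The crucial point is that this argument \emph{uses} the $U$-reducibility of $\bar X(\tilde\tau_k)$, which is exactly the hypothesis you have available.

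Your forward classification avoids that concatenation and therefore has to establish, independently, that every configuration visited at energy level $0$ lies outside ${\cal X}_U$. You assert that level-$0$ configurations are ``rectangle-plus-protuberance'' and hence trivially $U$-reducible, calling this a ``standard verification.'' But that is not a verification — it is the content of the lemma. The set ${\cal I}(\ell_1'\ell_2'+1)\cap{\cal X}_U$ is \emph{not} empty in general: the paper's graph $\tilde{\cal G}$ carries it as a separate node, and Case~2 of the induction handles exactly a return to that node. Concretely, the isoperimetric problem for $n=\ell_1'\ell_2'+1$ particles admits minimizers other than quasi-square-plus-protuberance (e.g.\ a $3\times3$ block with two opposite corners removed when $n=7$), and such shapes have all boundary particles with $\geq 2$ bonds, so they cannot be $U$-reduced by the detach-and-remove route you describe. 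What must be shown is that these $U$-irreducible isoperimetric shapes are not \emph{dynamically reachable} from $\bar X(\tilde\tau_k)$ inside a rest interval; your static classification does not show this, and the way to show it is precisely the reverse-path-plus-concatenation idea you did not use. A secondary, fixable, imprecision: at levels $U$ and $2U$ the move ``undo the last upward move'' need not be available (the last energy-changing move may have been a \emph{descent} from $2U$ to $U$), though there one can instead take the whole reversed path to level $0$, which does give a $U$-reduction since the endpoint has strictly lower energy.
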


\medskip\noindent
$\trianglerighteq$
Before proving $\tilde{\cal P}(k)$, $k_0 \leq k \leq \tilde n$, let us show that $\tilde{\cal P}(\tilde n)$ implies for both cases $\tilde n = \tilde n_1$ and $\tilde n = \tilde n_2$ that if $Z_2^c$ occurs, then $\varphi^n$ cannot escape from ${\cal G}_2$. 

For $\tilde n = \tilde n_1$, since $Z_2^c$ implies that $\tilde\varphi^{\tilde n}$ does not escape from $\tilde{\cal G}$, it suffices to prove, for all $k_0 \leq l \leq n$, that $\tau_l = \tilde\tau_k$ for some $k \leq \tilde n$. We prove prove by induction on $l \geq k_0$. The claim is obvious for $l = k_0$. If this is true for some $l < n$, then $\tilde\sigma_{k + 1} = \sigma_{l + 1}$ and, since $\tilde n = \tilde n_1$, there is a last time $\tilde\tau_{m^*} > \sigma_{l + 1}$ before $\tau_{l + 1}$:
$$
\tilde\tau_{m^*} = \max\{\tilde\tau_m \leq \tau_{l + 1}\colon\, \tilde\tau_m > \sigma_{l + 1}\}.
$$
If $X(\tilde\tau_{m^*}) \not\in {\cal X}_U$, then, by Lemma~\ref{gorgonzola}, $\tau_{l + 1} \geq \tilde\sigma_{m^* + 1}$ and $\tilde\tau_{m^*}$ cannot be the {\it last} time $\tilde\tau_m$ smaller than or equal to $\tau_{l + 1}$. It follows that $X(\tilde\tau_{m^*}) \in {\cal X}_D$ and, since $\tilde\tau_{m^*} > \sigma_{l + 1}$, $\tilde\tau_{m*} \geq \tau_{l + 1} \geq \tilde\tau_{m*}$: the two times coincide.

For $\tilde n = \tilde n_2$, like for $\tilde n = \tilde n_1$, we prove that there is some $k_1 > k_0$ such that $\tau_{k_1} = \tilde\tau_{\tilde n}$ and
$$
\{\tau_{k_0}, \tau_{k_0 + 1}, \dots, \tau_{k_1}\}
\subset \{\tilde\tau_{k_0}, \tilde\tau_{k_0 + 1}, \dots, \tilde\tau_{\tilde n}\}.
$$
It follows that $Z_2^c$ implies that $\varphi^{k_1}$ does not escape from ${\cal G}_2$. Since $\tilde n = \tilde n_2$, $X$ reaches ${\cal X}_D$ at time $\tilde\tau_{\tilde n}$ and, since $\tilde{\cal P}(\tilde n)$-i) implies that it does so by making some particle fall asleep, $\tilde{\cal P}(\tilde n)$(iv) implies that there is no yellow particle at time $\tau_{k_1} = \tilde\tau_{\tilde n}$. Using that $\tilde F_{m + 1}^c$ excludes any $(m + 2)^{th}$ attribution of the red color, we can show by induction, as in the proof of \eqref{langres}, that $\varphi^k$, for $k \geq k_1$, cannot escape anymore from ${\cal G}_1$, the subgraph of ${\cal G}_2$.

Since the cost of $Z_2$ is given by the smallest cost of its components, we obtain
$$
c(Z_2) = \left\{
\begin{array}{ll}
c(\tilde F_{m + 1}) \geq r(\ell_1,\ell_2) - \Delta + \frac{\gamma}{2} -O(\alpha,d) - O(\delta) & \hbox{if $\ell_1 < \ell_c - 1$,}\\
c(\tilde F_{m + 1})\geq r(\ell_1,\ell_2) - \Delta + \frac{\gamma}{2} -O(\alpha,d) - O(\delta) & \hbox{if $\ell_1=\ell_c-1$ and $\iota<1/2$}, \\
c(H_2)\geq r(\ell_1,\ell_2) - \Delta + (1-\iota)\gamma -O(\alpha,d) - O(\delta) & \hbox{if $\ell_1=\ell_c-1$ and $\iota\geq1/2$}.
\end{array}
\right.
$$
To prove \eqref{munster} in the subcritical case, it only remains to prove $\tilde{\cal P}(\tilde n)$ and check the given cost estimates .

\medskip\noindent
$\bullet$ {\bf Proof of $\boldsymbol{\tilde{\cal P}(k)}$, $\boldsymbol{k_0 \leq k \leq \tilde n}$.}
$\tilde{\cal P}(k_0)$(iii) and $\tilde{\cal P}(k_0)$(vi) follow from the argument explained below \eqref{bleu}, while the other items are obvious. For $k \geq k_0$, we assume $\tilde{\cal P}(k)$ to prove $\tilde{\cal P}(k + 1)$. We consider four cases, depending on the configuration at time $\tilde\tau_k$ in one of the sets of $\tilde{\cal G}$ ordered from right to left.

%%%

\paragraph{Case 1: $X(\tilde\tau_k) \in {\cal I}(\ell_1' \ell_2' + 2)$.}

If $k \neq k_0$, then $\tilde n = \tilde n_2 = k$ and there is nothing to prove. We only need to consider the case $k = k_0$, for which $\tilde\sigma_{k + 1} = \sigma_{k_0 + 1}$ and the definition of $\sigma_{k_0 + 1}$ gives $X(\sigma_{k_0 + 1}) \in {\cal I}(\ell_1' \ell_2' + 1)^{fp}$. The analysis of the time intervals $[\tau_k, \sigma_{k + 1}]$ we gave to prove \eqref{langres} also shows that in this case no yellow particle can be produced during the time interval $[\tilde\tau_{k_0}, \tilde\sigma_{k_0 + 1}]$, and that there are $\ell_1' \ell_2' + 2$ sleeping particles all along $[\tilde\tau_{k_0}, \tilde\sigma_{k_0 + 1}]$, and $\ell_1' \ell_2' + 1$ sleeping particles at time $\tilde\sigma_{k_0 + 1}$.

Since the free particle is colored red at time $\tilde\sigma_{k_0 + 1}$ and no yellow particle was produced during the time interval $[0, \tilde\sigma_{k_0 + 1}]$, the analysis of the time intervals $[\sigma_{k + 1}, \tau_{k + 1}]$ we gave to prove \eqref{langres} can be reproduced to prove $\tilde{\cal P}(k_0 + 1)$. There are two differences. One difference is that we have to distinguish between two cases at the end of the ``exit case'', when reaching an isoperimetric configuration of sleeping particles: if this configuration is $U$-irreducible, then $X$ reaches ${\cal X}_D$ in ${\cal I}(\ell_1' \ell_2' + 1) \cap {\cal X}_U$, while if not, then $X$ reaches ${\cal I}(\ell_1' \ell_2' + 1) \setminus {\cal X}_U$. Still, no yellow particle was produced during the time interval $[\tilde\sigma_{k_0 + 1}, \tilde\tau_{k_0 + 1}]$, in which we always have $\ell_1' \ell_2' + 1$ sleeping particles at least. The other difference is that we have to check ${\cal P}(k_0 + 1)$(v). To do so it suffices to note that the only case for which $X(\tilde\tau_{k_0 + 1}) \not\in {\cal X}_D$ is the ``exit case'' for which $X$ does not visit ${\cal X}_U$ during the whole time interval $[\tilde\sigma_{k_0 + 1}, \tilde\tau_{k_0 + 1})$.  Property $\tilde{\cal P}(k+1)$(vii) follows from the events $G_4'^c$, $G_3'^c$ and $\tilde{\cal P}(k+1)$-iii).

%%%

\paragraph{Case 2: $X(\tilde\tau_k) \in {\cal I}(\ell_1' \ell_2' + 1) \cap {\cal X}_U$.}

In this case the main part of the analysis is that of the time interval $[\tilde\tau_k, \tilde\sigma_{k + 1}]$. In particular, we will prove that 
$X(\tilde\sigma_{k + 1})$ belongs to ${\cal I}(\ell_1' \ell_2' + 1)^{fp}$, with a cluster made up of sleeping particles only, and there is no yellow particle at time $\tilde\sigma_{k + 1}$. After that we can conclude as in Case 1.

We first note that, by the definition of $\tilde\tau_k$, there are only sleeping particles in $\bar\Lambda$ at time $\tilde\tau_k$. Therefore we study once again the process
$$
\Delta\bar H\colon\, t \in [\tilde\tau_k, \tilde\sigma_{k + 1}) \mapsto \bar H(X(t)) - \bar H(X(\tilde\tau_k)).
$$
Similarly to the analysis we gave to prove \eqref{langres}, the events $\tilde F_{m + 1}^c$ and $A^c$ imply that the process can only oscillate between the energy levels $0$ and $U$, and has to go back to $0$ within a time $\ee^{\delta\beta}$ after each increase to $U$. Since $X(\tilde\tau_k) \in {\cal X}_U$, there is no way to free any particle without going above the energy level $U$. We therefore only have to consider the entrance case. The event $C'^c$ implies that $X$ reaches ${\cal I}(\ell_1' \ell_2' + 1)^{fp}$, with a cluster made up of sleeping particles only.

Now, if there were some yellow particle at time $\tilde\sigma_{k + 1}$, then by $\tilde{\cal P}(k)$(ii) this should have been produced at some earlier time $\tilde\sigma_{k'} < \tilde\tau_k$, leaving $\ell_1' \ell_2'$ sleeping particles. Since at time $\tilde\tau_k$ there are $\ell_1' \ell_2' + 1$ sleeping particles, we would get a contradiction with $\tilde{\cal P}(k)$(iv). It therefore remains to prove $\tilde{\cal P}(k+1)$(vii), for which we can argue as before.

%%%

\paragraph{Case 3: $X(\tilde\tau_k) \in {\cal I}(\ell_1' \ell_2' + 1) \setminus {\cal X}_U$.}

In this case, the same analysis for the time interval $[\tilde\tau_k, \tilde\sigma_{k + 1}]$ can be reproduced with a different conclusion.
On the one hand, it is now possible to free some particle with a move of cost $U$, leading to ${\cal I}(\ell_1' \ell_2')^{fp}$ at time $\tilde\sigma_{k + 1}$, with a cluster of $\ell_1' \ell_2'$ sleeping particles. One yellow particle, {\it but no more than one}, can subsequently be produced. On the other hand, it is still possible to reach ${\cal I}(\ell_1' \ell_2' + 1)^{fp}$ at time $\tilde\sigma_{k + 1}$, {\it without producing any new yellow particle}, but in this case too there is a difference with respect to Case 2: it is not true anymore that all the clusterized particles in $\bar\Lambda$ are necessarily sleeping at time $\tilde\sigma_{k + 1}$. Indeed, we cannot exclude anymore the presence of an active particle in $\bar\Lambda$ at time $\tilde\tau_k$. Also we cannot exclude with the same argument the possibility of having, at time $\tilde \sigma_{k + 1}$, $l'_1 l'_2 + 1$ sleeping particles together with a yellow free particle.  We will first prove that $Z_2^c$ implies that an eventual red or green particle at time $\tilde\tau_k$ cannot fall asleep during the time interval  $[\tilde\tau_k, \tilde\sigma_{k + 1}]$. Afterwards we will study the time interval $[\tilde\sigma_{k + 1}, \tilde\tau_{k + 1}]$ in the two cases  $X(\tilde\sigma_{k + 1}) \in {\cal I}(\ell_1' \ell_2' + 1)^{fp}$ and $X(\tilde\sigma_{k + 1}) \in {\cal I}(\ell_1' \ell_2')^{fp}$, with a cluster of $\ell_1' \ell_2'$ sleeping particles.

{\it A red or a green particle cannot fall asleep in the first time interval.}
We only have to consider the case when there is some red or green particle $i$ in $\bar\Lambda$ at time $\tilde\tau_k$. Let us call $\tilde\tau_{l^*}$ the last time $\tilde\tau_l$ before $\tilde\tau_k$ such that $X(\tilde\tau_l) \in {\cal X}_D$. Lemma~\ref{gorgonzola} implies that $X$ could not visit ${\cal X}_U$ during any time interval $[\tilde\tau_j, \tilde\sigma_{j + 1})$ for $1 \leq l^* < j \leq k$. Let us call $[\tilde\sigma_{j^*}, \tilde\tau_{j^*})$ the last time interval $[\tilde\sigma_{j}, \tilde\tau_{j})$ after $\tilde\tau_{l^*}$ and before $\tilde\tau_k$ in which $X$ visited ${\cal X}_U$.  We consider separately the cases in which such an index $j^*$ exists or not.  If $j^*$ exists, then by $\tilde{\cal P}(k)$-v) there was no red or green particle at time $\tilde\tau_{j^*}$, in particular, $j^* < k$ and, by construction, $X$ did not visit ${\cal X}_U$ during the time interval $[\tilde\tau_{j^*}, \tilde\sigma_{k + 1})$. The recurrence property to ${\cal X}_U$, which is described by $A^c$, then implies
\begin{equation}
\label{soumaintrain}
\tilde\sigma_{k + 1} - \tilde\tau_{j^*} \leq T_U \ee^{\delta\beta}.
\end{equation}
Since at time $\tilde\tau_{j^*}$ there was no red or green particle in $\bar\Lambda$, if our red or green particle $i$ at time $\tilde\tau_k$
was already in $\bar\Lambda$ at time $\tilde\tau_{j^*}$, then it was sleeping and there must have been some time $t_f$ in $[\tilde\tau_{j^*}, \tilde\tau_k)$ at which $i$ was free. If $i$ was not in $\bar\Lambda$ at time $\tilde\tau_k$, then it had to enter $\bar\Lambda$ during the time interval $[\tilde\tau_{j^*}, \tilde\tau_k)$ and, in this case too, it had to be free at some time $t_f$ in $[\tilde\tau_{j^*}, \tilde\tau_k)$. Inequality~\eqref{soumaintrain} implies that
$$
\tilde\sigma_{k + 1} - t_f \leq T_U \ee^{\delta\beta} < \ee^{D\beta},
$$
so that $i$ cannot fall asleep before time $\tilde\sigma_{k + 1}$.

If $j^*$ does not exist, then by construction we deduce that $l^*<k$ and
\begin{equation}
\label{2soumaintrain}
\tilde\sigma_{k + 1} - \tilde\sigma_{l^*+1} \leq T_U \ee^{\delta\beta}.
\end{equation}
Since all the clusterized particle in $\bar\Lambda$  at time $\tilde\sigma_{l^*+1}$ were sleeping particles, if $i$ was among them, then there was some time $t_f$ between $\tilde\sigma_{l^*+1}$ and $\tilde\tau_k$ when $i$ was free. The same conclusion obviously holds if $i$ was the free particle at time $\tilde\sigma_{l^*+1}$. Finally, if $i$ was not in $\bar\Lambda$ at time $\tilde\sigma_{l^*+1}$, then it had to enter $\bar\Lambda$ between times $\tilde\sigma_{l^*+1}$ and $\tilde\tau_k$. But in this case also it had to be free at some time $t_f$ between $\tilde\sigma_{l^*+1}$ and $\tilde\tau_k$. It follows from \eqref{2soumaintrain} that
$$
\tilde\sigma_{k + 1} - t_f \leq T_U \ee^{\delta\beta} < \ee^{D\beta},
$$
and $i$ cannot fall asleep before time $\tilde\sigma_{k+1}$.

\medskip\noindent
$\blacktriangleright$ {\it The case $X(\tilde\sigma_{k + 1}) \in {\cal I}(\ell_1' \ell_2' + 1)^{fp}$.}
If all the clusterized particles in $\bar\Lambda$ are sleeping at time $\tilde\sigma_{k + 1}$, then we can conclude as in Case 2: the entrance at $\tilde \sigma_{k + 1}$ of a yellow particle would imply either the presence of another yellow particle in $\bar \Lambda$ at time $\tilde \tau_k$, which would contradict $\tilde {\cal P}(k)$(iii), or the fact that there were only sleeping particles at $\tilde \tau_k$,
which as before would contradict either $\tilde{\cal P}(k)$(ii) or $\tilde {\cal P}(k)$(iv). Let us therefore assume that the isoperimetric cluster at time $\tilde\sigma_{k + 1}$ contains an active particle. Since there is also a free particle at time $\tilde\sigma_{k + 1}$ in $\bar\Lambda$, we have two active particles in $\bar\Lambda$. The events $G^c$, $G'^c$ and $H_2^c$ imply that at least one of them has to be yellow. Since at time $\tilde\tau_k$ there was one yellow particle at most and we did not produce any new yellow particle during the time interval $[\tilde\tau_k, \tilde\sigma_{k + 1}]$, there is at most one yellow particle. The events $G^c$, $G'^c$ and $H_2^c$ imply that, among the two active particles in $\bar\Lambda$, one is yellow and the other is either red or green, there is no other yellow particle in $\bar\Lambda^c$, and no other active particles in $[\bar\Lambda, D + \delta] \setminus \bar\Lambda$. In particular, as a consequence of $A^c$, no other particle can enter $\bar\Lambda$ before time $\tilde\tau_{k + 1}$.

Let us consider the process
$$
\Delta\bar H\colon\,t \in [\tilde\sigma_{k + 1}, \tilde\tau_{k + 1}]
\mapsto \bar H(X(t)) - \bar H(X(\tilde\sigma_{k + 1})).
$$
As a consequence of $A^c$, $\tilde C^c$ and the fact that no other particle can enter $\bar\Lambda$, this process has to decrease within a time $\ee^{\delta\beta}$. We then have a flow of alternatives organised as follows. We consider three distinct cases $a$, $b$, $c$: the first two will be conclusive, while the last can either be conclusive in three different ways or bring us to a similar but simpler and binary alternative $b'$/$c'$. Once again the first case will be conclusive, while the last case can either be conclusive in three different ways or bring us back to the same binary alternative $b'$/$c'$. It will be clear later that $Z_2^c$ will prevent us from running into an infinite loop.
\begin{description}
\item[$(a)$] 
{\it The free particle at time $\tilde\sigma_{k + 1}$ leaves $\bar\Lambda$ without interacting with any other particle in $\bar\Lambda$.}
In this case $\Delta\bar H$ first decreases to $-\Delta$, which occurs at time $\tilde\tau_{k + 1}$: $X$ reaches ${\cal I}(\ell_1' \ell_2' + 1) \setminus {\cal X}_U$ without having time to make the other active particle fall asleep. Indeed, with the same argument as used before, it is possible to prove that the eventual red or green particle cannot fall asleep during the time interval $[\tilde\tau_k,\tilde\sigma_{k+1}+\ee^{\delta\beta}]$. If the yellow particle was free at time $\tilde\sigma_{k+1}$, then at time $\tilde\tau_{k+1}$ it is outside $\bar\Lambda$. If the yellow particle was clusterized at time $\tilde\sigma_{k+1}$, then at time $\tilde\tau_{k+1}$ it is in $\bar\Lambda$. In this case the system does not visit ${\cal X}_U$ during the time interval $[\tilde\sigma_{k + 1}, \tilde\tau_{k + 1}]$.
\item[$(b)$] 
{\it $\Delta\bar H$ reaches the energy level $-2U$ before a free particle leaves $\bar\Lambda$.}
In this case we can reproduce the analysis of the good attachment case described to prove \eqref{langres}. $X$ reaches ${\cal X}_D$ in ${\cal I}(\ell_1' \ell_2' + 2)$ at time $\tilde\tau_{k + 1}$ by making fall asleep the two active particles of time $\tilde\sigma_{k + 1}$.
\item[$(c)$] 
{\it The free particle at time $\tilde\sigma_{k + 1}$ interacts with the clusterized particles and $\Delta\bar H$ does not reach the energy level $-2U$ before a free particle leaves $\bar\Lambda$.} In this case we can reproduce the analysis of the exit case described to prove \eqref{langres}, $\Delta\bar H$ will reach the energy level $-\Delta$ with the exit of a free particle from $\bar\Lambda$ and an isoperimetric configuration in $\bar\Lambda$. We note that our permutation rules ensure that at each time $t$ whenever there is a free particle after the first interaction time and before reaching the energy level $-\Delta$, it cannot be yellow. At the time $t$ of the red or green particle exit we distinguish between three cases.
\begin{itemize}
\item[(i)] 
If $X(t) \in {\cal I}(\ell_1' \ell_2' + 1) \setminus {\cal X}_U$, then $\tilde\tau_{k + 1} = t$. If some particle fell asleep before time $t$, then it was the yellow one and there is no yellow particle anymore at time $t$. If there is still some active particle in $\bar\Lambda$ at time $t$,
then it is the yellow one: there is no green or red particle in $\bar\Lambda$ at time $t$.
\item[(ii)] 
If $X(t) \in {\cal I}(\ell_1' \ell_2'+ 1) \cap {\cal X}_U$ and all particles in $\bar\Lambda$ are sleeping at time $t$, then $\tilde\tau_{k + 1} = t$. There is no yellow particle anymore at time $t$. There is no green or red particle in $\bar\Lambda$ at time $t$.
\item[(iii)] 
If $X(t) \in {\cal I}(\ell_1' \ell_2' + 1) \cap {\cal X}_U$ and the yellow particle is still active at time $t$, then $\tilde\tau_{k + 1} > t$. As in the good attachment case studied to prove \eqref{langres}, where $\Delta\bar H$ could eventually only oscillate between the two energy levels $-2U$ and $-U$, $\Delta\bar H$ can only oscillate between the energy levels $-\Delta$ and $-\Delta + U$ until the first time $t' > t$ when either the yellow particle falls asleep or the red or green particle comes back in $\bar\Lambda$. In the former case, to which we will refer as the {\it conclusive} case, $\tilde\tau_{k + 1} = t'$, there is no yellow particle anymore at time $t'$, and there is no red or green particle in $\bar\Lambda$ at time $t'$. In the latter case, considering in the same way 
$$
\Delta\bar H\colon\, s \in [t', \tilde\tau_{k + 1}] \mapsto \bar H(X(s)) - \bar H(X(t')),
$$
we are led to repeat the same kind of analysis, with one more hypothesis with respect to time $\tilde\sigma_{k + 1}$: we know that the free particle at time $t'$ is either red or green and that the clusterized active particle is the yellow one. We can then define a single alternative $(c')$ to a similar case $(b')$.
\begin{description}
\item[$(b')$] 
{\it $\Delta\bar H$ reaches the energy level $-2U$ before a free particle leaves $\bar\Lambda$.}
There is no difference in this case with the previous case $(b)$. 
\item[$(c')$] 
{\it $\Delta\bar H$ does not reach the energy level $-2U$ before a free particle leaves $\bar\Lambda$.}
This case includes a possible absence of interaction between the clusterized particles in $\bar\Lambda$ and the green or red free particle before it exits. The same conclusions hold as in the previous case $c$, with the possibility of going back to the same alternative $(b')$/$(c')$ after a similar time $t'$ when the green or red particle comes back in $\bar\Lambda$.
\end{description}
\end{itemize}
\end{description}
Since each time we go back to the alternative $(b')$/$(c')$ the green or red particle enters again $\bar\Lambda$, $\tilde B^c$ implies that it can happen a finite number of times only. Ultimately, no yellow particle can be produced during the time interval $[\tilde\sigma_{k + 1}, \tilde\tau_{k + 1}]$: if the red or the green particle falls asleep (cases $(b)$ and $(b')$), then so does the yellow one, and if the yellow particle falls asleep (cases $(b)$, $(b')$, $(c)$(ii), $(c')$(ii), conclusive $(c)$(iii) and $(c')$(iii), or $(a)$, $(c)$(i) and $(c')$(i)), then there is no yellow particle anymore at time $\tilde\tau_{k + 1}$, while if $X$ visited ${\cal X}_U$, then $a$ is excluded, which is the only case with a possible green or red particle in $\bar\Lambda$ at time $\tilde\tau_k$. We also had at least $\ell_1' \ell_2'$ sleeping particles in the whole time interval. For the proof of $\tilde{\cal P}(k+1)$(vii) we can argue as before.

\medskip\noindent
$\blacktriangleright$ {\it The case $X(\tilde\sigma_{k + 1}) \in {\cal I}(\ell_1' \ell_2')^{fp}$, with a cluster of $\ell_1' \ell_2'$ sleeping particles.}
Let us first show by contradiction that there cannot be two yellow particles at time $\tilde\sigma_{k + 1}$. Indeed, =in this case $\tilde{\cal P}(k)$(iii) would imply that we just reached ${\cal I}(\ell_1' \ell_2')^{fp}$ by producing a yellow particle $i$ during the time interval $[\tilde\tau_k, \tilde\sigma_{k + 1}]$. This is possible only if we had $\ell_1' \ell_2' + 1$ sleeping particles at time $\tilde\tau_k$. We note that we could not produce more than one yellow particle in this time interval. Hence there should have been another yellow particle $j$ produced at an earlier time $t < \tilde\tau_k$, and we can assume that $t$ was the last emission time of a yellow particle before time $\tilde\tau_k$. Since our hypothesis $\tilde{\cal P}(k)$(ii) implies that there were at most $\ell_1' \ell_2'$ sleeping particles at time $t$, some particle fell asleep between times $t$ and $\tilde\tau_k$ and this would contradict $\tilde{\cal P}(k)$(iv). 

Note that $G^c$, $G'^c$ and $H_2^c$ imply that there is either 0 or 1 particle in $[\bar\Lambda, D + \delta] \setminus \bar\Lambda$. We also note that the sleeping particles in $\bar\Lambda$ at time $\tilde\sigma_{k + 1}$ form a quasi-square: this is the only isoperimetrical configuration of size $\ell_1' \ell_2'$.

If there is no particle in $[\bar\Lambda, D + \delta] \setminus \bar\Lambda$, then, once again, $A^c$ and $\tilde C^c$ imply that the local energy first has to decrease within a time $\ee^{\delta\beta}$.This can be realized in two ways only: waiting either for the attachment of the free particle to the cluster or for the free particle to leave $\bar\Lambda$ at some time $t$. In both cases $\tilde\tau_{k + 1} = t$. In the former case $X$ goes back to ${\cal I}(\ell_1' \ell_2' + 1)$ without making any particle fall asleep and without visiting ${\cal X}_U$. In the latter case $X$ reaches ${\cal X}_D$ in ${\cal I}(\ell_1' \ell_2')$.

If there is another active particle in $[\bar\Lambda, D + \delta] \setminus \bar\Lambda$, then $A^c$ and $\tilde C^c$ together with Lemma~\ref{gex} and $\tilde F_{m + 1}^c$ lead to the same conclusion. The free particle at time $\tilde\sigma_{k + 1}$ indeed has to either leave $\bar\Lambda$ or join the cluster before the second active particle can enter $\bar\Lambda$. For the proof of $\tilde{\cal P}(k+1)$(vii) we can argue as before.

%%%

\paragraph{Case 4: $X(\tilde\tau_k) \in {\cal I}(\ell_1' \ell_2')$.}

In this case we have a quasi-square of sleeping particles at time $\tilde\tau_k$, and any move before the entrance of a free particle would cost $2U$ at least. Such a move is excluded by $\tilde F_{m + 1}^c$. It follows that $X$ reaches ${\cal I}(\ell_1' \ell_2')^{fp}$
with a cluster made up of sleeping particles only at time $\tilde\sigma_{k + 1}$, and we conclude like in the previous case. This ends our induction.

%%%

\paragraph{$\bullet$ Cost estimates.}

To complete the proof of \eqref{langres} in the subcritical case, we only need to check the given lower bounds for the cost of each event that compounds $Z_2$, for which we refer to Appendix \ref{sec:appb3}. This concludes Case (II).
\qed

%%%

\subsubsection{Escape case (III)}
\label{subsec:escapeIII}

%%%

\paragraph{$\bullet$ Large deviation events in the supercritical case.}

In the supercritical case, the cost of $Z_1$ is that of $H_2$. We will build $Z_2$ by removing $H_2$ from $Z_1$ before adding new large deviation events. The event $\tilde H_2$ in the following list is contained in $H_2$ and has a larger cost. 

\begin{description}
\item[$H_3:$] 
There are three green particles at a same time $t < T_{\Delta^+} \ee^{\delta\beta}$ in $[\bar\Lambda, D + \delta]$. This event costs at least $2(\Delta-D+\alpha)-O(\delta)$.
\item[$H_3':$] 
There are two times $t_1 < t_2 < T_{\Delta^+} \ee^{\delta\beta}$ at which there is a pair of green particles in $[\bar\Lambda, D + \delta]$ at time $t_1$ and a different pair of green particles in $[\bar\Lambda, D + \delta]$ at time $t_2$. This event costs at least $2(\Delta-D+\alpha) - O(\delta)$.
\item[$I:$] 
Within time $T_{\Delta^+} \ee^{\delta\beta}$ there are two green particles at a same time in $[\bar{\Lambda},D+\delta]$, and there is one attribution of the red color, or else the occurrence of one of the events $J^{ij}$. (Note that $I=H_2\cap(F_1\cup \bigcup_{i,j} J^{ij})$.) This event costs at least $U - \frac{1}{2} \epsilon +\frac{1}{2}\alpha -d -O(\delta)$.
\item[$\tilde{H}_2:$] 
$H_3\cup H_3'\cup I$. This event costs at least $U - \frac{1}{2} \epsilon +\frac{1}{2}\alpha -d -O(\delta)$.
\end{description}

\medskip\noindent
$\bullet$ {\bf $\boldsymbol{Z_2}$ and the escape from $\boldsymbol{{\cal G}_2}$ in the supercritical case.}
Set 
\begin{equation}
\label{Z2sup}
Z_2 = A \cup \tilde B \cup \tilde C \cup C' \cup \tilde D \cup D' \cup E
\cup  F_{m + 1} \cup G \cup G' \cup G_4' \cup G'_3 \cup \tilde{H}_2.
\end{equation}
Define $Z_2'^c = Z_2^c\cap\{\hbox{no red particles are produced}\}$ and $Z_2''^c=Z_2^c\cap\{\hbox{red particles can be produced}\}$, 
so that $Z_2^c=Z_2'^c\dot{\cup}Z_2''^c$. If $Z_2^c$ occurs, then either $Z_2'^c$ or $Z_2''^c$ occurs. If $Z_2''^c$ occurs, then, by using the event $I^c$ and arguing in a similar way as in the proof of \eqref{langres}, we obtain that $\varphi^k$ does not escape from $\mathcal{G}_1$. If $Z_2'^c$ occurs, then we define $\tilde\tau_0=\tau_0$ and $\tilde\sigma_k$, $\tau_k$ with $k>0$, as before. If there exists $1 \leq k_1\leq\tilde n$ such that at time $\tilde\sigma_{k_1}$ there are two green particles in $\bar\Lambda$, then we define $k_0=k_1-1$, otherwise we put $k_0=\tilde n$. We will analyze separately the behavior of the process $X$ up to and after time $\tilde\tau_{k_0}$, because before the appearance of two green particles in $\bar\Lambda$ no particle can be painted yellow, otherwise this is possible.

Let $\tilde{\cal{G}}'$ be the graph in Fig.~\ref{fig:tildecalGprime}.

%%%%%%%%%%%%%%%%%%%%%%%%%%%%%%%%%%%
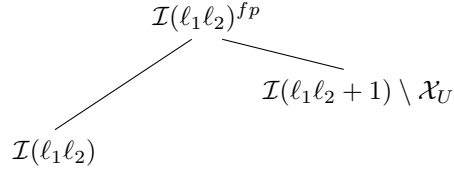
\begin{figure}[htbp]
$$
\begin{tikzpicture}
\draw (2, -.8) node {${\cal I}(\ell_1\ell_2)$};
\draw (2, -.5) -- (3.8, .7);
\draw (4, 1) node {${\cal I}(\ell_1\ell_2)^{fp}$};
\draw (4.2, .7) -- (5.8, .3);
\draw (6, 0) node {${\cal I}(\ell_1\ell_2 + 1) \setminus {\cal X}_U$};
\end{tikzpicture}
$$
\caption{The graph $\tilde{\cal{G}}'$.}
\label{fig:tildecalGprime}
\end{figure}
%%%%%%%%%%%%%%%%%%%%%%%%%%%%%%%%%%%%

Recall \eqref{bleu} and \eqref{fourme}, and set
$$
\tilde\varphi'^k = (X(\tilde\tau_{0}), X(\tilde\sigma_{1}), X(\tilde\tau_{1}), \dots, X(\tilde\sigma_k), X(\tilde\tau_k)),
\qquad k \leq k_0.
$$
We will prove by induction that, for all $k \leq k_0$,
\begin{ClmPkt'}
If $Z_2$ does not occur, then
\begin{itemize}
\item[(i)] 
$\tilde\varphi'^k$ does not escape from $\tilde {\cal G}'$.
\item[(ii)] 
There is no yellow particle at each time $t \leq \tilde\tau_k$.
\item[(iii)] 
For all $0<j \leq k$, if $X$ visited ${\cal X}_U$ during the time interval $[\tilde\sigma_j, \tilde\tau_j)$, then there is no green particle in $\bar\Lambda$ at time $\tilde\tau_j$.
\item[(iv)] 
At each time $t \leq \tilde\tau_k$ there are $\ell_1 \ell_2$ sleeping particles.
\item[(v)] 
No box creation occurs within time $\tau_k$.
\end{itemize}
\end{ClmPkt'}

\medskip\noindent
$\bullet$ {\bf Proof of $\boldsymbol{\tilde{\cal P}'(k)}$, $\boldsymbol{0 \leq k \leq k_0}$.} 
Note that $\tilde{\cal P}'(0)$ is trivial. For $k\in\N_0$ we assume $\tilde{\cal P}'(k)$ to prove $\tilde{\cal P}'(k+1)$. If $k=k_0$, then there is nothing to prove, so assume that $k\neq k_0$. We separate two cases, depending on the configuration at time $\tilde\tau_k$ in one of the bottom sets of $\tilde {\cal G}'$, ordered from left to right.

%%%

\paragraph{Case 1: $X(\tilde\tau_k) \in {\cal I}(\ell_1 \ell_2)$.} 

In this case we have a quasi-square of sleeping particles at time $\tilde\tau_k$, and any move before the entrance of a free particle would cost $2U$ at least. Such a move is excluded by the fact that no red particles can be produced. It follows that $X$ reaches ${\cal I}(\ell_1 \ell_2)^{fp}$ with a cluster made up of sleeping particles only at time $\tilde\sigma_{k + 1}$. By the fact that no red particles are created and by the event $H_3^c$, we know that there are at most two active particles in $[\bar\Lambda, D+\delta]$. In particular, the active particles can be green only. 

If there is no particle in $[\bar\Lambda,D+\delta]\setminus\bar\Lambda$, then by the events $A^c$ and $\tilde{C}^c$ we know that 
the local energy must decrease within a time $\ee^{\delta\beta}$. This can be realized in two ways only: waiting either for the attachment of the free particle to the cluster or for the free particle to leave $\bar\Lambda$ at some time $t$. In both cases $\tilde\tau_{k + 1} = t$. In the former case  $X$ goes back to ${\cal I}(\ell_1 \ell_2 + 1)$ without making any particle fall asleep and without visiting ${\cal X}_U$. In the latter case $X$ reaches ${\cal X}_D$ in ${\cal I}(\ell_1 \ell_2)$.

If there is one active particle in $[\bar\Lambda,D+\delta]\setminus\bar\Lambda$, then we argue as in the subcritical case by using the events $A^c$, $\tilde{C}^c$, $I^c$ and Lemma \ref{gex}, and the fact that no red particles can be produced. Indeed, the free particle at time $\tilde\sigma_{k+1}$ has to either leave $\bar\Lambda$ or join the cluster before the second active particle enters $\bar\Lambda$.  Property $\tilde{\cal P}'(k+1)$(v) follows from the event $G_4'^c$ and $\tilde{\cal P}'(k+1)$(ii).

%%%

\paragraph{Case 2: $X(\tilde\tau_k) \in {\cal I}(\ell_1 \ell_2+1)\setminus{ \cal X}_U$.} 

We can repeat the analysis given in the subcritical case. In particular, with the same arguments we prove that the possible green particle at time $\tilde\tau_k$ cannot feel asleep during the time interval $[\tilde\tau_k,\tilde\sigma_{k+1}]$. Note that $\tilde{\cal P}'(k)$ implies that at time $\tilde\tau_k$ there is a green particle in $\bar\Lambda$. We have to analyze the time interval $[\tilde\sigma_{k+1},\tilde\tau_{k+1}]$ in the case $X(\tilde\sigma_{k+1})\in{\cal I}(\ell_1\ell_2)^{fp}$, with a cluster of $\ell_1\ell_2$ sleeping particles: it is not possible that $X(\tilde\sigma_{k+1})\in {\cal I}(\ell_1\ell_2+1)^{fp}$ because $k\leq k_0$, and therefore two green particles cannot be in $\bar\Lambda$. We can therefore argue as in the subcritical case. For the proof of $\tilde{\cal P}'(k+1)$(v) we can argue as before.

Let $\tilde {\cal G}$ be the graph in Fig.~\ref{fig:tildecalGalt}.

%%%%%%%%%%%%%%%%%%%%%%%%%%%%%%%%%%%%%%
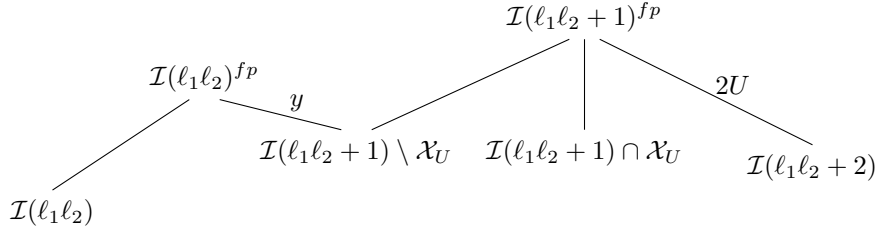
\begin{figure}[htbp]
$$
\begin{tikzpicture}
\draw (2, -.8) node {${\cal I}(\ell_1\ell_2)$};
\draw (2, -.5) -- (3.8, .7);
\draw (4, 1) node {${\cal I}(\ell_1\ell_2)^{fp}$};
\draw (4.2, .7) -- (5.8, .3);
\draw (5, .65) node [right] {$y$};
\draw (6, 0) node {${\cal I}(\ell_1\ell_2 + 1) \setminus {\cal X}_U$};
\draw (6.2, .3) -- (8.8, 1.5);
\draw (9, 0) node {${\cal I}(\ell_1\ell_2 + 1) \cap {\cal X}_U$};
\draw (9, .3) -- (9, 1.5);
\draw (9, 1.8) node {${\cal I}(\ell_1\ell_2 + 1)^{fp}$};
\draw (9.2, 1.5) -- (12, .1);
\draw (10.6, .9) node [right] {$2U$};
\draw (12, -.2) node {${\cal I}(\ell_1\ell_2 + 2)$};
\end{tikzpicture}
$$
\caption{The graph $\tilde {\cal G}$.}
\label{fig:tildecalGalt}
\end{figure}
%%%%%%%%%%%%%%%%%%%%%%%%%%%%%%%%%%%%%%%

Recall \eqref{bleu} and \eqref{fourme}, and set
$$
\tilde\varphi^k
= (X(\tilde\tau_{k_0}), X(\tilde\sigma_{k_0 + 1}), X(\tilde\tau_{k_0 + 1}), \dots, X(\tilde\sigma_k), X(\tilde\tau_k)),
\qquad k_0 < k \leq \tilde n. 
$$
We will prove by induction that, for all $k_0 < k \leq \tilde n$,
\begin{ClmPkt}
If $Z_2$ does not occur, then
\begin{itemize}
\item[(i)] 
$\tilde\varphi^k$ does not escape from $\tilde {\cal G}$.
\item[(ii)] 
Some particle can be colored yellow during the time interval $[\tilde\tau_{k_0}, \tilde\tau_k]$, but only during the climbing of the $y$-edge of $\tilde {\cal G}$.
\item[(iii)] 
There is at most one yellow particle at each time $t \leq \tilde\tau_k$.
\item[(iv)] 
At each time $1 \leq t \leq \tilde\tau_k$ when a particle falls asleep there is no yellow particle at the first $\tilde\tau_j$, $1 \leq j \leq k$, larger than or equal to $t$.
\item[(v)] 
For all $0<j \leq k$, if $X$ visited ${\cal X}_U$ during the time interval $[\tilde\sigma_j, \tilde\tau_j)$, then there is no red or green particle in $\bar\Lambda$ at time $\tilde\tau_j$.
\item[(vi)] 
At each time $t \leq \tilde\tau_k$ there are at least $\ell_1 \ell_2$ sleeping particles.
\item[(vii)] 
Each particle that is yellow at time $t_1\leq\tilde\tau_k$ was green at time $\tilde\sigma_{k_1}$ in $\bar\Lambda$.
\item[(viii)] 
If a green particle falls asleep at time $t\leq\tilde\tau_k$, then it was green at time $\tilde\sigma_{k_1}$ in $\bar\Lambda$.
\item[(ix)] 
No box creation occurs within $\tau_k$.
\end{itemize}
\end{ClmPkt}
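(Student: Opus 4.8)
Proof proposal. The plan is to prove Claim $\tilde{\cal P}(k)$, $k_0 < k \le \tilde n$, by induction on $k$, working on the sub-event $Z_2'^c$, since on the complementary branch $Z_2''^c$ the event $I^c$ together with the argument used to prove \eqref{langres} already shows that $\varphi^k$ never leaves ${\cal G}_1 \subset {\cal G}_2$, and then all items of the claim hold trivially. The structural novelty relative to the subcritical Escape case (II) is that on $Z_2'^c$ no particle is ever coloured red, so the $2U$-edges of $\tilde{\cal G}$ must be climbed by active \emph{green} particles; the event $H_3'^c$ forbids a second pair of green particles from ever coexisting in $[\bar\Lambda, D + \delta]$, which is exactly the role played by $\tilde F_{m+1}^c$ in the subcritical argument and is what will force ``at most one yellow particle at a time'' together with properties (vii)--(viii). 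The large deviation events $A^c, \tilde B^c, \tilde C^c, C'^c, \tilde D^c, D'^c, E^c, G^c, G'^c, G_4'^c, G_3'^c, \tilde H_2^c, F_{m+1}^c$, the recurrence to ${\cal X}_0, {\cal X}_U, {\cal X}_D$, the non-superdiffusivity of the QRWs, and Lemmas~\ref{gex}--\ref{gorgonzola} are the only inputs.

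For the base step $k = k_0 + 1$ one starts from Claim $\tilde{\cal P}'(k_0)$: $X(\tilde\tau_{k_0})$ sits at a bottom vertex of $\tilde{\cal G}'$ with no yellow particle and $\ell_1\ell_2$ sleeping particles, and by $k_0 = k_1 - 1$ there are two green particles in $\bar\Lambda$ at $\tilde\sigma_{k_0 + 1} = \tilde\sigma_{k_1}$, one of them free; the permutation rules place the freed one lowest in the hierarchy, and since it cannot be coloured red it is coloured yellow --- the unique yellow particle, which was green at $\tilde\sigma_{k_1}$ in $\bar\Lambda$, so (vii) and (viii) hold with $\tilde\sigma_{k_1}$ as witness. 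One then reruns the good-attachment/exit dichotomy of \eqref{langres} on $[\tilde\sigma_{k_0 + 1}, \tilde\tau_{k_0 + 1}]$, using $H_3^c$, $I^c$ and Lemma~\ref{gex} to exclude three active particles in $[\bar\Lambda, D+\delta]$, hence two free particles in $\bar\Lambda$. For the inductive step, assume $\tilde{\cal P}(k)$ and split $[\tilde\tau_k, \tilde\tau_{k+1}] = [\tilde\tau_k, \tilde\sigma_{k+1}] \cup [\tilde\sigma_{k+1}, \tilde\tau_{k+1}]$, tracking $\Delta\bar H(t) = \bar H(X(t)) - \bar H(X(\tilde\tau_k))$ and distinguishing the position of $X(\tilde\tau_k)$ in $\tilde{\cal G}$: (4) $X(\tilde\tau_k) \in {\cal I}(\ell_1\ell_2 + 2)$, where $\tilde n = \tilde n_2 = k$ and there is nothing to prove; (1) $X(\tilde\tau_k) \in {\cal I}(\ell_1\ell_2)$ and (2) $X(\tilde\tau_k) \in {\cal I}(\ell_1\ell_2 + 1) \cap {\cal X}_U$, where the absence of red particles with $A^c$, $\tilde C^c$, $C'^c$ forces $\Delta\bar H$ to oscillate only between $0$ and $U$ on $[\tilde\tau_k, \tilde\sigma_{k+1})$ and to reach ${\cal I}(\ell_1\ell_2)^{fp}$ or ${\cal I}(\ell_1\ell_2 + 1)^{fp}$ with a sleeping cluster and no yellow particle at $\tilde\sigma_{k+1}$, after which the attachment/exit dichotomy closes the step; and (3) $X(\tilde\tau_k) \in {\cal I}(\ell_1\ell_2 + 1) \setminus {\cal X}_U$, which is run exactly like subcritical Case 3 --- first use Lemma~\ref{gorgonzola} and $A^c$ to show a green particle present at $\tilde\tau_k$ cannot fall asleep on $[\tilde\tau_k, \tilde\sigma_{k+1}]$ (its last free-time $t_f$ satisfies $\tilde\sigma_{k+1} - t_f \le T_U \ee^{\delta\beta} < \ee^{D\beta}$), then run the flow of alternatives $(a)$/$(b)$/$(c)$ and the nested binary alternative $(b')$/$(c')$, with $\tilde B^c$ excluding an infinite loop, verifying in each branch that no new yellow particle is produced, that the unique yellow particle falls asleep whenever any particle does (property (iv)), and that (v)--(ix) propagate, property (ix) following from $G_4'^c$, $G_3'^c$ and (iii).

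Once $\tilde{\cal P}(\tilde n)$ is established one deduces, as in the subcritical case, that $\varphi^n$ does not escape from ${\cal G}_2$: on the branch $\tilde n = \tilde n_1$ one matches every $\tau_l$, $k_0 \le l \le n$, with some $\tilde\tau_k$ via Lemma~\ref{gorgonzola}; on the branch $\tilde n = \tilde n_2$ one matches $\{\tau_{k_0}, \dots, \tau_{k_1}\} \subset \{\tilde\tau_{k_0}, \dots, \tilde\tau_{\tilde n}\}$, uses (iv) to guarantee no yellow particle survives at $\tau_{k_1} = \tilde\tau_{\tilde n}$, and restarts the Claim-${\cal P}(k)$ induction of Escape case (I) from $k_1$, legitimate because $F_{m+1}^c \subset Z_2^c$ forbids any further attribution of the red colour. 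Finally $c(Z_2)$ equals the cost of its cheapest component, namely $F_{m+1}$ when $\iota < 1/2$ and $G_4' = G_3'$ when $\iota \ge 1/2$, which yields precisely \eqref{munster} with $a = (\epsilon - \gamma)\mathbb{1}_{\{\iota < 1/2\}} + \gamma\,\mathbb{1}_{\{\iota \ge 1/2\}}$; the remaining lower bounds on component costs are the ones checked in Appendix~\ref{sec:appb3}.

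The hard part will be the yellow-particle bookkeeping inside Case (3), and in particular the sub-case $X(\tilde\sigma_{k+1}) \in {\cal I}(\ell_1\ell_2 + 1)^{fp}$ with an active particle in the cluster: two active particles can then coexist in $\bar\Lambda$, and one must combine $G^c$, $G'^c$, $\tilde H_2^c$ to conclude that exactly one of them is yellow and the other green (red being excluded on $Z_2'^c$), then trace the yellow one back through the cluster hierarchy to a green particle present at $\tilde\sigma_{k_1}$ in $\bar\Lambda$, so as to keep the yellow count at one and close (vii)--(viii). A second, orthogonal difficulty is the careful interleaving of the two families of special times ($\tilde\sigma_k, \tilde\tau_k$ versus $\sigma_i, \tau_i$) in the passage from $\tilde{\cal P}(\tilde n)$ to the escape statement, which is the only place where Lemma~\ref{gorgonzola} is genuinely used.
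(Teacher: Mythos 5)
Your overall plan---split $Z_2^c$ into $Z_2'^c$ and $Z_2''^c$, induct on $k$ tracking $X(\tilde\tau_k)$ through the bottom vertices of $\tilde{\cal G}$, and control the yellow particles via $H_3'^c$ and the permutation rules---matches the paper's, but your base case $k = k_1$ is misanalyzed, and this is a genuine gap. You write that at $\tilde\sigma_{k_1}$ ``the freed one\ldots is coloured yellow'' and that this particle ``was green at $\tilde\sigma_{k_1}$ in $\bar\Lambda$, so (vii) and (viii) hold with $\tilde\sigma_{k_1}$ as witness.'' In fact at $\tilde\sigma_{k_1}$ both active particles in $\bar\Lambda$ are \emph{green}: the single green protuberance already present at $\tilde\tau_{k_0} \in {\cal I}(\ell_1\ell_2 + 1) \setminus {\cal X}_U$, and a second green that \emph{enters} $\bar\Lambda$ from outside---that entry is precisely what defines $\tilde\sigma_{k_1}$. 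The coloration rules paint red or yellow only when a sleeping particle wakes up, and no sleeping particle is freed at $\tilde\sigma_{k_1}$. Indeed no yellow particle is produced anywhere in $[\tilde\tau_{k_0}, \tilde\tau_{k_1}]$: throughout the good-attachment/exit analysis on $[\tilde\sigma_{k_1}, \tilde\tau_{k_1}]$ the permutation hierarchy always detaches the active green particle before a sleeping one. So at the base step (vii) is vacuous and (viii) is trivially satisfied by the two greens of $\tilde\sigma_{k_1}$; the yellow particle you try to anchor there does not exist.

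The correct mechanism runs in the opposite direction from yours. A yellow particle first appears only at a later step, when $\tilde\varphi$ climbs the $y$-edge of $\tilde{\cal G}$ (i.e.\ $X(\tilde\tau_k) \in {\cal I}(\ell_1\ell_2 + 1) \setminus {\cal X}_U$ and $X(\tilde\sigma_{k+1}) \in {\cal I}(\ell_1\ell_2)^{fp}$ by a $U$-cost detachment of a sleeping particle). At that moment (vii) is a \emph{consequence} of the inductive hypothesis: the newly-yellow particle was sleeping just before (permutation rules), hence had previously fallen asleep while green (by $\tilde{\cal P}(k)$), hence by $\tilde{\cal P}(k)$(viii) was green in $\bar\Lambda$ at $\tilde\sigma_{k_1}$. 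Property (viii) is in turn propagated at each step by $H_3'^c$, which forces any pair of greens appearing in $\bar\Lambda$ at a $\tilde\sigma$-time to coincide with the pair at $\tilde\sigma_{k_1}$. So (vii) feeds on (viii), which feeds on $H_3'^c$---a chain whose pieces you have but whose order you reverse. Also, in the sub-case $X(\tilde\sigma_{k+1}) \in {\cal I}(\ell_1\ell_2 + 1)^{fp}$ with an active clusterized particle, the two active particles need not be ``exactly one yellow and one green'': both may be green (then (viii) follows directly from $H_3'^c$), or one green and one yellow (then (viii) follows from (vii) together with $H_3'^c$). The rest of your proposal---the matching of $\tau$- and $\tilde\tau$-scales via Lemma~\ref{gorgonzola}, the cost computation giving $a = (\epsilon-\gamma)\mathbb{1}_{\{\iota<1/2\}} + \gamma\mathbb{1}_{\{\iota\geq1/2\}}$, and property (ix) from $G_4'^c$, $G_3'^c$ and (iii)---is sound.
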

Properties (i)-(vi) are the same as considered in the subcritical case, while we will use property (vii) to control inductively the yellow particles. In particular, we cannot exclude anymore the presence of two green particles, but we will exclude the simultaneous presence of two green particles and a yellow particle with the help of property (vii). Property (viii) will be used to prove property (vii). Property (iii) helps us to prove property (ix).

\medskip\noindent
$\trianglerighteq$
Before proving $\tilde {\cal P}(k)$, $k_0\leq k \leq \tilde{n}$, let us show that $\tilde {\cal P}(\tilde{n})$ implies that if $Z_2^c$ occurs, then $\varphi^n$ cannot escape from ${\cal G}_2$. We argue as in the subcritical case, but for $\tilde{n}=\tilde{n}_2$ there is one difference: the attribution of the red color is excluded by the event $Z_2'^c$,and therefore $\varphi^k$, $k\geq k_1$, cannot escape from ${\cal G}_2$. 

Since the cost of $Z_2$ is given by the smallest cost of its constituent components, we obtain
$$
c(Z_2) = \left\{
\begin{array}{ll}
c( F_{m + 1}) \geq r(\ell_1,\ell_2) - \Delta + \varepsilon -\gamma -O(\alpha,d) - O(\delta) & \hbox{if $\iota< 1/2$,}\\
c(G_4') \geq r(\ell_1,\ell_2) - \Delta + \gamma - O(\alpha,d) - O(\delta) & \hbox{if $\iota \geq 1/2$}.
\end{array}
\right.
$$
To prove \eqref{munster} in the supercritical case, it remains to prove $\tilde{\cal P}(\tilde n)$ and check the given cost estimates .

\medskip\noindent
$\bullet$ {\bf Proof of $\boldsymbol{\tilde{\cal P}(k)}$, $\boldsymbol{k_1 \leq k \leq \tilde n}$.} 
We have to prove ${\tilde{\cal P}(k_1)}$, and so we consider the time interval $[\tilde\tau_{k_0},\tilde\tau_{k_1}]$. By ${\tilde{\cal P}'(k_0)}$, either $X(\tilde\tau_{k_0})\in{\cal I}(\ell_1\ell_2)$ or $X(\tilde\tau_{k_0})\in{\cal I}(\ell_1\ell_2+1)\setminus{\cal X}_U$. We show by contradiction that $X(\tilde\tau_{k_0})\notin{\cal I}(\ell_1\ell_2)$. Indeed, if $X(\tilde\tau_{k_0})\in{\cal I}(\ell_1\ell_2)$, then repeating the analysis in the proof of ${\tilde{\cal P}'(k)}$ we obtain that $X(\tilde\sigma_{k_1})\in{\cal I}(\ell_1\ell_2)^{fp}$, with a cluster made up of sleeping particles only, and the free particle is green. This is in contradiction with the definition of the time $\tilde\sigma_{k_1}$. Hence $X(\tilde\tau_{k_0})\in{\cal I}(\ell_1\ell_2+1)\setminus{\cal X}_U$ with $\ell_1\ell_2$ sleeping particles and one active particle, which has to be green. We can repeat the analysis for in the subcritical case to prove that the green particle 
at time $\tilde\tau_{k_0}$ cannot fall asleep during the time interval $[\tilde\tau_{k_0},\tilde\sigma_{k_1}]$. By the definition of $\tilde\sigma_{k_1}$, we know that $X(\tilde\sigma_{k_1})\in{\cal I}(\ell_1\ell_2+1)^{fp}$, with $\ell_1\ell_2$ sleeping particles and two green particles. During the time interval $[\tilde\tau_{k_0},\tilde\sigma_{k_1}]$ no yellow particle is produced, an so there is no other particle in $[\bar\Lambda,D+\delta]$ at time $\tilde\sigma_{k_1}$. This implies that no other particle can enter $\bar\Lambda$ before time $\tilde\tau_{k_1}$. Property $\tilde{\cal P}'(k+1)$(ix) follows from the event $G_4'^c$, $\tilde{\cal P}'(k_0)$(ii) and the fact that no yellow particle is produced during the time interval $[\tilde\tau_{k_0},\tilde\tau_{k_1}]$. From now on we can argue as in the subcritical case with two differences only: we do not care about yellow particles and have to verify $\tilde{\cal P}(k_1)$(viii), which is trivial. For $k \geq k_1$ we assume $\tilde{\cal P}(k)$ to prove $\tilde{\cal P}(k + 1)$. We distinguish between four cases, depending on the configuration at time $\tilde\tau_k$ in one of the bottom sets of $\tilde{\cal G}$, ordered from left to right.

%%%

\paragraph{Case 1: $X(\tilde\tau_k) \in {\cal I}(\ell_1 \ell_2)$.}

In this case, as in the proof of ${\tilde{\cal P}'(k)}$, we have that $X(\tilde\sigma_{k+1})\in{\cal I}(\ell_1\ell_2)^{fp}$, with a cluster made up of sleeping particles only. Note that no yellow particle is produced during the time interval $[\tilde\tau_k,\tilde\sigma_{k+1}]$. By the fact that no red particle is produced and by the event $H_3^c$, we know that there are at most three active particles in $[\bar\Lambda,D+\delta]$. In particular, the free particle in $\bar\Lambda$ at time $\tilde\sigma_{k+1}$ is green.

If there is at most one particle in $[\bar\Lambda,D+\delta]\setminus\bar\Lambda$, then we can argue as in the subcritical case by using the events $A^c$, $\tilde{C}^c$, $I^c$ and Lemma \ref{gex}, and the fact that no red particles can be produced. If there are two particles in $[\bar\Lambda,D+\delta]\setminus\bar\Lambda$, then there are two green particles and one yellow particle $i$. Since no yellow particle is produced in $[\tilde\tau_k,\tilde\sigma_{k+1}]$, we know that particle $i$ was yellow at a time $t\leq\tilde\tau_k$. Thus, by ${\tilde{\cal P}(k)}$(vii) we know that $i$ was green at time $\tilde\sigma_{k_1}$ in $\bar\Lambda$. This is in contradiction with the event $H_3'^c$, so this case is not admissible.

${\tilde{\cal P}(k+1)}$(i)--(vi) follow by applying the same argument as in the subcritical case. We do not need to check ${\tilde{\cal P}(k+1)}$(vii)--(viii), because during the time interval $[\tilde\tau_k,\tilde\tau_{k+1}]$ no yellow particle is produced and no green particle falls asleep. Property $\tilde{\cal P}'(k+1)$(ix) follows from the events $G_4'^c$, $G_3'^c$ and $\tilde{\cal P}'(k+1)$(iii).

%%%

\paragraph{Case 2: $X(\tilde\tau_k) \in {\cal I}(\ell_1 \ell_2 + 1)\setminus {\cal X}_U$.}
 
We can repeat the analysis given for the subcritical case. In particular, with the same argument we are able to prove that the eventual green particle at time $\tilde\tau_k$ cannot fall asleep during the time interval $[\tilde\tau_k,\tilde\sigma_{k+1}]$, and we have to study 
the time interval $[\tilde\sigma_{k+1},\tilde\tau_{k+1}]$ in the two cases $X(\tilde\sigma_{k + 1}) \in {\cal I}(\ell_1 \ell_2)^{fp}$, with a cluster of $\ell_1 \ell_2$ sleeping particles, and $X(\tilde\sigma_{k + 1}) \in {\cal I}(\ell_1 \ell_2 + 1)^{fp}$.

\medskip\noindent
{\it The case $X(\tilde\sigma_{k + 1}) \in {\cal I}(\ell_1 \ell_2)^{fp}$, with a cluster of $\ell_1 \ell_2$ sleeping particles.}
As in the subcritical case, we can prove by contradiction that there cannot be two yellow particles at time $\tilde\sigma_{k + 1}$. By the fact that no red particle can be produced and by the event $H_3^c$, we know that there are at most three active particles in $[\bar\Lambda, D+\delta]$, and so we can conclude as in the previous case. ${\tilde{\cal P}(k+1)}$(i)--(vi) follow by applying the same argument carried out in the subcritical case. We do not need to check ${\tilde{\cal P}(k+1)}$(viii), because no particle falls asleep during the time interval $[\tilde\tau_k,\tilde\tau_{k+1}]$. To check ${\tilde{\cal P}(k+1)}$(vii), we may suppose that at time $\tilde\sigma_{k+1}$ a particle $i$ is colored yellow, because otherwise there is nothing to prove. By the permutation rules, it follows that particle $i$ was sleeping before being colored yellow. Since no particle falls asleep during the time interval $[\tilde\tau_{k},\tilde\sigma_{k+1}]$, particle $i$ was sleeping at $\tilde\tau_k^-$. By ${\tilde{\cal P}(k)}$ and the fact that $X(\tilde\sigma_{k+1})\in{\cal I}(\ell_1\ell_2+1)^{fp}$ 
with two green particles, we know that particle $i$ fell asleep when it was green. Thus, ${\tilde{\cal P}(k+1)}$(vii) follows by ${\tilde{\cal P}(k)}$(viii). For the proof of $\tilde{\cal P}'(k+1)$(ix) we can argue as before.

\medskip\noindent
{\it The case $X(\tilde\sigma_{k + 1}) \in {\cal I}(\ell_1 \ell_2 + 1)^{fp}$.}
We can argue as in the subcritical case with two differences only: we do not care about red particles and have to check ${\tilde{\cal P}(k+1)}$(viii): ${\tilde{\cal P}(k+1)}$(vii) is trivial because no particle is colored yellow during the time interval $[\tilde\sigma_{k+1},\tilde\tau_{k+1}]$. We distinguish between the two following cases: If at time $\tilde\sigma_{k+1}$ the two active particles in $\bar\Lambda$ are green, then ${\tilde{\cal P}(k+1)}$(viii) follows by the event $H_3'^c$. If at time $\tilde\sigma_{k+1}$ there is one green and one yellow particles in $\bar\Lambda$, then ${\tilde{\cal P}(k+1)}$(viii) follows by ${\tilde{\cal P}(k+1)}$(vii) and the event $H_3'^c$. For the proof of $\tilde{\cal P}'(k+1)$(ix) we can argue as before.

%%%

\paragraph{Case 3: $X(\tilde\tau_k) \in {\cal I}(\ell_1 \ell_2 + 1)\cap {\cal X}_U$.} 

In this case we can be repeated the analysis in the subcritical case with two differences only: no particle can be colored red and we do not need to check ${\tilde{\cal P}(k+1)}$(vii), because no yellow particle is produced during the time interval $[\tilde\tau_k,\tilde\tau_{k+1}]$. ${\tilde{\cal P}(k+1)}$(viii) can be checked as in the previous case. For the proof of $\tilde{\cal P}'(k+1)$(ix) we can argue as before.

%%%

\paragraph{Case 4: $X(\tilde\tau_k) \in {\cal I}(\ell_1 \ell_2 + 2)$.} 

In this case $k\neq k_0$, so $\tilde{n}=\tilde{n}_2=k$, and there is nothing to prove. This ends our induction.

%%%

\paragraph{$\bullet$ Cost estimates.}

To complete the proof of \eqref{munster} in the supercritical case, we only need to check the given lower bounds for the cost of each event that compounds $Z_2$, for which we refer to Appendix \ref{sec:appb3}. This concludes case (III).
\qed

%%%
	
\subsection{Proof of Lemma \ref{lmm:exit2x2}}
\label{sub:lmm2x2}
	
Recall the definition of the union of events $Z_1$ and $Z_2$ in the subcritical case given in $\eqref{Z1}$ and $\eqref{Z2sub}$. We can check that for the escape from ${\cal G}_1$ we can argue as in the general case $\ell_2\geq3$: the cost is given by $c(F_1)\geq 2U-\Delta-\alpha-O(\delta)$. For the escape from ${\cal G}_2$, again the proof of \eqref{langres} shows that $(Z_1 \setminus F_{1})^c$ implies that either $\varphi^n$ does not escape from ${\cal G}_1$ or there is a first return time $\tau_{k_0}$ such that $X(\tau_{k_0}) \in {\cal I}(4)$ and a particle is colored red at time $\sigma_{k_0 + 1}$. Set 
$$
\bar Z_2 = Z_2 \cup K_1 \cup K_2,
$$
where $K_1$ and $K_2$ are the following new large deviation events:
\begin{description}
\item[$K_1:$] 
There are three active particles, which can be green or red, together with one yellow particle in a box of volume $\ee^{D\beta}$ inside the box $[\bar\Lambda,\Delta+\delta]$ at a same time $t\in [t^*,T_{\Delta^+} \ee^{\delta \beta}]$ such that $X(t^*)\in{{\cal I}(0)}$ and at time $t^*$ the yellow particle is inside $[\bar\Lambda,D+\delta]$. This event costs at least $\Delta-D+\alpha-O(\delta)$.
\item[$K_2:$] 
There are two active particles, which can be green or red, together with two yellow particles in a box of volume $\ee^{D\beta}$ inside $[\bar\Lambda,\Delta+\delta]$ at a same time $t\in [t^*,T_{\Delta^+} \ee^{\delta \beta}]$ such that $X(t^*)\in{{\cal I}(0)}$ and at time $t^*$ the two yellow particles are inside $[\bar\Lambda,D+\delta]$. This event costs at least $\Delta-D+\alpha-O(\delta)$.
\end{description}
By defining 
\begin{equation}
\bar n = \min\{ n,  n^*\}
\end{equation}
with
\begin{equation}
n^* = \min\{k > k_0 : X(\tau_k) \in {\cal I}(4)\}
\end{equation}
and
$$
\bar\varphi^k = (X(\tau_{k_0}), X(\tau_{k_0 + 1}), \dots, X(\tau_k)), \qquad k_0 \leq k \leq \bar n,
$$
we will prove by induction that, for all $k_0\leq k \leq \bar n$,
\begin{ClmPktbar}
If $\bar Z_2$ does not occur, then
\begin{itemize}
\item[(i)] 
$\bar\varphi^k$ does not escape from ${\cal G}_2$.
\item[(ii)] 
There are at most three yellow particles at each $t\leq\tau_k$.
\item[(iii)] 
If $X(\tau_k)\in{\cal I}(4)$, then at time $\tau_k$ there is no yellow particle.
\item[(iv)] 
If $X(\tau_k)\in{\cal I}(0)$, then at time $\tau_k$ there are three yellow and one red particles in $[\bar\Lambda,D+\delta]$, with two yellow particles at distance two from each other. 
\end{itemize}
\end{ClmPktbar}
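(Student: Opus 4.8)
The plan is to prove Claim $\bar{\cal P}(k)$ by induction on $k$ from $k_0$ to $\bar n$, following the same deductive scheme used in Escape cases (I), (II), (III), but now tailored to the $2\times2$ square. The base case $\bar{\cal P}(k_0)$ holds because $X(\tau_{k_0})\in{\cal I}(4)$ by the choice of $k_0$, so (i), (iii) are immediate, (ii) is vacuous at this stage (no yellow particle has yet been produced, since prior to $\tau_{k_0}$ we are in the situation analysed for ${\cal G}_1$), and (iv) does not apply. For the inductive step, assuming $\bar{\cal P}(k)$ and $\bar Z_2^c$, I would split on the location of $X(\tau_k)$ in ${\cal G}_2$: either $X(\tau_k)\in{\cal I}(4)$ or $X(\tau_k)\in{\cal I}(0)$ (the only two vertices). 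In the first case, the key point is that from a $2\times2$ square of sleeping particles, any move costing less than $2U$ is impossible, so the only way to leave is the entrance of a free particle at $\sigma_{k+1}$; using $A^c$, $\tilde C^c$, $C'^c$ and the fact that no more than one extra red attribution is allowed (this is what $F_1$ removed from $Z_1$ buys us, via the definition of $\tilde F_{m+1}$ / the $2\times 2$ analogue), together with Lemma~\ref{gex} to forbid three simultaneous active particles in $[\bar\Lambda,D+\delta]$, one shows that the dimer either re-forms the square (staying at ${\cal I}(4)$) or fully dissolves, reaching ${\cal I}(0)$ and producing the red particle together with at most the yellow particles allowed by (ii); the events $K_1$, $K_2$ are exactly designed to cap the number of yellow particles present near $\bar\Lambda$ at the dissolution time so that (iv) holds with three yellow and one red in $[\bar\Lambda,D+\delta]$.

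In the second case, $X(\tau_k)\in{\cal I}(0)$, property (iv) from $\bar{\cal P}(k)$ tells us precisely the colour configuration at $\tau_k$: three yellow and one red in $[\bar\Lambda,D+\delta]$, with two of the yellow ones at distance two. I would then analyse the time interval $[\tau_k,\sigma_{k+1}]$ and $[\sigma_{k+1},\tau_{k+1}]$ using the local-energy process $\Delta\bar H$ exactly as in the earlier escape cases. The crucial sub-point is that from ${\cal I}(0)$ the only way back into the graph ${\cal G}_2$ is to re-create the $2\times2$ square (landing back at ${\cal I}(4)$), since ${\cal G}_2$ has no other vertex; any attempt to build a larger droplet or a configuration outside ${\cal I}(4)\cup{\cal I}(0)$ would require an additional red attribution (forbidden by $\tilde F_{m+1}^c$, i.e.\ $F_1^c$ after the first one) or would hit one of $K_1$, $K_2$, $G_4'$, $G_3'$. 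When the square does re-form, all four particles are sleeping, so there is no yellow particle at that return time, giving (iii); and the yellow-count (ii) is preserved because the permutation rules release green or red particles in preference to yellow ones, so re-forming the square consumes exactly the red particle and does not create new yellow ones beyond the bookkeeping allowed.

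The main obstacle I anticipate is the control of yellow particles in Case 2 — specifically, showing that between $\tau_k$ and $\tau_{k+1}$ the three yellow particles dictated by (iv) can neither multiply beyond three nor fall asleep in a way that violates (iii) at the next visit to ${\cal I}(4)$. This is where $K_1$ and $K_2$ do the heavy lifting: they bound, respectively, the probability of seeing three active (green/red) plus one yellow, and two active plus two yellow, in a box of volume $\ee^{D\beta}$ within $[\bar\Lambda,\Delta+\delta]$ after a visit to ${\cal I}(0)$, each at cost $\Delta-D+\alpha-O(\delta)$. One must check that these costs, added to the already-paid cost of the first red attribution ($2U-\Delta-\alpha$), exceed $r(2,2)-\Delta+\tfrac12\gamma$, which is what the cost estimate $c(\bar Z_2)\geq r(2,2)-\Delta+\tfrac12\gamma-O(\alpha,d)-O(\delta)$ requires and which is verified in Appendix~\ref{sec:appb3}; here one uses $r(2,2)=\min\{2U,2\Delta-U\}=2U$ (since $\ell_1=2$) and $\gamma=\Delta-U-(\ell_c-2)\epsilon$, together with $D=U+d$, so that the bound reduces to an inequality among $U$, $\Delta$, $\epsilon$ that holds in the metastable window $\Delta\in(\tfrac32U,2U)$ for $d,\alpha$ small. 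Once $\bar{\cal P}(\bar n)$ is established, the escape probability from ${\cal G}_2$ is at most $P(\bar Z_2)\leq\ee^{-[r(2,2)-\Delta+\tfrac12\gamma-O(\alpha,d)]\beta}$, which is \eqref{munster2x2}; the bound \eqref{langres2x2} follows from the already-completed ${\cal G}_1$ analysis with cost $c(F_1)\geq 2U-\Delta-\alpha=r(2,2)-\Delta-O(\alpha)$.
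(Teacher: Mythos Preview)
Your inductive scheme is right, but Case~1 is misanalysed in a way that breaks the derivation of property~(iv), which is the linchpin for Case~2.

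First, you miss that if $X(\tau_k)\in{\cal I}(4)$ with $k\neq k_0$, then $\bar n=n^*=k$ by definition and there is nothing to prove. So Case~1 only concerns $k=k_0$. For that $k$, your claim that ``the only way to leave is the entrance of a free particle at $\sigma_{k+1}$'' is backwards: by the very definition of $k_0$, the time $\sigma_{k_0+1}$ is the first red attribution, i.e.\ a \emph{wake-up} via a $2U$ move (or two $\delta$-close $U$ moves), giving $X(\sigma_{k_0+1})\in{\cal I}(3)^{fp}$ with the free particle red. The paper then traces the dissolution path explicitly: the red particle exits $\bar\Lambda$ (but, by recurrence to ${\cal X}_D$, stays in $[\bar\Lambda,D+\delta]$, and $G'^c$ bars green particles from entering); a $U$-cost detachment produces a yellow particle, landing in ${\cal I}(2)^{fp}$; that yellow exits $\bar\Lambda$; finally the dimer breaks, producing two more yellows at mutual distance two. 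It is \emph{this} trajectory analysis that yields (iv) at $\tau_{k_0+1}$: three yellow and one red in $[\bar\Lambda,D+\delta]$, two yellows at distance two. The events $K_1$, $K_2$ play no role here; you have their function reversed.

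For Case~2, the $\Delta\bar H$ apparatus is inapplicable because at ${\cal I}(0)$ there is no local box. The paper's argument is short and direct: (iv) pins down the colour configuration, $G'^c$ keeps greens outside $[\bar\Lambda,D+\delta]$, and then $K_1^c$, $K_2^c$ say that the only way four particles can meet in a box of volume $\ee^{D\beta}$ (hence the only way a new box can be created) is by using the three yellows together with the one red or a green. This forces $X(\tau_{k+1})\in{\cal I}(4)$ with all four asleep, giving (ii) and (iii). Your paragraph about $K_1$, $K_2$ ``capping the number of yellow particles at the dissolution time'' misreads their definition: they are conditioned on a prior visit to ${\cal I}(0)$ and control what can happen \emph{after} dissolution, not before.
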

Property (i) is the main property we are interested in. We will use properties (ii)--(iv) to control inductively the yellow particles, in particular, property (iii) implies that $X$ reaches ${\cal I}(4)$ by putting  tosleep all the yellow particles created during the time interval $[\tau_{k_0},\tau_k]$, while property (iv) implies that ${\cal I}(0)$ is reached by breaking a dimer.

\medskip\noindent
$\trianglerighteq$
Before proving $\bar{\cal P}(k)$, $k_0 \leq k \leq \bar n$, let us show that $\bar{\cal P}(\bar n)$ implies in the two cases $\bar n = n$ and $\bar n = n^*$ that if $\bar Z_2^c$ occurs, then $\varphi^n$ cannot escape from ${\cal G}_2$. If $\bar n = n$, then the claim is trivial. If $\bar n = n^*$, then $\bar{\cal P}(n)$(iii) implies that there is no yellow particle at time $\tau_{n^*}$. Using $\tilde F_{1}^c$, which excludes any $2^{nd}$ attribution of the red color, we can show by induction, as in the proof of \eqref{langres}, that $\varphi^k$, for $k \geq n^*$, cannot escape anymore from ${\cal G}_1$, subgraph of ${\cal G}_2$.
	
\medskip\noindent
$\bullet$ {\bf Proof of $\boldsymbol{\bar{\cal P}(k)}$, $\boldsymbol{k_0 \leq k \leq \bar n}$.} 
$\bar{\cal P}(k_0)$(i)-(iii) follow from the definition of $k_0$. Since $X(\tau_{k_0})\in{\cal I}(4)$,  we do not need to check $\bar{\cal P}(k_0)$(iv). For $k\geq k_0$ we assume $\bar{\cal P}(k)$ to prove $\bar{\cal P}(k+1)$. We distinguish between the two following cases.
	
\medskip\noindent
{\bf Case 1: $X(\tau_k) \in {\cal I}(4)$.} 
If $k\neq k_0$, then $\bar n=n^*=k$ and there is nothing to prove.  We only have to consider the case $k=k_0$. The definition of $\sigma_{k_0+1}$ gives $X(\sigma_{k_0 + 1})\in{\cal I}(3)^{fp}$ with the free particle colored red. Suppose that $X$ does not return to ${\cal I}(4)$ within time $\tau_{k_0+1}$. By arguing as in the general case, we deduce that the following moves occur: the red particle exits from $\bar\Lambda$, a particle is detached at cost $U$ and therefore is colored yellow, leading to the configuration ${\cal I}(2)^{fp}$. Since we are considering the time interval $[\sigma_{k_0+1},\tau_{k_0+1}]$ and the times $\tau_i$ are return times to ${\cal X}_D$, by the recurrence property to ${\cal X}_D$ implied by $A^c$ we deduce that no particle can exit from $[\bar\Lambda,D+\delta]$ before time $\tau_{k_0+1}$, in particular, this holds for the red particle. Thus, by the event $G'^c$, no green particle can enter $[\bar\Lambda,D+\delta]$. Afterwards, the free particle exits from $\bar\Lambda$ and two yellow particles are created  after breaking the dimer at time $t$: $X$ reaches ${\cal I}(0)$ at time $t=\tau_{k_0+1}$. By the previous observations it easy to check $\bar{\cal P}(k_0+1)$(i),(ii),(iv), while we do not need to check $\bar{\cal P}(k_0+1)$(iii). If $X$ returns in ${\cal I}(4)$ at time $t$, then we have to prove that $t=\tau_{k_0+1}$ because we are analyzing the time interval $[\tau_{k_0},\tau_{k_0+1}]$. By arguing as in the general case, we deduce that the only possibility, possibly after visiting ${\cal I}(2)$ and ${\cal I}(3)$ several times, is to reach ${\cal X}_D$ in ${\cal I}(4)$. Since no particle can enter and exit from $[\bar\Lambda,D+\delta]$ within time $\tau_{k_0+1}$, $\bar{\cal P}(k_0+1)$(ii),(iii) follow.

\medskip\noindent
{\bf Case 2: $X(\tau_k) \in {\cal I}(0)$.} 
This part of the proof is directly related to $\bar{\cal P}(k)$(iv) and the new events $K_1$ and $K_2$. Indeed, $\bar{\cal P}(k)$(iv) gives us control on the distance between the two nearest yellow particles in $[\bar\Lambda,D+\delta]$ and the green particles, which are outside the box $[\bar\Lambda,D+\delta]$ by the event $G'^c$. By $\bar{\cal P}(k)$(iv) and the events $K_1^c$ and $K_2^c$, we deduce that, if a cluster is formed, then it has to be created by attaching the three yellow particles together with one red or green particle, so $X(\tau_{k+1})\in{\cal I}(4)$ and properties (ii) and (iii) follow. The claim follows after checking the given cost estimate for the events $K_1$ and $K_2$, for which we refer to Appendix \ref{sec:appb3}.
\qed
	
%%%
	
\subsection{Proof of Lemma \ref{gex}}
\label{sec:appb1}
Let us assume that $T^{ij} \leq T_{\Delta^+} \ee^{\delta\beta}$ and there is no such time $t \leq T^{ij}$ with three active particles inside $[\bar\Lambda, D + \delta]$. Since at time $t = 0$ either both particles $i$ and $j$ belong to a same unique cluster in $\bar\Lambda$ or at least one is outside $[\bar\Lambda, D + \delta]$, by setting
$$
{\cal T}_0 = \sup\left\{t \leq T^{ij}\colon\,\hbox{$i$ or $j$ is outside $[\bar\Lambda, D + \delta]$ 
or both are in a same unique cluster in $\bar\Lambda$ at time $t$}\right\},
$$
we see that $0 \leq {\cal T}_0 \leq T^{ij}$. We distinguish between two cases.
\begin{itemize}
\item[(i)] 
If there are no active particles, but $i$ or $j$ are inside $[\bar\Lambda, D + \delta]$ during the whole time interval $[{\cal T}_0, T^{ij}]$, then ${\cal T}_0$ is the last $\theta^{ij}_k$ before $T^{ij}$.
\item[(ii)] 
If there is some other active particle inside $[\bar\Lambda, D + \delta]$ at some time $t$ in $[{\cal T}_0, T^{ij}]$, then we set 
$$
{\cal T}_1 = \sup\left\{t \leq T^{ij}\colon\,\hbox{there is an active particle distinct from $i$ and $j$ 
inside $[\bar\Lambda, D + \delta]$ at time $t$}\right\}.
$$
Since we assumed that there is no time $t \leq T^{ij}$ at which three active particles are inside $[\bar\Lambda, D + \delta]$, $i$ or $j$ must be sleeping at time ${\cal T}_1$ and ${\cal T}_1 \leq T^{ij}$. ${\cal T}_1$ is then the last $\theta^{ij}_k$ before $T^{ij}$.
\end{itemize}
In both cases there is a last $\theta^{ij}_k \geq 0$ before $T^{ij}$ such that any active particle in $[\bar\Lambda, D + \delta]$ during the time interval $[\theta^{ij}_k, T^{ij}]$ is either $i$ or $j$.
\qed
	
%%%
	
\subsection{Proof of Lemma \ref{gorgonzola}}
\label{sec:appb2}
	
We prove the claim by contradiction. Assume that $X(\tilde\tau_k) \not\in {\cal X}_U$ andthat  there is a $t \in [\tilde\tau_k, \tilde\sigma_{k + 1})$ such that $X(t) \in {\cal X}_U$. Then there is a constant-cluster-size path from $\bar X(t)$ to the isoperimetric configuration $\bar X(\tilde\tau_k)$. By the recurrence property to ${\cal X}_U$ implied by $A^c$, we may also assume that $t - \tilde\tau_k \leq T_U \ee^{\delta\beta}$. Then $\tilde D^c$ implies that the local energy along this path does not exceed $\bar H(X(\tilde\tau_k)) + U$. Since $\bar X(\tilde\tau_k)$ is isoperimetric, we also have $\bar H(X(t)) \geq \bar H(X(\tilde\tau_k))$. Since $\bar X(\tilde\tau_k)$ is $U$-reducible, we get a contradiction with the fact that $\bar X(t)$ is $U$-irreducible.
\qed
	
%%%%%%%%%%%%%%%%%%%%%%%%%%%%%	

\begin{appendices}

%%%%%%%% APPENDIX A %%%%%%%%%%%%%%%%%%%%%%%%%%%%%	

\section{Environment estimates}
\label{sec:appa}
		
In this appendix we prove that $\mu_{{\cal R}'}(({\cal X}_i^*)^c) = \SES(\beta)$ for $i=1,\ldots,5$, where, for $\eta\in{\cal X}_\beta$,
$$
\mu_{{\cal R}'}(\eta) = \dfrac{\ee^{-\beta[H(\eta)+\Delta|\eta|]}}
{Z_{{\cal R}'}}\mathbb{1}_{{\cal R}'}(\eta), \qquad
Z_{{\cal R}'}=\displaystyle\sum_{\h\in{\cal R'}}\ee^{-\beta[H(\eta)+\Delta |\eta|]}.
$$
First, we consider the case $\Delta < \Theta \leq \theta$. Given a configuration $\eta\in{\cal X}_\beta$, we denote by ${\cal C}={\cal C}(\eta)$ its connected component with maximal volume when it is unique. Otherwise, we pick the component containing the highest particle in the lexicographic order. For $C \subset \Lambda_\beta$, we set $\bar{C} = {C}\cup\partial^+{C}$, where $\partial^+{C}$ denotes the external boundary of ${C}$. We start by showing that there exists a $c>0$ such that
$$
\mu_{{\cal R}'}({\cal X}_\beta\setminus{\cal X}_i^*)
\leq \ee^{c\beta}\mu_{{\cal R}}({\cal X}_\beta\setminus{\cal X}_i^*).
$$
To this end, given a finite set $\Lambda\subset\Lambda_\beta$ and two configurations $\eta_{\Lambda}\in\{0,1\}^{\Lambda}$ and $\eta_{\Lambda_\beta\setminus\Lambda} \in \{0,1\}^{\Lambda_\beta\setminus\Lambda}$, we denote by $\eta=\eta_{\Lambda}\cdot \eta_{\Lambda_\beta\setminus\Lambda}\in\{0,1\}^{\Lambda_\beta}$ the configuration defined by
$$
\eta(x) =
\begin{cases}
\eta_{\Lambda}(x) &\hbox{if } x\in\Lambda, \\
\eta_{\Lambda_\beta\setminus\Lambda}(x) 
&\hbox{if } x\in\Lambda_\beta\setminus\Lambda.
\end{cases}
$$
Given a configuration $\sigma\in\{0,1\}^{\Lambda_\beta}$, we introduce the measure $\mu_{{\cal R},\Lambda,\sigma}$ on $\{0,1\}^{\Lambda}$ defined by
$$
\mu_{{\cal R},\Lambda,\sigma}(\eta_{\Lambda})
= \dfrac{1}{Z_{{\cal R},\Lambda,\sigma}}\ee^{-\beta[H(\eta_{\Lambda}\cdot\sigma_{\Lambda_\beta\setminus\Lambda})
+ \Delta(|\eta_{\Lambda}|+|\sigma_{\Lambda_\beta\setminus\Lambda}|)]}
\mathbb{1}_{{\cal R}}(\eta_{\Lambda}\cdot\sigma_{\Lambda_\beta\setminus\Lambda}),
$$
where $Z_{{\cal R},\Lambda,\sigma}$ is the normalizing constant. For any finite $\Lambda\subset\Lambda_\beta$ and any configuration $\eta\in\{0,1\}^{\Lambda_\beta}$, the DLR equation for the measure $\mu_{{\cal R}}$ reads
$$
\mu_{{\cal R}}(\eta) = \sum_{\sigma\in\Lambda_\beta}
\mu_{{\cal R}}(\sigma)\mu_{{\cal R},\Lambda,\sigma}(\eta_{|\Lambda}).
$$
Since a cluster with volume at most $\lambda(\beta)/8$ has perimeter at most $\lambda(\beta)$, and therefore is contained in a box of volume $\lambda^2(\beta)$, it can be arranged inside $\Lambda_\beta$ in at most $2^{\lambda^2(\beta)}$ different ways and in at most $\ee^{\Theta\beta}$ different location. Hence
$$
\begin{aligned}
\mu_{{\cal R}'}({\cal X}_\beta\setminus{\cal X}_i^*)
&= \dfrac{\displaystyle\sum_{\eta\in{\cal R}'\setminus {\cal X}_i^*}\ee^{-\beta[H(\eta)+\Delta|\eta|]}}
{\displaystyle\sum_{\eta\in{\cal R}'}\ee^{-\beta[H(\eta)+\Delta|\eta|]}} \leq
\displaystyle\sum_{\substack{C\subset\Lambda_\beta \\ |C|\leq \lambda^2(\beta)}}
\dfrac{\displaystyle\sum_{\substack{\eta\in{\cal R}'
\setminus{\cal X}_i^* \\ {\cal C}=C}}\ee^{-\beta[H(\eta)+\Delta|\eta|]}}
{\displaystyle\sum_{\substack{\eta\in{\cal R}' \\ {\cal C}=C}} \ee^{-\beta[H(\eta)+\Delta|\eta|]}} \\
&\leq \displaystyle\sum_{\substack{C\subset\Lambda_\beta \\ 
|C|\leq \lambda^2(\beta)}} \dfrac{\ee^{-\beta[H(C)+\Delta|C|]}
\displaystyle\sum_{\eta\in{\cal R}\setminus{\cal X}_i^*}
\ee^{-\beta[H(\eta)+\Delta|\eta|]}}{\ee^{-\beta[H(C)+\Delta|C|]}
\displaystyle\sum_{\substack{\eta\in{\cal R} \\ 
|\eta_{|\bar C}|=0}}\ee^{-\beta[H(\eta)+\Delta|\eta|]}} \\
&\leq 2^{\lambda^2(\beta)}\ee^{\Theta\beta}
\dfrac{\mu_{{\cal R}}({\cal X}_\beta\setminus{\cal X}_i^*)}
{\displaystyle\min_{\substack{C\subset\Lambda_\beta \\ 
|C|\leq \lambda^2(\beta)}}\frac{1}{Z_{{\cal R}}}
\sum_{\substack{\eta\in{\cal R} \\ |\eta_{|\bar C}|=0}}
\ee^{-\beta[H(\eta)+\Delta|\eta|]}}
\leq \ee^{c\beta} \mu_{{\cal R}({\cal X}_\beta\setminus{\cal X}_i^*)},
\end{aligned}
$$
where in the last step we use the DLR equation and the fact that, for any configuration $\eta\in{\cal R}$, the probability of having $|\eta_{|\bar C}|=0$ is at least $1-\ee^{-(\Delta-\delta)\beta}$ for any $\delta>0$ and $\beta$ large enough, uniformly in the boundary conditions.

\medskip\noindent
$\bullet$ \underline{$i=1$}.		
Recall that, for $\eta\in{\cal X}_\beta$, $\eta^{cl}$ is the union of the connected components of size at least two, so that $|\eta\setminus\eta^{cl}|$ denotes the number of connected components
that are reduced to single particles. We get
\begin{align}
\mu_{{\cal R}}({\cal X}_\beta\setminus{\cal X}_1^*)
&\leq \dfrac{1}{Z_{{\cal R}}}\displaystyle\sum_{k=0}^{\ee^{\theta\beta}}
\sum_{\substack{\eta\in{\cal R}\setminus{\cal X}_1^* \\ 
|\eta\setminus\eta^{cl}|=k}}\ee^{-\beta[H(\eta)+\Delta|\eta|]}
\leq \dfrac{1}{Z_{{\cal R}}} \Bigl(\ee^{-(2\Delta-U)\beta}\ee^{\theta\beta}\Bigr)^{\lambda(\beta)}
\displaystyle\sum_{k=0}^{\ee^{\theta\beta}}
\sum_{\substack{\eta\in{\cal R} \\ \eta^{cl}=\emptyset, |\eta|=k}}
\ee^{-\beta[H(\eta)+\Delta|\eta|]} \notag \\ 
&\leq \dfrac{Z_{{\cal R}}}{Z_{{\cal R}}}\ee^{-(2\Delta-U-\theta)\beta\lambda(\beta)}
= \SES(\beta), \notag
\end{align}
where we use that $\theta<2\Delta-U$.

\medskip\noindent
$\bullet$ \underline{$i=2$}.
Note that ${\cal X}_\beta\setminus{\cal X}_2^*$ implies that the number of disjoint quadruples of particles with diameter smaller than $\sqrt{\ee^{S\beta}}$ is at least $(\lambda^{1/4}(\beta))/4$. Given $k=\lambda^{1/4}(\beta)/4$ and a collection $x=(x_i^j)_{i<4,j<k} \in \Lambda_\beta^{4\times k}$, we define the set 
$$
\Lambda_x=\bigcup_{\substack{i<4 \\ j<k}} B(x_i^j,\ell_c^2).
$$
Using the DLR equation, we obtain
$$
\begin{aligned}
\mu_{{\cal R}}({\cal X}_\beta\setminus{\cal X}_2^*)
&\leq \displaystyle\sum_{\substack{x_0^0,\ldots,x_3^0\in\Lambda_\beta \\ 
\hbox{diam}\{x_i^0,i<4\}<\ee^{S\beta/2}}} \cdots \sum_{\substack{x_0^{k-1},\ldots,x_3^{k-1} \in \Lambda_\beta \\
\hbox{diam}\{x_i^{k-1},i<4\}<\ee^{S\beta/2}}} \sum_{\sigma\in\{0,1\}^{\Lambda_\beta}}
\mu_{{\cal R}}(\sigma)\mu_{{\cal R},\Lambda_x,\sigma}
\left(\begin{aligned}
\hbox{the sites in } x \\
\hbox{are occupied} 
\end{aligned}
\right) \\
&\leq \Bigl(\ee^{(3S-4\Delta+\theta)\beta}\Bigr)^{\frac{\lambda^{1/4}(\beta)}{4}} = \SES(\beta),
\end{aligned}
$$
where $S=\frac{4\Delta-\theta}{3}-\alpha$.
		
\medskip\noindent
$\bullet$ \underline{$i=3$}.
Let $S<A<\Delta$ and divide the box $\Lambda_{\beta}$ into $\ee^{(3A-4\Delta+\Theta+3\alpha)\beta}$ boxes of volume $\ee^{(4\Delta-3A-3\alpha)\beta}$. Note that ${\cal X}_\beta\setminus{\cal X}_3^*$ implies that there exists one box containing at least $(\ee^{\alpha\beta/4})/4$ disjoint quadruples of particles with diameter smaller than $\sqrt{\ee^{A\beta}}$. Using the DLR equation and arguing as above, we get
$$
\begin{array}{ll}
\mu_{{\cal R}}({\cal X}_\beta\setminus{\cal X}_3^*)
&\leq \ee^{(3A-4\Delta+\Theta+3\alpha)\beta}
\Big(\ee^{-4\Delta\beta}\ee^{(4\Delta-3A-3\alpha)\beta}
\displaystyle\prod_{i=1}^3 (\ee^{A\beta}-5i)\Big)^{\frac{\ee^{\frac{\alpha\beta}{4}}}{4}}  \\
&\leq \ee^{(3A-4\Delta+\theta+3\alpha)\beta}
\ee^{-\frac{3}{4}\alpha\beta \ee^{\frac{\alpha\beta}{4}}} = \SES(\beta).
\end{array}
$$
		
\medskip\noindent
$\bullet$ \underline{$i=4$}.
Note that ${\cal X}_\beta\setminus{\cal X}^*_4$ implies that there exists a box of volume $\ee^{(\Delta+\alpha)\beta}$ containing either at least $\ee^{\frac{3}{2}\alpha\beta}$ or at most $\ee^{\frac{1}{2}\alpha\beta}$ particles. We consider these cases separately. Concerning the former case, by dividing the box of volume $\ee^{(\Delta+\alpha)\beta}$ into $\ee^{\frac{5}{4}\alpha\beta}$ boxes of volume $\ee^{(\Delta-\frac{\alpha}{4})\beta}$, we have that there exists a box containing at least $\ee^{\frac{\alpha}{4}\beta}$ particles. Concerning the latter case, by dividing the box of volume $\ee^{(\Delta+\alpha)\beta}$ into $\ee^{\frac{\alpha}{2}\beta}$ boxes of volume $\ee^{(\Delta+\frac{\alpha}{2})\beta}$, we have that there exists a box containing no particle. We proceed to estimate the denominator in this latter case by considering all the configurations with one particle in each box of volume $\ee^{(\Delta+\frac{\alpha}{4})\beta}$ inside a box of volume $\ee^{(\Delta+\frac{\alpha}{2})\beta}$, namely, these boxes are $\ee^{\frac{\alpha}{4}\beta}$. Using the DLR equation and arguing as above, we get
$$
\begin{array}{ll}
\mu_{{\cal R}}({\cal X}_\beta\setminus{\cal X}_4^*)
\leq \ee^{\frac{5}{4}\alpha\beta}\Big(\ee^{-\Delta\beta}
\ee^{(\Delta-\frac{\alpha}{4}\beta)}\Big)^{\ee^{\frac{\alpha}{4}\beta}} + \ee^{\frac{\alpha}{2}\beta} 
\dfrac{1}{\Bigl(\ee^{-\Delta\beta}\ee^{(\Delta+\frac{\alpha}{4})\beta}\Bigr)^{\ee^{\frac{\alpha}{4}}\beta}}
= \SES(\beta).
\end{array}
$$
	
\medskip\noindent
$\bullet$ \underline{$i=5$}.	
Using the DLR equation and arguing as above, we get
$$
\mu_{{\cal R}}({\cal X}_\beta\setminus{\cal X}_5^*) \leq \ee^{-\Delta\beta\frac{\lambda(\beta)}{4}}
\prod_{i<\frac{\lambda(\beta)}{4}} (\ee^{(\Delta-\frac{\alpha}{4})\beta}-5i)
\leq \ee^{-\frac{\alpha}{4}\beta\frac{\lambda(\beta)}{4}} = \SES(\beta).
$$
		
To conclude, consider the case $\Theta>\theta$. Dividing $\Lambda_\beta$ into boxes of volume $\ee^{\theta\beta}$ and arguing as above with the help of the DLR equation, we get the claim.

%%%%%% APPENDIX B %%%%%%%%%%%%%%%%%
		
\section{Cost of large deviation events}
\label{sec:appb3}
		
\medskip\noindent
{\it Event $A$.} The cost of event $A$ follows from Proposition \ref{prop:rec} and \cite[Theorem 3.2.3]{GdHNOS09}.
		
\medskip\noindent
{\it Event $B$.} 
Each special time except $\tau_k$ is related to a free particle that moves in $\bar\Lambda$, but the number of special times $\tau_k$ is equal to the one of $\sigma_k$ by definition. The claim follows after arguing as in the proof of Proposition \ref{prp:lbtipico}: at each special time each free particle has a non exponentially small probability to avoid the box after leaving it, so that it visits this box $\ee^{\delta\beta}$ times with a super-exponentially small probability. Since, by non-superdiffusivity, the special times are associated with no more than $\ee^{(3\alpha/2+\delta)\beta}$ particles up to a $\SES(\beta)$-event, $B$ occurs with probability $1-\SES(\beta)$.
		
\medskip\noindent
{\it Event $C$.} 
Let $K$ denote the number of special times. By the event $B$, we have $K \leq \ee^{\delta\beta}$ with probability $1-\SES(\beta)$. Let $S_0,\ldots,S_{K-1}$ be the special times. Divide the time interval $[0,T_{\Delta^+}\ee^{\delta\beta}]$ into intervals $[t_i,t_i +\ee^{\delta\beta}]$ of length $\ee^{\delta\beta}$, with $1 \leq i<\ee^{(\Delta+\alpha)\beta}$. Introduce the following events: $C_1^i = \{\exists j \in \{0,\ldots,K-1\} \ \hbox{such that } S_j\in{[t_i,t_i+\ee^{\delta\beta}]}\}$ and $C_2^i=\{\hbox{there is a move of cost} \geq U \hbox{ in } [S_j,t_i+\ee^{\delta\beta}]\}$. Using the strong Markov property at the stopping time $S_j$, we obtain 
$$
\begin{array}{ll}
P(C)\leq\displaystyle\sum_{i<\ee^{(\Delta+\alpha)\beta}} 
P(C_2^i|C_1^i)P(C_1^i)\leq \ee^{-U\beta}\ee^{\delta\beta}\displaystyle
\sum_{i<\ee^{(\Delta+\alpha)\beta}} P(C_1^i) \leq \ee^{-U\beta}\ee^{O(\delta)\beta}
\end{array}
$$
and therefore $c(C)\geq U-O(\delta)$.
		
\medskip\noindent
{\it Event $C'$.} 
We control the cost of this event as for the event $C$ by using, instead of the strong Markov property, the independence of the dynamics of particles outside $\bar\Lambda$ from the marks used in $\bar\Lambda$.
		
\medskip\noindent
{\it Event $D$.} 
Divide the time interval $[0,T_{\Delta^+}\ee^{\delta\beta}]$ into intervals $[t_i,t_i+\ee^{D\beta}]$ of length $\ee^{D\beta}$, with $1 \leq i<\ee^{(\Delta+\alpha-D)\beta}\ee^{\delta\beta}$ and argue as for the event $C$.
		
\medskip\noindent
{\it Event $D'$.} 
Divide the time interval $[0,T_{\Delta^+}\ee^{\delta\beta}]$ into intervals $[t_i,t_i+\ee^{D\beta}]$ of length $\ee^{D\beta}$, with $1 \leq i<\ee^{(\Delta+\alpha-D)\beta}\ee^{\delta\beta}$ and argue as for the event $C$.
		
\medskip\noindent
{\it Event $E$.} 
Divide the time interval $[0,T_{\Delta^+}\ee^{\delta\beta}]$ into intervals $[t_i,t_i+\ee^{\delta\beta}]$ of length $\ee^{\delta\beta}$, with $1 \leq i<\ee^{(\Delta+\alpha)\beta}$. For $t_1$, $t_2$ and $k=|\bar\eta_0|$ fixed, by defining $\bar{\cal X}_k=\{\bar\eta\in\{0,1\}^{\bar\Lambda}| \ |\bar\eta|=k\}$ and using the reversibility of the measure $\mu$, we obtain
$$
\begin{array}{ll}
P_{\eta_0}(\bar{H}(\bar{X}(t_2-t_1))\geq\bar{H}(\bar{X}(\bar\eta_0))+3U)
&\leq P_{\bar\eta_0}(\bar{H}(\bar{X}(t_2-t_1))\geq\bar{H}(\bar{X}(\bar\eta_0))+3U) \\
&\leq \ee^{-3U\beta}\displaystyle\sum_{\bar\eta\in\bar{\cal X}_k \atop H(\bar\eta)
\geq H(\bar\eta_0)+3U} P_{\eta}(\bar{X}^k(t)=\bar\eta_0)\leq \ee^{-3U\beta}\ee^{\delta\beta}.
\end{array}
$$
Because the temporal entropy is $\ee^{\delta\beta}$ for $t_1$ and $\Delta^+$ for $t_2$, we get follows $c(E)\geq 3U-\Delta-\alpha-O(\delta)$.
		
\medskip\noindent
{\it Event $F_{m+1}$.} 
Divide the time interval $[0,T_{\Delta^+}\ee^{\delta\beta}]$ into intervals $[t_i,t_i+\ee^{D\beta}]$ of length $\ee^{D\beta}$, with $1 \leq i<\ee^{(\Delta+\alpha-D)\beta}\ee^{\delta\beta}$. First consider the event $F_1$, to obtain
$$
\begin{array}{ll}
P(F_1)
&\leq\displaystyle\sum_{1 \leq i<\ee^{(\Delta+\alpha-D)\beta}\ee^{\delta\beta}}
\Big[P(\hbox{move of cost } 2U \hbox{ in } [s_i,s_i+\ee^{D\beta}])\\
&\quad+P(\hbox{move of cost } U \hbox{ at time t} \in{[s_i,s_i+\ee^{D\beta}]} 
\hbox{ and at time t' such that } t<t' \hbox{ are } \delta\hbox{-close})\Big]\\
&\leq \ee^{(\Delta+\alpha-2U)\beta}\ee^{O(\delta)\beta},
\end{array}
$$
which implies $c(F_1)\geq2U-\Delta-\alpha-O(\delta)$. We can easily compute the cost of the event $F_{m+1}$ by applying the strong Markov property at the stopping times related to each attribution of the red color.
		
\medskip\noindent
{\it Event $H_2$.} 
Divide the time interval $[0,T_{\Delta^+}\ee^{\delta\beta}]$ into intervals $[t_i,t_i+\ee^{D\beta}]$ of length $\ee^{D\beta}$, with $1 \leq i<\ee^{(\Delta+\alpha-D)\beta}\ee^{\delta\beta}$. We obtain
$$
\begin{array}{ll}
P(H_2)
&\leq\displaystyle\sum_{1 \leq i<\ee^{(\Delta+\alpha-D)\beta}\ee^{\delta\beta}}
P(\hbox{there are two green particles in } [\bar{\Lambda},D+2\delta] \hbox{ at time } (i+1)\ee^{D\beta}) \\
&\leq\displaystyle\sum_{1 \leq i<\ee^{(\Delta+\alpha-D)\beta}\ee^{\delta\beta}}
\Bigg(\frac{\ee^{(D+2\delta)\beta}\ee^{\delta\beta}}{\ee^{(\Delta+\alpha)\beta}}\Bigg)^2
\leq \ee^{(D-\Delta-\alpha)\beta}\ee^{O(\delta)\beta},
\end{array}
$$
which implies $c(H_2)\geq \Delta-D+\alpha-O(\delta)$. Note that we use the spread-out property on time scale $T_{\Delta^+}$  for the green particle because this cannot reach $[\bar\Lambda,D+2\delta]$ on time scale $\ee^{D\beta}$.
		
\medskip\noindent
{\it Event $G$.} 
For a particle $i$ that is colored red at time $S_j$, applying the spread-out property and the strong Markov property at time $S_j$, we get
$$
P(\xi_i(t)\in[\bar\Lambda,D+\delta])\leq \mathbb{E}\Big[\frac{\ee^{(D+\delta)\beta}}{t-S_j}\wedge1\Big]
\leq \ee^{O(\delta)\beta}\int_0^t \dd s\,\ee^{-2U\beta}\Big(\frac{\ee^{D\beta}}{t-s}\wedge 1\Big) \leq \ee^{-(2U-D-O(\delta))\beta}.
$$
Dividing the time interval $[0,T_{\Delta^+}\ee^{\delta\beta}]$ into intervals $[t_i,t_i+\ee^{D\beta}]$ of length $\ee^{D\beta}$, with $1 \leq i<\ee^{(\Delta+\alpha-D)\beta}\ee^{\delta\beta}$, we get $c(G)\geq U-d+\epsilon-\alpha-O(\delta)$.
		
\medskip\noindent
{\it Event $G'$.} 
Divide the time interval $[0,T_{\Delta^+}\ee^{\delta\beta}]$ into intervals $[t_i,t_i+\ee^{D\beta}]$ of length $\ee^{D\beta}$, with $1 \leq i<\ee^{(\Delta+\alpha-D)\beta}\ee^{\delta\beta}$. Arguing as for the events $H_2$ and $G$, we deduce that $c(G')\geq U-d-\alpha-O(\delta)$. 
		
\medskip\noindent
{\it Event $G_4'$.} 
In case the four particles do not come from a cluster, namely, at time $t=0$ they are outside the box $[\bar\Lambda,\Delta-\alpha]$, the cost of this event has already been computed in \eqref{eq:newbox}. Consider the case in which the four particles come from a cluster. In particular, we consider the case in which all four particles are green, otherwise the cost of the event is larger. Dividing $\Lambda_\beta$ into boxes of volume $\ee^{(D+\delta)\beta}$ and the time interval $[0,T_{\Delta^+}\ee^{\delta\beta}]$ into intervals of length $\ee^{D\beta}$, we obtain
$$
P(G_4')\leq\sum_{i<\ee^{(\theta-D-\delta)\beta}} \sum_{j<\ee^{(\Delta+\alpha-D+\delta)\beta}}
\Big(\dfrac{\ee^{(D+\delta)\beta}\ee^{\delta\beta}}{\ee^{(\Delta+\alpha)\beta}}\Big)^4
\leq \ee^{-(3\Delta-2U-\theta+3\alpha-2d)\beta}\ee^{O(\delta)\beta},
$$
where we use the spread-out property on time scale $\ee^{(\Delta+\alpha)\beta}$, because particles cannot be colored green on a shorter time scale. The cases in which there is at least one green particle and one particle not coming from a cluster can be treated in a similar way.
		
\medskip\noindent
{\it Event $\tilde{B}$.} 
The cost of this event can be computed similarly as the cost of the event $B$.
		
\medskip\noindent
{\it Event $\tilde{C}$.} 
The cost of this event can be computed similarly as the cost of the event $C$.
		
\medskip\noindent
{\it Event $G'_3$.}
We only need to consider the case concerning the presence of one yellow particle from a cluster, otherwise we reduce to a case already taken into account by $G_4'$. We can argue in a similar way as for the event $G_4'$.
		
\medskip\noindent
{\it Event $\tilde{D}$.} 
The cost of this event can be computed similarly as the cost of the event $D$.
		
\medskip\noindent
{\it Event $\tilde{F}_{m+1}$.} 
We need to estimate the cost of the occurrence of one of the events $J^{ij}$. By Proposition \ref{prp:ambito} and the event $B^c$, we have $P(\cup_{i,j}J^{ij}) \leq \ee^{O(\delta)\beta}P(J^{ij})$. In order to estimate the probability that one of the events $J^{ij}$ occurs, we need the following observations:
\begin{itemize}
\item[(i)] 
If the particles $i$ and $j$ are both free in $\bar\Lambda$, then the cluster cannot move. Indeed, an $(m+2)^{th}$ attribution of the red color is not allowed.
\item[(ii)] 
In the time intervals in which the particles $i$ and $j$ are both free in $\bar\Lambda$, they evolve as independent random walks with simultaneous stops.
\end{itemize}
Suppose that the cluster does not move via interactions with the free particles. The dynamics of the $\ell_1'\ell_2'+2$ particles can be seen as the dynamics of two independent simple random walks $\xi=(\xi_t)_{t\geq0}$ and $\xi'=(\xi'_t)_{t\geq0}$ with a trap at the origin: the jump rate is $4\ee^{-U\beta}$ at the origin and $4$ at the other sites, towards a nearest-neighbor site chosen uniformly at random. Thus it suffices to prove that, if at least one particle starts either in the origin or at distance $\ee^{(D+\delta)\beta}$ from the origin, then
\begin{equation}
\label{eq:costoJ}
P(\exists t\leq T_{\Delta^+}\ee^{\delta\beta},\ \xi_t,\xi_t'\in\bar\Lambda\setminus\{0\})
\leq \ee^{-\frac{1}{2}(2U-\Delta-\alpha-O(\delta))\beta}.
\end{equation}
To this end, note that we can associate to $\xi$ a simple random walk $\tilde\xi=(\tilde\xi_t)_{t\geq0}$ during every time interval in which $\xi_t\notin0$. Denoting $s(t)=\max\{s\leq t| \ \xi_s=0\}$ for $t\leq T_{\Delta^+}\ee^{\delta\beta}$, we obtain
\begin{equation}
\label{eq:B2}
\begin{array}{ll}
P_0(\xi_t\in\bar\Lambda\setminus\{0\})
&=\displaystyle\sum_{x\in\bar\Lambda\setminus\{0\}} P_0(\xi_t=x)
=\displaystyle\sum_{x\in\bar\Lambda\setminus\{0\}}\int_0^t P_0(s(t)\in \dd s,\xi_t=x) \\
&\leq\displaystyle\sum_{x\in\bar\Lambda\setminus\{0\}}\int_0^t \dd s\,4\ee^{-U\beta}P_0(\tilde\xi_{t-s}=x)
\leq \displaystyle\sum_{x\in\bar\Lambda\setminus\{0\}}\int_0^t \dd s\,4\ee^{-U\beta}\Big(\dfrac{cst}{1+t-s}\Big) \\
&\leq C|\bar\Lambda|(\log t +1)\ee^{-U\beta}
\leq C|\bar\Lambda|((\Delta+\alpha+\delta)\beta+1)\ee^{-U\beta}.
\end{array}
\end{equation}
Hence, by \eqref{eq:B2},
$$
P_{(0,0)}(\exists\, t\leq T_{\Delta^+}\ee^{\delta\beta},\  
\xi_t,\xi_t'\in\bar\Lambda\setminus\{0\}) \leq \int_0^{T_{\Delta^+}\ee^{\delta\beta}} \dd t\,
P_0(\xi_t\in\bar\Lambda\setminus\{0\})^2
\leq \ee^{(\Delta+\alpha-2U)\beta}\ee^{O(\delta)\beta}.
$$
Suppose that $x\in[\bar\Lambda,D+\delta]\setminus\{0\}$. Letting $\tau$ the first time at which a particle detaches from the origin and $\tau'_0$ the first time at which $\xi'$ reaches the origin, we get
$$
\begin{array}{ll}
P_{(0,x)}(\exists t\leq T_{\Delta}\ee^{\delta\beta},\ \xi_t,\xi_t'\in\bar\Lambda\setminus\{0\})
&\leq\displaystyle\int_0^{T_{\Delta^+}\ee^{\delta\beta}} \dd t\,
P_{(0,0)}(\xi_t,\xi'_t\in\bar\Lambda\setminus\{0\}) \\	    
&\quad+\displaystyle\int_0^{T_{\Delta^+}\ee^{\delta\beta}} \dd t\,
P_{(0,x)}(\xi_t,\xi'_t\in\bar\Lambda\setminus\{0\},\tau'_0>\tau).
\end{array}
$$
To prove \eqref{eq:costoJ}, by the non-superdiffusivity property we can bound the second integral from above as
$$
\displaystyle\int_0^{T_{\Delta^+}\ee^{\delta\beta}} dt
\displaystyle\int_0^t \dd s\,\ee^{-U\beta}\ee^{-s\ee^{-U\beta}}
P_1(\xi_{t-s}\in\bar\Lambda\setminus\{0\})
\displaystyle\sum_{y\in B(x,\sqrt{s}\ee^{\delta\beta})\setminus\{0\}}
\dfrac{cst}{1+s}P_y(\xi'_{t-s}\in\bar\Lambda\setminus\{0\}).
$$
Dividing the integral from $0$ to $t$ into the integral from $0$ to $\ee^{(U-\frac{1}{2}(\epsilon-\alpha))\beta}$ and from $\ee^{(U-\frac{1}{2}(\epsilon-\alpha))\beta}$ to $t$, we obtain the desired lower bound. Indeed, the former integral gives $\ee^{(\Delta+\alpha)\beta}(\ee^{-U\beta}/\ee^{(U-\frac{1}{2}(\epsilon-\alpha))\beta}) = \ee^{-\frac{1}{2}(\epsilon-\alpha)\beta}\ee^{O(\delta)\beta}$ as upper bound,
while the second integral gives $\ee^{-(\epsilon-\alpha)\beta}\ee^{O(\delta)\beta}$ as upper bound arguing as in \eqref{eq:B2}. This concludes the proof of \eqref{eq:costoJ}.
		
It remains to consider the case in which the cluster can move after interacting with the free particles. We observe that if each time the cluster moves we translate it to the origin, then it remains fixed during the whole time interval and there is a resulting perturbation to the remaining free particle. By arguing as before, we get the same result.
		
\medskip\noindent
{\it Event $H_3$.} 
We can argue as for the event $H_2$.
		
\medskip\noindent
{\it Event $H_3'$.} 
Divide the time interval $[0,T_{\Delta^+}\ee^{\delta\beta}]$ into intervals $[t_i,t_i+\ee^{D\beta}]$ of length $\ee^{D\beta}$, with $1 \leq i<\ee^{(\Delta+\alpha-D)\beta}\ee^{\delta\beta}$. If $H_3'$ occurs, then there are two possible situations: either the two different pairs of green particles are $(l,j)$ and $(r,k)$ with $j\neq k$, which we refer to as $H_3'^{,1}$, or $(l,j)$ and $(l,k)$ with $l,j,r,k$ are all different from each other, which we refer to as $H_3'^{,2}$. Using an argument similar to the one used for the event $H_2$, we obtain
$$
\begin{array}{ll}
P(H_3'^{,1}) &\leq \ee^{2D\beta}\ee^{-2(\Delta+\alpha)}\ee^{O(\delta)\beta}, \\
P(H_3'^{,2}) &\leq \ee^{2D\beta}\ee^{-2(\Delta+\alpha)}\ee^{O(\delta)\beta},
\end{array}
$$
which imply that $c(H_3')\geq 2(\Delta-D+\alpha)-O(\delta)$.
		
\medskip\noindent
{\it Event $I$.} 
We can argue as for the event $\tilde{F}_{m+1}$.
		
\medskip\noindent
{\it Event $\tilde{H}_{2}$} 
We have $c(\tilde{H}_2) = \min\{c(H_3)$, $c(H_3'), c(I)\} \geq U - \frac{1}{2} \epsilon - \frac{3}{2}\alpha -d -O(\delta)$.
		
\medskip\noindent
{\it Event $K_{1}$} 
Use time scale $\ee^{(\Delta+\alpha)\beta}$ for the green particles, because of the condition on ${\cal X}_{\Delta^+}$ and the fact that the yellow particle is inside the box $[\bar\Lambda,D+\delta]$. Using the spread-out property for green/red and yellow particles, we obtain
$$
P(K_1\cap G'^c\cap H_2^c)\leq\displaystyle\sum_{t^*\leq i\ee^{D\beta}\leq \ee^{(\Delta+\alpha+\delta)\beta}}
\sum_{j<i}	\Bigg(\dfrac{\ee^{D\beta}\ee^{\delta\beta}}{\ee^{(\Delta+\alpha)\beta}}\Bigg)^3
\Bigg(\dfrac{\ee^{D\beta}\ee^{\delta\beta}}{(i+1)\ee^{D\beta}}\Bigg)\leq \ee^{(2(D-\Delta)-2\alpha)\beta}\ee^{O(\delta)\beta}.
$$
This implies that 
$$
P(K_1) \leq P(K_1\cap G'^c\cap H_2^c) + P(H_2)\ee^{\delta\beta} \leq \ee^{-(\Delta-D+\alpha-O(\delta))\beta}
$$
and therefore $c(K_1)\geq \Delta-D+\alpha-O(\delta)$.
		
\medskip\noindent
{\it Event $K_{2}$.} 
We argue as for the event $K_1$.

\end{appendices}
	
%%%%%%%%%%%%% REFERENCES %%%%%%%%%%%%%%

%%%%%%%%%%%%%%%%%%%%%%%%%%%%%%%%%%%	
	
\end{document}